\newfont{\msbm}{msbm10 at 11pt}
\newcommand {\R} {\mbox{\msbm R}}
\newcommand {\Q} {\mbox{\msbm Q}}
\newcommand {\N} {\mbox{\msbm N}}
\newcommand {\1} {\mathds{1}}
\newcommand {\ind} {\mathds{1}}
\newcommand {\F} {\mathcal{F}}
\newfont{\msbmsm}{msbm10 at 8pt}
\newcommand {\Rsm} {\mbox{\msbmsm R}}
\newcommand {\E} {\mbox{\msbm E}}
\renewcommand {\P} {\mbox{\msbm P}}
\newtheorem{Theo}{Theorem}[section]
\newtheorem{Lemma}[Theo]{Lemma}
\newtheorem{Cor}[Theo]{Corollary}
\newtheorem{Prop}[Theo]{Proposition}
\def\eps{\varepsilon}
\def\Var{\textup{Var}}
\numberwithin{equation}{section}
\begin{document}
\title{A Gaussian particle distribution for branching Brownian motion with an inhomogeneous branching rate}
\author{Matthew I. Roberts\thanks{Supported by a Royal Society University Research Fellowship} \, and Jason Schweinsberg\thanks{Supported in part by NSF Grant DMS-1707953}}
\maketitle

\vspace{-.2in}

\begin{abstract}
Motivated by the goal of understanding the evolution of populations undergoing selection, we consider branching Brownian motion in which particles independently move according to one-dimensional Brownian motion with drift, each particle may either split into two or die, and the difference between the birth and death rates is a linear function of the position of the particle.  We show that, under certain assumptions, after a sufficiently long time, the empirical distribution of the positions of the particles is approximately Gaussian.  This provides mathematically rigorous justification for results in the biology literature indicating that the distribution of the fitness levels of individuals in a population over time evolves like a Gaussian traveling wave. 
\end{abstract}

\footnote{{\it AMS 2020 subject classifications}.  Primary 60J80;
Secondary 92D15, 92D25}

\footnote{{\it Key words and phrases}.  Branching Brownian motion, Gaussian traveling wave, fitness, evolution}

\section{Introduction}\label{intro_sec}

An important problem in evolutionary biology is to understand how the fitness of individuals in a population increases over time as a result of beneficial mutations.  Results in the biology and physics literature indicate that in large populations, if individuals acquire beneficial mutations at a constant rate, then the overall fitness level of the population increases at a constant rate, known as the rate of adaptation, while the empirical distribution of the fitness levels of individuals in the population becomes approximately Gaussian.  That is, the empirical distribution of the fitness levels of individuals in the population evolves over time like a Gaussian traveling wave.  The idea of modeling the fitness distribution by a traveling wave goes back at least to the work of Tsimring, Levine, and Kessler \cite{tlk96}.  Later works discussing the Gaussian shape for the traveling wave include \cite{beer07, brw08, df07, f13, nh13, psk10, rbw08, rc05}.

Although the idea that the fitness distribution evolves as a Gaussian traveling wave is well established in the biology and physics literature, the mathematically rigorous work on this problem has been considerably more limited. The main aim of this paper is to provide a first rigorous analysis in which a non-degenerate Gaussian traveling wave is observed in this context. Before we go into the details of our own results, we give a brief overview of the existing mathematical literature.

A standard mathematical model involves a population of fixed size $N$ in which each individual independently acquires beneficial mutations at the constant rate $\mu$.  Beneficial mutations increase an individual's fitness by $s$, so that an individual that has acquired $k$ beneficial mutations, which we call a type $k$ individual, has fitness $\max\{0, 1 + s(k - m(t))\}$, where $m(t)$ is the mean number of mutations of the individuals in the population at time $t$.  Each individual independently dies at rate one, and when an individual dies, the parent of the new individual that is born is chosen at random from the population with probability proportional to fitness.  A number of authors have studied models very similar to this one.  Yu, Etheridge, and Cuthbertson \cite{yec10} and Kelly \cite{mk1} obtained rigorous results concerning the rate of adaptation for a very similar model, but did not establish a Gaussian shape for the fitness distribution.  Durrett and Mayberry \cite{dm11} considered, for a closely related model, the case in which $s$ is constant and the mutation rate is $N^{-\alpha}$, where $0 < \alpha < 1$.  They rigorously established traveling wave behavior.  However, they considered mutation rates that are small enough that the number of distinct types present in the population at a typical time is a constant that does not tend to infinity with $N$, which means the traveling wave does not have a Gaussian shape.  Schweinsberg \cite{schI} considered slightly faster mutation rates, so that the mutation rate tends to zero more slowly than any power of $N$.  This work essentially made rigorous the heuristics developed by Desai and Fisher \cite{df07}.  For the range of parameter values considered in \cite{schI}, the traveling wave exhibits Gaussian-like tail behavior, in the sense that the logarithm of the ratio of the number of individuals with $\ell$ more mutations than average to the number of individuals with an average number of mutations is proportional to $-\ell^2$.  However, at a typical time, most individuals have the same number of mutations, which means that the empirical distribution of the fitnesses of individuals in the population is actually converging to a point mass, rather than to a Gaussian distribution.  Up to this point, as far as we know, the empirical distribution of the fitnesses of individuals in the population in this model has not been rigorously shown to converge to a Gaussian distribution for any range of values of the parameters $\mu$ and $s$.

For the fitness distribution to be approximately Gaussian, the mutation rate needs to be large enough that one type does not dominate the population at a typical time.  This corresponds to the high speed regime considered in \cite{f13}; see also \cite{gd13}.  We therefore consider a scenario in which the rate of beneficial mutations is large, but the additional selective benefit resulting from each mutation is small.  This is the idea behind the so-called infinitesimal model in quantitative genetics, which goes back to the early work of Fisher \cite{f18}.  See Barton, Etheridge, and V\'eber~\cite{bev17} for a recent mathematical treatment of the infinitesimal model and an extensive survey of the relevant biology literature.  They establish conditions under which the values of various quantitative traits within a family are approximately normally distributed, but emphasize that their results do not imply that the trait values across the entire population are approximately normally distributed.

If individuals acquire many mutations, each having a small effect on fitness, then the fitness of an individual over time will evolve like a continuous-time random walk, which, after being scaled to have mean zero, can be approximated by Brownian motion.  We will therefore consider a model in which the fitness of an individual moves over time according to Brownian motion.  We will allow the offspring of individuals to evolve independently, rather than imposing a fixed population size.  However, by adding a negative drift to the Brownian motion and choosing the initial conditions carefully, we can work in a nearly critical regime in which the number of individuals stays the same order of magnitude on the time scale of interest, rather than dying out quickly or growing exponentially.  Consequently, we believe that our results will be relevant for understanding populations with a fixed size, as we discuss briefly in Section~\ref{discrete_sec}.

\subsection{The Model}

The above considerations lead us to consider the following branching Brownian motion process, which is the model that we will study throughout the rest of the paper.  Because we aim to prove a limit theorem, we will consider a sequence of processes indexed by $n$.  We begin with some configuration of particles at time zero, which may depend on $n$.  Each particle independently moves according to one-dimensional Brownian motion with drift $-\rho_n$, where $\rho_n > 0$.  Also, any particle at the location $x$ independently dies at rate $d_n(x)$, and splits into two particles at rate $b_n(x)$, where
\begin{equation}\label{bndn}
b_n(x)-d_n(x) = \beta_n x
\end{equation}
for some $\beta_n > 0$. In particular, note that the birth and death rates are the same for particles at the origin. This model is very similar to the model studied by Neher and Hallatschek \cite{nh13}.  The main results of this paper are mathematically rigorous versions of some of the results in \cite{nh13}, and some of the results in the high speed regime in \cite{f13}.

As indicated above, we view this process as modeling a population undergoing selection.  With this interpretation, particles represent individuals in a population, and the position of the particle corresponds to the fitness level of the individual. 

\subsection{Main Results}

Before stating our main results, we will need to introduce some assumptions and some notation.  Given two sequences of positive real numbers $(a_n)_{n=1}^{\infty}$ and $(b_n)_{n=1}^{\infty}$, we will write $a_n \lesssim b_n$ if $a_n/b_n$ is bounded above by a positive constant, and $a_n \ll b_n$ if $\lim_{n \rightarrow \infty} a_n/b_n = 0$.  We will use the symbols $\gtrsim$ and $\gg$ likewise.  We also write $a_n \asymp b_n$ if $a_n/b_n$ is bounded both above and below by positive constants.  We write that $a_n$ is $O(1)$ if the sequence $(a_n)_{n=1}^{\infty}$ is bounded and $o(1)$ if $\lim_{n \rightarrow \infty} a_n = 0$.  We will also use $O(1)$ and $o(1)$ for random sequences that are uniformly bounded above by deterministic sequences that are $O(1)$ and $o(1)$ respectively.

We will make the crucial assumption that
\begin{equation}\label{A1}
\lim_{n \rightarrow \infty} \frac{\rho_n^3}{\beta_n} = \infty.
\end{equation}
We will also assume that 
\begin{equation}\label{A2}
\lim_{n \rightarrow \infty} \rho_n = 0,
\end{equation}
and that, in addition to (\ref{bndn}), there exists $\Delta\in(0,1)$ such that
\begin{equation}\label{A3}
d_n(x)\ge \Delta \hspace{4mm} \text{for all } x\in\R, n\in\N \hspace{4mm} \text{and} \hspace{4mm} b_n(x)\le 1/\Delta \hspace{4mm} \text{for all } x\le 1/\beta_n, n\in\N.
\end{equation}
The assumptions (\ref{A1}), (\ref{A2}), and (\ref{A3}) will be in effect throughout the rest of the paper, even when they are not explicitly mentioned.
Note that (\ref{bndn}) and (\ref{A3}) are satisfied, for example, if $d_n(x) = 1$ and $b_n(x) = 1 + \beta_n x$ for all $x \geq -1/\beta_n$, while $d_n(x) = -\beta_n x$ and $b_n(x) = 0$ for $x < -1/\beta_n$.

Beyond Section \ref{intro_sec}, to lighten notation, we will drop the subscripts and write $\rho$ and $\beta$ in place of $\rho_n$ and $\beta_n$.  However, it is important for the reader to keep in mind that these parameters do depend on $n$.

We will also need to consider the Airy function $$Ai(x) = \frac{1}{\pi} \int_0^{\infty} \cos \bigg( \frac{y^3}{3} + xy \bigg) \: dy.$$  The Airy function $Ai$ has an infinite sequence of zeros $(\gamma_k)_{k=1}^{\infty}$ which satisfy $ \dots < \gamma_2 < \gamma_1 < 0$.  The Airy function and particularly the quantity $\gamma_1$ will play an important role in what follows.  It is known (see table 9.9.1 in \cite{dlmf}) that to three decimal places,
\begin{equation}\label{zerovalue}
\gamma_1 \approx -2.338.
\end{equation}

We will let $N_n(t)$ denote the total number of particles in the system at time $t$, and we will let $X_{1,n}(t) \geq X_{2,n}(t) \geq \dots \geq X_{N_n(t),n}(t)$ denote the locations of the particles at time $t$. We imagine our system drawn with time on the vertical axis, so that the maximal particle is the \emph{right-most}. Let
\begin{equation}\label{Lndef}
L_n = \frac{\rho_n^2}{2 \beta_n} - (2 \beta_n)^{-1/3} \gamma_1.
\end{equation}
Note that $\rho_n^2/\beta_n \ll \beta_n^{-1/3}$ by (\ref{A1}), which means that $\rho_n^2/2 \beta_n$ is the dominant term in (\ref{Lndef}).
Our particles will generally be to the left of $L_n$, and we call the area near $L_n$ the \emph{right edge}.
Note that since $\gamma_1<0$, $L_n$ is to the right of $\rho_n^2/2\beta_n$. Define
\begin{equation}\label{Yntdef}
Y_n(t) = \sum_{i=1}^{N_n(t)} e^{\rho_n X_{i,n}(t)}
\end{equation}
and
\begin{equation}\label{Zntdef}
Z_n(t) = \sum_{i=1}^{N_n(t)} e^{\rho_n X_{i,n}(t)} Ai((2 \beta_n)^{1/3}(L_n - X_{i,n}(t)) + \gamma_1) \1_{\{X_{i,n}(t) < L_n\}}.
\end{equation}
While the form of $Z_n(t)$ may seem mysterious at this point, this turns out to be a natural measure of the ``size" of the process at time $t$.  It turns out that, if we modify the process by killing particles that reach $L_n$, then $(Z_n(t), t \geq 0)$ is a martingale.

In addition to the assumptions (\ref{A1}), (\ref{A2}), and (\ref{A3}) on the parameters, we will make two assumptions on the initial configuration of particles at time zero.  We will assume that for all $\eps > 0$, there is a $\delta > 0$ such that for sufficiently large $n$,
\begin{equation}\label{Zasm}
\P\bigg( \delta \cdot \frac{\beta_n^{1/3}}{\rho_n^3} e^{\rho_n L_n} \leq Z_n(0) \leq \frac{1}{\delta} \cdot \frac{\beta_n^{1/3}}{\rho_n^3} e^{\rho_n L_n} \bigg) > 1 - \eps.
\end{equation}
In other words, the sequence of random variables $(\rho_n^3 \beta_n^{-1/3} e^{-\rho_n L_n} Z_n(0))_{n=1}^{\infty}$ and its reciprocal sequence are tight. Since $\rho_n L_n \approx \rho_n^3/2\beta_n\to\infty$ by (\ref{A1}), we have $(\beta_n^{1/3}/\rho_n) e^{\rho_n L_n} \rightarrow \infty$.  It then follows that when (\ref{Zasm}) holds, $Z_n(0)$ is large because $1/\rho_n^2 \rightarrow \infty$ by (\ref{A2}).

We will also assume that 
\begin{equation}\label{Yasm}
\rho_n^2 e^{-\rho_n L_n} Y_n(0) \rightarrow_p 0,
\end{equation}
where $\rightarrow_p$ denotes convergence in probability as $n \rightarrow \infty$.  Roughly speaking, the condition (\ref{Yasm}) ensures that the contribution to $Z_n(t)$ for small values of $t$ will not be dominated by the descendants of a single particle at time zero, nor will it be dominated by particles that are far from $L_n$ at time zero. The two conditions \eqref{Zasm} and \eqref{Yasm} cannot be satisfied by a single particle at any location.  However, they are satisfied, for example, by letting $(u_n)_{n=1}^{\infty}$ be any sequence satisfying $\rho_n^{-1}\ll u_n\le \beta_n^{-1/3}$, and starting with $e^{\rho_n u_n}/u_n\rho_n^3$ particles all located at $L_n-u_n$.

We are now ready to state our main result, establishing the Gaussian shape for the distribution of particles on a suitable time scale.  Here and throughout the paper, $\delta_y$ denotes the unit mass at $y$, and $\Rightarrow$ denotes convergence in distribution for random elements of the Polish space of probability measures on $\R$ endowed with the weak topology.

\begin{Theo}\label{GaussThm}
Suppose the assumptions (\ref{A1}), (\ref{A2}), (\ref{A3}), (\ref{Zasm}) and (\ref{Yasm}) hold.  Suppose
\begin{equation}\label{rhobetagauss}
\frac{\rho_n^{2/3}}{\beta_n^{8/9}} \ll t_n - \frac{\rho_n}{\beta_n} \lesssim \frac{\rho_n}{\beta_n}.
\end{equation}
For $t > 0$, define the random probability measure
\begin{equation}\label{zetadef}
\zeta_n(t) = \frac{1}{N_n(t)} \sum_{i=1}^{N_n(t)} \delta_{X_{i,n}(t) \sqrt{\beta_n/\rho_n}}
\end{equation}
if $N_n(t) \geq 1$, and set $\zeta_n(t) = \delta_0$ if $N_n(t) = 0$.
Let $\mu$ be the standard normal distribution.  Then $\zeta_n(t_n) \Rightarrow \mu$ as $n \rightarrow \infty$.
\end{Theo}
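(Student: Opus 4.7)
The plan is to prove that $\int f\,d\zeta_n(t_n) \to_p \int f\,d\mu$ for every bounded continuous $f:\R\to\R$, which is equivalent to the stated weak convergence. Writing
\[
\int f\,d\zeta_n(t_n) = \frac{\sum_{i} f(X_{i,n}(t_n)\sqrt{\beta_n/\rho_n})}{N_n(t_n)},
\]
I treat the numerator and the denominator separately, as sums $\sum_i g(X_{i,n}(t_n))$ for $g(x)=f(x\sqrt{\beta_n/\rho_n})$ and $g\equiv 1$. For each such $g$ the two tasks are (i) a first-moment asymptotic for $\E[\sum_i g(X_{i,n}(t_n))\mid \F_0]$, and (ii) concentration $\sum_i g(X_{i,n}(t_n))/\E[\sum_i g(X_{i,n}(t_n))\mid\F_0] \to_p 1$. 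If both are accomplished then the ratio converges deterministically to $\int f(y\sqrt{\beta_n/\rho_n})\phi^*(y)\,dy/\int\phi^*(y)\,dy$ for an explicit density $\phi^*$, which must then be shown to tend to $\int f\,d\mu$.

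For the first moment, apply the many-to-one formula,
\[
\E\!\left[\sum_i g(X_{i,n}(t_n))\,\Big|\,\F_0\right] = \sum_{j} \E_{X_{j,n}(0)}\!\left[g(\xi_{t_n}) \exp\!\Bigl(\beta_n\!\int_0^{t_n}\!\xi_s\,ds\Bigr)\right],
\]
where $\xi$ is Brownian motion with drift $-\rho_n$. After Dirichlet killing at $L_n$, the conjugation $u = e^{\rho_n x}\psi$ reduces the Feynman-Kac semigroup to that of the self-adjoint operator $\tfrac12\partial^2 + \beta_n x$ on $(-\infty,L_n)$ with Dirichlet boundary, whose principal eigenfunction is $\psi(x) = Ai((2\beta_n)^{1/3}(L_n-x)+\gamma_1)$ at eigenvalue $0$ --- this is exactly why $Z_n(t)$ is a martingale. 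The spectral gap is of order $\beta_n^{2/3}$, and $\beta_n^{2/3} t_n \asymp \rho_n/\beta_n^{1/3} \to \infty$ by (\ref{A1}), so only the principal mode survives:
\[
\E\!\left[\sum_i g(X_{i,n}(t_n))\,\Big|\,\F_0\right] \sim c_n\, Z_n(0)\int_{-\infty}^{L_n} g(y)\,\phi^*(y)\,dy,
\]
where $\phi^*(y) = e^{-\rho_n y}\psi(y)$ is the adjoint eigenfunction and $c_n$ is $g$-independent. One must also show that lineages crossing $L_n$ within $[0,t_n]$ contribute negligibly, which uses the upper bound $t_n\lesssim \rho_n/\beta_n$ in (\ref{rhobetagauss}) and (\ref{Yasm}). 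To identify $\phi^*$ as asymptotically Gaussian on the relevant scale, insert the large-argument asymptotic $Ai(z) \sim (2\sqrt{\pi})^{-1} z^{-1/4} e^{-\tfrac23 z^{3/2}}$ and Taylor-expand $(L_n - y)^{3/2}$ about $y=0$: the linear term in $y$ cancels $e^{-\rho_n y}$ exactly, the quadratic term produces $\exp(-\beta_n y^2/(2\rho_n))$, and higher-order terms are $o(1)$ once $|y|=O(\sqrt{\rho_n/\beta_n})$, by (\ref{A1}). Hence $\phi^*$, rescaled by $\sqrt{\beta_n/\rho_n}$, converges to the standard normal density, and the $Z_n(0)$ and $c_n$ prefactors cancel in the ratio.

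The main obstacle is the concentration step. I would bound the conditional variance of $\sum_i g(X_{i,n}(t_n))$ by a many-to-two (spine) decomposition, integrating over the time of the last common ancestor of a sampled pair of particles and evaluating the expected squared Feynman-Kac weight of the two descendant branches. The difficulty is that $\exp(\beta_n\int \xi_s\,ds)$ blows up exponentially when a spine approaches $L_n$, so naive second-moment bounds diverge; the remedy is to restrict the sum to contributions from lineages that avoid a neighborhood of $L_n$ and to show, via a truncated-martingale argument for $Z_n$, that the excluded part is small in $L^1$. Assumption (\ref{Zasm}) controls the denominator from below (so $o(Z_n(0))$ errors in the unnormalized sums become $o(1)$ errors in the ratio), while (\ref{Yasm}) rules out configurations dominated by a single ancestor's descendants (such a contribution would scale like $Y_n(0)$ in $L^2$). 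Chebyshev's inequality then delivers the concentration, and combined with the first-moment analysis completes the proof of weak convergence to $\mu$.
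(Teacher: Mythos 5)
Your outline correctly identifies the key structural ingredients — Dirichlet killing at $L_n$, conjugation to a self-adjoint Airy operator with principal eigenvalue $0$ (so that $Z_n$ is a martingale), and the fact that the rescaled adjoint eigenfunction $e^{-\rho y}Ai((2\beta)^{1/3}(L-y)+\gamma_1)$ becomes Gaussian after Taylor expansion — and those computations check out. However, your central justification for the single-mode approximation is wrong, and the error is not merely technical. You argue that the spectral gap is $O(\beta^{2/3})$ and $\beta^{2/3}t_n\to\infty$, ``so only the principal mode survives.'' This ignores that the eigenfunctions $Ai(\gamma_k+z)$ \emph{grow} relative to $Ai(\gamma_1+z)$ at large argument: $Ai(\gamma_k+z)/Ai(\gamma_1+z)\sim e^{(\gamma_1-\gamma_k)z^{1/2}}$, which is large since $\gamma_1>\gamma_k$. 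For $y$ near the origin — exactly where the Gaussian lives — one has $z^{1/2}\approx\rho/(2\beta)^{1/3}$, and the competition between this spatial growth and the temporal decay $e^{-(\gamma_1-\gamma_k)2^{-1/3}\beta^{2/3}t}$ comes out to $e^{-(\gamma_1-\gamma_k)2^{-1/3}\beta^{2/3}(t-\rho/\beta)}$. At $t=\rho/\beta$ exactly these cancel and \emph{all modes contribute equally} in the bulk. The correct sufficient condition is $\beta^{2/3}(t_n-\rho/\beta)\to\infty$, which is precisely the lower bound in (\ref{rhobetagauss}) — but the point is that your argument as written would conclude the Gaussian shape at time $t_n=\rho/\beta$, which is false, and you would not discover the role of the lower bound $\rho^{2/3}/\beta^{8/9}$. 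Compare Lemma~\ref{dapprox4} and condition (\ref{dcondbig}), where this is made precise via Lemma~\ref{Airyratlem}.

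The paper sidesteps this delicacy by splitting into two stages, and that split also cures several other problems your one-shot argument encounters. In the first stage (up to an intermediate time $u_n$), the spectral expansion is used only for particles near $L_n$, where $L-y\lesssim\beta^{-1/3}$ and the single-mode approximation holds comfortably (Proposition~\ref{rtconfigprop}). In the second stage, the propagation from $u_n$ to $t_n$ is done with the \emph{unkilled} Gaussian heat kernel (Lemma~\ref{firstmom}), and the Gaussian shape at $y\approx 0$ falls out of an explicit computation, not from the spectral formula. This also resolves two problems you do not address: (i) initial particles far from $L_n$, for which the error in the spectral expansion is not small — the paper quarantines them using (\ref{Yasm}) together with Lemma~\ref{lowxylem}; and (ii) the variance bound. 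Your sketch of the many-to-two second moment with ``truncation near $L_n$'' is plausible in spirit, but the paper's Lemma~\ref{mainvarg} is stated only for propagation times $t\le\rho/\beta$, so it is applied starting from $u_n$, not time $0$. Your one-stage argument would require an analogous bound valid up to $t_n\gtrsim\rho/\beta$, which is not automatic.

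Finally, the statement of Theorem~\ref{GaussThm} allows $t_n-\rho_n/\beta_n$ as large as a constant multiple of $\rho_n/\beta_n$. The paper handles this via a subsequence argument: pass to a subsequence where $\beta_nt_n/\rho_n-1\to\tau$, restart the process at an intermediate time $u_n$ with $t_n-u_n$ satisfying the tighter condition (\ref{tcond2}), and invoke Proposition~\ref{YZmain} to verify that (\ref{Zasm}) and (\ref{Yasm}) propagate to time $u_n$. Your proposal makes no mention of this reduction, and without it the upper bound $t_n\lesssim\rho_n/\beta_n$ in (\ref{rhobetagauss}) is not used in any essential way, which should be a warning sign.
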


This result shows that the empirical distribution of the particle locations shortly after time $\rho_n/\beta_n$ is approximately Gaussian, and stays that way at least for times of the order $\rho_n/\beta_n$.  We will see later that it takes time approximately $\rho_n/\beta_n$ for descendants of particles near $L_n$ to start to reach the origin, which is the reason why the Gaussian shape does not show up until after a waiting time of approximately $\rho_n/\beta_n$.

Note that this Gaussian distribution has mean zero.  However, if we considered a translation of the model in which particles move according to branching Brownian motion with no drift, with time-dependent birth and death rates satisfying $b(x,t)-d(x,t)=\beta_n(x - \rho_n t)$, then the Gaussian particle distribution at time $t_n$ would be centered at $\rho_n t_n$, giving rise to the Gaussian traveling wave behavior discussed above.  Therefore, the drift parameter $\rho_n$ can be interpreted in this model as the speed at which the traveling wave advances.  Because the fitness of the population, as measured by the branching rate, increases by $\beta_n$ whenever the traveling wave advances by one unit, the fitness of the population increases over time at rate $v_n = \beta_n \rho_n$.

Note also that from the scaling in (\ref{zetadef}), the empirical distribution of particles has standard deviation approximately $\sqrt{\rho_n/\beta_n}$, and therefore has variance $\rho_n/\beta_n$.  Because the fitness of a particle in increases by $\beta_n$ when the position of a particle increases by one, it follows that the variance of the fitness distribution is $\sigma_n^2 = \beta_n^2 (\rho_n/\beta_n) = v_n$, in agreement with Fisher's Fundamental Theorem of Natural Selection~\cite{f30}.

Because, as we will see, some particles with unusually high fitness account for nearly all of the offspring that are still alive at a time $\rho_n/\beta_n$ in the future, it is also of interest to understand the empirical distribution of particles close to the right edge.  To do this, we consider an empirical measure in which a particle at $x$ is weighted by $e^{\rho_n x}$.  This leads to the following result.

\begin{Theo}\label{EdgeThm}
Suppose the assumptions (\ref{A1}), (\ref{A2}), (\ref{A3}), (\ref{Zasm}) and (\ref{Yasm}) hold.  Suppose
\begin{equation}\label{tcond}
\beta_n^{-2/3} \bigg( \log \bigg(\frac{\rho_n}{\beta_n^{1/3}} \bigg) \bigg)^{1/3} \ll t_n \lesssim \frac{\rho_n}{\beta_n}.
\end{equation}
For $t > 0$, define the random probability measure
\begin{equation}\label{xidef}
\xi_n(t) = \frac{1}{Y_n(t)} \sum_{i=1}^{N_n(t)} e^{\rho_n X_{i,n}(t)} \delta_{(2 \beta_n)^{1/3}(L_n - X_{i,n}(t))}
\end{equation}
if $N_n(t) \geq 1$, and set $\xi_n(t) = \delta_0$ if $N_n(t) = 0$.
Let $\nu$ be the probability measure on $(0,\infty)$ with probability density function
\begin{equation}\label{hdef}
h(y) = \frac{Ai(y + \gamma_1)}{\int_0^{\infty} Ai(z + \gamma_1) \: dz}.
\end{equation}
Then $\xi_n(t_n) \Rightarrow \nu$.
\end{Theo}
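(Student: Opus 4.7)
My plan is to reduce the convergence $\xi_n(t_n)\Rightarrow\nu$ to first- and second-moment calculations on an empirical measure weighted by $e^{\rho_n X_{i,n}}$, using the spectral theory of the Feynman--Kac semigroup attached to the process by the many-to-one formula.  For a bounded continuous $f:[0,\infty)\to\R$, set
$$F_n(t) \;=\; \sum_{i=1}^{N_n(t)} e^{\rho_n X_{i,n}(t)}\, f\bigl((2\beta_n)^{1/3}(L_n - X_{i,n}(t))\bigr)\1_{\{X_{i,n}(t) < L_n\}}.$$
Then $\langle \xi_n(t_n), f\rangle$ agrees with $F_n(t_n)/Y_n(t_n)$ up to the mass of particles at or above $L_n$, which I would show separately is $o_p(Y_n(t_n))$ using assumption \eqref{Yasm} and the exponential rarity of Brownian paths exceeding $L_n$.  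The target then reduces to $F_n(t_n)/Y_n(t_n)\rightarrow_p \int_0^\infty f\,h$.

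\textbf{First moment via Airy spectral theory.}  The many-to-one formula (with net growth rate $b_n-d_n=\beta_n x$) followed by a Girsanov shift removing the drift $-\rho_n$ gives
$$\E[F_n(t)\mid\F_0] \;=\; e^{-\rho_n^2 t/2}\sum_{j} e^{\rho_n X_{j,n}(0)}\,E_{X_{j,n}(0)}\!\left[f\bigl((2\beta_n)^{1/3}(L_n-B_t)\bigr)\1_{\{\tau_{L_n}>t\}}\,\exp\!\Bigl(\beta_n\!\int_0^t\! B_s\,ds\Bigr)\right],$$
where $B$ is standard Brownian motion under $E$ and $\tau_{L_n}$ its first hitting time of $L_n$.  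The inner expectation is the Feynman--Kac semigroup of $H=\tfrac12\partial_x^2+\beta_n x$, killed at $L_n$.  Passing to $y=(2\beta_n)^{1/3}(L_n-x)$ turns $H$ into $\tfrac12(2\beta_n)^{2/3}(\partial_y^2-y)+\beta_n L_n$, an Airy operator on $(0,\infty)$ with Dirichlet boundary at $0$; its eigenfunctions are $Ai(y+\gamma_k)$, and the precise choice of $L_n$ makes its top eigenvalue exactly $\rho_n^2/2$, which is the same calculation underlying the martingale property of $Z_n$.  The spectral gap is of order $\beta_n^{2/3}$, so for $t_n\gg\beta_n^{-2/3}$ the semigroup collapses onto its top eigenprojection and
$$\E[F_n(t_n)\mid\F_0]\;\approx\;C\,Z_n(0)\int_0^\infty f(y)\,Ai(y+\gamma_1)\,dy$$
for an explicit constant $C$; the same reasoning with $f\equiv 1$ gives $\E[Y_n(t_n)\mid\F_0]\approx C\,Z_n(0)\int_0^\infty Ai(z+\gamma_1)\,dz$, and the ratio is $\int_0^\infty f\,h$.

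\textbf{Second moment, the main obstacle.}  To upgrade the first-moment calculation to convergence in probability, I would bound $\Var(F_n(t_n)\mid\F_0)$ and $\Var(Y_n(t_n)\mid\F_0)$ by a many-to-two argument: conditioning on the branching time $s$ of the last common ancestor of a pair of particles, the second moment equals an integral over $s$ of the Feynman--Kac weight at the ancestor times the squared semigroup action on $f$ over $[s,t_n]$.  The dangerous contributions come from pairs whose common ancestor sits near the right edge, and the lower bound on $t_n$ in \eqref{tcond} is what tames them: the $\beta_n^{-2/3}$ factor supplies the spectral-gap decay $e^{-c\beta_n^{2/3}(t_n-s)}$ on each branch, while the $(\log(\rho_n/\beta_n^{1/3}))^{1/3}$ absorbs the logarithmic correction arising from balancing the Airy decay against the large exponential weight $e^{\rho_n x}$ at the ancestor.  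The resulting bound $\Var(F_n(t_n)\mid\F_0)\ll\E[F_n(t_n)\mid\F_0]^2$, combined with the two-sided tightness of $Z_n(0)$ from \eqref{Zasm}, yields $F_n(t_n)/Y_n(t_n)\rightarrow_p\int_0^\infty f\,h$ for every bounded continuous $f$, and hence $\xi_n(t_n)\Rightarrow\nu$.
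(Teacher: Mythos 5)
Your first-moment plan is sound and is essentially the paper's: the killed density $p_t^{L_n}(x,y)$ does have exactly the Airy eigenfunction expansion you describe (equation~\eqref{ptKeq}), and the choice of $L_n$ makes the top eigenvalue zero for the drift-shifted, Feynman--Kac weighted semigroup. The second-moment scheme (many-to-two, conditioning on the branching time of the last common ancestor, killing at a level) is also the paper's strategy in spirit. However, there is a real gap in your reduction step, precisely at the upper end of the allowed time range.

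You assert that the contribution to $F_n(t_n)$ and $Y_n(t_n)$ from particles whose trajectories cross $L_n$ before time $t_n$ is $o_p(Y_n(t_n))$, ``using \eqref{Yasm} and the exponential rarity of Brownian paths exceeding $L_n$.'' This is the step that breaks when $t_n\asymp\rho_n/\beta_n$. Under \eqref{Zasm} the expected number of particles hitting $L_n$ by time $t_n$ is of order $t_n\beta_n^{2/3}e^{-\rho_n L_n}Z_n(0)\asymp \beta_n t_n/\rho_n$, which is $\Theta(1)$ rather than $o(1)$ when $t_n\asymp\rho_n/\beta_n$; and each such particle carries weight $e^{\rho_n L_n}$, which is enormous compared to the normalization $Y_n(t_n)\asymp\beta_n^{1/3}\rho_n^{-3}e^{\rho_n L_n}$. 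Worse, because of the heavy-tailed nature of $\sum_i e^{\rho_n X_{i,n}}$, the \emph{expected} contribution of a single ancestor at $L_n$ to $Y_n(t_n)$ over a time window of order $\rho_n/\beta_n$ is $\exp(2\rho_n L_n-\rho_n^3/3\beta_n)\gg Y_n(t_n)$, so a naive conditional Markov bound cannot close the argument. The paper's proof therefore only establishes the edge-measure convergence directly for $t_n\ll\rho_n/\beta_n$ (Proposition~\ref{rtconfigprop}), where $\beta_n t_n/\rho_n\to 0$ and the crossing particles can be ruled out in probability. To pass from $\ll$ to $\lesssim$ the paper proves a separate propagation result (Proposition~\ref{YZmain}): after evolving for a time of order $\rho_n/\beta_n$, the random configuration again satisfies \eqref{Zasm} and \eqref{Yasm} with high probability. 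This requires a genuinely different technical idea --- a moving right barrier $\Lambda_A(s)$ together with a sequence of fixed truncation levels --- to control particles that escape past $L_A$. Theorem~\ref{EdgeThm} is then obtained by a subsequence argument: split $t_n=u_n+s_n$ with $s_n$ short, apply Proposition~\ref{YZmain} at time $u_n$, then use the Markov property and Proposition~\ref{rtconfigprop} for the last short piece. This two-stage reduction is absent from your outline, and without it (or some substitute), the proposal cannot reach the full range $t_n\lesssim\rho_n/\beta_n$.

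A smaller inaccuracy: the role of the logarithmic factor in the lower bound of \eqref{tcond} is not quite to ``balance the Airy decay against $e^{\rho_n x}$ at the ancestor.'' It is there to beat the ratio $Y_n(0)/Z_n(0)\lesssim\rho_n/\beta_n^{1/3}$ against the Gaussian transition cost $e^{-c\beta_n^2 t_n^3}$ incurred by initial particles far to the left of $L_n$ (see~\eqref{2nbound} and Lemma~\ref{lowxylem}); the condition $\beta_n^2 t_n^3\gg\log(\rho_n/\beta_n^{1/3})$ makes that product tend to zero. This is a refinement you would need to work out, but it is a matter of careful bookkeeping rather than a structural problem.
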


Note that $\beta_n^{-2/3} \ll \rho_n/\beta_n$ by (\ref{A1}), so the particles near the right edge reach this limiting configuration on a much faster time scale than the time it takes for the Gaussian shape to emerge near the origin.  Note also that Theorem \ref{EdgeThm} describes particles that are within $O(\beta_n^{-1/3})$ of $L_n$, which in turn is within $O(\beta_n^{-1/3})$ of $\rho_n^2/2 \beta_n$.  Because $\rho_n^2/2 \beta_n \gg \sqrt{\rho_n/\beta_n}$ by (\ref{A1}), it follows that these particles near the right edge whose configuration is described by Theorem \ref{EdgeThm} are far away from the bulk of the Gaussian distribution of particles described by Theorem \ref{GaussThm}.

The appearance of the Airy function here is not a surprise.  The Airy function appeared in the early work on traveling waves by Tsimring, Levine, and Kessler \cite{tlk96}.  It also arises in the work of Neher and Hallatschek \cite{nh13}, who studied essentially the same model that we are considering in this paper, as well as in \cite{ckl05}, where one of the equations that is central to the work of Neher and Hallatschek \cite{nh13} had previously been studied.  Fisher \cite{f13} arrived at an expression analogous to (\ref{Lndef}) involving the largest zero of the Airy function for the difference in fitness between the fittest individual and an individual of average fitness (see equation (50) of \cite{f13}).  The Airy function also arises in the expression for the position of the right-most particle in branching Brownian motion with inhomogeneous variance, as shown in \cite{mz16}.

\subsection{A heuristic analysis based on large deviations}\label{ldsec}

Branching Brownian motion with an inhomogeneous branching rate was also studied in \cite{bbhhr, jhsh09}, where the authors considered the case in which a particle at $x$ branches at a rate proportional to $|x|^p$, where $0 < p < 2$.  The techniques of proof that we will use in the present paper are quite different from those used in \cite{bbhhr, jhsh09}.  However, while the large deviations techniques used in \cite{bbhhr} are not sufficiently precise to prove the main results of this paper, a heuristic calculation based on these techniques provides insight into the behavior of the process and helps to explain the motivation for our proof strategy.  We therefore summarize this calculation here, even though it is not logically necessary for understanding the rest of the paper.

Let $T$ be a large time, and let $f: [0, T] \rightarrow \R$.  According to results in \cite{bbhhr}, if the process starts with one particle at $f(0)$, then the expected number of particles that stay close to the function $f$ through time $T$ can be approximated by
\begin{equation}\label{mainexp}
\exp \bigg( \int_0^T \Big(\beta_n f(u) - \frac{1}{2} (f'(u) + \rho_n)^2\Big) \: du \bigg).
\end{equation}
This is because the birth rate minus the death rate for particles that do manage to follow near $f$ will be approximately $\beta_n f(u)$ at time $u$, and the probability that a Brownian motion with drift $-\rho_n$ manages to follow near $f$ is roughly $\exp(-\frac{1}{2}\int_0^T (f'(u)+\rho_n)^2\:du)$ by Schilder's theorem.

If the process starts with one particle, then it is possible that the process will die out almost immediately.  However, assuming this does not happen, the actual number of such particles will be reasonably omparable to the expected number provided that the integral in (\ref{mainexp}), when evaluated from $0$ to $t$, is nonnegative for all $t \in [0, T]$. Otherwise, at the point that the integral becomes negative, the expected number of particles will be exponentially small and Markov's inequality entails that with high probability no particles will manage to follow the trajectory.

Suppose the function $f$ is constant on $[0, T]$.  Then the integrand in (\ref{mainexp}) is positive if $f(u) > \rho_n^2/2 \beta_n$, suggesting that if we begin with one particle above $\rho_n^2/2 \beta_n$, the number of descendants of this particle will grow exponentially.  The integrand in (\ref{mainexp}) is negative if $f(u) > \rho_n^2/2 \beta_n$, suggesting that if we begin with one particle below $\rho^2/2 \beta$, then no particle can remain close to this level.  An equilibrium is reached when $f(u) = \rho_n^2/2\beta_n$ for all $u \in [0, T]$, in which case the integrand in (\ref{mainexp}) is zero.  Therefore, if we start with one particle at the position $\rho_n^2/2 \beta_n$ at time zero (and the descendants of this particle do not die out quickly), then the right-most particle is likely to stay near the position $\rho_n^2/2 \beta_n$.  This calculation explains why we defined $L_n$ to be close to $\rho_n^2/2 \beta_n$.

We now consider the trajectory $f_z$ followed by particles that are near $z$ at time $T$, starting from one particle at the position $\rho_n^2/2 \beta_n$ at time zero.  According to Theorem 7 of \cite{bbhhr}, these particles initially stay close to their initial position, so we have $f_z(t) = \rho_n^2/2 \beta_n$ for $t \in [0, t_z]$ for some $t_z$.  Then for $u \in [t_z, T]$, equation (18) of \cite{bbhhr} implies that $f_z''(u) = - \beta_n$, which means $f_z(u) = a + bu - (\beta_n/2) u^2$ for $u \in [t_z, T]$, with the conditions $f_z(T) = z$, $f_z(t_z) = \rho_n^2/2\beta_n$, and $f_z'(t_z) = 0$, for some real numbers $a$ and $b$.  Solving, we get
\begin{equation}
f_z(u) = \frac{\rho_n^2}{2 \beta_n} - \frac{\beta_n}{2} (u - t_z)^2, \hspace{.5in}
t_z = T - \sqrt{\frac{2}{\beta_n} \bigg(\frac{\rho_n^2}{2 \beta_n} - z \bigg)} \label{tz}.
\end{equation}
This means that the number of particles near $z$ at time $T$ is approximately $\exp(g(z))$, where
\begin{equation}\label{gdef}
g(z) = \int_{t_z}^T \Big( \beta_n f_z(u) - \frac{1}{2}(f_z'(u) + \rho_n)^2 \Big) \: du = \frac{\rho_n^3}{2 \beta_n} - \rho_n z - \frac{2 \sqrt{2 \beta_n}}{3} \bigg( \frac{\rho_n^2}{2 \beta_n} - z \bigg)^{3/2}.
\end{equation}
We can then calculate $$g(0) = \frac{\rho_n^3}{6 \beta_n}, \hspace{.2in} g'(0) = 0, \hspace{.2in}g''(0) = - \frac{\beta_n}{\rho_n},$$
which means that for small $z$, a Taylor expansion gives
\begin{equation}\label{gapprox}
g(z) \approx \frac{\rho_n^3}{6 \beta_n} - \frac{\beta_n z^2}{2 \rho_n}.
\end{equation}
We then see that at time $T$, the empirical distribution of particles should be approximately normal with mean zero and variance $\rho_n/\beta_n$, consistent with Theorem \ref{GaussThm}.  For this reasoning to be valid, the standard deviation $\sqrt{\rho_n/\beta_n}$ needs to be much smaller than the distance between zero and the right-most particle, which is $\rho_n^2/2\beta_n$.  This is indeed the case when (\ref{A1}) holds.
Note also that $T - t_0 = \rho_n/\beta_n$, which means that it takes time approximately $\rho_n/\beta_n$ for the descendants of particles near $\rho_n^2/2 \beta_n$ to concentrate near the origin.  This explains why, in Theorem \ref{GaussThm}, the Gaussian shape emerges only after the process has evolved for time $\rho_n/\beta_n$.

Finally, note that although the bulk of the distribution of particles is Gaussian, we see different behavior near the right edge.  In particular, writing $y = \rho_n^2/2 \beta_n - z$, from \eqref{gdef} we have $$g(z) = \rho_n y - \frac{2 \sqrt{2 \beta_n}}{3} y^{3/2},$$ and since the first term dominates for small $y$, we see an exponential decay of rate $\rho_n$ for the particle density near the right edge.  This is partially explained by Theorem \ref{EdgeThm}, which describes the configuration of particles that are within a distance of order $\beta_n^{-1/3}$ from the right edge at $L_n$.

\subsection{Additional connections to previous work}

Branching Brownian motion has previously been used to model populations undergoing selection in \cite{bbs, bdmm1, bdmm2, dfpsl19, m16}.  However, in these works, it was assumed that the branching rate of a particle does not depend on the position of the particle.  Instead, to model selection, particles are killed when they drift too far to the left, that is, when their fitness gets too low.  This model leads to substantially different behavior.  The empirical distribution of particles is not approximately Gaussian but rather most particles end up close to the left edge.  For example, for the model studied in \cite{bbs} in which particles are killed when they reach the origin, when the system has $N$ particles that are in a nearly stable configuration, the density of particles near $y$ will be roughly proportional to $e^{-\sqrt{2} y} \sin(\sqrt{2} \pi y/\log N)$.  See \cite{bbs3} for a precise formulation and proof of this result.  Nevertheless, the techniques of proof used in the present paper are very similar to those used in \cite{bbs, bbs3}.

The main results of \cite{bbs} state that when the branching rate is constant but particles are killed upon reaching the origin, under suitable initial conditions, the number of particles over time behaves like Neveu's continuous-state branching process, while the genealogy of the particles can be described by the Bolthausen-Sznitman coalescent.  The result concerning the genealogy of the particles had previously been predicted in \cite{bdmm1, bdmm2}.  We believe that similar results should hold for the process studied in the present paper.  Indeed, \eqref{zmeaneq} and Lemma \ref{2momprop} below closely resemble Lemmas 11 and 12 of \cite{bbs} respectively, which are two of the key steps in proving the convergence to Neveu's continuous-state branching process.  Also, it was established in \cite{nh13} by nonrigorous methods that the genealogy of the particles for the process studied in the present paper should be described by the Bolthausen-Sznitman coalescent, following an initial time period in which coalescence does not occur because particles sampled from the bulk of the distribution at time $t$ will most likely be descended from distinct ancestors near the right edge of the distribution at time $t - \rho/\beta$.  However, in the present paper, we focus on establishing the Gaussian shape for the empirical distribution of particles, and we defer consideration of the genealogy of the particles to a future work.

Beckman \cite{beck19} also considered branching Brownian motion in which the branching rate depends on the position of the particle.  She assumed that at time zero, there are $N$ particles placed independently according to some density, and established a hydrodynamic limit for the evolution of the empirical distribution of particles over time as $N \rightarrow \infty$.  An important difference between the work in \cite{beck19} and the present paper is that in \cite{beck19}, time is not rescaled, so the results essentially pertain to what happens at a fixed time $t$ in the limit as $N \rightarrow \infty$, whereas here we consider times of the order $\rho/\beta$, which tends to infinity.  Also, while the work in \cite{beck19} was likewise motivated by the consideration of evolving populations, the results in \cite{beck19} have been established only for the case in which the branching rate of the particles is a bounded function of the position.  Groisman, Jonckheere, and Martinez \cite{gjm19} likewise obtained a hydrodynamic limit for a model in which, at the time of a branching event, two particles are chosen at random and the particle on the left jumps to the location of the particle on the right, effectively giving a higher branching rate to particles further to the right.

\subsection{Implications for discrete population models}\label{discrete_sec}

We briefly return here to the discrete population model mentioned at the beginning of the paper, in which there is a fixed population of size $N$ and, at rate $\mu_N$, each individual acquires beneficial mutations that increase the individual's fitness by $s_N$.  As noted above, we believe that our results may provide non-rigorous insight into the behavior of this discrete model.  In this section, we explain the correspondence that we expect should hold between the discrete population model and the branching Brownian motion model. We emphasize, however, that the results in this paper do not rigorously establish any results for the discrete population model.  Extending the results in this way is expected to require overcoming considerable technical obstacles, and we consider this to be an important topic for future work. 

In the discrete model, the evolution of an individual's fitness over time is a continuous-time random walk, which evolves by making steps of size $s_N$ at rate $\mu_N$. If $\mu_N$ is sufficiently large and the selective advantage $s_N$ resulting from an individual mutation is sufficiently small, then after suitable scaling, it should therefore be possible to approximate the evolution of an individual's fitness over time by Brownian motion with drift. We therefore expect that, if the parameters are tuned in the right way, then our branching Brownian motion model should give a good approximation to the discrete model. Below we show how to choose the parameters to match with the discrete model, and we then expect that similar results to those that we establish for the branching Brownian motion model should also be true for the discrete model. In particular, we expect the fitness distribution of the population to evolve like a Gaussian traveling wave. 

Furthermore, because the convergence of random walks to Brownian motion is not sensitive to the step size distribution of the random walk (assuming finiteness of the second moment), we believe that this correspondence should extend to discrete models in which the fitness change resulting from a mutation is random, and mutations could be deleterious as well as beneficial.  See \cite{f13} and \cite{grbhd12} for detailed nonrigorous work on the evolution of populations in which mutations have a random effect on fitness.

Recall that although the number of particles varies over time in the branching Brownian motion model, the parameters and initial conditions can be chosen so that the size of the population stays the same order of magnitude on the time scale of interest.  We will therefore compare the discrete population model with $N$ individuals to branching Brownian motion with the parameters indexed by $N$ and chosen so that the number of particles is within a constant multiple of $N$.  To see how the parameters match up, note that in the discrete population model, the standard deviation of the number of mutations that an individual gets in one time unit is $\sqrt{\mu_N}$, so the standard deviation of the fitness change of an individual in one time unit is $s_N \sqrt{\mu_N}$.  Because the standard deviation of the position of a Brownian motion with drift after one time unit equals one, the standard deviation of the fitness change of a particle in one time unit in the branching Brownian motion model is $\beta_N$.  This leads to the correspondence
\begin{equation}\label{betadef}
\beta_N = s_N \sqrt{\mu_N}.
\end{equation}
Furthermore, (\ref{mainNn}) below states that for the branching Brownian motion model, if (\ref{Zasm}) and (\ref{Yasm}) hold, the number of particles in the system at later times is of the order $\beta_N^{1/3} \rho_N^{-3} e^{\rho_N L_N} e^{-\rho_N^3/3 \beta_N}$, which suggests the correspondence
\begin{equation}\label{rhodef}
N = \frac{\beta_N^{1/3}}{\rho_N^3} \exp \bigg( \frac{\rho_N^3}{6 \beta_N} - \rho_N (2 \beta_N)^{-1/3} \gamma_1\bigg).
\end{equation}
That is, given the biological parameters $N$, $\mu_N$, and $s_N$, to model the population using branching Brownian motion, we can choose $\beta_N$ to satisfy (\ref{betadef}) and then choose the drift $\rho_N$ so that (\ref{rhodef}) holds.

If (\ref{rhodef}) holds, then the assumption (\ref{A1}) is equivalent to the condition that $N \gg \beta_N^{-2/3}$, which if (\ref{betadef}) holds is equivalent to the condition that
\begin{equation}\label{biocond}
N^3 \mu_N s_N^2 \rightarrow \infty.
\end{equation}
Note that (\ref{biocond}) fails to hold when $\mu_N$ and $s_N$ are both of the order $1/N$, in which case the population can be studied using a classical diffusion approximation.  The quantity $N^3 \mu_N s_N^2$ also figures prominently in the work of Good, Walczak, Neher, and Desai \cite{gwnd}.  They considered the case in which $N \mu_N \rightarrow \infty$, $N s_N \rightarrow 0$, and $N^3 \mu_N s_N^2 \rightarrow c \in (0, \infty)$, which led to what they called the ``fine-grained coalescent".  Their parameter regime would correspond to our model with $\rho_N^3/\beta_N \rightarrow c \in (0, \infty)$, which entails weaker selection than what we consider in this paper.

To prove results rigorously for the discrete population model, it would be necessary to extend the results in this paper for branching Brownian motion to the case of branching random walks, in which the mutations would correspond to random walk steps.  Furthermore, it would be necessary to adapt the analysis to the case of a model of fixed population size, which presents technical challenges because individuals in the population no longer evolve independently.  As noted in \cite{f13}, this will lead to some complicated feedback between the behavior of the fittest individuals and the behavior of the bulk of the population.  Similar technical obstacles were overcome successfully in \cite{m16} by Maillard, who showed that many of the ideas developed in \cite{bbs} for branching Brownian motion with particles killed at the origin could be carried over to the so-called $N$-BBM process, in which the left-most particle is killed at the time of a branching event to keep the number of particles fixed.  We do not pursue these matters further in this paper.

\subsection{Index of notation}

Below is an index of some of the notation that is used repeatedly throughout the paper.

\begin{longtable}{ll}
$A$ & Constant used to adjust where particles are killed.  See (\ref{LAdef}). \\
$b_n(x)$ & Rate at which a particle at $x$ splits into two particles. \\
$d_n(x)$ & Death rate for a particle at $x$. \\
$h$ & Density of limiting particle configuration near right edge.  See (\ref{hdef}). \\
$H_A(t)$ & Defined to equal $L_A - \beta t^2/9$.  See (\ref{HAdef}). \\
$K_A(t)$ & Defined to equal $L_A - \beta t^2/66$.  See (\ref{Kdef}). \\
$\ell(t)$ & Defined to equal $\beta t^2/33$.  See (\ref{Kdef}). \\
$L = L_N$ & Position at which particles are often killed, defined by $L = \rho^2/2 \beta - (2 \beta)^{-1/3} \gamma_1$. \\
$L_A$ & Defined by $L_A = L - A/\rho$.  See (\ref{LAdef}). \\
$N_n(t)$ & Number of particles at time $t$. \\
$p_t(x,y)$ & Density for process started with one particle at $x$ at time zero. See (\ref{ptxy}). \\
$p_t^{\ell}(x,y)$ & Density for the process if particles are killed at $\ell$. \\
$q_t(x,y)$ & Density for the process if the drift is removed. \\
$r_x^{\ell}(u,v)$ & Expected number of particles killed at $\ell$ between times $u$ and $v$. \\
${\tilde r}_x^{\ell}(t)$ & Rate at which particles hit $\ell$ at time $t$. \\
$t_{x,A}$ & Time defined in (\ref{txdef}). \\
$X_{i,n}(t)$ & Location of the $i$th particle at time $t$, when particles are ordered by position. \\
$Y_n(t)$ & Weighted sum in which a particle at $x$ contributes $e^{\rho x}$.  See (\ref{Yntdef}). \\
$Y_{n,A}^*(t)$ & Similar to $Y_n(t)$ but counting only particles that stay below $L_A$.  See (\ref{Ynstardef}). \\
$Z_n(t)$ & Measure of the ``size" of the process at time $t$, defined in (\ref{Zntdef}). \\
$Z_{n,A}(t)$ & Similar to $Z_n(t)$ but counting only particles below $L_A$ at time $t$.  See (\ref{ZnAdef}). \\
$Z_{n,A}^*(t)$ & Similar to $Z_{n,A}(t)$ but counting only particles that stay below $L_A$.  See (\ref{Znstardef}). \\
$z_{n,A}(x)$ & Contribution to $Z_{n,A}(t)$ from one particle at $x$.  See (\ref{znAdef}). \\
$\alpha(x)$ & Defined to be $Ai((2 \beta)^{1/3}x + \gamma_1)$. \\
$\beta = \beta_n$ & Selection parameter, defined so that $b_n(x) - d_n(x) = \beta_n x$. \\
$\gamma_k$ & $k$th-largest zero of the Airy function $Ai$. \\
$\zeta_n(t)$ & Scaled empirical distribution of particle locations at time $t$, defined in (\ref{zetadef}). \\
$\xi_n(t)$ & Distribution of mass at time $t$, when a particle at $x$ contributes $e^{\rho x}$.  See (\ref{xidef}). \\
$\rho = \rho_n$ & The Brownian particles have a drift of $-\rho$. \\
\end{longtable}

\section{Outline of the Proofs}

The proofs of Theorems \ref{GaussThm} and \ref{EdgeThm} rely on a combination of first moment estimates based on the many-to-one lemma, second moment estimates to control fluctuations, and careful truncation arguments.  We record in this section some of the intermediate results that are important for the argument and defer the more technical proofs until later sections.

We first introduce here some more notation. As mentioned in Section \ref{intro_sec}, from now on we will drop the subscript $n$ from much of our notation, for example writing $\rho$ and $\beta$ in place of $\rho_n$ and $\beta_n$, and writing $L$ in place of $L_n$.  We emphasize again that is important for the reader to keep in mind that these parameters do depend on $n$. We have only excluded the subscripts to lighten the burden of notation.

When the initial configuration consists of a single particle at the location $x$, we denote probabilities and expectations by $\P_x$ and $\E_x$. 
For real numbers $A$, we define
\begin{equation}\label{LAdef}
L_A = \frac{\rho^2}{2 \beta} - (2 \beta)^{-1/3} \gamma_1 - \frac{A}{\rho}.
\end{equation}
Note that $L_A$ depends on $n$, although again we omit the subscript.  Note also that $L_0 = L$. The terms in $L_A$ are listed in order of size: the dominant term remains $\rho^2/2\beta$, and there is an adjustment of order $\beta^{-1/3}$ as in $L$; we will see why $L$ has this form later, in \eqref{ptKeq}. The difference between $L_A$ and $L$ is only in the term of smallest order, $A/\rho$.

We also introduce the shorthand notation 
\begin{equation}\label{alphadef}
\alpha(x) = Ai((2 \beta)^{1/3} x + \gamma_1).
\end{equation}
For $A \in \R$ and $x \in \R$, we define
\begin{equation}\label{znAdef}
z_{n,A}(x) = e^{\rho x} Ai((2 \beta)^{1/3}(L_A - x) + \gamma_1)\1_{\{x < L_A\}} = e^{\rho x} \alpha(L_A - x)\1_{\{x < L_A\}}.
\end{equation}
For $A \in \R$ and $t \geq 0$, we define 
\begin{equation}\label{ZnAdef}
Z_{n,A}(t) = \sum_{i=1}^{N_n(t)} e^{\rho X_{i,n}(t)} Ai((2 \beta)^{1/3}(L_A - X_{i,n}(t)) + \gamma_1)\1_{\{X_{i,n}(t) < L_A\}} = \sum_{i=1}^{N_n(t)} z_{n,A}(X_{i,n}(t)).
\end{equation}
Note that $Z_n(t) = Z_{n,0}(t)$.  More generally, given $A \in \R$ and a function $\varphi: (-\infty, L_A) \rightarrow \R$, we define
\begin{equation}\label{Vdef}
V_{\varphi, n, A}(t) = \sum_{i=1}^{N_n(t)} e^{\rho X_{i,n}(t)} \varphi(X_{i,n}(t)) \1_{\{X_{i,n}(t) < L_A\}}.
\end{equation}

We introduce $L_A$ above because it will sometimes be necessary to consider a modification of the process in which particles are killed when they reach $L_A$.  In this case, it will be important to keep track of how various constants depend on $A$.  Given two sequences $(a_n)_{n=1}^{\infty}$ and $(b_n)_{n=1}^{\infty}$, when we write $a_n \lesssim b_n$ or $b_n \gtrsim a_n$, the ratio $a_n/b_n$ must be bounded above by a positive constant that does not depend on $A$. We write $a_n \asymp b_n$ to mean that both $a_n\lesssim b_n$ and $a_n\gtrsim b_n$ hold.  Throughout the rest of the paper, $C_k$ for a nonnegative integer $k$ will denote a fixed positive constant.  The value of $C_k$ may not depend on $n$ or $A$ and does not change from one occurrence to the next.

\subsection{The empirical distribution of particles}

A large part of our proofs will consist of detailed first and second moment estimates, allowing us to approximate the density of particles in different regions of space. We will detail these in sections \ref{density_unkilled_sec} to \ref{estimates_LA_sec} below. Putting these moment estimates aside, there are three main steps required to prove Theorems \ref{GaussThm} and \ref{EdgeThm}.  The first step requires considering the configuration of particles near the right edge.  As long as the initial configuration of particles satisfies (\ref{Zasm}) and (\ref{Yasm}), a short time later the particles near the right edge should settle into a relatively stable configuration, described by the density $h$ defined in (\ref{hdef}).  For particles within a distance of order $\beta^{-1/3}$ from the right edge, it should take a time of order $\beta^{-2/3}$ for the particles to reach this relatively stable configuration.  For technical reasons, to make it easier to control particles that drift a bit further than order $\beta^{-1/3}$ away from the right edge, we give the particles a bit more time to move into this configuration and establish the result below for times much larger than $\beta^{-2/3} \log(\rho/\beta^{1/3})$.  The proof of Proposition \ref{rtconfigprop}, whose statement is identical to Theorem~\ref{EdgeThm} except that we insist that $t_n\ll\rho/\beta$, uses techniques similar to those used in \cite{bbs3}, and is given in section \ref{rtedgesec}.

\begin{Prop}\label{rtconfigprop}
Suppose the initial configuration of particles satisfies (\ref{Zasm}) and (\ref{Yasm}).  Suppose
\begin{equation}\label{tcond1}
\beta^{-2/3} \bigg( \log \bigg(\frac{\rho}{\beta^{1/3}} \bigg) \bigg)^{1/3} \ll t_n \ll \frac{\rho}{\beta}.
\end{equation}
For $t \geq 0$, define $\xi_n(t)$ as in (\ref{xidef}).  Let $\nu$ be the probability measure on $(0,\infty)$ with probability density function $h$ defined in (\ref{hdef}).  Then $\xi_n(t_n) \Rightarrow \nu$ as $n \rightarrow \infty$.
\end{Prop}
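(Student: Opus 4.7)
To prove $\xi_n(t_n)\Rightarrow\nu$ it is enough, by tightness of probability measures on $\R$ combined with a countable-separating-family argument, to fix a bounded continuous $f$ on $\R$ and show
$$\int f\,d\xi_n(t_n) \;=\; \frac{\sum_{i} e^{\rho X_{i,n}(t_n)}\, f\bigl((2\beta)^{1/3}(L-X_{i,n}(t_n))\bigr)}{Y_n(t_n)} \;\longrightarrow_p\; \int_0^\infty f(y)\,h(y)\,dy.$$
Both numerator and denominator are additive over initial particles by the branching property, so I would first condition on the initial $\sigma$-algebra $\F_0$ and use (\ref{Zasm}) and (\ref{Yasm}) to restrict to the high-probability event on which $Z_n(0)$ lies in a fixed multiplicative window of $\beta^{1/3}\rho^{-3}e^{\rho L}$ and no single initial particle is responsible for a macroscopic fraction of $Z_n(0)$. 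This reduces the problem to single-particle first- and second-moment estimates that are then reassembled over the initial configuration.

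The core input is spectral. A direct computation from the Airy equation $Ai''(w)=w\,Ai(w)$ shows that $u(x):=e^{\rho x}\alpha(L-x)$ is a zero-eigenvalue eigenfunction of the (non-self-adjoint) generator $\tfrac12\partial_x^2-\rho\partial_x+\beta x$ on $(-\infty,L)$ with Dirichlet condition at $L$, and that the corresponding adjoint eigenfunction is $\tilde u(y):=e^{-\rho y}\alpha(L-y)$. This is exactly why the killed process $Z_n$ is a martingale. After rescaling by $(2\beta)^{1/3}$ the spectral gap is of order $\beta^{2/3}|\gamma_2-\gamma_1|$, so the hypothesis $t_n\gg\beta^{-2/3}(\log(\rho/\beta^{1/3}))^{1/3}$ gives ample time for the higher Airy modes to decay. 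Combining the many-to-one lemma with this spectral expansion, and drawing on the density estimates of Sections \ref{density_unkilled_sec}--\ref{estimates_LA_sec}, I would obtain
$$\E_x\bigl[V_{\varphi_f,n,0}(t_n)\bigr] \;\approx\; z_{n,0}(x)\cdot C_n\int_0^\infty f(s)\,Ai(s+\gamma_1)\,ds,$$
where $\varphi_f(x):=f((2\beta)^{1/3}(L-x))$ and $C_n$ is a $\varphi$-independent constant absorbing the spectral normalization and the change-of-variable factor $(2\beta)^{-1/3}$. Summing over initial particles yields $\E[V_{\varphi_f,n,0}(t_n)\mid\F_0]\approx Z_n(0)\,C_n\int_0^\infty f(s)\,Ai(s+\gamma_1)\,ds$, and the same formula with $f\equiv 1$ handles the portion of $Y_n(t_n)$ coming from particles with $X_{i,n}(t_n)<L$. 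The factor $Z_n(0)C_n$ then cancels in the ratio, leaving exactly $\int f\,d\nu$.

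To translate this into convergence in probability I would apply the many-to-two (spine) decomposition to bound $\Var\bigl[V_{\varphi_f,n,0}(t_n)\mid\F_0\bigr]$, which decomposes as an integral over the branching time $s\in[0,t_n]$ of the expected single-particle weight $\E[u(\cdot)]$ at the branching point multiplied by the post-branching pair interaction. Plugging in the moment estimates from Sections \ref{density_unkilled_sec}--\ref{estimates_LA_sec} and using (\ref{Zasm}) to lower-bound $Z_n(0)$, Chebyshev's inequality gives concentration of the numerator around $Z_n(0)C_n\int f\,Ai(\cdot+\gamma_1)$; the same argument handles the denominator. The residual contribution from particles with $X_{i,n}(t_n)\ge L$ is handled by passing to the process killed at a level $L_A$ with $A=A_n\to\infty$ slowly: the martingale $Z_{n,A}$ preserves the bulk of the mass, while a direct many-to-one estimate on $\sum_{X_{i,n}(t_n)\ge L_A} e^{\rho X_{i,n}(t_n)}$ shows that the edge contribution is $o_p(Z_n(0))$ in both numerator and denominator.

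I expect the main obstacle to be the branching part of the variance. The Feynman--Kac weight $\exp(\int_0^{t_n}\beta B_s\,ds)$ is highly sensitive to excursions of the spine to the right, so one must carefully trade the small probability of an atypically high branching event against its large exponential reward, while also handling pair interactions for particles that split close to the edge near time $t_n$. This is exactly what forces the extra $(\log(\rho/\beta^{1/3}))^{1/3}$ factor in the lower bound on $t_n$ in (\ref{tcond1}): it gives the variance tails time to relax below the square of the first moment.
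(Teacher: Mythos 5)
Your overall strategy is the same as the paper's: reduce to showing that the weighted sum $\Phi_n(g)=\sum_i e^{\rho X_{i,n}(t_n)}g\big((2\beta)^{1/3}(L-X_{i,n}(t_n))\big)$ concentrates around $\frac{1}{(Ai'(\gamma_1))^2}\big(\int_0^\infty g(z)Ai(\gamma_1+z)\,dz\big)Z_n(0)$ conditionally on $\F_0$, using the Airy eigenfunction expansion of the killed density (Lemma \ref{dapprox4}) for first moments, the Ikeda--Nagasawa--Watanabe two-particle moment formula (your ``many-to-two'') for the variance (Lemma \ref{2momprop}), and Chebyshev, after which the $Z_n(0)$ normalization cancels in the ratio against $\Phi_n(1)=Y_n(t_n)$. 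This is exactly what the paper does.

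Where you diverge is in the treatment of particles that reach or exceed $L$, and there your sketch has a sign problem. With the paper's parametrization $L_A=L-A/\rho$, taking $A_n\to+\infty$ moves the barrier \emph{left}, killing more particles, and \eqref{zmeaneq} then gives $\E_x[Z_{n,A_n}(t)]=e^{-A_n\beta t/\rho}z_{n,A_n}(x)$, which decays rather than ``preserving the bulk of the mass'' as you claim; you presumably want $A_n\to-\infty$, but then $Z_{n,A_n}$ grows. The paper sidesteps this by killing at $L$ itself ($A=0$) and decomposing trajectories by \emph{hitting time}: particles that reach $L$ before time $t_n-2C_1\rho^{-2}$ cannot have survivors at $t_n$ by Lemma \ref{Lsurvive} combined with Lemma \ref{Rxlem}, while particles that reach $L$ in the final window $[t_n-2C_1\rho^{-2},t_n]$ contribute a controllably small amount to $Y_n(t_n)$ via Lemma \ref{YfromL}. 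You also elide a decomposition the paper cannot avoid: the Airy approximation to $p_{t_n}^L(x,y)$ is only valid when both $x$ and $y$ lie in $(H(t_n),L)$, and the second-moment bound of Lemma \ref{2momprop} further requires $(K(t_n),L)$; the contributions from $(-\infty,H(t_n)]$ and $(H(t_n),K(t_n)]$ have to be killed off separately using Lemma \ref{lowxylem} and the decay of $Ai$, which is where most of the seven-way split in the paper's proof comes from. Your description of these pieces as a generic ``trade-off between small probability and large exponential reward'' is accurate as motivation but would need to be replaced by those specific region estimates for the argument to close.
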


The second step involves considering the configuration of particles near the origin.  As long as the initial configuration of particles satisfies (\ref{Zasm}) and (\ref{Yasm}), the particles near the origin at time approximately $\rho/\beta$, most of which will have been descended from particles that were near $L$ at time zero, should be approximately in a Gaussian configuration.  Note that (\ref{mainNn}) below, while not strictly required for the proof of Theorem \ref{GaussThm}, provides useful insight into the behavior of the process, as it says that the number of particles in the system at time approximately $\rho/\beta$ is determined, to a high degree of precision, by the value of $Z_n(0)$.  The proof of Proposition~\ref{gaussianprop}, which also uses techniques similar to those used in \cite{bbs3}, is given in section \ref{gausssec}.

\begin{Prop}\label{gaussianprop}
Suppose the initial configuration of particles satisfies (\ref{Zasm}) and (\ref{Yasm}).  Suppose 
\begin{equation}\label{tcond2}
\frac{\rho^{2/3}}{\beta^{8/9}} \ll t_n - \frac{\rho}{\beta} \ll \frac{\rho}{\beta}.
\end{equation}
Then, for all $\kappa > 0$, we have
\begin{equation}\label{mainNn}
\lim_{n \rightarrow \infty} \P \bigg(\frac{(1 - \kappa)}{(Ai'(\gamma_1))^2} e^{-\rho^3/3 \beta} Z_n(0) \leq N_n(t_n) \leq \frac{(1 + \kappa)}{(Ai'(\gamma_1))^2} e^{-\rho^3/3 \beta} Z_n(0) \bigg) = 1.
\end{equation}
For $t \geq 0$, define $\zeta_n(t)$ as in (\ref{zetadef}).  Let $\mu$ be the standard normal distribution.  Then $\zeta_n(t_n) \Rightarrow \mu$ as $n \rightarrow \infty$.
\end{Prop}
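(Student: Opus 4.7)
\noindent\emph{Proof proposal.}
The plan is to reduce to a situation in which the particle configuration near the right edge is already in the Airy equilibrium by running the process for a short intermediate time $s_n$, and then carry out first and second moment estimates over a single time step of length $\Delta := t_n - s_n \asymp \rho/\beta$, using the many-to-one and many-to-two lemmas together with the saddle-point heuristic of Section~\ref{ldsec}.

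By (\ref{tcond2}), together with (\ref{A1}) which makes $\rho^{2/3}/\beta^{8/9}$ much larger than $\beta^{-2/3}(\log(\rho/\beta^{1/3}))^{1/3}$, I can choose $s_n$ with $\beta^{-2/3}(\log(\rho/\beta^{1/3}))^{1/3} \ll s_n \ll t_n - \rho/\beta$. Applying Proposition~\ref{rtconfigprop} at time $s_n$ gives $\xi_n(s_n) \Rightarrow \nu$, so the $e^{\rho X}$-weighted distribution of particles on the window $(L-O(\beta^{-1/3}), L)$ is, after rescaling, close to $h$. Since $(Z_{n,A}(t))_{t\ge 0}$ is an exact martingale once particles reaching $L_A$ are killed, and particles rarely cross $L_A$ over $[0,s_n]$ under (\ref{Zasm})--(\ref{Yasm}) (by first-moment estimates from Sections~\ref{density_unkilled_sec}--\ref{estimates_LA_sec}), I also obtain $Z_n(s_n) \asymp Z_n(0)$ with probability tending to one. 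This reduces the problem to an initial configuration at time $s_n$ with the Airy profile and the same $Z$-mass, up to negligible errors.

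Conditional on $\F_{s_n}$, the many-to-one lemma gives, for bounded continuous $\psi$,
\begin{equation*}
\E\!\left[\sum_{i=1}^{N_n(t_n)} \psi\bigl(X_{i,n}(t_n)\sqrt{\beta/\rho}\bigr)\,\Big|\,\F_{s_n}\right]
= \sum_{j=1}^{N_n(s_n)} \E\!\left[\psi\bigl((y_j+B_\Delta-\rho\Delta)\sqrt{\beta/\rho}\bigr)\,\exp\!\Big(\beta\!\int_0^\Delta (y_j+B_u-\rho u)\,du\Big)\right],
\end{equation*}
with $y_j = X_{j,n}(s_n)$. Conditioning the Brownian bridge on its endpoint $x$ and evaluating the deterministic part of the Feynman--Kac exponential at the parabolic optimiser of (\ref{tz}), the inner expectation factors as $e^{-\rho x}\exp(g_\Delta(y_j,x))$ times a Gaussian density, where $g_\Delta$ coincides at leading order with the large-deviation exponent $g$ of (\ref{gdef}); the bridge fluctuation factor is governed by the Airy ground state of the Schr\"odinger operator $\tfrac12\partial_x^2-(\rho^2/2-\beta x)$ with Dirichlet boundary at $L$, which via the Sturm--Liouville identity $\int_0^\infty Ai(w+\gamma_1)^2\,dw = Ai'(\gamma_1)^2$ produces the $y_j$-dependence $\alpha(L-y_j)/Ai'(\gamma_1)$ at the starting end and a further factor $1/Ai'(\gamma_1)$ from the spectral-projection normalisation at the endpoint side. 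Summing over $j$ using the Airy profile from Step~1 and applying the Taylor expansion (\ref{gapprox}), I expect to obtain
\begin{equation*}
\E\!\left[\sum_i \psi\bigl(X_{i,n}(t_n)\sqrt{\beta/\rho}\bigr)\,\Big|\,\F_{s_n}\right]
\;\sim\; \frac{e^{-\rho^3/3\beta}}{(Ai'(\gamma_1))^2}\,Z_n(s_n)\int_{\R}\psi(z)\,\frac{e^{-z^2/2}}{\sqrt{2\pi}}\,dz.
\end{equation*}
Specialising to $\psi\equiv 1$ gives the first-moment version of (\ref{mainNn}).

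A parallel second-moment (many-to-two) computation on $[s_n,t_n]$, leveraging the pair-density bounds from Sections~\ref{density_unkilled_sec}--\ref{estimates_LA_sec}, will show that the conditional variance of the above sum is $o(1)$ times the squared mean. The Airy profile at time $s_n$ together with (\ref{Yasm}) prevents any single ancestor at time $s_n$ from producing a macroscopic fraction of the descendants, so the dominant pair contributions come from pairs that have already separated at time $s_n$, whose covariance factors through the same Feynman--Kac kernel and is subleading. Chebyshev's inequality then promotes the first-moment asymptotic to $L^2$ concentration, delivering (\ref{mainNn}) together with $\int\psi\,d\zeta_n(t_n)\to\int\psi\,d\mu$ in probability for every bounded continuous $\psi$, and hence $\zeta_n(t_n)\Rightarrow\mu$. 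The principal technical obstacle will be the saddle-point analysis of the Feynman--Kac expectation: one must show rigorously that paths deviating from the parabolic optimiser $f_x$ of (\ref{tz}) contribute negligibly, and that the Airy ground state captures both the $y_j$-dependence at the starting end and the spectral projection at the endpoint with the correct constants. The lower bound $t_n-\rho/\beta \gg \rho^{2/3}/\beta^{8/9}$ in (\ref{tcond2}) enters precisely to make the Gaussian curvature $\beta/(2\rho)$ in (\ref{gapprox}) dominate both the higher-order Taylor error in the expansion of $g_\Delta$ and the $s_n$-dependent offset of the optimiser.
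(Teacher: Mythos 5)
Your high-level strategy matches the paper's: run the process for an intermediate time $u_n$ so that the right-edge configuration reaches the Airy profile via Proposition~\ref{rtconfigprop}, then analyse the remaining time of order $\rho/\beta$ with a first-moment (many-to-one) and second-moment (many-to-two) computation, and conclude by Chebyshev. However there are several genuine gaps and one conceptual misattribution.

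First, the spectral/Feynman--Kac analysis of the $\rho/\beta$-scale transition is both unnecessary and incorrect. A particle starting at $y_j$ within $O(\beta^{-1/3})$ of $L$ at time $u_n$ moves to a location $x = O(\sqrt{\rho/\beta})$ near the origin at time $t_n$, and is then far from the killing boundary at $L$; the Dirichlet/Airy spectral decomposition (\ref{ptKeq}) is not useful in this regime because the series does not localise to the $k=1$ term. The correct tool is the explicit free density (\ref{ptxy}), whose Taylor expansion in Lemma~\ref{firstmom} gives the Gaussian shape directly with no Airy factor at all. Consequently, the entire factor $(Ai'(\gamma_1))^{-2}$ in (\ref{mainNn}) comes from the intermediate window $[0,u_n]$ (traced back to (\ref{densapprox}) and (\ref{Pphi}) in the proof of Proposition~\ref{rtconfigprop}), not from ``a factor from each end'' of the long transition. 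Lemma~\ref{rhobetas} is the device that converts the weighted sum over the Airy-distributed particles at time $u_n$ into $(1\pm o(1))Z_n(0)/(Ai'(\gamma_1))^2$ together with the exponential compensators; without it, your ``further factor $1/Ai'(\gamma_1)$ from the spectral-projection normalisation at the endpoint side'' has no basis and would give the wrong constant.

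Second, your variance argument contains a real error. Conditional on $\F_{s_n}$, pairs of time-$t_n$ particles whose most recent common ancestor lived \emph{before} time $s_n$ are independent and have \emph{zero} covariance, not merely ``subleading'' covariance. The entire variance comes from pairs whose common ancestor lives in $[s_n,t_n]$, and bounding the contribution from these pairs is the hard part. This is precisely what Lemma~\ref{mainvarg} does, and its proof (Section~\ref{2momproof}) requires working with the killed process, because without the barrier at $L$ the second moment is not controllable. You would then also need to show that the particles discarded by the killing (those crossing $L$ between $u_n$ and $t_n$) contribute $o_p(1)$ to $\Psi_n(g)$; the paper handles this as the separate term $\Psi_{5,n}(g)$ with Corollary~\ref{rtedge} and the integral bound in (\ref{rxn})--(\ref{vnu}).

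Third, the claim $Z_n(s_n) \asymp Z_n(0)$ does not follow from the $Z$-martingale property plus ``particles rarely cross $L_A$''; a martingale gives mean preservation, while $\asymp$ with high probability requires concentration and hence second-moment control, which you have not supplied at this stage. The paper sidesteps this by applying (\ref{Pphi}) directly at time $u_n$, which immediately gives the relation to $Z_n(0)$ without ever establishing $Z_n(u_n)\asymp Z_n(0)$ as an intermediate fact.

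Finally, your sketch omits the decomposition of particles at time $t_n$ into the six classes $S_{1,n},\dots,S_{6,n}$. Without this, you cannot reduce to the window where Lemma~\ref{firstmom} and Lemma~\ref{mainvarg} apply: particles whose ancestors were far below $H(u_n)$, or above $L$ at time $u_n$, or which land outside $[-C_9\sqrt{\rho/\beta},C_9\sqrt{\rho/\beta}]$, all need separate first-moment truncations (using (\ref{explowx})--(\ref{explowy2}), Lemma~\ref{aboveL}, and (\ref{C3cond})) before the concentration step. This book-keeping is the bulk of the proof and cannot be skipped.
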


The third step is to show that if (\ref{Zasm}) and (\ref{Yasm}) hold, then these conditions will still hold with high probability at later times, which are order $\rho/\beta$ in the future.  This is established in the following proposition, which is proved in section \ref{YZsec}.

\begin{Prop}\label{YZmain}
Suppose the initial configuration of particles satisfies (\ref{Zasm}) and (\ref{Yasm}).  Suppose the times $t_n$ are chosen so that $$\lim_{n \rightarrow \infty} \frac{\beta t_n}{\rho} = \tau \in (0, \infty).$$  Then, with probability tending to one as $n \rightarrow \infty$, the conditions (\ref{Zasm}) and (\ref{Yasm}) hold with $Z_n(t_n)$ and $Y_n(t_n)$ in place of $Z_n(0)$ and $Y_n(0)$ respectively.
\end{Prop}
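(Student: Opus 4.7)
The plan is to verify (\ref{Zasm}) and (\ref{Yasm}) at time $t_n$ by combining first- and second-moment estimates on $Z_n(t_n)$ and $Y_n(t_n)$. The key tools are the martingale property of $Z_{n,A}(t)$ in the process killed at $L_A$, the many-to-one lemma for first moments, and a pair-covariance calculation for second moments.

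For the upper bound in (\ref{Zasm}), I would fix $\eps>0$ and pick $A=A(\eps)<0$ of large absolute value, then work on the high-probability event that no particle of the unkilled process reaches $L_A$ by time $t_n$ (which follows from tail bounds on the right edge developed in Sections \ref{density_unkilled_sec}--\ref{estimates_LA_sec}). On this event the killed and unkilled versions agree and $\E[Z_{n,A}(t_n)\mid \F_0]=Z_{n,A}(0)$ by the martingale property. Because $(2\beta)^{1/3}|A|/\rho\to 0$ by (\ref{A1}), the weights $\alpha(L-x)$ and $\alpha(L_A-x)$ are comparable uniformly in $x$, so $Z_{n,A}(0)\asymp Z_n(0)$, and Markov's inequality delivers the upper bound on $Z_n(t_n)$. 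For the matching lower bound I would use the second-moment estimate for $Z_{n,A}(t_n)$ developed in Section~\ref{estimates_LA_sec}: a decomposition of the variance along pairs of descendants of a common ancestor yields a bound that, using (\ref{Zasm}) and (\ref{Yasm}) at time $0$, is $o(Z_n(0)^2)$. Chebyshev's inequality then gives $Z_{n,A}(t_n)\sim Z_n(0)$ in probability, and hence the lower bound in (\ref{Zasm}) at $t_n$.

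To check (\ref{Yasm}) at $t_n$ I would use the many-to-one lemma to write
\[
\E[Y_n(t_n)\mid\F_0] \;=\; \sum_{i=1}^{N_n(0)} u(X_{i,n}(0),t_n),\qquad u(x,t)=\E_x\!\Big[\exp\Big(\rho B_t+\int_0^t \beta B_s\,ds\Big)\Big],
\]
where $B$ is a Brownian motion with drift $-\rho$. Substituting $u=e^{\rho x}\phi$ turns the generating PDE $u_t=\tfrac12 u_{xx}-\rho u_x+\beta x u$ into a shifted Airy equation, and spectral analysis then gives a bound of the form $u(x,t_n)\lesssim e^{\rho x}\alpha(L-x)+(\text{subleading})$, integrating against the initial configuration to produce $\E[Y_n(t_n)\mid\F_0]\lesssim Z_n(0)+Y_n(0)$. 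Using (\ref{Zasm}) and (\ref{Yasm}) at time $0$, this is $\lesssim \beta^{1/3}\rho^{-3}e^{\rho L}$, and Markov's inequality yields $\rho^2 e^{-\rho L}Y_n(t_n)\lesssim \beta^{1/3}/\rho=o(1)$ by (\ref{A1}), establishing (\ref{Yasm}) at $t_n$.

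The main obstacle is the second-moment bound on $Z_{n,A}(t_n)$: it requires a uniform two-particle Green's-function estimate for BBM with linear branching rate truncated at $L_A$, on a time scale $t_n\asymp \rho/\beta$ that is much longer than the edge relaxation time $\beta^{-2/3}$. The hypothesis (\ref{Yasm}) enters precisely to prevent a single ancestor's family from dominating the variance, and a secondary technical subtlety is the quantitative comparison between the killed and unkilled functionals needed to transfer the martingale first-moment identity back to the original process.
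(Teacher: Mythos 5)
Your strategy---first moments via a killed process and martingale-type identities, second moments to control fluctuations, with (\ref{Yasm}) ruling out a dominant ancestor---matches the broad shape of the paper's proof, but each of the three main steps fails at the time scale $t_n\asymp\rho/\beta$ in a way that forces a genuinely different construction. The event ``no particle reaches $L_A$ by time $t_n$'' does \emph{not} have high probability for any fixed $A<0$: by Lemma~\ref{Rxlem}, on the event $G_n$ of Lemma~\ref{ZAlem}, the expected number of particles hitting $L_A$ over a time $\asymp\rho/\beta$ is of order $\rho^{-2}e^{|A|(1+\tau)}\to\infty$, and even the expected number that hit and then survive (each surviving with probability $\lesssim\rho^2$ by Lemma~\ref{Lsurvive}) is a constant that \emph{increases} with $|A|$. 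This is exactly the obstruction that makes the paper introduce the moving barrier $\Lambda_A(s)$ (Section~\ref{moving_barrier_sec}) and the Girsanov argument of Lemma~\ref{rAabound}; the resulting bound in Lemma~\ref{hitL} carries an extra factor $1/|A|$, so the probability can finally be driven to zero by taking $|A|$ large. Similarly, the first-moment bound you assert for $Y_n(t_n)$ is false for the \emph{unkilled} process: from (\ref{ptxy}), a single particle at $L$ contributes $\int p_{t_n}(L,y)e^{\rho y}\,dy=e^{2\rho L-\rho^3/3\beta}$ to $\E[Y_n(t_n)]$, which exceeds $e^{\rho L}$ by a factor $e^{\rho^3/6\beta-\rho(2\beta)^{-1/3}\gamma_1}\to\infty$. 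The eigenfunction bound $u(x,t)\lesssim e^{\rho x}\alpha(L-x)$ holds only for the process killed near $L$, not the free process, so Markov for the free $Y_n(t_n)$ gives nothing; the paper bounds $Y_n(t_n)$ through $Y^*_{n,A^*}(t_n)$ plus explicit accounting of escapees (Lemma~\ref{mainupper}).

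The Chebyshev step for the lower bound also does not close. Plugging $t=t_n\asymp\rho/\beta$ into Lemma~\ref{2momprop} and summing over initial particles using (\ref{Zasm}) and (\ref{Yasm}) gives $\Var(Z_{n,A}(t_n)\mid\mathcal{F}_0)\asymp\beta^{2/3}\rho^{-6}e^{2\rho L}\asymp Z_n(0)^2$, so the variance is of the \emph{same order} as the squared mean, not $o(Z_n(0)^2)$, and Chebyshev yields only a constant-order failure probability. No fixed shift of the barrier repairs this: taking $A>0$ multiplies the variance by $e^{-A}$ but the squared mean by $e^{-2A\tau}$, so the ratio worsens whenever $\tau>1/2$. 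The paper's resolution (proof of Lemma~\ref{mainZlow}) is to iterate over a bounded number of subintervals of length $\asymp\rho/(\beta A_k)$, with barriers at $L_{A_k}$ for an increasing sequence $A_k$, so that each per-step variance-to-squared-mean ratio carries a factor $1/A_k$ and the union bound over steps can be made small by choosing the initial $A$ large. Those two constructions---the moving barrier with Girsanov for the upper bound, and the iterated barrier scheme for the lower bound---are the substantive ideas your sketch leaves out, and they are not optional technicalities but exactly what makes the $O(\rho/\beta)$ time horizon tractable.
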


We now show how Propositions \ref{rtconfigprop}, \ref{gaussianprop}, and \ref{YZmain} imply Theorems \ref{GaussThm} and \ref{EdgeThm}.  Because these proofs involve subsequence arguments, we will return to writing $\rho_n$ and $\beta_n$ instead of $\rho$ and $\beta$ to avoid confusion.

\begin{proof}[Proof of Theorem \ref{GaussThm}]
It suffices to show that every subsequence $(n_j)_{j=1}^{\infty}$ contains a further subsequence $(n_{j_k})_{k=1}^{\infty}$ for which $\zeta_{n_{j_k}}(t_{n_{j_k}}) \Rightarrow \mu$ as $k \rightarrow \infty$.  By (\ref{rhobetagauss}), the sequence $$\bigg(\frac{\beta_n t_n}{\rho_n} - 1 \bigg)_{n=1}^{\infty}$$ is bounded and non-negative for large $n$.  Therefore, given a subsequence $(n_j)_{j=1}^{\infty}$, we can choose a further subsequence $(n_{j_k})_{k=1}^{\infty}$ for which $$\lim_{k \rightarrow \infty} \frac{\beta_{n_{j_k}} t_{n_{j_k}}}{\rho_{n_{j_k}}} - 1 = \tau \in [0, \infty).$$  If $\tau = 0$, then (\ref{tcond2}) holds along this subsequence, and it follows immediately from Proposition~\ref{gaussianprop} that $\zeta_{n_{j_k}}(t_{n_{j_k}}) \Rightarrow \mu$ as $k \rightarrow \infty$.  Suppose instead that $\tau > 0$.  Choose any sequence $(s_n)_{n=1}^{\infty}$ for which (\ref{tcond2}) holds with $s_n$ in place of $t_n$, and let $u_n = t_n - s_n$.  Note that $$\lim_{k \rightarrow \infty} \frac{\beta_{n_{j_k}} u_{n_{j_k}}}{\rho_{n_{j_k}}} = \tau \in (0, \infty),$$ so Proposition \ref{YZmain} implies that (\ref{Zasm}) and (\ref{Yasm}) hold with $Z_{n_{j_k}}(u_{n_{j_k}})$ and $Y_{n_{j_k}}(u_{n_{j_k}})$ in place of $Z_n(0)$ and $Y_n(0)$.  We can therefore apply the Markov property at time $u_{n_{j_k}}$, followed by Proposition \ref{gaussianprop} with $s_n$ in place of $t_n$, to see that $\zeta_{n_{j_k}}(t_{n_{j_k}}) \Rightarrow \mu$.
\end{proof}

\begin{proof}[Proof of Theorem \ref{EdgeThm}]
The proof is very similar to the proof of Theorem \ref{GaussThm}.  It suffices to show that every subsequence $(n_j)_{j=1}^{\infty}$ contains a further subsequence $(n_{j_k})_{k=1}^{\infty}$ for which $\xi_{n_{j_k}}(t_{n_{j_k}}) \Rightarrow \nu$ as $k \rightarrow \infty$.  By (\ref{tcond}), the sequence $(\beta_n t_n/\rho_n)_{n=1}^{\infty}$ is bounded.  Therefore, given a subsequence $(n_j)_{j=1}^{\infty}$, we can choose a further subsequence $(n_{j_k})_{k=1}^{\infty}$ for which $$\lim_{k \rightarrow \infty} \frac{\beta_{n_{j_k}} t_{n_{j_k}}}{\rho_{n_{j_k}}} = \tau \in [0, \infty).$$  If $\tau = 0$, then (\ref{tcond1}) holds along this subsequence, and it follows from Proposition \ref{rtconfigprop} that $\xi_{n_{j_k}}(t_{n_{j_k}}) \Rightarrow \nu$.  Suppose instead that $\tau > 0$.  Choose any sequence $(s_n)_{n=1}^{\infty}$ for which (\ref{tcond1}) holds with $s_n$ in place of $t_n$, and let $u_n = t_n - s_n$.  Note that $$\lim_{k \rightarrow \infty} \frac{\beta_{n_{j_k}} u_{n_{j_k}}}{\rho_{n_{j_k}}} = \tau \in (0, \infty),$$ so Proposition \ref{YZmain} implies that (\ref{Zasm}) and (\ref{Yasm}) hold with $Z_{n_{j_k}}(u_{n_{j_k}})$ and $Y_{n_{j_k}}(u_{n_{j_k}})$ in place of $Z_n(0)$ and $Y_n(0)$.  We can therefore apply the Markov property at time $u_{n_{j_k}}$, followed by Proposition \ref{rtconfigprop} with $s_n$ in place of $t_n$, to see that $\xi_{n_{j_k}}(t_{n_{j_k}}) \Rightarrow \nu$.
\end{proof}

\subsection{The density for the unkilled process}\label{density_unkilled_sec}

As mentioned above, a large amount of the work in our proofs involves moment estimates that allow us to bound the number of particles in certain regions of space. We begin with first moment calculations.

We denote by $p_t(x,y)$ the density for the process, which means that if there is one particle at $x$ at time zero, then the expected number of particles in the Borel set $D$ at time $t$ is given by $$\int_D p_t(x,y) \: dy.$$  To calculate the density, we can invoke the many-to-one lemma, which is proved, for example, in \cite{hh09}.  To do this, we first compute the density for the process when $\rho = 0$, which we denote by $q_t(x,y)$.  Let $(B_t)_{t \geq 0}$ be one-dimensional Brownian motion started at $B_0 = x$.  The many-to-one lemma states that if $f: \R \rightarrow \R$ is a nonnegative measurable function, then
\begin{equation}\label{many1}
\E_x \bigg[ \sum_{k=1}^{N(t)} f(X_{k,n}(t)) \bigg] = \E_x \bigg[ \exp \bigg( \int_0^t \beta B_u \: du \bigg) f(B_t) \bigg].
\end{equation}
Consequently, the density for the process without drift can be read from formula 1.8.7 on page~141 of \cite{bosa}, which yields
$$q_t(x,y)dy = \E_x \bigg[ \exp \bigg( \int_0^t \beta B_s \: ds \bigg) ; B_t \in dy \bigg] = \frac{1}{\sqrt{2 \pi t}} \exp \bigg( - \frac{(y - x)^2}{2t} + \frac{\beta(y + x) t}{2} + \frac{\beta^2 t^3}{24} \bigg)dy.$$
The drift of $-\rho$ can be added using a standard Girsanov transformation, which implies that
\begin{equation}\label{Girsanoveq}
p_t(x,y) = e^{\rho(x - y)} e^{-\rho^2 t/2} q_t(x,y)
\end{equation}
and therefore
\begin{equation}\label{ptxy}
p_t(x,y) = \frac{1}{\sqrt{2 \pi t}} \exp \bigg( \rho x - \rho y - \frac{(y - x)^2}{2t} - \frac{\rho^2 t}{2} + \frac{\beta(y + x) t}{2} + \frac{\beta^2 t^3}{24} \bigg).
\end{equation}

Integrating (\ref{ptxy}) with respect to $y$ gives
\begin{equation}\label{intpeq}
\int_{-\infty}^{\infty} p_t(x,y) \: dy = \exp \bigg( \beta x t + \frac{\beta^2 t^3}{6} - \frac{\beta \rho t^2}{2} \bigg).
\end{equation}
Alternatively, one can obtain (\ref{intpeq}) by applying the many-to-one formula \eqref{many1} to Brownian motion with drift, which gives
$$\int_{-\infty}^{\infty} p_t(x,y) \: dy = \E_x \bigg[ \exp \bigg( \int_0^t \beta (B_u - \rho u) \: du \bigg) \bigg] = \exp \bigg( - \frac{\beta \rho t^2}{2} \bigg) \E_x \bigg[ \exp \bigg( \int_0^t \beta B_u \: du \bigg) \bigg].$$  The result (\ref{intpeq}) then follows from equation 1.8.3 on page 141 of \cite{bosa}, which states that $$\E_x \bigg[ \exp \bigg( \int_0^t \beta B_u \: du \bigg) \bigg] = \exp \bigg( \beta x t + \frac{\beta^2 t^3}{6} \bigg).$$

\subsection{Brownian motion killed at a random time}\label{salminen_sec}

In order to control accurately the number of particles in certain regions of space, we will need to use moment estimates for a process where some of the particles are killed upon hitting a barrier. To carry out these calculations we will need estimates on Brownian motion killed at a random time, which we collect here.

Suppose that $x>0$ and $(B_t)_{t\ge 0}$ is a Brownian motion started from $x$. We will use an interpretation of the integral $\int_0^t \beta |B_u| du$ as a random clock, following Salminen \cite{ps88}. Let
\[T_0 = \inf\{t\ge 0 : B_t\le 0\},\]
the first time that the Brownian motion hits zero. Let $\mathcal E$ be an independent exponential random variable of parameter $1$, and define
\[\zeta = \inf\Big\{t\ge 0 : \int_0^t \beta |B_u| du > \mathcal E\Big\}.\]
Let
\[\hat X_t = \begin{cases} B_t & \text{if } t< \zeta\wedge T_0,\\
						   \Upsilon & \text{otherwise }\end{cases}\]
where $\Upsilon$ is some graveyard state. Then for any $x > 0$ and any Borel set $A\subseteq(0,\infty)$,
\begin{equation}\label{killedrepresentation}
\P_x(\hat X_t\in A) = \E_x[e^{-\int_0^t \beta B_u du} \1_{\{T_0>t\}}\1_{\{B_t\in A\}}].
\end{equation}
Let $\tau_0 = \inf\{t\ge 0 : \hat X_t = 0\}$, which takes values in $(0, \infty]$. Writing $\hat p_t(x,y)$ for the transition density of $\hat X_t$ and $\pi_x(t)$ for the density of $\tau_0$, we will need the following two results from \cite{ps88} (noting that the value of $\beta$ used in \cite{ps88} differs from ours by a factor of $2$ and the densities in \cite{ps88} are written with respect to twice Lebesgue measure):
\begin{equation}\label{salminen39}
\hat p_t(x,y) = (2\beta)^{1/3} \sum_{k=1}^\infty e^{\beta(2\beta)^{-1/3} \gamma_k t} \frac{Ai((2\beta)^{1/3}x + \gamma_k)Ai((2\beta)^{1/3}y + \gamma_k)}{Ai'(\gamma_k)^2},
\end{equation}
which is Proposition 3.9 of \cite{ps88}, and
\begin{equation}\label{salminen32}
\pi_x(t) = \lim_{y\downarrow 0}\frac{1}{2}\frac{\partial}{\partial y} \hat p_t(x,y),
\end{equation}
which is (3.2) in \cite{ps88}.

\subsection{The density for the process killed at level $\ell$}

Returning to our branching Brownian motion, we will often need to consider a truncated version of the process, in which particles are killed as soon as they surpass some level $\ell \in \R$.  For $x < \ell$ and $y < \ell$, we denote the density for this process by $p_t^{\ell}(x,y)$. Defining
\[T_{\ell} = \inf\{t\ge 0 : B_t \ge \ell\}\]
to be the first hitting time of $\ell$ by our Brownian motion $(B_t)_{t \geq 0}$, by the many-to-one formula and (\ref{Girsanoveq}) we have
\[p_t^{\ell}(x,y)dy = e^{\rho x - \rho y - \rho^2 t/2} \E_x[e^{\int_0^t \beta B_u du} \1_{\{T_{\ell} > t\}}\1_{\{B_t\in dy\}}].\]
Making the transformation $B'_u = \ell-B_u$, and setting $T'_0 = \inf\{t\ge 0 : B'_t \le 0\}$, we see that
\begin{equation}\label{ptl2}
p_t^{\ell}(x,y)dy = e^{\rho x - \rho y - \rho^2 t/2 + \beta \ell t} \E_{\ell-x}[e^{-\int_0^t \beta B'_u du} \1_{\{T'_0 > t\}}\1_{\{\ell-B'_t\in dy\}}].
\end{equation}
Recognising the last expectation as the transition density of the killed Brownian motion from Section \ref{salminen_sec}, from \eqref{salminen39} we have
\begin{align}\label{ptKeq}
p_t^{\ell}(x,y) &= (2 \beta)^{1/3} \sum_{k=1}^{\infty} \frac{e^{(\beta(\ell + (2 \beta)^{-1/3} \gamma_k) - \rho^2/2) t}}{Ai'(\gamma_k)^2} \nonumber \\
&\hspace{.5in} \times e^{\rho x} Ai((2 \beta)^{1/3}(\ell - x) + \gamma_k) e^{-\rho y} Ai((2 \beta)^{1/3}(\ell - y) + \gamma_k).
\end{align}
We will typically take $\ell=L_A$ for some real number $A$.  The exponent $(\beta(\ell + (2 \beta)^{-1/3} \gamma_1) - \rho^2/2) t$ in the leading term in (\ref{ptKeq}) is zero when $\ell = L$, which is why $L$ is the correct level at which to kill particles to keep the number of particles in the system approximately stable over time.

As a consequence of the formula (\ref{ptKeq}) for the density, it is possible to show that for any $A \in \R$, if we consider a modified process in which particles are killed when they reach $L_A$, then for all $t \geq 0$ and $x < L_A$, we have
\begin{equation}\label{zmeaneq}
\E_x[Z_{n,A}(t)] = e^{-A \beta t/\rho} z_{n,A}(x).
\end{equation}
In particular, the process $(Z_n(t), t \geq 0)$ is a martingale when particles are killed at $\ell$. This can be proved using \eqref{ptKeq}, the dominated convergence theorem, and the following orthogonality relation from Section 4.4 of \cite{vsairy}:
\begin{equation}\label{orth}
\int_0^{\infty} Ai(z + \gamma_j) Ai(z + \gamma_k) \: dz = \left\{
\begin{array}{ll}
(Ai'(\gamma_j))^2 & \mbox{ if } j = k \\
0 & \mbox{ otherwise.}
\end{array} \right.
\end{equation}
We will not use the fact that $(Z_n(t), t\ge0)$ is a martingale in the rest of the paper, so we do not include the full proof here, but it may provide insight into why $(Z_n(t), t \geq 0)$ is an important measure of the ``size" of the process.

\subsection{Approximate density formulas}\label{approxden}

We will need to establish some approximations to the density formulas (\ref{ptxy}) and (\ref{ptKeq}).  We will therefore state in this subsection six lemmas, all of which will be proved in section \ref{densec}.  As indicated in section \ref{ldsec}, most particles that are alive at time $t$ will be descended from particles that were near $L$ at time $t - \rho/\beta$.  Therefore, it will be useful to have the following approximation for $p_t(x,y)$, which holds when $x \approx \rho^2/2 \beta$ and $t \approx \rho/\beta$.  Note that here and throughout this subsection and the next two, the time $t$ implicitly depends on $n$.

\begin{Lemma}\label{firstmom}
Write $$t = \frac{\rho}{\beta} - s, \hspace{.5in} x = \frac{\rho^2}{2 \beta} - w.$$  If $|w| \ll \sqrt{\rho/\beta}$ and $0 \leq s \ll \rho^{1/4} \beta^{-3/4}$, then
\begin{align}\label{ptestimate}
p_t(x,y) &= \frac{1}{\sqrt{2 \pi}} \sqrt{ \frac{\beta}{\rho}} \exp \bigg( \rho x - \frac{\rho^3}{3 \beta} - \frac{\beta y^2}{2 \rho} - \frac{\beta^2 s^3}{6} + \beta w s \nonumber \\
&\hspace{1in}+ O \bigg( \frac{s^2 \beta^2 |y|}{\rho} \bigg) + O \bigg( \frac{s \beta^2 y^2}{\rho^2} \bigg) + O \bigg( \frac{\beta |w y|}{\rho} \bigg) + o(1) \bigg).
\end{align}
\end{Lemma}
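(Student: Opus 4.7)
The plan is to start from the exact formula \eqref{ptxy} for $p_t(x,y)$ and substitute $t=\rho/\beta-s$ and $x=\rho^2/(2\beta)-w$ directly, then track terms by their order in $s$, $w$, $y$, and the small parameters $\beta s/\rho$ and $\beta w/\rho$. The factor $1/\sqrt{2\pi t}$ immediately equals $(1/\sqrt{2\pi})\sqrt{\beta/\rho}\,(1-\beta s/\rho)^{-1/2}$, and since $s \ll \rho^{1/4}\beta^{-3/4} \ll \rho/\beta$ (the latter follows from assumption \eqref{A1}), this factor is $(1/\sqrt{2\pi})\sqrt{\beta/\rho}\,(1+o(1))$, contributing an additive $o(1)$ inside the exponential.

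The main work is to expand the exponent of \eqref{ptxy} and verify term-by-term that it matches \eqref{ptestimate}. Here I would keep the $\rho x$ term as is and expand the remaining five summands separately, using the geometric series $1/t = (\beta/\rho)\sum_{k\ge 0}(\beta s/\rho)^k$ for the pieces involving $1/t$. The terms I expect to see are (i) coefficient of $y^2$ coming only from $-y^2/(2t)$, giving $-\beta y^2/(2\rho)$ at leading order with remainder $O(s\beta^2 y^2/\rho^2)$; (ii) coefficient of $y$ from $-\rho y$, from $xy/t$, and from $\beta y t/2$ — the leading contributions $-\rho y + \rho y/2 + \rho y/2 = 0$ and $\beta sy/2 - \beta sy/2 = 0$ should cancel, leaving only $-\beta w y/\rho$ plus $O(s^2\beta^2|y|/\rho)$, matching the claimed error bounds; (iii) the $y$-free terms coming from $-x^2/(2t), -\rho^2 t/2, \beta x t/2, \beta^2 t^3/24$.

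The crucial algebraic verification is that among the $y$-free terms, the contributions of order $\rho^3/\beta$ collapse to $-\rho^3/(3\beta)$, the contributions of order $\rho^2 s$ and $\rho\beta s^2$ and $\rho w$ all cancel out exactly, and the $\beta w s$ coefficients from $\beta x t/2$ and from $-x^2/(2t)$ combine to produce exactly $\beta w s$. The $\beta^2 s^3$ coefficient requires carrying the geometric expansion of $-\rho^4/(8\beta^2 t)$ out to the cubic term, which contributes $-\beta^2 s^3/8$; adding this to the $-\beta^2 s^3/24$ from $\beta^2 t^3/24$ yields precisely $-\beta^2 s^3/6$. This balancing of constants is the real content of the lemma.

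Finally, I would have to check that every remaining term that has been thrown away is either explicitly listed in the stated error or is $o(1)$ under the hypotheses $|w| \ll \sqrt{\rho/\beta}$ and $s \ll \rho^{1/4}\beta^{-3/4}$ (together with \eqref{A1}). The main bounds to verify are: $\beta w^2/\rho = o(1)$ (immediate from $|w|\ll\sqrt{\rho/\beta}$); the higher-order tails $\beta^j s^j \rho^{3-j}/\beta^{j-1}$ for $j\ge 4$ coming from the geometric series, which are bounded by $(\beta s/\rho)^{j-3}\cdot\beta^2 s^3$ and therefore dominated by $\beta^3 s^4/\rho$, itself $o(1)$ under the $s$-hypothesis; and the mixed tails $\beta^2 s^2 w/\rho, \beta^2 s w^2/\rho^2$, etc., each of which is a product of two $o(1)$ quantities of matching scales. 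The main obstacle, then, is not conceptual — it is the bookkeeping required to check that no term survives outside the ones listed and that the constants of the surviving terms balance as claimed; the exponent-cancellations for the $\rho^2 s$ and $\rho\beta s^2$ coefficients, and the careful extraction of $-\beta^2 s^3/6$ from two different sources, are the most error-prone pieces of the calculation.
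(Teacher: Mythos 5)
Your proposal is correct and follows essentially the same route as the paper's proof: substitute $t=\rho/\beta-s$, $x=\rho^2/2\beta-w$ into the explicit density \eqref{ptxy}, expand $1/t$ as a geometric series, and verify the coefficient cancellations term-by-term. The only difference is organizational (you sort by powers of $y$ while the paper expands $(w+y)^2$ as a block), and your identification of the key cancellations — the $\rho^2 s$, $\beta\rho s^2$ and $\rho w$ terms vanishing, the $-\rho^3/3\beta$ collapse, the $-\beta^2 s^3/6$ coming from $-\beta^2 s^3/8 - \beta^2 s^3/24$, and the two $\beta ws/2$ contributions — matches the paper's computation exactly.
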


It follows from (\ref{A1}) that if $|y| \lesssim \sqrt{\rho/\beta}$, then the last four terms inside the exponential in (\ref{ptestimate}) are all $o(1)$.  Among the other terms inside the exponential in (\ref{ptestimate}), only the $-\beta y^2/2 \rho$ term involves $y$.  Therefore, Lemma \ref{firstmom} indicates that for $x$ sufficiently close to $\rho^2/2 \beta$, the empirical distribution of particles at time $t$ closely matches the Gaussian distribution with mean $0$ and variance $\rho/\beta$.

Our remaining approximations pertain to the process in which particles are killed when they reach $L_A$.  As long as $t$ is large enough, and $x$ and $y$ are sufficiently close to $L_A$, the right-hand side of (\ref{ptKeq}) can be approximated by its leading term.  

\begin{Lemma}\label{dapprox4}
Suppose that $A\in\R$, $t \geq 0$, $x < L_A$ and $y < L_A$. Then
\begin{equation}\label{densapprox}
p_t^{L_A}(x,y) = \frac{(2 \beta)^{1/3} e^{-\beta A t/\rho}}{(Ai'(\gamma_1))^2} e^{\rho x} \alpha(L_A - x) e^{-\rho y} \alpha(L_A - y) \big(1 + E_A(t,x,y) \big)
\end{equation}
where $$|E_A(t,x,y)| \lesssim \sum_{k=2}^{\infty} e^{(\gamma_1 - \gamma_k)((2 \beta)^{1/6}((L_A-x)^{1/2} + (L_A-y)^{1/2}) - 2^{-1/3} \beta^{2/3} t)}.$$
In particular, if there exists a strictly positive constant $C$ such that
\begin{equation}\label{dcond}
(2 \beta)^{1/6}\big((L_A-x)^{1/2} + (L_A-y)^{1/2}\big) \leq 2^{-1/3} \beta^{2/3} t - C
\end{equation}
then there is a positive constant $C_0$ such that
\begin{equation}\label{densapproxrough}
p_t^{L_A}(x,y) \leq C_0 \beta^{1/3} e^{-\beta A t/\rho} e^{\rho x} \alpha(L_A - x) e^{-\rho y} \alpha(L_A - y),
\end{equation}
and if 
\begin{equation}\label{dcondbig}
(2 \beta)^{1/6}\big((L_A-x)^{1/2} + (L_A-y)^{1/2}\big) - 2^{-1/3} \beta^{2/3} t \rightarrow -\infty,
\end{equation}
then the error term $E_A(t, x, y)$ in (\ref{densapprox}) is $o(1)$.
\end{Lemma}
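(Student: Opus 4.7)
The plan is to start from the exact series representation \eqref{ptKeq} with $\ell = L_A$ and extract the $k=1$ term as the leading asymptotic, showing that all higher terms are controlled by a geometric-type series with the claimed exponent.

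The first step is algebraic. Using the definition \eqref{LAdef} of $L_A$, a direct computation gives
\begin{equation*}
\beta\bigl(L_A + (2\beta)^{-1/3}\gamma_k\bigr) - \tfrac{\rho^2}{2}  =  2^{-1/3}\beta^{2/3}(\gamma_k - \gamma_1) - \tfrac{\beta A}{\rho},
\end{equation*}
so that each term in \eqref{ptKeq} carries a common factor $e^{-\beta A t/\rho}$ together with a $k$-dependent decay $e^{-2^{-1/3}\beta^{2/3}(\gamma_1 - \gamma_k)t}$, which equals $1$ when $k=1$. Factoring out the $k=1$ term produces the leading prefactor in \eqref{densapprox}, and the remainder takes the form
\begin{equation*}
E_A(t,x,y) = \sum_{k=2}^{\infty} \frac{(Ai'(\gamma_1))^2}{(Ai'(\gamma_k))^2}\cdot\frac{Ai(u+\gamma_k)}{Ai(u+\gamma_1)}\cdot\frac{Ai(v+\gamma_k)}{Ai(v+\gamma_1)}\cdot e^{-2^{-1/3}\beta^{2/3}(\gamma_1-\gamma_k)t},
\end{equation*}
where $u = (2\beta)^{1/3}(L_A - x)$ and $v = (2\beta)^{1/3}(L_A - y)$ are both nonnegative.

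The heart of the proof is the uniform-in-$k$ Airy ratio bound: for all $k\ge 2$ and $u\ge 0$,
\begin{equation*}
\frac{|Ai'(\gamma_1)|}{|Ai'(\gamma_k)|}\cdot\frac{|Ai(u+\gamma_k)|}{Ai(u+\gamma_1)} \;\lesssim\; e^{(\gamma_1-\gamma_k)\,u^{1/2}}. \qquad(\ast)
\end{equation*}
I would prove $(\ast)$ by splitting into regimes. For $u$ near $0$, L'Hopital at $u=0$ gives the ratio $|Ai(u+\gamma_k)|/Ai(u+\gamma_1)\to |Ai'(\gamma_k)|/|Ai'(\gamma_1)|$, so the left-hand side tends to $1$. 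For $u$ in a bounded interval $[0,M]$, $(\ast)$ follows from continuity and the standard uniform bound $|Ai(w)|\le C(1+|w|)^{-1/4}$ combined with the fact that $Ai(u+\gamma_1)$ is bounded below by a constant multiple of $\min(u,1)$ on such an interval. For $u\gg |\gamma_k|$, both Airy functions are in the exponential-decay regime, and the classical asymptotic $Ai(w)\sim(2\sqrt{\pi})^{-1}w^{-1/4}e^{-(2/3)w^{3/2}}$ yields
\begin{equation*}
\frac{Ai(u+\gamma_k)}{Ai(u+\gamma_1)} \sim \Bigl(\tfrac{u+\gamma_1}{u+\gamma_k}\Bigr)^{1/4} \exp\!\Bigl(-\tfrac{2}{3}\bigl[(u+\gamma_k)^{3/2}-(u+\gamma_1)^{3/2}\bigr]\Bigr);
\end{equation*}
the elementary inequality $(u+\gamma_k)^{3/2}-(u+\gamma_1)^{3/2}\ge \tfrac{3}{2}(\gamma_k-\gamma_1)\,u^{1/2}$ gives the desired exponential factor, while the factor $1/|Ai'(\gamma_k)|$ absorbs the polynomial prefactor since $|Ai'(\gamma_k)|\sim\pi^{-1/2}|\gamma_k|^{1/4}\to\infty$. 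The intermediate regime $u\asymp|\gamma_k|$ is handled by patching together the oscillatory and exponential-decay estimates for $Ai$ across its first turning point.

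Granted $(\ast)$, applying it twice (once in $u$, once in $v$) and combining with the time-decay factor immediately yields the series bound for $|E_A(t,x,y)|$. The consequences in the statement then follow quickly: under \eqref{dcond} each summand is at most $e^{-C(\gamma_1-\gamma_k)}$, and the tail $\sum_{k\ge 2} e^{-C(\gamma_1-\gamma_k)}$ converges because $\gamma_1-\gamma_k\sim(3\pi k/2)^{2/3}\to\infty$, giving the rough bound \eqref{densapproxrough} with $C_0 = (2\beta)^{1/3}(Ai'(\gamma_1))^{-2}(1+\sum_{k\ge 2} e^{-C(\gamma_1-\gamma_k)})$; and under \eqref{dcondbig} the $k=2$ term already tends to $0$, while the tail is dominated by a convergent geometric-type series, so dominated convergence gives $E_A(t,x,y)=o(1)$. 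The main obstacle I anticipate is verifying $(\ast)$ uniformly in $k$, particularly in the intermediate regime $u\asymp |\gamma_k|$ where $Ai(u+\gamma_k)$ is near its first positive zero, requiring careful bookkeeping of the oscillatory-to-exponential transition of the Airy function.
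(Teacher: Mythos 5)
Your approach is essentially the paper's: extract the $k=1$ term from the exact series \eqref{ptKeq}, and control the tail by a uniform-in-$k$ Airy ratio estimate. Your key bound $(\ast)$ is exactly the paper's Lemma \ref{Airyratlem} combined with the asymptotic $|Ai'(\gamma_k)|\asymp k^{1/6}$ (the paper writes the bound as $|Ai(\gamma_k+z)|\lesssim k^{1/6}e^{(\gamma_1-\gamma_k)z^{1/2}}Ai(\gamma_1+z)$ and cancels the $k^{1/6}$ against the derivative ratio at the end), and the large-$u$ argument via the asymptotic $Ai(w)\sim (2\sqrt\pi)^{-1}w^{-1/4}e^{-(2/3)w^{3/2}}$ and the elementary inequality on $(u+\gamma_k)^{3/2}-(u+\gamma_1)^{3/2}$ is the same.

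However, your treatment of the regime near $u=0$ has a genuine gap. L'H\^opital gives the pointwise limit $1$ as $u\to 0$ for each fixed $k$, but not a bound that is uniform in $k$ for small positive $u$. Your fallback --- ``continuity plus the uniform bound $|Ai(w)|\lesssim(1+|w|)^{-1/4}$'' --- fails in exactly the same range: for $u\in(0,M]$ and large $k$ it gives $|Ai(u+\gamma_k)|\lesssim |\gamma_k|^{-1/4}\asymp k^{-1/6}$, and after dividing by $Ai(u+\gamma_1)\gtrsim u$ and by $|Ai'(\gamma_k)|\asymp k^{1/6}$ you obtain $\lesssim k^{-1/3}/u$, which is unbounded as $u\downarrow 0$. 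The correct fix, which is what the paper does, is to use the mean value theorem together with the growth bound on $Ai'$: for $u\in[0,1]$, $|Ai(u+\gamma_k)|=|Ai(u+\gamma_k)-Ai(\gamma_k)|\leq u\sup_{[\gamma_k,\gamma_k+1]}|Ai'|\lesssim u|\gamma_k|^{1/4}\lesssim u\,k^{1/6}$, which combined with $Ai(u+\gamma_1)\gtrsim u$ and $|Ai'(\gamma_k)|^{-1}\asymp k^{-1/6}$ gives the needed $O(1)$ bound. Conversely, the intermediate regime $u\asymp|\gamma_k|$ that you flag as the main obstacle is in fact unproblematic and requires no turning-point analysis: one simply bounds $|Ai(u+\gamma_k)|$ by its global supremum, bounds $Ai(u+\gamma_1)$ from below via \eqref{Airyasymp}, and checks that $(2/3)(\gamma_1+z)^{3/2}\leq(\gamma_1-\gamma_k)z^{1/2}+O(1)$ on $A\leq z\leq -\gamma_k+A$, which is an elementary calculation.
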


We will also need several additional formulas that can be used when (\ref{dcond}) is not satisfied.  Lemma \ref{dapprox1} is most useful when $t \leq \rho^{-2}$, while Lemma \ref{dapprox2} is useful for slightly larger values of $t$, particularly when both $x$ and $y$ are close to $L_A$.  Lemma \ref{dapprox3} is useful when $x$ is close to $L_A$, and $y$ is far enough away from $L_A$ that a particle going from $x$ to $y$ is unlikely to be affected by the right boundary at $L_A$, unless it hits the boundary almost immediately.

\begin{Lemma}\label{dapprox1}
For all $t \geq 0$, $\ell\ge 0$, $x < \ell$, and $y < \ell$, we have
\[p_t^{\ell}(x,y) \le \frac{1}{\sqrt{2\pi t}} \exp\bigg(\rho x - \rho y - \frac{(y-x)^2}{2t} - \frac{\rho^2 t}{2} + \beta \ell t\bigg).\]
\end{Lemma}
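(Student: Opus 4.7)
The proof is a direct consequence of the many-to-one representation already derived in the excerpt, combined with two elementary bounds: a pointwise bound on the clock $\int_0^t \beta B_u\,du$ on the survival event, and the trivial bound of the killed Brownian transition density by the free one.

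\textbf{Key steps.} First, I would recall the representation obtained just above equation \eqref{ptKeq},
\[
p_t^{\ell}(x,y)\,dy \;=\; e^{\rho x - \rho y - \rho^2 t/2}\, \E_x\!\left[e^{\int_0^t \beta B_u\,du}\,\1_{\{T_\ell > t\}}\,\1_{\{B_t \in dy\}}\right],
\]
where $(B_u)_{u\ge 0}$ is a standard Brownian motion started from $x$ and $T_\ell=\inf\{u\ge 0:B_u\ge\ell\}$. On the event $\{T_\ell>t\}$ we have $B_u < \ell$ for all $u\in[0,t]$, and since $\beta>0$ this forces $\int_0^t \beta B_u\,du \le \beta\ell t$ pointwise. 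Pulling this deterministic bound out of the expectation gives
\[
p_t^{\ell}(x,y)\,dy \;\le\; e^{\rho x - \rho y - \rho^2 t/2 + \beta \ell t}\, \P_x\bigl(B_t \in dy,\, T_\ell > t\bigr).
\]

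Next, I would drop the indicator of survival to bound $\P_x(B_t\in dy,\,T_\ell>t)\le\P_x(B_t\in dy)$, and then substitute the Gaussian density of Brownian motion,
\[
\P_x(B_t\in dy) \;=\; \frac{1}{\sqrt{2\pi t}}\, e^{-(y-x)^2/(2t)}\,dy.
\]
Combining the two displays yields precisely the claimed inequality. Dividing through by $dy$ is justified because both sides are (non-negative) densities with respect to Lebesgue measure.

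\textbf{Main obstacle.} There is essentially none; the argument is a one-line estimate once the many-to-one formula is in hand. The only thing to be a little careful about is verifying that the sign condition $\beta>0$, guaranteed by the standing assumptions, is what allows the pointwise bound $\int_0^t\beta B_u\,du\le\beta\ell t$ on $\{T_\ell>t\}$. The hypothesis $\ell\ge 0$ is not used in the bound itself, but reflects the regime ($\ell=L_A$ with $A$ not too large) in which this crude estimate will later be applied.
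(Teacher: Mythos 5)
Your proof is correct and is essentially the same argument as the paper's: bound the branching factor $e^{\int_0^t \beta B_u\,du}$ by $e^{\beta\ell t}$ using the killing constraint $B_u<\ell$, drop the survival indicator, and use the Gaussian transition density. The only cosmetic difference is that you start from the Girsanov-transformed representation (displayed just above \eqref{ptKeq}) with a driftless Brownian motion, whereas the paper applies the many-to-one formula \eqref{many1} directly to Brownian motion with drift $-\rho$; these are equivalent.
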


\begin{Lemma}\label{dapprox2}
For all $t \geq 0$, $\ell\ge 0$, $x < \ell$, and $y < \ell$, we have
\[p_t^\ell(x,y) \lesssim \frac{(\ell - x)(\ell - y)}{t^{3/2}} \exp \bigg( \rho x - \rho y - \frac{(y - x)^2}{2t} - \frac{\rho^2 t}{2} + \beta \ell t \bigg).\]
\end{Lemma}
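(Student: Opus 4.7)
The plan is to use the Feynman--Kac/Girsanov representation for $p_t^\ell(x,y)$ already displayed in the paragraph preceding \eqref{ptKeq}, which reads
\[p_t^\ell(x,y)\,dy = e^{\rho x - \rho y - \rho^2 t/2 + \beta \ell t}\,\E_{\ell - x}\!\left[e^{-\int_0^t \beta B'_u\,du}\,\ind_{\{T'_0 > t\}}\,\ind_{\{\ell - B'_t \in dy\}}\right],\]
where $B'$ is a standard Brownian motion started at $\ell - x > 0$ and $T'_0$ is its first hitting time of zero. Since $\beta > 0$ and $B'_u \geq 0$ for $u < T'_0$, the Feynman--Kac weight $e^{-\int_0^t \beta B'_u\,du}$ is at most $1$ on the event $\{T'_0 > t\}$, so I can drop it and reduce the problem to bounding the sub-density of a standard Brownian motion started at $a := \ell - x$, killed at zero, and evaluated at $b := \ell - y$. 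Note $b - a = x - y$ and both $a, b > 0$.

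The key step is then to invoke the reflection principle, which gives the exact formula
\[q_t^0(a, b) = \frac{1}{\sqrt{2\pi t}}\Big(e^{-(b-a)^2/2t} - e^{-(b+a)^2/2t}\Big) = \frac{e^{-(b-a)^2/2t}}{\sqrt{2\pi t}}\bigl(1 - e^{-2ab/t}\bigr).\]
Applying the elementary inequality $1 - e^{-u} \leq u$ for $u \geq 0$ with $u = 2ab/t$ produces the refinement
\[q_t^0(a, b) \lesssim \frac{ab}{t^{3/2}}\,e^{-(b-a)^2/2t}.\]
Substituting back $a = \ell - x$, $b = \ell - y$ and multiplying by the deterministic exponential prefactor $e^{\rho x - \rho y - \rho^2 t/2 + \beta \ell t}$ yields exactly the claimed bound.

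The proof is essentially a direct calculation and I anticipate no real obstacle. The only subtlety worth flagging is that Lemma \ref{dapprox1} is recovered by simply bounding the killed-Brownian density by the free Gaussian density $(2\pi t)^{-1/2} e^{-(b-a)^2/2t}$, whereas Lemma \ref{dapprox2} squeezes out the additional factor $(\ell-x)(\ell-y)/t$ precisely by retaining the reflection correction and estimating it via $1 - e^{-u}\leq u$. No hypothesis beyond $x < \ell$ and $y < \ell$ is needed for this step, which is why the lemma is stated for arbitrary $\ell \geq 0$.
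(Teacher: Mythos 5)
Your proof is correct and takes essentially the same approach as the paper's: both bound the Feynman--Kac weight (the paper via $e^{\int_0^t\beta(B_s-\rho s)\,ds}\leq e^{\beta\ell t}$, you via $e^{-\int_0^t\beta B'_u\,du}\leq 1$ after factoring out $e^{\beta\ell t}$, which is the same estimate), then apply the reflection principle for the killed Brownian density and finish with the elementary bound $1-e^{-u}\leq u$, exactly as the paper does in its final display. Your presentation is marginally cleaner by quoting the formula already displayed before \eqref{ptKeq}, but the argument is identical.
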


\begin{Lemma}\label{dapprox3}
Suppose $A\in\R$, $t \geq 2 \beta^{-2/3}$, $0 \leq L_A - x \lesssim \beta^{-1/3}$, and $y < L_A$.  Then
\begin{align*}
p_t^{L_A}(x,y) &\lesssim \frac{\beta^{1/3} (L_A - x)}{\sqrt{t}} \max\bigg\{1, \frac{1}{\beta^{1/3} t} \Big( L_A - y - \frac{\beta t^2}{2} \Big) \bigg\} \\
&\hspace{.2in}\times \exp \bigg( \rho x - \rho y - \frac{(y - x)^2}{2t} - \frac{\rho^2 t}{2} + \frac{\beta(y + x) t}{2} + \frac{\beta^2 t^3}{24} + \frac{1}{2\beta^{1/3} t} \Big(L_A - y - \frac{\beta t^2}{2} \Big) \bigg).
\end{align*}
\end{Lemma}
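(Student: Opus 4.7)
My plan is to start from the Salminen-style representation from Section~\ref{salminen_sec}. Writing $u = L_A - x$ and $v = L_A - y$, so that by hypothesis $0 \leq u \lesssim \beta^{-1/3}$ and $v > 0$, one has
\[p_t^{L_A}(x,y) = e^{\rho x - \rho y - \rho^2 t/2 + \beta L_A t}\,\hat p_t(u,v),\]
where $\hat p_t$ is the density of a Brownian motion killed at $0$ and at the exponential clock running at rate $\beta B_s$, as in \eqref{killedrepresentation}. The integration-by-parts identity $\int_0^t B_s\,ds = tB_t - \int_0^t s\,dB_s$, combined with the exponential martingale $\exp(\int_0^t \beta s\,dB_s - \beta^2 t^3/6)$, yields a Girsanov change of measure under which $\tilde B_s = B_s - \beta s^2/2$ is a standard Brownian motion; this rewrites the density as
\[\hat p_t(u,v) = e^{\beta^2 t^3/6 - \beta t v}\,\tilde q_t(u, w), \qquad w := v - \beta t^2/2,\]
where $\tilde q_t(a,b)$ is the sub-probability density at $b$ at time $t$ of a Brownian motion started at $a$ and killed at the concave parabolic barrier $s \mapsto -\beta s^2/2$. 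After combining exponentials (the quadratic $(y-x)^2/(2t)$ from the $p_t(x,y)$ side recombines with the parabolic shift), the lemma reduces to proving
\[\tilde q_t(u, w) \lesssim \frac{\beta^{1/3} u}{\sqrt t}\, \max\!\Bigl\{1, \tfrac{w}{\beta^{1/3} t}\Bigr\}\, \exp\!\Bigl(-\tfrac{(w-u)^2}{2t} + \tfrac{w}{2\beta^{1/3} t}\Bigr).\]

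The estimate on $\tilde q_t$ will be proved by splitting according to whether $w \leq C\beta^{1/3} t$ or not. In the first regime, the spectral expansion \eqref{salminen39} applied on $[0,t]$ has a leading first term controlled by the hypothesis $t \geq 2\beta^{-2/3}$ (which makes the tail terms negligible, essentially as in Lemma~\ref{dapprox4}), and the local expansion $Ai(\gamma_1 + z) = Ai'(\gamma_1) z + O(z^3)$ together with $(2\beta)^{1/3} u \lesssim 1$ extracts the prefactor $\beta^{1/3} u/\sqrt t$ from the factor $\alpha(u)$; here the max-factor equals $1$ and the exponential correction is of order one. In the second regime, the endpoint of $\tilde B$ lies well above the barrier at time $t$, and I would apply the Markov property of the parabolic-killed Brownian motion at an intermediate time $\tau \asymp \beta^{-2/3}$: the survival estimate over $[0,\tau]$, controlled by the leading term of the spectral expansion, contributes the factor $\beta^{1/3} u$, while the free transition density on $[\tau,t]$ (shifted by the remaining portion of the parabola) contributes both the factor $w/(\beta^{1/3} t)$ and the Gaussian decay $\exp(-(w-u)^2/(2t))$.

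The main obstacle I expect is extracting the exact exponential correction $\exp(w/(2\beta^{1/3} t))$ in the large-$w$ regime: this non-Gaussian factor does not fall out of a naive Markov decomposition, and capturing it will require either optimising the intermediate time $\tau$ as a function of $w$, or replacing the parabolic barrier by a tangent line at a cleverly chosen point and using a reflection-type estimate for Brownian motion killed at this affine barrier so that the discrepancy between tangent and parabola exactly produces the stated correction. A final step is to match the two regimes at the crossover $w \asymp \beta^{1/3} t$ and to verify that the implicit constants do not depend on $A$.
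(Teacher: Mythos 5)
Your central idea—apply the Chapman--Kolmogorov equation at an intermediate time $\tau\asymp\beta^{-2/3}$, control the short interval $[0,\tau]$ near the barrier to extract the factor $\beta^{1/3}(L_A-x)$, and use the free Gaussian density on $[\tau,t]$—is exactly what the paper does. The Girsanov reduction to the parabolic-barrier density $\tilde q_t$ is a legitimate reformulation, but it is not needed, and the paper works directly with $p^{L_A}_t$: it takes $s=\beta^{-2/3}$, bounds $p^{L_A}_s(x,z)$ by the reflection estimate of Lemma~\ref{dapprox2}, bounds $p^{L_A}_{t-s}(z,y)$ by the free density~\eqref{ptxy}, and evaluates one Gaussian integral; both the factor $\max\{1,\tfrac{1}{\beta^{1/3}t}(L_A-y-\beta t^2/2)\}$ (a positive-part moment of a Gaussian, cf.~\eqref{approx38eq}) and the exponential correction $\exp\bigl(\tfrac{1}{2\beta^{1/3}t}(L_A-y-\beta t^2/2)\bigr)$ come out of the algebra (see~\eqref{approx34eq}--\eqref{approx36eq}) at the fixed choice $s=\beta^{-2/3}$.

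The plan as written has two genuine gaps. First, the ``small $w$'' branch invokes the spectral expansion~\eqref{salminen39}, claiming the tail is negligible ``essentially as in Lemma~\ref{dapprox4}.'' This is not available: when $w=L_A-y-\beta t^2/2$ is of order $\beta^{1/3}t$ or smaller, $L_A-y\approx\beta t^2/2$, so $(2\beta)^{1/6}(L_A-y)^{1/2}\approx 2^{-1/3}\beta^{2/3}t$ and the hypothesis~\eqref{dcond} of Lemma~\ref{dapprox4} is at best marginal; the series remainder is not small. (Similarly, using a spectral ``leading term'' for the survival probability over the short window $[0,\tau]$ cannot work—there $2^{-1/3}\beta^{2/3}\tau = O(1)$, so~\eqref{dcond} fails outright; the correct tool on $[0,\tau]$ is Lemma~\ref{dapprox2}.) Second, you flag the factor $\exp\bigl(w/(2\beta^{1/3}t)\bigr)$ as the main obstacle in the large-$w$ regime and propose either optimising $\tau$ as a function of $w$ or replacing the parabola by a tangent line. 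Neither is needed: with $\tau=\beta^{-2/3}$ fixed, this factor is precisely the residual $\tfrac{\beta(L_A-y)s^2}{2t}-\tfrac{\beta^2s^2t}{4}$ left over after completing the square in $z$, so it appears automatically and uniformly in $y$. In particular, the two-regime split is unnecessary, and one should not expect the two branches to require different techniques: a single Gaussian computation covers all $y<L_A$.
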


The next result gives an estimate for the integral of the density for branching Brownian motion when the particles are killed at $L_A$ and the initial particle at $x$ is close to the right boundary.  The result can be compared to the formula (\ref{intpeq}) for the process without killing.

\begin{Lemma}\label{intpL}
Suppose $$t = \frac{\rho}{\beta} - s,$$ where $0 \leq s \ll t$, and suppose $A\in\R$ and $x < L_A$.  Then
$$\int_{-\infty}^{L_A} p_t^{L_A}(x,y) \: dy \lesssim \beta^{1/3} (L_A - x) \exp \bigg( \beta x t + \frac{\beta^2 t^3}{6} - \frac{\beta \rho t^2}{2} + \beta^{2/3} s \bigg).$$
\end{Lemma}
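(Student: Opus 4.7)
The plan is to split on the size of $L_A - x$ and use Lemma \ref{dapprox3} in the hard case. In the easy case $L_A - x \geq \beta^{-1/3}$, the bound $p_t^{L_A}(x,y) \leq p_t(x,y)$ together with (\ref{intpeq}) gives
\[\int_{-\infty}^{L_A} p_t^{L_A}(x,y)\,dy \leq \int_{\R} p_t(x,y)\,dy = \exp\!\Big(\beta x t + \tfrac{\beta^2 t^3}{6} - \tfrac{\beta \rho t^2}{2}\Big),\]
and the factors $\beta^{1/3}(L_A - x) \geq 1$ and $e^{\beta^{2/3} s} \geq 1$ are free, so the claimed inequality holds.

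For the main case $0 \leq L_A - x \leq \beta^{-1/3}$, note that $t \asymp \rho/\beta$, so the hypothesis $t \geq 2\beta^{-2/3}$ of Lemma \ref{dapprox3} is satisfied for large $n$ by (\ref{A1}). Write $G(y) = (L_A - y - \beta t^2/2)/(\beta^{1/3} t)$ and let $H(y) = \log(\sqrt{2\pi t}\, p_t(x,y))$ be the exponent appearing in (\ref{ptxy}). The elementary inequality $G\, e^{G/2} \leq (2/e)\, e^G$ (which follows from $\sup_G G\, e^{-G/2} = 2/e$) gives the pointwise bound $\max\{1, G(y)\}\, e^{G(y)/2} \leq e^{G(y)/2} + (2/e)\, e^{G(y)}$, so integrating the bound from Lemma \ref{dapprox3} yields
\[\int_{-\infty}^{L_A} p_t^{L_A}(x,y)\, dy \lesssim \frac{\beta^{1/3}(L_A - x)}{\sqrt{t}} \sum_{c \in \{1/2,\, 1\}} \int_{\R} e^{H(y) + c\, G(y)}\, dy.\]

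For each $c$ the integrand is a shifted Gaussian in $y$. Completing the square (and noting that the $c = 0$ case must recover (\ref{intpeq})) produces
\[\int_{\R} e^{H(y) + c\, G(y)}\, dy = \sqrt{2\pi t}\, \exp\!\Big(\beta x t + \tfrac{\beta^2 t^3}{6} - \tfrac{\beta \rho t^2}{2} + c \beta^{2/3} s + \tfrac{c(L_A - x)}{\beta^{1/3} t} + \tfrac{c^2}{2\beta^{2/3} t}\Big),\]
where the clean $c\beta^{2/3} s$ arises from the cancellation $c\rho/\beta^{1/3} - c\beta^{2/3} t = c\beta^{2/3} s$ after substituting $t = \rho/\beta - s$. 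Under the case hypothesis $L_A - x \leq \beta^{-1/3}$ and (\ref{A1}), the two trailing correction terms are each $o(1)$; cancelling $\sqrt{2\pi t}/\sqrt{t}$ then produces $\int_{-\infty}^{L_A} p_t^{L_A}(x,y)\, dy \lesssim \beta^{1/3}(L_A - x)\, \exp(\beta x t + \beta^2 t^3/6 - \beta\rho t^2/2 + \beta^{2/3} s)$.

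The main technical work is the completing-the-square calculation in the main case: one must carefully track the linear and constant contributions from the perturbation $c\, G(y)$ and use $t = \rho/\beta - s$ to verify that the leading $c\rho/\beta^{1/3}$ cancels against $-c\beta^{2/3} t$, leaving only $c\beta^{2/3} s$ and strictly lower-order corrections. Once that is in hand the rest is routine bookkeeping, since under $L_A - x \leq \beta^{-1/3}$ and (\ref{A1}) the remaining $c(L_A-x)/(\beta^{1/3}t)$ and $c^2/(2\beta^{2/3}t)$ terms are indeed $o(1)$.
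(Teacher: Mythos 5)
Your proof is correct, and it takes the same framework as the paper's (reduce to $L_A - x \leq \beta^{-1/3}$ via $p_t^{L_A}\le p_t$ and (\ref{intpeq}), then invoke Lemma \ref{dapprox3}) but handles the troublesome $\max\{1,G(y)\}$ factor in a genuinely different way. The paper completes the square once, extracts the factor $\exp(\beta^{2/3}s/2)$ from the resulting Gaussian kernel, and then interprets the remaining integral $\int \frac{1}{\sqrt{t}}\max\{1,G(y)\} e^{-(y-m)^2/2t}\,dy$ as $\sqrt{2\pi}\,\E[\max\{1,W\}]$ for a Gaussian $W$ with mean $\approx\beta^{2/3}s$ and variance $1/(\beta^{2/3}t)\to 0$, which contributes a second factor $\lesssim\exp(\beta^{2/3}s/2)$. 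You instead dispatch the $\max$ \emph{pointwise} via $Ge^{G/2}\le(2/e)e^G$, reducing everything to two plain Gaussian integrals $\int e^{H+cG}\,dy$ for $c\in\{1/2,1\}$, each of which can be computed explicitly by completing the square; the factor $e^{c\beta^{2/3}s}$ falls out cleanly from the cancellation $c\rho/\beta^{1/3}-c\beta^{2/3}t=c\beta^{2/3}s$, and the correction terms $c(L_A-x)/(\beta^{1/3}t)$ and $c^2/(2\beta^{2/3}t)$ are $o(1)$ under the case hypothesis and (\ref{A1}). Your route avoids the probabilistic reinterpretation and the variance-shrinking observation, at the modest cost of carrying two values of $c$ through the same computation; both arguments rest on essentially the same completed-square algebra, so the tradeoff is largely stylistic, but the pointwise bound $\max\{1,G\}e^{G/2}\le e^{G/2}+(2/e)e^G$ is a tidy trick that makes the source of the final $e^{\beta^{2/3}s}$ quite transparent.
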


\subsection{Second moment estimates}

To control the fluctuations in the process, we will need good second moment bounds.  Given $A \in \R$ and $t \geq 0$, let
\begin{equation}\label{Kdef}
l(t) = \frac{\beta t^2}{33}, \hspace{.5in}K_A(t) = L_A - \frac{l(t)}{2} = L_A - \frac{\beta t^2}{66}.
\end{equation}
There is nothing particularly special about these functions. We will need to split the integrals that arise in our second moment bounds into several cases, and these functions will give convenient points at which to split, taking into account condition \eqref{dcond} in Lemma \ref{dapprox4}. It is worth noting that when $t\asymp \rho/\beta$, $l(t)$ is of order $\rho^2/\beta$ which is the same order as $L$, so $K_A(t)$ is really a long way below $L$ (and $L_A$). On the other hand, when $t\asymp \beta^{-2/3}$, which is much smaller than $\rho/\beta$, we have $l(t)\asymp \beta^{-1/3}$, which is the same order as the second-order term in $L$.

Let $\varphi: (-\infty, L_A) \rightarrow \R$ be a measurable function.  Lemma \ref{2momprop} establishes a second moment bound for the quantity $V_{\varphi, n, A}(t)$ defined in (\ref{Vdef}).  This bound will help us to control the fluctuations of the number of particles at time $t$ in the interval $(K_A(t), L_A)$.  The particles outside this interval will be controlled by other methods.  Note that in Lemma \ref{2momprop}, the time $t$ and the function $\varphi$ are implicitly allowed to depend on $n$.

\begin{Lemma}\label{2momprop}
Fix $A \geq 0$, and let $\varphi: (-\infty, L_A) \rightarrow \R$ be a bounded measurable function such that $\varphi(y) = 0$ unless $K_A(t) < y < L_A$.
Consider the process in which there is initially one particle at $x$ and particles are killed when they reach $L_A$.  Suppose $K_A(t) < x < L_A$, and suppose $t \gtrsim \beta^{-2/3}$.  Then
\begin{equation}\label{Vvareq}
\E_x[V_{\varphi, n, A}(t)^2] \lesssim \frac{\beta^{2/3} e^{\rho L_A}}{\rho^4} \big( e^{\rho x} + \beta^{2/3} t z_A(x) \big).
\end{equation}
\end{Lemma}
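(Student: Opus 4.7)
My plan is to apply a many-to-two second-moment decomposition for the killed process and then control the resulting integrals using the density estimates from Lemmas \ref{dapprox4}--\ref{dapprox3}. Writing $u(t,x) = \E_x[V_{\varphi,n,A}(t)]$, the standard derivation via Duhamel applied to the backward equation $\partial_t w = \tfrac{1}{2} w'' - \rho w' + (b-d) w + 2b u^2$ gives
\[
\E_x[V_{\varphi,n,A}(t)^2] = \int_{-\infty}^{L_A} p_t^{L_A}(x,y)\, e^{2\rho y} \varphi(y)^2 \, dy + 2 \int_0^t \!\!\int_{-\infty}^{L_A} p_s^{L_A}(x,z)\, b(z)\, u(t-s,z)^2\, dz\, ds,
\]
the first term recording the contribution of individual particles and the second that of pairs of particles sharing a most-recent common ancestor splitting at position $z$ and time $s$. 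I would also use throughout that $b(z) \lesssim 1$ on the relevant range, which follows from \eqref{bndn}, (A3), and $\beta L_A \lesssim \rho^2 \ll 1$.

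For the diagonal term I would invoke Lemma \ref{dapprox4}. Since $\varphi$ is supported on $(K_A(t), L_A)$, $x$ lies there, and $t \gtrsim \beta^{-2/3}$, the constant $1/33$ in the definition of $l(t)$ ensures that condition \eqref{dcond} is comfortably satisfied. After substituting $u = L_A - y$ and using the near-origin expansion $\alpha(u) \lesssim (2\beta)^{1/3} |Ai'(\gamma_1)| u$ (valid because $Ai(\gamma_1) = 0$), the resulting $u$-integral is dominated by the scale $u \lesssim 1/\rho$, which is well within the linear range of $\alpha$ because $1/\rho \ll \beta^{-1/3}$ by \eqref{A1}, and evaluates to order $\beta^{1/3}/\rho^2$. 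This yields a diagonal bound of order $\beta^{2/3} e^{\rho L_A} z_A(x)/\rho^2$, which is dominated by the desired $\beta^{2/3} e^{\rho L_A} e^{\rho x}/\rho^4$ since $\alpha(L_A - x) \rho^2 \lesssim \rho^2 \ll 1$ by \eqref{A2}.

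For the pair term I would first bound $u(t-s, z)$ by many-to-one. In the bulk regime where $t-s \gtrsim \beta^{-2/3}$ and $z$ is close enough to $L_A$ for \eqref{dcond} to hold, Lemma \ref{dapprox4} combined with $\int_0^{l(t)/2} \alpha(w)\, dw \lesssim \beta^{-1/3}$ yields $u(t-s,z) \lesssim z_A(z)$ (with implicit constant depending on $\|\varphi\|_\infty$). Applying Lemma \ref{dapprox4} once more to $p_s^{L_A}(x,z)$ reduces the $z$-integral to a constant multiple of $\beta^{1/3} z_A(x) e^{\rho L_A} \int_0^\infty e^{-\rho u} \alpha(u)^3\, du$. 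Using the same approximation $\alpha(u) \sim (2\beta)^{1/3} Ai'(\gamma_1) u$ on the dominant range $u \lesssim 1/\rho$, the remaining integral is of order $\beta/\rho^4$. Integrating the resulting $\beta^{4/3} z_A(x) e^{\rho L_A}/\rho^4$ bound in $s$ over $[0, t]$ produces a contribution of order $\beta^{2/3} e^{\rho L_A} \rho^{-4} \cdot \beta^{2/3} t z_A(x)$, matching the second term of \eqref{Vvareq} precisely.

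The main obstacle will be handling the boundary regions of the pair integral where either $s$ or $t-s$ is of order less than $\beta^{-2/3}$, or where $z$ lies too far from $L_A$ for Lemma \ref{dapprox4} to apply. In those regions I would substitute the cruder bounds of Lemmas \ref{dapprox1}--\ref{dapprox3} for the density, and control $u(t-s, z)$ either by direct integration against these cruder density bounds or by the total-mass estimate \eqref{intpeq}. A careful Gaussian computation (closely paralleling the corresponding second-moment analysis in \cite{bbs3}) should show that each of these boundary contributions is of the same order as, or smaller than, the bulk contribution computed above, so that the bound \eqref{Vvareq} is preserved.
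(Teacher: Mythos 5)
Your proposal mirrors the paper's own proof: the same Ikeda--Nagasawa--Watanabe second-moment decomposition, the same leading-order Airy analysis for the diagonal and bulk pair terms via Lemma~\ref{dapprox4} and the linearization $\alpha(u)\lesssim\beta^{1/3}u$, and the same reliance on Lemmas~\ref{dapprox1}--\ref{dapprox3} for the boundary pieces; the computations you do carry out match. Be aware, though, that the boundary analysis you defer to a ``careful Gaussian computation'' is genuinely where most of the work lies---the paper's proof splits the $s$-range into five intervals and within the putative bulk range $s\in[8\beta^{-2/3},t/2]$ needs the additional threshold $s\gtrsim\sqrt{(L_A-x)/\beta}$ (and a restriction $L_A-z\lesssim\beta s^2$) before condition~\eqref{dcond} validates Lemma~\ref{dapprox4} for $p_s^{L_A}(x,z)$, a case your description of the boundary as simply ``$s$ or $t-s$ of order less than $\beta^{-2/3}$'' does not capture.
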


To prove Theorem \ref{GaussThm} and establish that the empirical distribution of particles is asymptotically Gaussian, we will also need to control the fluctuations in the number of particles close to the origin after a time that is approximately $\rho/\beta$.

\begin{Lemma}\label{mainvarg}
Consider the process in which there is initially one particle at $x$ and particles are killed when they reach $L$.
Write $$t = \frac{\rho}{\beta} - s.$$  Suppose $0 \leq L - x \ll \rho^2/\beta$ and $0 \leq s \ll t$.  Suppose $g: \R \rightarrow \R$ is a bounded measurable function.  Then
\begin{equation}\label{mainvargeq}
\E_x \bigg[ \bigg( \sum_{i=1}^{N(t)} g\big( X_i(t) \sqrt{\beta/\rho} \big) \bigg)^2 \bigg] \lesssim \frac{\beta^{2/3}}{\rho^4} \exp \bigg( \rho x + \rho L - \frac{2 \rho^3}{3 \beta} - \frac{\beta^2 s^3}{3}\bigg).
\end{equation}
\end{Lemma}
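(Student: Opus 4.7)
My plan is to apply the standard many-to-two second-moment decomposition for branching Brownian motion. Writing $Z_g(t) = \sum_{i=1}^{N(t)} g(X_i(t)\sqrt{\beta/\rho})$, this reads
\begin{equation*}
\E_x[Z_g(t)^2] = \E_x\bigg[\sum_{i=1}^{N(t)} g\big(X_i(t)\sqrt{\beta/\rho}\big)^{2}\bigg] + 2\int_0^t\!\!\int_{-\infty}^L p_u^L(x,y)\, b(y)\, U(y,t-u)^2\, dy\, du,
\end{equation*}
where $U(y,s) = \E_y[Z_g(s)]$. The diagonal term is at most $\|g\|_\infty^2\, \E_x[N(t)]$; by Lemma~\ref{intpL} it is of order $\beta^{1/3}(L-x)\exp(\rho x - \rho^3/(3\beta) - \beta^2 s^3/6 + \beta^{2/3}s)$, and is dominated by the target bound since the target carries the additional factor $e^{\rho L - \rho^3/(3\beta)} = e^{\rho^3/(6\beta) + O(\rho/\beta^{1/3})}$, which is enormous by \eqref{A1}. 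The main task is therefore to estimate the branching integral.

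For the first-moment factor $U(y,t-u)$ I would invoke Lemma~\ref{firstmom}. Substituting $\tilde z = z\sqrt{\beta/\rho}$ inside the integral defining $U$, the Jacobian $\sqrt{\rho/\beta}$ cancels the $\sqrt{\beta/(2\pi\rho)}$ prefactor from the Gaussian approximation, giving
\begin{equation*}
U(y,t-u) \lesssim \|g\|_\infty \exp\!\bigg(\rho y - \frac{\rho^3}{3\beta} + \beta w_y(s+u) - \frac{\beta^2 (s+u)^3}{6}\bigg)
\end{equation*}
with $w_y = \rho^2/(2\beta) - y$, valid in the bulk regime where Lemma~\ref{firstmom} applies.

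For $p_u^L(x,y)$ I would split the $u$-range. When $u \gtrsim \beta^{-2/3}$ so that condition \eqref{dcond} holds, Lemma~\ref{dapprox4} gives $p_u^L(x,y) \lesssim \beta^{1/3} e^{\rho x}\alpha(L-x)\, e^{-\rho y}\alpha(L-y)$. Substituting $y = L-\eta$ and using the linearisation $\alpha(\eta)\sim (2\beta)^{1/3}|Ai'(\gamma_1)|\eta$ near the right edge, the bound $b(L-\eta)\lesssim \rho^2$ in the relevant range, and the exponential cutoff at $\eta\sim 1/\rho$, the $\eta$-integral evaluates to order $\beta^{1/3}$. The remaining $u$-integral reduces, under $v = s+u$, to $\int_s^{s+t}\exp(\gamma_1(2\beta)^{2/3}v - \beta^2 v^3/3)\, dv$, controlled by $\beta^{-2/3}e^{-\beta^2 s^3/3}$ via a Laplace-type estimate. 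For the small-$u$ regime where \eqref{dcond} fails, I would use Lemmas~\ref{dapprox1}, \ref{dapprox2} and \ref{dapprox3}, with the survival-probability suppression $(L-x)(L-y)/u^{3/2}$ ensuring this range contributes at most at the same order.

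The main obstacle is extracting the exact prefactor $\beta^{2/3}/\rho^4$ when assembling all the pieces. Unlike Lemma~\ref{2momprop}, where the initial particle is near the right edge and everything is controlled by the Airy expansion, here $L-x$ can be as large as $\rho^2/\beta$, so one must combine the Gaussian-bulk estimate for $U$ with the boundary-suppression of $p_u^L$ near $y = L$. In particular, the $\rho^2$ that appears through $b(y) \approx \beta L$ near the right edge must be offset by the $\beta^{1/3}/\rho^2$ produced by the $\eta$-integration of $\eta\, e^{-\rho\eta}$, with the remaining powers of $\beta$ combining with the $u$-integral to give the stated scaling. Overall the argument parallels the second-moment estimates in \cite{bbs, bbs3}, adapted to the inhomogeneous branching rate via the Airy-based density estimates of Section~\ref{approxden}.
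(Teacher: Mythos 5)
Your proposal follows the paper's decomposition, but the core density estimates you invoke do not apply in the relevant parameter regime, and this is a genuine gap rather than a matter of missing prefactor bookkeeping.

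The claim that $u\gtrsim\beta^{-2/3}$ implies condition \eqref{dcond} is false here. Condition \eqref{dcond} requires $(2\beta)^{1/6}\big[(L-x)^{1/2}+(L-y)^{1/2}\big]\le 2^{-1/3}\beta^{2/3}u-C$, and since the hypothesis of Lemma~\ref{mainvarg} only gives $L-x\ll\rho^2/\beta$, the left side can be as large as $\beta^{1/6}\cdot\rho\beta^{-1/2}=\rho\beta^{-1/3}$, which is $\gg 1$ by \eqref{A1}; so with $u\asymp\beta^{-2/3}$ the right side is $O(1)$ and the inequality fails badly. Worse, the branching position $y$ (your $y$, the paper's $z$) ranges over all of $(-\infty,L)$, so $L-y$ is unbounded, and the linearisation $\alpha(\eta)\sim(2\beta)^{1/3}|Ai'(\gamma_1)|\eta$ with a cutoff at $\eta\sim 1/\rho$ is not justified across the integration domain. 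Consequently Lemma~\ref{dapprox4} cannot be used to bound $p_u^L(x,y)$ here, and the Airy factor $\alpha(L-x)$ you introduce does not appear in the true answer: the stated bound carries $e^{\rho x}$, not $z_A(x)$. Similarly, applying Lemma~\ref{firstmom} to $U(y,t-u)$ requires $|\rho^2/(2\beta)-y|\ll\sqrt{\rho/\beta}$ and $s+u\ll\rho^{1/4}\beta^{-3/4}$, both of which fail over most of the $(y,u)$ domain of integration, and the error terms in \eqref{ptestimate} are uncontrolled for large $|y|$.

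The paper's proof sidesteps both problems. It bounds $g^2\le\|g\|^2$ immediately so that the inner factor becomes the plain total-mass integral $\int_{-\infty}^L p^L_{t-u}(z,y)\,dy$, which is handled exactly via \eqref{intpeq} (and via Lemma~\ref{intpL} for small $u$) without any Gaussian or Airy approximation. For $p_u^L(x,z)$, the paper never uses Lemma~\ref{dapprox4}; instead it takes the pointwise minimum of the three elementary bounds from Lemma~\ref{dapprox1}, Lemma~\ref{dapprox2}, and \eqref{ptxy}, then splits the $u$-integral into four regimes ($u\le\rho^{-2}$, $\rho^{-2}<u<\rho^{1/2}\beta^{-5/6}$, $\rho^{1/2}\beta^{-5/6}\le u<r\rho/\beta$, $u\ge r\rho/\beta$) and evaluates each with Lemma~\ref{BMmoments}, Lemma~\ref{32int}, and Laplace-type estimates. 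If you want to salvage your approach you would need to replace the Airy/Gaussian bounds by the raw kernel estimates throughout, which essentially turns it into the paper's argument.
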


\noindent Because the proofs of Lemmas \ref{2momprop} and \ref{mainvarg} are rather tedious, we defer them until section \ref{2momproof}.

\subsection{Estimates for the particles that reach $L_A$}\label{estimates_LA_sec}

Fix $A \in \R$.  We estimate here the rate at which particles reach the right boundary at $L_A$, when particles are killed upon hitting this level. We will begin by considering a more general right boundary at $\ell$, before we later specialise to $\ell=L_A$. Suppose we start with one particle at $x<\ell$ and kill particles when they hit level $\ell$.  For $0 \leq u < v$, let $r_x^\ell(u,v)$ be the expected number of particles that hit $\ell$ between times $u$ and $v$.  Let ${\tilde r}_x^\ell(t)$ denote the rate at which particles hit $\ell$ at time $t$, defined by requiring that
$$r_x^\ell(u,v) = \int_u^v {\tilde r}_x^\ell(t) \: dt.$$

Recall that we defined $T_\ell = \inf\{t\ge 0 : B_t \ge \ell\}$, where $(B_t)_{t \geq 0}$ is Brownian motion, which means $\{T_{\ell} \leq t\}$ is the event that the Brownian particle hits $\ell$ before time $t$. To calculate $r_x^\ell(0,t)$, it is more convenient to think of freezing particles at $\ell$ (that is, they no longer move or branch) rather than killing them. By applying the many-to-one formula \eqref{many1} to this system, with $f(y) = \ind_{\{y\ge \ell\}}$, and additionally using (\ref{Girsanoveq}) to incorporate the drift, we get
\begin{equation}\label{rx0t}
r_x^\ell(0,t) = \E_x[e^{\rho x - \rho \ell - \rho^2 T_\ell/2 + \int_0^{T_\ell} \beta B_u du} \1_{\{T_\ell \le t\}}].
\end{equation}
Thus
\[\tilde r_x^\ell(t) dt = e^{\rho x - \rho \ell - \rho^2 t/2} \E_x[e^{\int_0^t \beta B_u du} \1_{\{T_\ell \in dt\}}],\]
and making the transformation $B'_u = \ell- B_u$ with $T'_0 = \inf\{t\ge 0 : B'_t \le 0\}$,
\[\tilde r_x^\ell(t) dt = e^{\rho x - \rho \ell - \rho^2 t/2 + \beta \ell t} \E_{\ell-x}[e^{-\int_0^t \beta B'_u du} \1_{\{T'_0 \in dt\}}].\]
Recognising the last expectation as the density of the hitting time of zero of the killed Brownian motion introduced in Section \ref{salminen_sec}, and recalling the notation of that section, from \eqref{salminen32} we have
\[\tilde r_x^\ell(t) = \frac{1}{2}e^{\rho x - \rho \ell - \rho^2 t/2 + \beta \ell t}\lim_{y\downarrow0}\frac{\partial}{\partial y} \hat p_t(\ell-x,y).\]
However, by comparing \eqref{killedrepresentation} and \eqref{ptl2}, we see that
$$p_t^{\ell}(x, \ell - y) = e^{\rho x + \rho y - \rho \ell - \rho^2 t/2 + \beta \ell t} \hat p_t(\ell - x, y),$$
and because $p_t^{\ell}(x, \ell) = 0$, it follows that 
\begin{equation}\label{rxformula}
\tilde r_x^\ell(t) = \frac{1}{2} \lim_{y \downarrow 0} \frac{\partial}{\partial y} e^{-\rho y} p_t^{\ell}(x, \ell - y) =
-\frac{1}{2} \lim_{y \uparrow \ell} \frac{\partial}{\partial y} p_t^{\ell}(x, y).
\end{equation}

Since we know that $p_t^\ell(x,\ell)=0$ and that the limit on the right-hand side of (\ref{rxformula}) exists, if $p_t^{\ell}(x, \ell - h) \leq dh$ for sufficiently small $h > 0$, then ${\tilde r}_x^\ell(t) \leq d$.  Therefore, the estimates on $p_t^{\ell}(x, \ell - h)$ from Lemmas \ref{dapprox4} and \ref{dapprox2} imply the following corollary.

\begin{Cor}\label{rtedge}
For all $t \geq 0$, $\ell>0$ and $x < \ell$, we have
\begin{equation}\label{rtdef}
{\tilde r}_x^{\ell}(t) \lesssim \frac{\ell - x}{t^{3/2}} \exp \bigg(\rho x - \rho \ell - \frac{(\ell - x)^2}{2t} - \frac{\rho^2 t}{2} + \beta \ell t \bigg).
\end{equation}
If $\ell=L_A$ for some fixed $A\in\R$ and, in addition, there exists a positive constant $C>0$ such that
\begin{equation}\label{Rtcond}
(2 \beta)^{1/6}(L_A - x)^{1/2} \leq 2^{-1/3} \beta^{2/3} t - C,
\end{equation}
then
\begin{equation}\label{rtlarge}
{\tilde r}_x^{L_A}(t) \lesssim \beta^{2/3} e^{-\beta A t/\rho} e^{\rho x} \alpha(L_A - x) e^{-\rho L_A}.
\end{equation}
\end{Cor}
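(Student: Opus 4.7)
The representation \eqref{rxformula} displayed just above the corollary expresses $\tilde r_x^\ell(t) = -\tfrac{1}{2}\lim_{y\uparrow \ell}\partial_y p_t^\ell(x,y)$, and since $p_t^\ell(x,\ell)=0$ the one-sided derivative from below equals
\[-\lim_{y\uparrow\ell}\partial_y p_t^\ell(x,y) = \lim_{h\downarrow 0}\frac{p_t^\ell(x,\ell-h)}{h}.\]
My plan for both parts is to substitute $y=\ell-h$ into a sharp upper bound on the transition density that visibly vanishes to first order as $y\uparrow\ell$, divide by $h$, and let $h\downarrow 0$. The two ingredients are already in place: Lemma \ref{dapprox2} for the first bound, and Lemma \ref{dapprox4} combined with the fact that $Ai(\gamma_1)=0$ for the second.

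For the first bound \eqref{rtdef}, Lemma \ref{dapprox2} produces an upper bound of the form $(\ell-y)\cdot F(y)$ with $F$ continuous at $\ell$. Setting $y=\ell-h$, dividing by $h$, and letting $h\downarrow 0$ replaces $(\ell-h-x)^2/(2t)$ with $(\ell-x)^2/(2t)$ and $e^{-\rho(\ell-h)}$ with $e^{-\rho \ell}$, reproducing precisely the right-hand side of \eqref{rtdef}.

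For the sharper bound \eqref{rtlarge}, I would apply Lemma \ref{dapprox4} at $y=L_A-h$. The vanishing of $\alpha(L_A-y)$ as $y\uparrow L_A$ comes from the Taylor expansion
\[\alpha(h) = Ai\bigl((2\beta)^{1/3}h + \gamma_1\bigr) = (2\beta)^{1/3}\,Ai'(\gamma_1)\,h + O\bigl((\beta^{1/3}h)^2\bigr),\]
using $Ai(\gamma_1)=0$. Plugging this into the representation in Lemma \ref{dapprox4}, using $e^{-\rho(L_A-h)}\to e^{-\rho L_A}$, dividing by $h$, and letting $h\downarrow 0$ gives
\[\tilde r_x^{L_A}(t) = \frac{(2\beta)^{2/3}}{2\,Ai'(\gamma_1)}\,e^{-\beta A t/\rho}\,e^{\rho x}\,\alpha(L_A-x)\,e^{-\rho L_A}\bigl(1+E_A(t,x,L_A)\bigr).\]
Condition \eqref{Rtcond} is exactly condition \eqref{dcond} from Lemma \ref{dapprox4} specialized to $y=L_A$, so the factor $1+E_A(t,x,L_A-h)$ stays uniformly bounded as $h\downarrow 0$ (the vanishing extra $(2\beta)^{1/6}h^{1/2}$ contribution in \eqref{dcond} can be absorbed into the positive constant $C$), which yields \eqref{rtlarge}.

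The only technical point worth flagging is the justification of passing $h\downarrow 0$ inside these bounds. In the first case this is immediate from continuity of every factor in $y$. In the second case one needs the series defining $E_A$ to be controlled uniformly as $y\uparrow L_A$, which is ensured by \eqref{Rtcond} together with the exponential decay in $k$ of the spectral data $e^{(\gamma_1-\gamma_k)(\,\cdot\,)}$ appearing in Lemma \ref{dapprox4}. Everything else is elementary substitution and bookkeeping, so I do not anticipate any serious obstacle beyond keeping track of the constants.
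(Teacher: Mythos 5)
Your proposal is correct and matches the paper's own (unwritten) argument exactly: the sentence immediately preceding the Corollary states that one deduces it from \eqref{rxformula}, the vanishing $p_t^\ell(x,\ell)=0$, and the bounds on $p_t^\ell(x,\ell-h)$ from Lemmas~\ref{dapprox2} and~\ref{dapprox4}. You have simply made explicit the difference-quotient limit, the Taylor expansion $\alpha(h)=(2\beta)^{1/3}Ai'(\gamma_1)h+O((\beta^{1/3}h)^2)$, and the observation that the $(2\beta)^{1/6}h^{1/2}$ term in \eqref{dcond} can be absorbed into the constant for small $h$.
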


We will also use the following lemma to estimate the number of descendants of a particle at $x$ that reach $L_A$.  This estimate involves bounding separately the expected number of descendants that hit $L_A$ during an initial time period, for which we can use the bound in (\ref{rtdef}), and the number of descendants that hit $L_A$ later, for which the bound in (\ref{rtlarge}) is valid.  This result will be proved in section \ref{rtsec}.

\begin{Lemma}\label{Rxlem}
For $A \in \R$, $x < L_A$, and $0\le s<t$, if we define $A_- = \max\{-A, 0\}$, then
\[r_x^{L_A}(s,t) \lesssim e^{\rho x} e^{-\rho L_A} e^{-\beta^2 s^3/9} + (t - s) e^{-\rho L_A} \beta^{2/3} z_A(x) e^{\beta A_- t/\rho}.\]
\end{Lemma}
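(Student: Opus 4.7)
The strategy is to split $r_x^{L_A}(s,t)=\int_s^t \tilde r_x^{L_A}(u)\,du$ at a critical time $t^\dagger$ chosen so that condition \eqref{Rtcond} just becomes satisfied; concretely, $t^\dagger$ is defined by $(2\beta)^{1/6}(L_A-x)^{1/2}=2^{-1/3}\beta^{2/3}t^\dagger-C$ for the constant $C$ appearing in Corollary \ref{rtedge}, so $t^\dagger$ is of order $\max(\beta^{-2/3},\beta^{-1/2}(L_A-x)^{1/2})$. This separates the regime where the sharp long-time bound \eqref{rtlarge} applies from the regime where only the first-passage-type bound \eqref{rtdef} is available.

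For $u\in[s\vee t^\dagger,t]$ I would apply \eqref{rtlarge} directly. Since $e^{-\beta Au/\rho}\le e^{\beta A_-t/\rho}$ on this interval, integrating gives
\[ \int_{s\vee t^\dagger}^{t} \tilde r_x^{L_A}(u)\,du \;\lesssim\; (t-s)\,\beta^{2/3}z_A(x)e^{-\rho L_A}e^{\beta A_-t/\rho}, \]
which is the second summand of the claimed bound.

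For $u\in[s,t^\dagger\wedge t]$ (nonempty only when $s<t^\dagger$) I would apply \eqref{rtdef}. Using $\beta L_A-\rho^2/2=2^{-1/3}(-\gamma_1)\beta^{2/3}-\beta A/\rho$ and the fact that \eqref{A1} forces $\beta/\rho=o(\beta^{2/3})$, the term $-\beta Au/\rho$ is absorbed into the $\beta^{2/3}u$ term with a slightly larger constant $C_1$, reducing the task to bounding
\[ e^{\rho x-\rho L_A}\int_s^{t^\dagger}\frac{W}{u^{3/2}}\exp\!\Bigl(-\frac{W^2}{2u}+C_1\beta^{2/3}u\Bigr)du, \qquad W:=L_A-x. \]
I would now use a Gaussian-type split $-W^2/(2u)=-\eta W^2/(2u)-(1-\eta)W^2/(2u)$ with $\eta\in(0,1)$ close to $1/9$, factor out the supremum of $-(1-\eta)W^2/(2u)+C_1\beta^{2/3}u$ over $[s,t^\dagger]$ (attained at $u=t^\dagger$ since the exponent is strictly increasing in $u$), and evaluate the remaining factor using the identity $\int_0^\infty\frac{W}{u^{3/2}}e^{-\eta W^2/(2u)}du=\sqrt{2\pi/\eta}$ obtained by the substitution $v=\eta W^2/(2u)$. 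Plugging in $t^\dagger\asymp\beta^{-1/2}W^{1/2}$ and the inequality $W\gtrsim\beta s^2/2$ (which is precisely the statement $s\le t^\dagger$, up to lower-order corrections) converts the dominant part of the supremum, namely $-(1-\eta)\beta^{1/2}W^{3/2}/(2\sqrt{2})$, into a bound of the form $-(1-\eta)\beta^2s^3/8$, and taking $\eta\approx 1/9$ produces the desired factor $e^{-\beta^2s^3/9}$ (times a constant). In the subcase $W\lesssim\beta^{-1/3}$ one has $t^\dagger\lesssim\beta^{-2/3}$, so $\beta^2s^3=O(1)$ and the target exponential is merely a constant; here one uses $\int_0^\infty\frac{W}{u^{3/2}}e^{-W^2/(2u)}du=\sqrt{2\pi}$ together with $e^{C_1\beta^{2/3}u}=O(1)$ on $[0,t^\dagger]$.

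The main obstacle is the short-time, large-$W$ regime. The exponent in \eqref{rtdef} is increasing in $u$, so it cannot be bounded simply by its value at the left endpoint $u=s$; the ceiling $u\le t^\dagger$ together with the coupling $W\gtrsim\beta s^2/2$ is what converts a bound in $\beta^{1/2}W^{3/2}$ at the right endpoint into the required bound in $\beta^2 s^3$. Calibrating the split parameter $\eta$ so that the positive contribution $C_1\beta^{2/3}t^\dagger$ (which is of order $\beta^{1/6}W^{1/2}$) does not spoil the coefficient $1/9$, while simultaneously handling the case where $A<0$ (so that $-\beta Au/\rho$ contributes positively), is the delicate bookkeeping.
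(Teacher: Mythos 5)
Your proposal is correct and essentially reproduces the paper's own proof: both split the integral at the same critical time $t_{x,A}$ (your $t^\dagger$), use \eqref{rtlarge} on the long-time piece, and on the short-time piece use \eqref{rtdef} together with a Gaussian split of the exponent whose supremum is taken at $t_{x,A}$, with the split parameter calibrated so that the resulting coefficient is at least $1/9$. The only cosmetic difference is that the paper establishes $r_x^{L_A}(0,t_{x,A})\lesssim e^{\rho x-\rho L_A}e^{-\beta^2 t_{x,A}^3/9}$ first and then invokes $s\le t_{x,A}$, whereas you target $e^{-\beta^2 s^3/9}$ directly via the equivalent coupling $L_A-x\gtrsim\beta s^2/2$.
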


Finally, we return to the original process in which particles are not killed when they reach $L_A$.  Lemma \ref{Lsurvive} below shows that the probability that a descendant of a particle that reaches $L_A$ will survive for a reasonably long time is of the order $\rho^2$.  Note that because a particle near $L_A$ has an effective branching rate of $b(x)-d(x) \approx \rho^2/2$, this result is to be expected in view of classical results on the survival probability of Galton-Watson processes.  A complication is that the birth rate changes as the particles move.  Note that in \cite{bbs, bms}, stronger results were obtained related to the number of surviving descendants of particles that reached the right boundary, and these results were essential for establishing convergence to a continuous-state branching process.  However, the weaker result established in Lemma \ref{Lsurvive} will be sufficient for our purposes.  Lemma~\ref{Lsurvive} will also be proved in section \ref{rtsec}.

\begin{Lemma}\label{Lsurvive}
Suppose at time zero, there is a single particle at $x=x_n$, with $\beta^{-1/3}\ll x\ll \beta^{-1}$.  There is a positive constant $C_1$ such that for sufficiently large $n$, the probability that any individual survives until time $C_1/(\beta x)$ is at most $2\beta x/\Delta$, where $\Delta$ is the constant from \eqref{A3}.
\end{Lemma}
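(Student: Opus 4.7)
The survival probability $u(t,y) := \P_y(N(t) \ge 1)$ satisfies the semilinear parabolic equation
$$u_t = \tfrac{1}{2} u_{yy} - \rho u_y + \beta y\, u - b(y) u^2, \qquad u(0,y) = 1,$$
obtained by conditioning on the first infinitesimal event of the initial particle. My plan is to bound $u(t_*, x)$ at $t_* := C_1/(\beta x)$ by a PDE super-solution argument combined with a truncation at a barrier slightly above $x$.

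I would fix a level $\ell = (1+\eta)x$ for a small positive constant $\eta$ and write
$$u(t_*, x) \le \widetilde u(t_*, x) + r_x^\ell(0, t_*),$$
where $\widetilde u$ is the survival probability of the process in which particles are killed upon any descendant reaching $\ell$, and the second term bounds the probability of any such hit. For $\widetilde u$ I would use the spatially constant candidate super-solution $\bar u(t) := p(t)$, where $p$ solves the logistic ODE
$$p'(t) = \beta\ell\, p(t) - \Delta\, p(t)^2, \qquad p(0) = 1,$$
whose explicit solution is $p(t) = \beta\ell/[\Delta - (\Delta-\beta\ell)e^{-\beta\ell t}]$. To verify the super-solution property, a direct computation gives
$$p'(t) - \bigl(\beta y\, p - b(y) p^2\bigr) = (\beta\ell - \beta y)\, p + (b(y) - \Delta)\, p^2.$$
The first summand is nonnegative on $\{y < \ell\}$. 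If $b(y) \ge \Delta$ (in particular for all $y \ge 0$) the second summand is also nonnegative. If $b(y) < \Delta$, which forces $y < 0$, one uses $p \le 1$ to bound $(b(y) - \Delta) p^2 \ge (b(y) - \Delta) p = (d(y) + \beta y - \Delta) p$, after which the total collapses to $(\beta\ell + d(y) - \Delta)\, p \ge \beta\ell\, p > 0$ thanks to $d(y) \ge \Delta$. Hence $\widetilde u(t_*, x) \le p(t_*)$, and since $\beta\ell t_* = (1+\eta) C_1$ and $\beta\ell \ll \Delta$,
$$p(t_*) \le \frac{(1+\eta)\beta x}{\Delta(1 - e^{-(1+\eta)C_1})}(1 + o(1)),$$
which can be made strictly less than $2\beta x/\Delta$ by taking $\eta$ small and $C_1$ slightly larger than $\log 2$.

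For the exit probability I would apply Corollary \ref{rtedge}, which gives
$$\widetilde r_x^\ell(s) \lesssim \frac{\eta x}{s^{3/2}}\exp\!\Bigl(-\rho\eta x - \frac{(\eta x)^2}{2s} - \frac{\rho^2 s}{2} + \beta\ell s\Bigr),$$
and since $\beta\ell s \le (1+\eta)C_1$ on $[0, t_*]$, Laplace's method on the remaining exponent (whose critical point is $s^* = \eta x/\rho$) yields $r_x^\ell(0, t_*) \lesssim e^{-2\rho\eta x}$ up to polynomial prefactors. Using the standing hypotheses $\rho \gg \beta^{1/3}$ and $x \gg \beta^{-1/3}$, one has $\rho x \to \infty$; tuning $\eta$ appropriately (possibly shrinking slowly with $n$, but kept bounded so as not to spoil the super-solution bound) then makes the exit probability $o(\beta x)$, and combining with the super-solution estimate yields $u(t_*, x) \le 2\beta x/\Delta$ for $n$ large. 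The main technical obstacle is the joint tuning of $\eta$ and $C_1$ across all parameter regimes compatible with (\ref{A1})--(\ref{A3}); in the subregime where $\rho x$ grows only slowly compared to $\log(1/(\beta x))$, the drift by itself suppresses the process so strongly that a direct many-to-one estimate of $\E_x[N(t_*)]$ followed by Markov's inequality gives the bound, complementing the PDE argument in the other regime.
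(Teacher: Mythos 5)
Your super-solution argument for the truncated process is correct, elegant, and---after the case analysis on the sign of $b(y)-\Delta$, which checks out---recovers essentially the same logistic comparison bound that the paper obtains via the explicit Galton-Watson survival probability (their equation (\ref{GWsurvive})). The derivation of the explicit solution for $p(t)$ and the observation that $p$ remains $\leq 1$ when $\beta\ell < \Delta$ are both fine. The difference from the paper is structural: you use a single barrier $\ell = (1+\eta)x$, whereas the paper iterates over the geometric ladder $K_m = x(1+\delta)^m$ with shrinking time windows $\tau_m$.

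However, there is a genuine gap in the exit-probability step, and the patch you propose does not close it. Your bound on the exit contribution is $r_x^\ell(0,t_*) \lesssim e^{-2\rho\eta x}$ (or, more crudely, $e^{-\rho\eta x}$ via Lemma \ref{32int}). The standing assumptions only guarantee $\rho x\to\infty$, which follows from $x\gg\beta^{-1/3}\gg\rho^{-1}$; they do \emph{not} guarantee $\rho x \gg \log(1/(\beta x))$, which is what would be needed for $e^{-2\rho\eta x} = o(\beta x)$. For a concrete counterexample satisfying (\ref{A1})--(\ref{A3}), take $\beta_n = e^{-3n/2} n^{-1/4}$, $\rho_n = e^{-n/2} n^{1/2}$, $x_n = e^{n/2} n^{1/4}$. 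One checks $\rho_n^3/\beta_n = n^{7/4} \to\infty$, $\rho_n\to 0$, $\beta_n^{-1/3} \ll x_n \ll \beta_n^{-1}$, and $s^* = \eta x_n/\rho_n \ll t_* = C_1/(\beta_n x_n)$, so your Laplace estimate is applicable and gives $r_x^\ell(0,t_*) \asymp e^{-2\eta n^{3/4}}$. But $\beta_n x_n = e^{-n}$, so the exit bound exceeds $\beta x$ by a factor $e^{n-2\eta n^{3/4}}\to\infty$. In this same example the Markov fallback also fails: from (\ref{intpeq}), $\E_x[N(t_*)] = \exp(C_1 + o(1) - \rho C_1^2/(2\beta x^2))$ and $\rho/(\beta x^2) = n^{1/4}$, so $\E_x[N(t_*)] \asymp e^{-\Theta(n^{1/4})} \gg e^{-n} = \beta x$. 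Thus both prongs of your dichotomy fail on the same sequence, and the case split does not cover all parameter ranges compatible with the hypotheses.

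The missing ingredient is visible on comparing to the paper's proof: they do not bound the probability of hitting $K_1$ per se, but rather the expected number of level-crossers \emph{times the conditional survival probability from the crossing point}, which contributes the extra small factor $\beta K_{m+1}/\Delta$. That product is $\beta x \cdot e^{-\rho x\delta + O(1)} = o(\beta x)$ using only $\rho x\to\infty$, which is robust across all regimes. Your decomposition $u \le \widetilde u + r_x^\ell(0,t_*)$ drops that survival factor. Replacing it by $u \le \widetilde u + \int_0^{t_*}\tilde r_x^\ell(s)\,q(t_*-s,\ell)\,ds$, with $q$ the survival probability started from $\ell$, recovers the needed factor but forces you to control $q$ with yet another barrier, i.e.\ to iterate the construction. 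Carried out across a geometric ladder of barriers and a matched sequence of time increments, this becomes the paper's argument. So the super-solution viewpoint is a perfectly good substitute for the explicit GW formula, but a single barrier does not suffice; the multi-scale iteration is not a technicality that can be bypassed by tuning $\eta$ and $C_1$.
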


\section{Proofs of density approximations}\label{densec}

In this section, we prove the results stated in section \ref{approxden}, which establish approximate formulas for the densities $p_t(x,y)$ and $p_t^{L_A}(x,y)$.  

\subsection{Facts about Airy functions}

In this subsection we collect some facts about Airy functions which will be needed later in the paper.  In particular, Lemma \ref{Airyratlem} below will be important for the proof of Lemma \ref{dapprox4}.

We first record some facts which can be found in \cite{vsairy}.  Below $C_2$, $C_3$, $C_4$, and $C_5$ are positive constants.  Using $\sim$ to indicate that the ratio of the two sides tends to one, we have
\begin{equation}\label{Airyasymp}
Ai(x) \sim \frac{1}{2 \sqrt{\pi} x^{1/4}} e^{-(2/3) x^{3/2}} \hspace{.2in}\mbox{as }x \rightarrow \infty,
\end{equation}
which is (2.45) in \cite{vsairy}.  We will also use that
\begin{equation}\label{zeros}
-\gamma_k \sim C_2 k^{2/3} \hspace{.2in}\mbox{as }k \rightarrow \infty,
\end{equation}
which can be seen from (2.54) and (2.66) in \cite{vsairy}, and
\begin{equation}\label{zerosder}
|Ai'(\gamma_k)| \sim C_3 k^{1/6} \hspace{.2in}\mbox{as }k \rightarrow \infty,
\end{equation}
which can be seen from (2.60) and (2.68) in \cite{vsairy}.  We have
\begin{equation}\label{upairy}
|Ai(x)| \leq C_4 \hspace{.2in}\mbox{for all }x \in \R,
\end{equation}
which can be deduced from (\ref{Airyasymp}), equation (2.49) in \cite{vsairy}, and the continuity of the Airy function.  Finally, we will use that
\begin{equation}\label{airyder}
|Ai'(x)| \lesssim |x|^{1/4} \hspace{.2in}\mbox{for }x \leq -1,
\end{equation}
which follows from (2.50) in \cite{vsairy}, and that
\begin{equation}\label{airyder2}
|Ai'(x)| \leq C_5 \hspace{.2in}\mbox{for all }x \geq -1,
\end{equation}
which can be deduced from equation (2.46) in \cite{vsairy} and the continuity of the derivative of the Airy function.

\begin{Lemma}\label{Airyratlem}
For all $z \geq 0$ and $k \in \N$, we have
\begin{equation}\label{Aikbound}
|Ai(\gamma_k + z)| \lesssim k^{1/6} e^{(\gamma_1 - \gamma_k)z^{1/2}} Ai(\gamma_1 + z).
\end{equation}
\end{Lemma}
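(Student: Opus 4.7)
The plan is to perform a case analysis on $z$ after fixing $k \geq 2$ (the case $k = 1$ being trivial). Write $a_k := \gamma_1 - \gamma_k > 0$. The overall strategy is to combine the uniform bound (\ref{upairy}) and the decaying asymptotic (\ref{Airyasymp}) to control $|Ai(\gamma_k + z)|$ from above, and to obtain a matching lower bound on $Ai(\gamma_1 + z)$ by piecing together Taylor expansion at $\gamma_1$, continuity on compact sets, and (\ref{Airyasymp}) for large argument. The growth rates $|\gamma_k| \sim C_2 k^{2/3}$ from (\ref{zeros}) and $|Ai'(\gamma_k)| \sim C_3 k^{1/6}$ from (\ref{zerosder}) are what ultimately produce the $k^{1/6}$ factor.

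For $0 \leq z \leq 1$, I would use $Ai(\gamma_k) = 0$ to write $|Ai(\gamma_k + z)| = \bigl|\int_0^z Ai'(\gamma_k + s)\,ds\bigr|$; combining (\ref{airyder}), (\ref{airyder2}) and $|\gamma_k| \lesssim k^{2/3}$ gives $\sup_{s \in [0,1]} |Ai'(\gamma_k + s)| \lesssim k^{1/6}$, hence $|Ai(\gamma_k + z)| \lesssim k^{1/6} z$. Since $Ai'(\gamma_1) > 0$, Taylor expansion together with positivity of $Ai$ on $(\gamma_1, \infty)$ yields $Ai(\gamma_1 + z) \gtrsim z$ on $[0,1]$. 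The ratio is then $\lesssim k^{1/6}$, and the claim follows because $e^{a_k z^{1/2}} \geq 1$.

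For $z \geq 1$, I would bound $|Ai(\gamma_k + z)|$ from above by (\ref{Airyasymp}) when $\gamma_k + z \geq 1$ and by the constant $C_4$ from (\ref{upairy}) when $\gamma_k + z < 1$, and bound $Ai(\gamma_1 + z)$ from below by continuity on a compact interval combined with (\ref{Airyasymp}) when $\gamma_1 + z \geq C$ for some fixed $C > 0$. After taking the ratio, the prefactor from the $-1/4$ powers is $\lesssim (1 + a_k)^{1/4} \lesssim k^{1/6}$ since $\gamma_1 + z \leq 1 + a_k$ in the relevant regime. The remaining exponential factor is $\exp\bigl(\tfrac{2}{3}[(\gamma_1 + z)_+^{3/2} - (\gamma_k + z)_+^{3/2}]\bigr)$, and when $\gamma_k + z \geq 0$ the mean-value theorem for $x \mapsto x^{3/2}$ on $[\gamma_k + z, \gamma_1 + z]$ produces $\tfrac{3}{2}(\xi + z)^{1/2} a_k$ for some $\xi \in (\gamma_k, \gamma_1)$; since $\xi < 0$ we have $(\xi + z)^{1/2} < z^{1/2}$, giving exactly the bound $\leq e^{a_k z^{1/2}}$.

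The main obstacle is the intermediate regime $\gamma_k + z < 0 \leq \gamma_1 + z$, where the MVT argument breaks down and the exponent collapses to $\tfrac{2}{3}(\gamma_1 + z)^{3/2}$; one must show directly that $\tfrac{2}{3}(\gamma_1 + z)^{3/2} \leq a_k z^{1/2}$ on $[-\gamma_1, 1 - \gamma_k]$. I would handle this by setting $h(z) := a_k z^{1/2} - \tfrac{2}{3}(\gamma_1 + z)^{3/2}$ and noting that $h''(z) = -a_k/(4 z^{3/2}) - 1/(2\sqrt{\gamma_1 + z}) < 0$ on $(-\gamma_1, \infty)$, so $h$ is strictly concave; thus it suffices to verify $h \geq 0$ at the two endpoints. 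At $z = -\gamma_1$, $h = a_k \sqrt{-\gamma_1} > 0$; at $z = 1 - \gamma_k$, direct computation gives $h = a_k \sqrt{1 - \gamma_k} - \tfrac{2}{3}(1 + a_k)^{3/2}$, which is asymptotic to $a_k^{3/2} - \tfrac{2}{3} a_k^{3/2} = \tfrac{1}{3} a_k^{3/2}$ as $k \to \infty$ and hence is nonnegative for all sufficiently large $k$. Concavity then propagates nonnegativity through the entire interval, and the finitely many small values of $k$ are absorbed into the implicit constant.
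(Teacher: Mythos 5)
Your decomposition is the same as the paper's: small $z$ by Taylor expansion around the zero $\gamma_k$ using (\ref{zeros}), (\ref{airyder}), (\ref{airyder2}); intermediate $z$ by the uniform bound (\ref{upairy}) on $|Ai(\gamma_k+z)|$ against the asymptotic lower bound (\ref{Airyasymp}) on $Ai(\gamma_1+z)$; and large $z$ by the asymptotic on both factors together with a mean-value estimate for $(\gamma_1+z)^{3/2}-(\gamma_k+z)^{3/2}$. So the proof is correct and parallels the paper's. The one genuine point of divergence is how you dispose of the exponent in the intermediate regime: you prove the exact inequality $\tfrac{2}{3}(\gamma_1+z)^{3/2}\le(\gamma_1-\gamma_k)z^{1/2}$ by noting $h(z)=(\gamma_1-\gamma_k)z^{1/2}-\tfrac{2}{3}(\gamma_1+z)^{3/2}$ is concave on $(-\gamma_1,\infty)$ and checking the two endpoints, whereas the paper settles for the weaker $\le(\gamma_1-\gamma_k)z^{1/2}+A'$ obtained directly by writing $(\gamma_1+z)^{3/2}=(\gamma_1+z)(\gamma_1+z)^{1/2}$ and bounding the factors separately by $\gamma_1-\gamma_k+A$ and $z^{1/2}$. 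Both suffice since additive constants are absorbed by $\lesssim$; your concavity route is a bit cleaner conceptually, at the cost of the extra endpoint verification that $a_k\sqrt{1-\gamma_k}\ge\tfrac{2}{3}(1+a_k)^{3/2}$ for large $k$.
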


\begin{proof}
Let $A$ be a positive constant.  If $0 \leq z \leq A$, then it follows from (\ref{zeros}), along with the facts that $Ai'(\gamma_1) > 0$ and $Ai(z) > 0$ for all $z > \gamma_1$, that
$$|Ai(\gamma_k + z)| \lesssim k^{1/6} z \lesssim k^{1/6} Ai(\gamma_1 + z),$$ which is stronger than (\ref{Aikbound}).

Next, suppose $A \leq z \leq -\gamma_k + A$.  Then $|Ai(\gamma_k + z)| \leq C_4$ by (\ref{upairy}), and $$Ai(\gamma_1 + z) \gtrsim (\gamma_1 + z)^{-1/4} e^{-(2/3)(\gamma_1 + z)^{3/2}}$$ by (\ref{Airyasymp}).  It follows that
\begin{equation}\label{midb1}
|Ai(\gamma_k + z)| \lesssim (\gamma_1 + z)^{1/4} e^{(2/3)(\gamma_1 + z)^{3/2}} Ai(\gamma_1 + z).
\end{equation}
In view of (\ref{zeros}), we have 
\begin{equation}\label{midb2}
(\gamma_1 + z)^{1/4} \leq (\gamma_1 - \gamma_k + A)^{1/4} \lesssim k^{1/6}.
\end{equation}
Also, recalling that $\gamma_1 < 0$,
$$\frac{2}{3}(\gamma_1 + z)^{3/2} = \frac{2}{3} (\gamma_1 + z)(\gamma_1 + z)^{1/2} \leq \frac{2}{3} (\gamma_1 - \gamma_k + A) z^{1/2} = (\gamma_1 - \gamma_k) z^{1/2} + \bigg(\frac{2A}{3} -\frac{\gamma_1 - \gamma_k}{3} \bigg) z^{1/2}.$$
Because $2A - (\gamma_1 - \gamma_k) < 0$ for sufficiently large $k$ and $z \leq -\gamma_k + A$, it follows that there is a positive constant $A'$ such that
\begin{equation}\label{midb3}
\frac{2}{3}(\gamma_1 + z)^{3/2} \leq (\gamma_1 - \gamma_k) z^{1/2} + A'.
\end{equation}
The result (\ref{Aikbound}) when $A \leq z \leq -\gamma_k + A$ follows from (\ref{midb1}), (\ref{midb2}), and (\ref{midb3}).

Finally, suppose $z \geq -\gamma_k + A$.  We can apply (\ref{Airyasymp}) to get
\begin{equation}\label{bigb1}
\frac{Ai(\gamma_k + z)}{Ai(\gamma_1 + z)} \lesssim \bigg( \frac{\gamma_1 + z}{\gamma_k + z} \bigg)^{1/4} e^{(2/3)[(\gamma_1 + z)^{3/2} - (\gamma_k + z)^{3/2}]}.
\end{equation}
The first factor is maximized when $z = -\gamma_k + A$, so using (\ref{zeros}), we get
\begin{equation}\label{bigb2}
\bigg(\frac{\gamma_1 + z}{\gamma_k + z} \bigg)^{1/4} \leq \bigg( \frac{\gamma_1 - \gamma_k + A}{A} \bigg)^{1/4} \lesssim k^{1/6}.
\end{equation}
Also, because $\gamma_1$ and $\gamma_k$ are negative and $\frac{d}{dz} z^{3/2} = \frac{3}{2} z^{1/2}$, we have
\begin{equation}\label{bigb3}
(\gamma_1 + z)^{3/2} - (\gamma_k + z)^{3/2} \leq \frac{3}{2} (\gamma_1 - \gamma_k) z^{1/2}.
\end{equation}
The result (\ref{Aikbound}) follows from (\ref{bigb1}), (\ref{bigb2}), and (\ref{bigb3}).
\end{proof}

\subsection{Proofs of Lemmas \ref{dapprox4}, \ref{dapprox1}, \ref{dapprox2}, and \ref{dapprox3}}

We begin by using Lemma \ref{Airyratlem} to provide the necessary error estimates to prove Lemma \ref{dapprox4}.

\begin{proof}[Proof of Lemma \ref{dapprox4}]
The expression (\ref{densapprox}) with $E_A = 0$ is the $k = 1$ term from (\ref{ptKeq}), as can be seen by recalling (\ref{LAdef}).  Denote by $r_k(t,x,y)$ the ratio of the $k$th term in (\ref{ptKeq}) to the first term, when $\ell = L_A$.  
For all $t \geq 0$, $x \leq L_A$, and $y \leq L_A$, we have
$$r_k(t, x, y) = \frac{(Ai'(\gamma_1))^2}{(Ai'(\gamma_k))^2} \cdot \frac{Ai((2 \beta)^{1/3}(L_A - x) + \gamma_k) Ai((2 \beta)^{1/3}(L_A - y) + \gamma_k)}{Ai((2 \beta)^{1/3}(L_A - x) + \gamma_1) Ai((2 \beta)^{1/3}(L_A - y) + \gamma_1)} \cdot \frac{e^{\beta (2 \beta)^{-1/3} \gamma_k t}}{e^{\beta (2 \beta)^{-1/3} \gamma_1 t}}.$$
Using (\ref{zerosder}) to bound the first factor and Lemma \ref{Airyratlem} to bound the second factor, we get
\begin{align*}
|r_k(x, t, y)| &\lesssim k^{-1/3} \cdot k^{1/3} e^{(\gamma_1 - \gamma_k)(2 \beta)^{1/6}[(L_A - x)^{1/2} + (L_A - y)^{1/2}]} \cdot e^{-(\gamma_1 - \gamma_k) 2^{-1/3} \beta^{2/3} t} \\
&= e^{(\gamma_1 - \gamma_k)((2 \beta)^{1/6}[(L_A - x)^{1/2} + (L_A - y)^{1/2}] - 2^{-1/3} \beta^{2/3} t)},
\end{align*}
which implies (\ref{densapprox}).  We then obtain (\ref{densapproxrough}) and the last conclusion of the lemma by estimating $E_A(t, x, y)$ using (\ref{zeros}).
\end{proof}

The proof of Lemma \ref{dapprox1} uses the fact that no particles go above $\ell$ to bound the branching rate, but otherwise uses the unkilled process. This explains the fact that it is similar to, but not the same as, \eqref{ptxy}.

\begin{proof}[Proof of Lemma \ref{dapprox1}]
Fix $a < b \le \ell$.  Let $N_t(a,b)$ denote the number of particles in $(a, b)$ at time $t$ that have never hit $\ell$.  Let $(B_s)_{s \geq 0}$ be Brownian motion started at $x$.  By applying the many-to-one formula \eqref{many1} to Brownian motion with drift, we get
\begin{align*}
\E_x[N_t(a,b)] &= \E_x\left[e^{\int_0^t \beta (B_s-\rho s) ds} \1_{\{B_s-\rho s<\ell \, \forall s\le t,\,B_t-\rho t\in(a,b)\}}\right]\\
&\le e^{\beta \ell t}\P_x(B_s-\rho s<\ell\,\, \forall s\le t,\,B_t-\rho t\in(a,b))\\
&\le e^{\beta \ell t}\P_x(B_t-\rho t\in(a,b))\\
&=\frac{e^{\beta \ell t}}{\sqrt{2\pi t}} \int_a^b \exp\Big(-\frac{(y-x+\rho t)^2}{2t}\Big) dy\\
&=\frac{e^{\beta \ell t}}{\sqrt{2\pi t}}\int_a^b \exp\Big(\rho x - \rho y - \frac{(y-x)^2}{2t} - \frac{\rho^2 t}{2} \Big) dy
\end{align*}
so
\[p_t^{\ell}(x,y) \le \frac{1}{\sqrt{2\pi t}} \exp\Big(\rho x - \rho y - \frac{(y-x)^2}{2t} - \frac{\rho^2 t}{2} + \beta \ell t  \Big),\]
as claimed.
\end{proof}

The proof of Lemma \ref{dapprox2} uses a trivial bound on the branching rate, but uses the killed process as opposed to the unkilled process.

\begin{proof}[Proof of Lemma \ref{dapprox2}]
We proceed as in the proof of Lemma \ref{dapprox1} but keep the restriction that $B_s-\rho s< \ell$ for all $s \le t$, and apply Girsanov's theorem followed by the reflection principle. This gives
\begin{align*}
\E_x[N_t(a,b)] &= \E_x\left[e^{\int_0^t \beta (B_s-\rho s) ds} \1_{\{B_s-\rho s<\ell\, \forall s\le t,\,B_t-\rho t\in(a,b)\}}\right]\\
&\le e^{\beta \ell t}\P_x(B_s-\rho s<\ell\,\,\, \forall s\le t,\,B_t-\rho t\in(a,b))\\
&= e^{\beta \ell t}\P_0(B_s+\rho s>x-\ell\,\,\, \forall s\le t,\,B_t+\rho t\in(x-b,x-a))\\
&= e^{\beta \ell t - \rho^2 t/2}\E_0[e^{\rho B_t}\1_{\{B_s>x-\ell\,\,\, \forall s\le t,\,B_t\in(x-b,x-a)\}}]\\
&= e^{\beta \ell t - \rho^2 t/2}\int_{x-b}^{x-a} e^{\rho y}\frac{1}{\sqrt{2\pi t}}\left(e^{-y^2/2t} - e^{-(y+2(\ell-x))^2/2t}\right) dy\\
&= e^{\beta \ell t - \rho^2 t/2}\int_a^b e^{\rho x - \rho y}\frac{1}{\sqrt{2\pi t}}\left(e^{-(x-y)^2/2t} - e^{-(x-y+2(\ell-x))^2/2t}\right) dy.
\end{align*}
Applying the elementary bound
\[e^{-A^2/2t} - e^{-(A+2B)^2/2t} = e^{-A^2/2t}(1-e^{-2B(A+B)/t}) \le \frac{2B}{t}(A+B)e^{-A^2/2t}\]
gives
\[\E_x[N_t(a,b)] \le e^{\beta \ell t - \rho^2 t/2}\int_a^b e^{\rho x - \rho y}\frac{2^{1/2}(\ell-x)(\ell-y)}{\pi^{1/2}t^{3/2}}e^{-(x-y)^2/2t} dy\]
and therefore
\[p_t^{\ell}(x,y) \le e^{\beta \ell t - \rho^2 t/2 + \rho x - \rho y}\frac{2^{1/2}(\ell-x)(\ell-y)}{\pi^{1/2}t^{3/2}}e^{-(x-y)^2/2t},\]
which yields the result.
\end{proof}

Finally, the proof of Lemma \ref{dapprox3} is more involved and combines the formula (\ref{ptxy}) with the estimate in Lemma \ref{dapprox2}. 

\begin{proof}[Proof of Lemma \ref{dapprox3}]
Let $s = \beta^{-2/3}$.
We apply the Chapman-Kolmogorov equation at time $s$ and then use Lemma \ref{dapprox2} to bound $p_s^{L_A}(x,z)$ and (\ref{ptxy}) to bound $p_{t-s}(z,y)$, which gives
\begin{align}\label{approx31eq}
p_t^{L_A}(x,y) &= \int_{-\infty}^{L_A} p_s^{L_A}(x,z) p_{t-s}^{L_A}(z,y) \: dz \nonumber \\
&\lesssim \int_{-\infty}^{L_A} \frac{(L_A - x)(L_A - z)}{s^{3/2}} \exp \bigg( \rho x - \rho z - \frac{(z - x)^2}{2s} - \frac{\rho^2 s}{2} + \beta L_A s \bigg) \nonumber \\
&\hspace{.2in}\times \frac{1}{\sqrt{t - s}} \exp \bigg( \rho z - \rho y - \frac{(y - z)^2}{2(t - s)} - \frac{\rho^2(t - s)}{2} + \frac{\beta(z + y)(t-s)}{2} + \frac{\beta^2(t - s)^3}{24} \bigg) \: dz \nonumber \\
&= \frac{L_A - x}{s^{3/2} \sqrt{t - s}} \exp \bigg( \rho x - \rho y - \frac{\rho^2 t}{2} + \beta L_A t - \frac{\beta(L_A - y)(t - s)}{2} + \frac{\beta^2(t - s)^3}{24} \bigg) \nonumber \\
&\hspace{.2in} \times \int_{-\infty}^{L_A} (L_A - z) \exp \bigg( - \frac{(x - z)^2}{2s} - \frac{(y - z)^2}{2(t - s)} - \frac{\beta(L_A - z)(t-s)}{2} \bigg) \: dz.
\end{align}
Now let ${\tilde x} = L_A - x$, ${\tilde y} = L_A - y$, and ${\tilde z} = L_A - z$.  After a few lines of algebra, we obtain
\begin{align*}
\frac{({\tilde x} - {\tilde z})^2}{2s} + \frac{({\tilde z} - {\tilde y})^2}{2(t-s)} + \frac{\beta {\tilde z} (t - s)}{2} &= \frac{t}{2s(t - s)} \bigg({\tilde z} - \bigg( \frac{(t-s){\tilde x}}{t} + \frac{s{\tilde y}}{t} - \frac{\beta s (t-s)^2}{2t} \bigg) \bigg)^2 \\
&\hspace{.2in} + \frac{({\tilde y} - {\tilde x})^2}{2t} - \frac{\beta^2 s (t - s)^3}{8t} + \frac{\beta (t-s)^2 {\tilde x}}{2t} + \frac{\beta s (t-s) {\tilde y}}{2t}.
\end{align*}
Therefore, substituting ${\tilde z}$ in place of $z$ in the integral in (\ref{approx31eq}), we get
\begin{align}\label{approx32eq}
p_t^{L_A}(x,y)& \lesssim \frac{L_A - x}{s^{3/2} \sqrt{t - s}} \exp \bigg( \rho x - \rho y - \frac{(y - x)^2}{2t} - \frac{\rho^2 t}{2} + \beta L_A t - \frac{\beta(t - s)^2(L_A - x)}{2t} \bigg) \nonumber \\
&\hspace{.2in} \times \exp \bigg(- \frac{\beta(L_A - y)(t - s)}{2} - \frac{\beta s (t - s)(L_A - y)}{2t} + \frac{\beta^2(t - s)^3}{24} + \frac{\beta^2 s (t - s)^3}{8t} \bigg) \nonumber \\
&\hspace{.2in} \times \int_0^{\infty} {\tilde z} \exp \bigg( - \frac{t}{2s(t-s)} \bigg({\tilde z} - \bigg( \frac{(t-s){\tilde x}}{t} + \frac{s{\tilde y}}{t} - \frac{\beta s (t-s)^2}{2t} \bigg) \bigg)^2 \bigg) \: d{\tilde z}.
\end{align}
Because we are assuming $L_A - x \lesssim \beta^{-1/3}$ and $s=\beta^{-2/3}\le t$, we have
\begin{equation}\label{approx33eq}
- \frac{\beta(t - s)^2 (L_A - x)}{2t} = - \frac{\beta(L_A - x)t}{2} + \beta s (L_A - x) - \frac{\beta s^2 (L_A - x)}{2t} = - \frac{\beta (L_A - x)t}{2} + O(1).
\end{equation}
Note that
\begin{equation}\label{approx34eq}
- \frac{\beta(L_A - y)(t - s)}{2} - \frac{\beta s (t - s)(L_A - y)}{2t} = - \frac{\beta (L_A - y) t}{2} + \frac{\beta(L_A - y) s^2}{2t},
\end{equation}
and
\begin{equation}\label{approx35eq}
\frac{\beta^2 (t - s)^3}{24} + \frac{\beta^2 s (t - s)^3}{8t} = \frac{\beta^2 t^3}{24} - \frac{\beta^2 s^2 t}{4} + O(1).
\end{equation}
Also, because $s = \beta^{-2/3}$
\begin{equation}\label{approx36eq}
\frac{\beta(L_A - y) s^2}{2t} - \frac{\beta^2 s^2 t}{4} = \frac{1}{2 \beta^{1/3} t} \Big(L_A - y - \frac{\beta t^2}{2} \Big).
\end{equation}
Using (\ref{approx33eq}), (\ref{approx34eq}), (\ref{approx35eq}), and (\ref{approx36eq}) to simplify the exponential terms in (\ref{approx32eq}), we get
\begin{align}\label{approx37eq}
p_t^{L_A}(x,y) &\lesssim \frac{L_A - x}{s^{3/2} \sqrt{t - s}} \nonumber \\
&\hspace{.2in} \times \exp \bigg( \rho x - \rho y - \frac{(y - x)^2}{2t} - \frac{\rho^2 t}{2} + \frac{\beta(y + x) t}{2} + \frac{\beta^2 t^3}{24} + \frac{1}{2\beta^{1/3} t} \Big(L_A - y - \frac{\beta t^2}{2} \Big) \bigg) \nonumber \\
&\hspace{.2in} \times \int_0^{\infty} {\tilde z} \exp \bigg( - \frac{t}{2s(t-s)} \bigg({\tilde z} - \bigg( \frac{(t-s){\tilde x}}{t} + \frac{s{\tilde y}}{t} - \frac{\beta s (t-s)^2}{2t} \bigg) \bigg)^2 \bigg) \: d{\tilde z}
\end{align}

For $z \in \R$, we will write $z_+ = \max\{0, z\}$ for the positive part of $z$.
It is easy to check that if $Z$ has a normal distribution with mean $\mu \in \R$ and standard deviation $\sigma > 0$, then $E[Z_+] \lesssim \mu_+ + \sigma$.  Therefore, 
\begin{align*}
&\int_0^{\infty} {\tilde z} \exp \bigg( - \frac{t}{2s(t-s)} \bigg({\tilde z} - \bigg( \frac{(t-s){\tilde x}}{t} + \frac{s{\tilde y}}{t} - \frac{\beta s (t-s)^2}{2t} \bigg) \bigg)^2 \bigg) \: dz \nonumber \\
&\hspace{1in}\lesssim \sqrt{\frac{s(t-s)}{t}} \bigg( \bigg( \frac{(t-s){\tilde x}}{t} + \frac{s{\tilde y}}{t} - \frac{\beta s (t-s)^2}{2t} \bigg)_+ + \sqrt{\frac{s(t-s)}{t}} \bigg).
\end{align*}
Because $s = \beta^{-2/3}$ and $t - s \asymp t$, we have $\sqrt{s(t-s)/t} \asymp \beta^{-1/3}$.  Using that ${\tilde x} \lesssim \beta^{-1/3}$, we have $$\bigg( \frac{(t-s){\tilde x}}{t} + \frac{s{\tilde y}}{t} - \frac{\beta s (t-s)^2}{2t} \bigg)_+ \leq \bigg( {\tilde x} + \frac{s}{t} \Big( {\tilde y} - \frac{\beta t^2}{2} \Big) + \beta s^2 \bigg)_+ \lesssim \beta^{-1/3} + \frac{s}{t} \Big(L_A - y - \frac{\beta t^2}{2} \Big)_+,$$
and therefore
\begin{align}\label{approx38eq}
&\int_0^{\infty} {\tilde z} \exp \bigg( - \frac{t}{2s(t-s)} \bigg({\tilde z} - \bigg( \frac{(t-s){\tilde x}}{t} + \frac{s{\tilde y}}{t} - \frac{\beta s (t-s)^2}{2t} \bigg) \bigg)^2 \bigg) \: dz \nonumber \\
&\hspace{.5in}\lesssim \beta^{-2/3} + \frac{\beta^{-1/3} s}{t} \Big(L_A - y - \frac{\beta t^2}{2} \Big)_+ \lesssim \beta^{-2/3} \max \bigg\{ 1,\frac{1}{\beta^{1/3} t} \Big(L_A - y - \frac{\beta t^2}{2} \Big) \bigg\}.
\end{align}
Because $s^{-3/2} = \beta$ and $t - s \asymp t$, the lemma follows from (\ref{approx37eq}) and (\ref{approx38eq}).
\end{proof}

\subsection{Proof of Lemma \ref{firstmom}}

From (\ref{ptxy}), we have
\begin{equation}\label{ptformula}
p_t(x,y) = \frac{1}{\sqrt{2 \pi t}} \exp \bigg( \rho x - \rho y - \frac{\rho^2 t}{2} - \frac{(y - x)^2}{2t} + \frac{\beta(y + x) t}{2} + \frac{\beta^2 t^3}{24} \bigg).
\end{equation}
From (\ref{A1}), we have $s \ll t$, and therefore
\begin{equation}\label{pt1}
\frac{1}{\sqrt{2 \pi t}} \exp \bigg( \rho x - \frac{\rho^2 t}{2} \bigg) \sim \frac{1}{\sqrt{2 \pi}} \sqrt{ \frac{\beta}{\rho}} \exp \bigg( \rho x - \frac{\rho^3}{2 \beta} + \frac{\rho^2 s}{2} \bigg).
\end{equation}
Also,
\begin{equation}\label{pt2}
\frac{\beta^2 t^3}{24} = \frac{\rho^3}{24 \beta} - \frac{\rho^2 s}{8} + \frac{\beta \rho s^2}{8} - \frac{\beta^2 s^3}{24}.
\end{equation}
Next, we observe that
\begin{align}\label{pt3}
- \rho y + \frac{\beta (y+x) t}{2} &= - \rho y + \frac{\beta}{2}  \bigg(y + \frac{\rho^2}{2 \beta} - w \bigg) \bigg( \frac{\rho}{\beta} - s \bigg) \nonumber \\
&= - \frac{\rho y}{2} - \frac{\beta y s}{2} + \frac{\rho^3}{4 \beta} - \frac{\rho^2 s}{4} - \frac{\rho w}{2} + \frac{\beta w s}{2}.
\end{align}
Furthermore, we have
\begin{align}\label{quadterm}
- \frac{(y-x)^2}{2t} &= - \frac{\beta}{2 \rho (1 - s \beta/\rho)} \bigg(\frac{\rho^2}{2 \beta} - w - y \bigg)^2 \nonumber \\
&= - \frac{\beta}{2 \rho} \bigg( \frac{\rho^2}{2 \beta} - w - y \bigg)^2 \sum_{k=0}^{\infty} \bigg( \frac{s \beta}{\rho} \bigg)^k \nonumber \\
&= - \frac{\rho^3}{8 \beta} \sum_{k=0}^{\infty} \bigg( \frac{s \beta}{\rho} \bigg)^k  + \frac{\rho(w + y)}{2} \sum_{k=0}^{\infty} \bigg( \frac{s \beta}{\rho} \bigg)^k - \frac{\beta (w + y)^2}{2 \rho} \sum_{k=0}^{\infty} \bigg( \frac{s \beta}{\rho} \bigg)^k.
\end{align}
To estimate the first term in (\ref{quadterm}), we note that the terms in the infinite sum with $k \geq 4$ are small due to (\ref{A1}) and the assumption that $s \ll \rho^{1/4} \beta^{-3/4}$.  Therefore,
\begin{equation}\label{pt4}
- \frac{\rho^3}{8 \beta} \sum_{k=0}^{\infty} \bigg( \frac{s \beta}{\rho} \bigg)^k = - \frac{\rho^3}{8 \beta} - \frac{\rho^2 s}{8} - \frac{\beta \rho s^2}{8} - \frac{\beta^2 s^3}{8} + o(1).
\end{equation}
To estimate the second term in (\ref{quadterm}), we again use (\ref{A1}) and the hypotheses on $|w|$ and $s$ to get
\begin{equation}\label{pt5}
\frac{\rho(w + y)}{2} \sum_{k=0}^{\infty} \bigg( \frac{s \beta}{\rho} \bigg)^k = \frac{\rho (w + y)}{2} + \frac{s \beta (w + y)}{2} + O \bigg( \frac{s^2 \beta^2 |y|}{\rho} \bigg) + o(1).
\end{equation}
For the third term in (\ref{quadterm}), we have
\begin{equation}\label{pt6}
- \frac{\beta (w + y)^2}{2 \rho} \sum_{k=0}^{\infty} \bigg( \frac{s \beta}{\rho} \bigg)^k = - \frac{\beta y^2}{2 \rho} + O \bigg( \frac{s \beta^2 y^2}{\rho^2} \bigg) + O \bigg( \frac{\beta |w y|}{\rho} \bigg) + o(1).
\end{equation}
We can now evaluate the right-hand side of (\ref{ptformula}) by putting together the results in (\ref{pt1}), (\ref{pt2}), (\ref{pt3}), (\ref{pt4}), (\ref{pt5}), and (\ref{pt6}), which yields (\ref{ptestimate}).

\subsection{Proof of Lemma \ref{intpL}}

Because $p_t^{L_A}(x,y) \leq p_t(x,y)$, it follows from (\ref{intpeq}) that it suffices to prove the result when $L_A - x \leq \beta^{-1/3}$, which we will assume for the rest of the proof.
Note that $L_A = \rho^2/2 \beta + O(\beta^{-1/3})$, and $\beta^{-2/3}/t \rightarrow 0$ by (\ref{A1}).  Therefore,
\begin{align*}
\frac{1}{2\beta^{1/3} t} \bigg(L_A - y - \frac{\beta t^2}{2} \bigg) &= \frac{1}{2 \beta^{1/3} t} \bigg( \frac{\rho^2}{2 \beta} + O(\beta^{-1/3}) - y - \frac{\beta}{2} \Big( \frac{\rho}{\beta} - s \Big)^2 \bigg) \\
&= \frac{1}{2 \beta^{1/3} t} \bigg( \rho s - \frac{\beta s^2}{2} - y \bigg) + o(1).
\end{align*}
Combining this estimate with Lemma \ref{dapprox3}, and separating out the terms involving $y$ in the exponential factor, we get
\begin{align*}
p_t^{L_A}(x,y) &\lesssim \frac{\beta^{1/3}(L_A - x)}{\sqrt{t}} \max\bigg\{1, \frac{1}{\beta^{1/3} t} \Big(\rho s - \frac{\beta s^2}{2} - y \Big) \bigg\} \\
&\hspace{.5in} \times \exp \bigg(\rho x - \frac{\rho^2 t}{2} + \frac{\beta x t}{2} + \frac{\beta^2 t^3}{24} + \frac{\rho s}{2 \beta^{1/3} t} - \frac{\beta^{2/3} s^2}{4t} \bigg) \\
&\hspace{1in} \times \exp \bigg(- \frac{(y - x)^2 + 2 \rho y t - \beta y t^2 + \beta^{-1/3} y}{2t} \bigg).
\end{align*}
Note that if $a \in \R$, then $(y - x)^2 + ay = (y - (x - a/2))^2 + ax - a^2/4$.  Applying this result to the second exponential factor above with $$a = 2 \rho t - \beta t^2 + \beta^{-1/3},$$ we get
\begin{align}\label{expnoy}
p_t^{L_A}(x,y) &\lesssim \frac{\beta^{1/3}(L_A - x)}{\sqrt{t}} \max\bigg\{1, \frac{1}{\beta^{1/3} t} \Big(\rho s - \frac{\beta s^2}{2} - y \Big) \bigg\}
\exp \bigg(- \frac{1}{2t} \Big(y - \Big(x - \frac{a}{2} \Big) \Big)^2 \bigg) \nonumber \\
&\hspace{.5in} \times \exp \bigg(\rho x - \frac{\rho^2 t}{2} + \frac{\beta x t}{2} + \frac{\beta^2 t^3}{24} + \frac{\rho s}{2 \beta^{1/3} t} - \frac{\beta^{2/3} s^2}{4t} - \frac{a x}{2t} + \frac{a^2}{8t} \bigg). 
\end{align}
After some tedious but straightforward algebra, we see that the exponential factor on the second line of (\ref{expnoy}) can be written as
\begin{equation}\label{expnoy2}
\exp \bigg( \beta x t + \frac{\beta^2 t^3}{6} - \frac{\beta \rho t^2}{2} \bigg) \exp \bigg(\frac{1}{8 \beta^{2/3} t} + \frac{2 \rho s + 2 \rho t - 2x - \beta s^2 - \beta t^2}{4\beta^{1/3}t} \bigg).
\end{equation}
Now using that $t = \rho/\beta - s$, and that $x = \rho^2/2 \beta + O(\beta^{-1/3})$ by the definition of $L_A$ and the assumption that $L_A - x \leq \beta^{-1/3}$, we can write the second exponential factor in (\ref{expnoy2}) as
\begin{equation}\label{expnoy3}
\exp \bigg(\frac{2 \rho s - 2 \beta s^2}{4 \beta^{1/3} t} + O \Big( \frac{1}{\beta^{2/3} t} \Big) \bigg) = \exp \bigg(\frac{\beta^{2/3} s}{2} + O \Big( \frac{1}{\beta^{2/3} t} \Big) \bigg).
\end{equation}
Substituting the results in (\ref{expnoy2}) and (\ref{expnoy3}) back into (\ref{expnoy}), using that $1/(\beta^{2/3} t) \rightarrow 0$, and integrating with respect to $y$, we get
\begin{align}\label{intptL}
\int_{-\infty}^{L_A} p_t^{L_A}(x,y) \: dy &\lesssim \beta^{1/3} (L_A - x) \exp \bigg( \beta x t + \frac{\beta^2 t^3}{6} - \frac{\beta \rho t^2}{2} + \frac{\beta^{2/3} s}{2} \bigg) \nonumber \\
&\hspace{.1in}\times \int_{-\infty}^{L_A} \frac{1}{\sqrt{t}} \max\bigg\{1, \frac{1}{\beta^{1/3} t} \Big(\rho s - \frac{\beta s^2}{2} - y \Big) \bigg\}
\exp \bigg(- \frac{1}{2t} \Big(y - \Big(x - \frac{a}{2} \Big) \Big)^2 \bigg) \: dy.
\end{align}
The integral in the previous line is bounded above by $$\sqrt{2 \pi} \E \bigg[ \max\bigg\{1, \frac{1}{\beta^{1/3} t} \Big(\rho s - \frac{\beta s^2}{2} - V \Big) \bigg\} \bigg],$$ where $V$ has a normal distribution with mean $x - a/2$ and variance $t$.  This is the same as $$\sqrt{2 \pi} \E[\max\{1, W\}],$$ where $W$ has a normal distribution with variance $$\sigma^2 = \frac{1}{(\beta^{1/3} t)^2} \cdot t = \frac{1}{\beta^{2/3} t} \rightarrow 0$$
and mean
\begin{align*}
\mu &= \frac{1}{\beta^{1/3} t} \bigg( \rho s - \frac{\beta s^2}{2} - \Big( x - \frac{a}{2} \Big) \bigg) \\
&= \frac{1}{\beta^{1/3} t} \bigg( \rho s - \frac{\beta s^2}{2} - \frac{\rho^2}{2 \beta} + \rho t - \frac{\beta t^2}{2} + O(\beta^{-1/3}) \bigg) \\
&= \frac{\rho s - \beta s^2}{\beta^{1/3} t} + O \bigg( \frac{1}{\beta^{2/3} t} \bigg) \\
&= \beta^{2/3} s + o(1).
\end{align*}
We therefore have $\E[\max\{1, W\}] \lesssim \max\{1, \beta^{2/3} s\} \lesssim \exp( \beta^{2/3}s/2).$  Because this expression gives an upper bound for the integral in (\ref{intptL}), the result follows.

\section{Particles that hit $L_A$}\label{rtsec}

In this section, we will prove Lemmas \ref{Rxlem} and \ref{Lsurvive}.  Lemma \ref{Rxlem} pertains to the process in which particles are killed when they reach $L_A$, and provides an estimate for how many particles are killed.  Lemma \ref{Lsurvive} pertains to the process in which particles are allowed to continue after reaching $L_A$.  It provides an estimate for the probability that the descendants of a particle will survive for a significant period of time after the particle reaches $L_A$.  We also prove Lemma \ref{YfromL} below, which bounds the expected contribution to $Y_n(t)$, for small times $t$, from an initial particle at $L_A$.  We begin with the following simple integral estimate.

\begin{Lemma}\label{32int}
If $a > 0$ and $b > 0$, then
$$\int_0^{\infty} \frac{1}{x^{3/2}} e^{-b^2/ax} \: dx = \frac{\sqrt{\pi a}}{b}.$$
\end{Lemma}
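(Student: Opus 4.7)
The plan is to reduce this to the Gaussian integral $\int_0^\infty e^{-u^2}\,du = \sqrt{\pi}/2$ by a single substitution. The natural choice is $u = b/\sqrt{ax}$, which makes the exponent in the integrand simply $-u^2$. Under this change of variables, $x = b^2/(au^2)$ and $dx = -2b^2/(au^3)\,du$, so
\[
\frac{dx}{x^{3/2}} = \frac{a^{3/2} u^3}{b^3} \cdot \left(-\frac{2b^2}{au^3}\right) du = -\frac{2\sqrt{a}}{b}\,du.
\]
As $x$ ranges over $(0,\infty)$, $u$ ranges over $(\infty, 0)$, and the minus sign is absorbed into the reversal of the limits.

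With this substitution the integral becomes
\[
\int_0^\infty \frac{1}{x^{3/2}} e^{-b^2/(ax)}\,dx = \frac{2\sqrt{a}}{b} \int_0^\infty e^{-u^2}\,du = \frac{2\sqrt{a}}{b} \cdot \frac{\sqrt{\pi}}{2} = \frac{\sqrt{\pi a}}{b},
\]
which is the claimed identity.

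There is no real obstacle here: the only thing to check carefully is the Jacobian and the orientation of the transformed domain. As a sanity check, the formula is dimensionally consistent (if $x$ has units of length$^2$ and $b^2/a$ has the same units, then the integral has units of length$^{-1}$, as does $\sqrt{a}/b$). An alternative route, should one prefer to avoid the substitution, is to recognize the integrand as (a constant multiple of) the density of the first passage time to level $b/\sqrt{a}$ of a standard Brownian motion and read off the normalization, but the direct computation above is shorter and self-contained.
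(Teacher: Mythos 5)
Your proof is correct and essentially the same as the paper's: the paper substitutes $y = b^2/(ax)$ to reduce to $\Gamma(1/2) = \sqrt{\pi}$, while you substitute $u = b/\sqrt{ax}$ (which is just $\sqrt{y}$) to reduce to the Gaussian integral $\int_0^\infty e^{-u^2}\,du = \sqrt{\pi}/2$; these are the same computation up to the standard change of variables relating the two classical facts.
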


\begin{proof}
Make the substitution $y = b^2/ax$ and then recall that $\Gamma(1/2) = \sqrt{\pi}$.
\end{proof}

\begin{proof}[Proof of Lemma \ref{Rxlem}]
Define
\begin{equation}\label{txdef}
t_{x,A} = \sqrt{\frac{2 (L_A - x)}{\beta}} + 2^{1/3}\beta^{-2/3}.
\end{equation}
We first show that
\begin{equation}\label{rxsmall}
r_x^{L_A}(0,t_{x,A}) \lesssim e^{\rho x} e^{-\rho L_A} e^{-\beta^2 t_{x,A}^3/9}.
\end{equation}
Let $0 < \delta < 2^{-1/3}$.  It follows from Corollary \ref{rtedge} and the fact that $\gamma_1 < 0$ that
\begin{align}\label{rxLA}
r_x^{L_A}(0, t_{x,A}) &=\int_0^{t_{x,A}} {\tilde r}_x^{L_A}(u) \: du \nonumber \\
&\lesssim \int_0^{t_{x,A}} \frac{L_A - x}{u^{3/2}} \exp \bigg( \rho x - \rho L_A - \frac{(L_A - x)^2}{2u} - 2^{-1/3} \gamma_1 \beta^{2/3} u - \frac{A \beta u}{\rho} \bigg) \: du \nonumber \\
&\le (L_A - x) \exp \bigg( \rho x - \rho L_A - \frac{(1 - \delta)(L_A - x)^2}{2 t_{x,A}} - 2^{-1/3} \gamma_1 \beta^{2/3} t_{x,A} + \frac{A_- \beta t_{x,A}}{\rho} \bigg) \nonumber \\
&\hspace{.5in}\times \int_0^{t_{x,A}} \frac{1}{u^{3/2}} e^{-\delta (L_A - x)^2/2u} \: du.
\end{align}
It follows from Lemma \ref{32int} that 
$$\int_0^{t_{x,A}} \frac{1}{u^{3/2}} e^{-\delta (L_A - x)^2/2u} \: du \lesssim \frac{1}{L_A - x}.$$  Therefore to prove \eqref{rxsmall} it suffices to show that
\begin{equation}\label{exp5nts}
\exp \bigg( - \frac{(1 - \delta)(L_A - x)^2}{2 t_{x,A}} + \Big(- 2^{-1/3} \gamma_1 + \frac{A_- \beta^{1/3}}{\rho} \Big) \beta^{2/3} t_{x,A} \bigg) \lesssim e^{-\beta^2 t_{x,A}^3/9}.
\end{equation}
We consider two cases.  First, suppose $t_{x,A} \leq \delta^{-1} \beta^{-2/3}$.  Then (\ref{exp5nts}) holds because the left-hand side is bounded above by a positive constant, while the right-hand side is bounded below by a positive constant.  Next, suppose $t_{x,A} \geq \delta^{-1} \beta^{-2/3}$.  It follows from (\ref{txdef}) that
\[\sqrt{2(L_A - x)/\beta} = (1-2^{1/3}\delta)t_{x,A} + 2^{1/3}\delta t_{x,A} - 2^{1/3}\beta^{-2/3} \ge (1-2^{1/3}\delta)t_{x,A},\]
and therefore $$L_A - x \geq \frac{\beta (1 - 2^{1/3} \delta)^2 t_{x,A}^2}{2}.$$  Also, we can bound $A_- \beta^{1/3}/\rho$ from above by $C$ because $\beta^{1/3}/\rho \rightarrow 0$.  Therefore, the left-hand side of (\ref{exp5nts}) is bounded above by
\begin{align*}
&\exp \bigg( - \frac{(1 - \delta)(1 - 2^{1/3} \delta)}{8} \beta^2 t_{x,A}^3 + (-2^{-1/3} \gamma_1 + C) \beta^{2/3} t_{x,A} \bigg) \\
&\hspace{.2in}= \exp \bigg( - \bigg( \frac{(1 - \delta)(1 - 2^{1/3} \delta)}{8} - \frac{1}{9} \bigg) (\beta^{2/3} t_{x,A})^3 + (-2^{-1/3} \gamma_1 + C) \beta^{2/3} t_{x,A} \bigg) e^{-\beta^2 t_{x,A}^3/9}.
\end{align*}
By choosing $\delta$ sufficiently small and using that the function $y \mapsto -ay^3 + by$ is bounded above for all $a > 0$ and $b > 0$, we see that the quantity above can be upper bounded by a constant multiple of $e^{-\beta^2 t_{x,A}^3/9}$.  It follows that
(\ref{exp5nts}) holds, and therefore so does \eqref{rxsmall}. In particular this proves the lemma in the case $s\le t\le t_{x,A}$.

Next, suppose that $t_{x,A} \leq s \leq t$. Note that $t_{x,A}$ has been defined so that (\ref{Rtcond}) holds with equality when $t = t_{x,A}$ and $C = 1$.
If $s \leq u \leq t$, then (\ref{Rtcond}) holds with $u$ in place of $t$.  Therefore, by Corollary \ref{rtedge},
\begin{equation}\label{rxlate}
\int_s^t {\tilde r}_x^{L_A}(u) \: du \lesssim \int_s^t \beta^{2/3} e^{-\beta A u/\rho} e^{\rho x} \alpha(L_A - x) e^{-\rho L_A} \: du \leq (t - s) e^{-\rho L_A} \beta^{2/3} z_A(x) e^{\beta A_- t/\rho},
\end{equation}
which proves the lemma in the case $t_{x,A}\le s \le t$. For the remaining case, when $s\le t_{x,A}\le t$, simply note that
\[r_x^{L_A}(s,t) \le r_x^{L_A}(0,t_{x,A}) + r_x^{L_A}(t_{x,A},t)\]
and combine the bounds from \eqref{rxsmall} and \eqref{rxlate}.
\end{proof}

Before we prove Lemma \ref{Lsurvive}, we show that our process cannot explode in finite time.

\begin{Lemma}\label{noexplosion}
Suppose that, at time zero, there is a single particle at $x$. Then for all $t > 0$, the random variable
\[M = \sup\{X_{i,n}(u) : i\le N_n(u),\, u \le t\}\]
is finite almost surely.
\end{Lemma}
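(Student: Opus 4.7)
The plan is to show that $\P_x(M \geq \ell) \to 0$ as $\ell \to \infty$, which gives $M < \infty$ almost surely. The main subtlety is that $b_n(x)$ grows without bound as $x \to \infty$, so one cannot immediately dominate the whole process by a Yule process.

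To get around this, I would first consider the truncated process in which particles are killed upon reaching level $\ell$. In this killed process the branching rate is uniformly bounded by $b_n(\ell) < \infty$, so the total population is dominated by a pure-birth process of rate $b_n(\ell)$ and is finite at every finite time almost surely. The key observation is that the event $\{M < \ell\}$ in the original (unkilled) process coincides with the event that no particle has been killed by time $t$ in the truncated process. Hence, writing $r_x^{\ell}(0, t)$ for the expected number of particles killed by time $t$ (as in Section \ref{estimates_LA_sec}), Markov's inequality gives $\P_x(M \geq \ell) \leq r_x^{\ell}(0, t)$.

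Next I would bound $r_x^{\ell}(0, t)$ directly using the many-to-one representation derived at the start of Section \ref{estimates_LA_sec}:
\[
r_x^{\ell}(0, t) = \E_x\bigl[\, e^{\rho x - \rho \ell - \rho^2 T_\ell/2 + \int_0^{T_\ell} \beta B_u \, du}\, \1_{\{T_\ell \leq t\}}\bigr],
\]
where $B$ is Brownian motion with drift $-\rho$ starting from $x$ and $T_\ell$ is its first hitting time of $\ell$. On $\{T_\ell \leq t\}$ we have $B_u \leq \ell$ for $u \in [0, T_\ell]$, so $\int_0^{T_\ell} \beta B_u \, du \leq \beta \ell t$; discarding the nonpositive term $-\rho^2 T_\ell/2$ then gives $r_x^{\ell}(0, t) \leq e^{\rho x - \rho \ell + \beta \ell t}\, \P_x(T_\ell \leq t)$. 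A standard reflection-principle bound yields $\P_x(T_\ell \leq t) \leq 2 e^{-(\ell - x)^2/(2t)}$, so
\[
r_x^{\ell}(0, t) \leq 2 \exp\Bigl(\rho x - \rho \ell + \beta \ell t - \tfrac{(\ell - x)^2}{2t}\Bigr),
\]
and for fixed $t$ the $-\ell^2/(2t)$ quadratic dominates the linear-in-$\ell$ terms, so the right-hand side tends to $0$ as $\ell \to \infty$, as required.

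The potential obstacle is precisely the unbounded branching rate: a priori the exponential factor $e^{\beta \ell t}$ could ruin the bound. The Gaussian tail coming from the first-hitting-time estimate easily beats this exponential, however, so the argument goes through. This is morally the same reason that killing at $L_A \approx \rho^2/(2\beta)$ stabilises the process on the longer timescales considered in the rest of the paper.
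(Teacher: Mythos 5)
Your proof is correct and takes essentially the same approach as the paper (the paper freezes particles at the barrier rather than killing them, but this is a cosmetic difference; both reduce the problem to a Gaussian hitting-time tail beating the factor $e^{\beta\ell t}$). One small slip: the representation $r_x^{\ell}(0,t) = \E_x\bigl[e^{\rho x - \rho\ell - \rho^2 T_\ell/2 + \int_0^{T_\ell}\beta B_u\,du}\1_{\{T_\ell\leq t\}}\bigr]$ from Section \ref{estimates_LA_sec} requires $B$ to be a \emph{driftless} Brownian motion (the Girsanov prefactor $e^{\rho x - \rho\ell - \rho^2 T_\ell/2}$ already encodes the change of drift), which is also exactly what your reflection-principle bound $\P_x(T_\ell\leq t)\leq 2e^{-(\ell-x)^2/(2t)}$ needs, so the argument goes through unchanged once $B$ is understood to be driftless.
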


\begin{proof}
Fix $k > x$ and consider a system where particles are frozen (that is, they no longer move or branch) once they hit level $k$. Call the resulting probability measure $\hat\P$. Let $\mathcal A$ be the number of particles that have been frozen by time $t$.  Let $(B_u)_{u \geq 0}$ be a process which, under $\hat\P$, is a Brownian motion with drift $-\rho$ that is frozen upon hitting the level $k$.  By applying the many-to-one lemma (\ref{many1}) under the measure $\hat\P$ with the function $f(y)=\ind_{\{y\ge k\}}$, we get
\[\P_x(M \ge k) = \hat \P_x(\mathcal A\ge 1) \le \hat \E_x[\mathcal A] \le \hat\E_x\Big[\exp\Big(\int_0^t \beta B_u du\Big) \ind_{\{B_t = k\}}\Big].\]
Since $B_u\le k$ for all $u$ under $\hat\P$, we deduce that
\[\P_x(M \ge k) \le e^{\beta k t} \hat\P_x\big( B_t = k \big).\]
Now, the probability that $B_t = k$ is exactly the probability that a Brownian motion with drift $-\rho$ hits $k$ before time $t$, which is smaller than the probability that a Brownian motion with no drift hits $k$ before time $t$. It is well-known that the first hitting time of level $y$ by a standard Brownian motion $(W_t)_{t\ge 0}$ started from $0$ is equal in distribution to $(y/W_1)^2$; thus, using a standard Gaussian approximation,
\[\P_x(M \ge k) \le e^{\beta k t}\P\Big(\frac{(k-x)^2}{W_1^2} \le t \Big) = 2e^{\beta k t}\P\Big(W_1 \ge \frac{k-x}{t^{1/2}}\Big) \lesssim \frac{t^{1/2}}{k-x} \exp\Big(\beta kt - \frac{(k-x)^2}{2t} \Big),\]
which converges to $0$ as $k\to\infty$. The result follows.
\end{proof}

\begin{proof}[Proof of Lemma \ref{Lsurvive}]
Fix $\delta > 0$.  Let $K_m = x(1 + \delta)^m$ for each $m\ge 0$.  Label the initial particle at $x$ to be type $0$.  Whenever a particle reaches $K_m$ for the first time, it becomes type $m$.  The type of a particle is never allowed to decrease, so a particle will have type $m$ if at some time it had an ancestor above $K_m$, but it never had an ancestor above $K_{m+1}$.  When a birth occurs, offspring have the same type as the parent.

For nonnegative integers $m$, let $$\tau_m = \frac{\log(1/\delta)}{\beta x (1 + \delta)^{m+1}},$$ and let $T_m = \sum_{k=0}^m \tau_k$.  Let
\begin{equation}\label{Tdef}
T = \lim_{m \rightarrow \infty} T_m = \sum_{k=0}^{\infty} \tau_k = \frac{\log(1/ \delta)}{\beta x \delta}.
\end{equation}
Let $D_0$ be the event that there is a type $0$ individual alive in the population at time $T_0$.  For positive integers $m$, let $D_m$ be the event that there are type $m$ individuals in the population continuously from time $T_{m-1}$ until time $T_m$.  Let $D$ be the event that some individual survives until time $T$.  We claim that
\begin{equation}\label{DDm}
\P(D) \le \P\Big(\bigcup_{m=0}^{\infty} D_m\Big).
\end{equation}
To see this, note that if $D_0$ fails to occur, then there are no type 0 individuals left at time $T_0$, but there could be individuals that migrated to the right of $K_1$ and became type 1 individuals.  If $D_1$ also fails to occur, then the type 1 individuals must all be gone by time $T_1$, but there could be individuals that became type $2$.  Repeating this argument, we see that if none of the $D_i$ occur, then there cannot be individuals of any type remaining at time $T$. The only further possibility is that there are individuals alive at time $t$ that have had type $j+1$ by time $T_j$, for all $j\in\N$. By Lemma \ref{noexplosion}, this has probability zero.

We therefore aim to bound the probability of $D_m$.  Suppose $m \geq 1$. For any $u\ge 0$, by (\ref{rtdef}),
$$\tilde r_x^{K_m}(u) \lesssim \frac{K_m - x}{u^{3/2}} \exp \bigg( \rho (x - K_m) - \frac{(K_m-x)^2}{2u} - \frac{\rho^2 u}{2} + \beta K_m u \bigg).$$
Note that if $0 \leq u \leq T$, then
$$\beta K_m u \leq \beta x (1+\delta)^m \frac{\log(1/\delta)}{\beta x\delta}  = \frac{(1+\delta)^m\log(1/\delta)}{\delta}.$$
Therefore, using also that $\rho^2 u/2 \geq 0$, we have
$${\tilde r}_{x}^{K_m}(u) \lesssim \frac{K_m - x}{u^{3/2}} \exp \bigg( \frac{(1 + \delta)^m \log(1/\delta)}{\delta} + \rho (x - K_m)\bigg) \exp \bigg( - \frac{(x - K_m)^2}{2u} \bigg).$$  It follows from this bound and Lemma \ref{32int} that the expected number of particles to hit $K_m$ by time $T_{m-1}$, if particles are killed upon reaching $K_m$, is at most
\begin{align}\label{hitKm}
\int_0^{T_{m-1}} {\tilde r}_x^{K_m}(u) \: du &\lesssim \exp \bigg( \frac{(1 + \delta)^m \log(1/\delta)}{\delta} + \rho (x - K_m)\bigg) \nonumber \\
&= \exp \bigg( \frac{(1 + \delta)^{m} \log(1/\delta)}{\delta} + \rho x - \rho x (1 + \delta)^m \bigg).
\end{align}

Now suppose, for $m \geq 1$, a particle reaches $K_m$ before time $T_{m-1}$.  For $m = 0$, we can consider instead the particle at $x$ at time zero.  Particles of type $m$ have positions $x\le K_{m+1}$ and therefore effective branching rate $b(x)-d(x)\le\beta K_{m+1}$.  For a continuous-time branching process in which each individual gives birth at rate $\lambda$ and dies at rate $\mu$, it is well-known that the probability that the population survives for at least time $t$ is given by
\[\frac{\lambda - \mu}{\lambda - \mu e^{-(\lambda - \mu)t}} = \frac{\lambda - \mu}{\lambda - \mu + \mu(1 - e^{-(\lambda - \mu)t})}.\]
This formula can be deduced, for example, from results in Section 5 in Chapter III of \cite{athney}.

For any $\mu>0$ and $t>0$, one may check that the derivative of the function $z\mapsto\frac{z + \mu(1 - e^{-zt})}{z}$
is always negative and therefore the function is decreasing in $z$. Thus the function
\[z\mapsto \frac{z}{z + \mu(1 - e^{-zt})}\]
is increasing in $z$. Applying this with $z=\lambda-\mu\le \beta K_{m+1}$, $\mu\ge\Delta$ (from \eqref{A3}) and $t=\tau_m$, we see that the probability that a particle that reaches $K_m$ before time $T_{m-1}$ has 
descendants of type $m$ alive in the population at time $T_m$ is bounded above by
\begin{equation}\label{survivep1}
\frac{\beta K_{m+1}}{\beta K_{m+1} + \Delta(1 - e^{-\beta K_{m+1} \tau_m})} \le \frac{\beta K_{m+1}}{\Delta(1 - e^{-\beta K_{m+1} \tau_m})}.
\end{equation}
By our choice of $\tau_m$, this equals
\begin{equation}\label{GWsurvive}
\frac{\beta x(1+\delta)^{m+1}}{\Delta(1-\delta)}.
\end{equation}
When $m = 0$, it follows that
$$\P(D_0) \leq \frac{\beta x (1 + \delta)}{\Delta(1 - \delta)}.$$
When $m \geq 1$, it follows from (\ref{hitKm}) and the estimate in (\ref{GWsurvive}) that
$$\P(D_m) \leq \frac{\beta x (1 + \delta)^{m+1}}{\Delta(1 - \delta)} \exp \bigg( (1 + \delta)^m \frac{\log(1/\delta)}{\delta} +\rho x - \rho x (1+\delta)^m \bigg).$$
Therefore, using (\ref{DDm}),
\begin{equation}\label{PD}
\P(D) \leq \frac{\beta x (1 + \delta)}{\Delta(1 - \delta)} + \sum_{m=1}^{\infty} \frac{\beta x (1 + \delta)^{m+1}}{\Delta(1 - \delta)} \exp \bigg( (1 + \delta)^m \frac{\log(1/\delta)}{\delta} +\rho x - \rho x (1+\delta)^m \bigg).
\end{equation}
Let $\Gamma_m$ denote the $m$th term in the infinite sum in (\ref{PD}).  Because $x\gg \beta^{-1/3}\gg \rho^{-1}$ as $n \rightarrow \infty$, we see that for any fixed $\delta$, we have $\Gamma_1 \rightarrow 0$ and $\sup_{m \geq 1} \Gamma_{m+1}/\Gamma_m \rightarrow 0$.  It follows that the infinite sum in (\ref{PD}) tends to zero, and therefore as long as $\delta$ is chosen to be small enough that $(1 + \delta)/(1 - \delta) < 2$, we have $\P(D) \leq 2\beta x/\Delta$ for sufficiently large $n$.  In view of (\ref{Tdef}), it follows that the conclusion of the lemma holds with $C_1 = \log(1/\delta)/\delta$.
\end{proof}

\begin{Lemma}\label{YfromL}
If $0 < t \lesssim \rho^{-2}$, then $$\int_{-\infty}^{\infty} p_t(L_A, y) e^{\rho y} \: dy \lesssim e^{\rho L_A}.$$
\end{Lemma}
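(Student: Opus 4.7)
The plan is to compute the integral explicitly and show the remaining exponent is $O(1)$. Starting from formula \eqref{ptxy}, I multiply the integrand by $e^{\rho y}$, which cancels the $-\rho y$ contribution in the exponent. The resulting exponent is a quadratic in $y$, so I complete the square:
\begin{equation*}
-\frac{(y-x)^2}{2t} + \frac{\beta t y}{2} = -\frac{1}{2t}\Big(y - \Big(x + \tfrac{\beta t^2}{2}\Big)\Big)^2 + \frac{\beta x t}{2} + \frac{\beta^2 t^3}{8}.
\end{equation*}
Integrating the Gaussian factor in $y$ leaves
\begin{equation*}
\int_{-\infty}^{\infty} p_t(x,y) e^{\rho y}\,dy \;=\; \exp\Big(\rho x + \beta x t - \tfrac{\rho^2 t}{2} + \tfrac{\beta^2 t^3}{6}\Big).
\end{equation*}

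Now setting $x = L_A$, the claim reduces to showing that $\beta L_A t - \rho^2 t/2 + \beta^2 t^3/6 = O(1)$. Using the definition of $L_A$ together with $\beta(2\beta)^{-1/3} = \beta^{2/3}/2^{1/3}$, I can write
\begin{equation*}
\beta L_A t - \tfrac{\rho^2 t}{2} \;=\; t\Big(\tfrac{|\gamma_1| \beta^{2/3}}{2^{1/3}} - \tfrac{A \beta}{\rho}\Big).
\end{equation*}
Under the hypothesis $t \lesssim \rho^{-2}$, each of the three terms is controlled by assumption \eqref{A1}: the first is $O(\beta^{2/3}/\rho^2) = O((\beta/\rho^3)^{2/3}) = o(1)$; the $A$-term is $O(\beta/\rho^3) = o(1)$; and the cubic term satisfies $\beta^2 t^3 \lesssim (\beta/\rho^3)^2 \beta \rho^{-2}\cdot \rho^{-1}$... more cleanly, $\beta^2 t^3 \lesssim \beta^2/\rho^6 = (\beta/\rho^3)^2 = o(1)$. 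So the exponent differs from $\rho L_A$ by $o(1)$, yielding the desired bound.

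The main obstacle is purely bookkeeping in the completion of the square and verifying that \eqref{A1} kills each error term; there is no probabilistic subtlety. An alternative derivation of the closed form proceeds via the many-to-one lemma applied to Brownian motion with drift together with a Girsanov change of measure, but the direct calculation from \eqref{ptxy} above is the shortest route.
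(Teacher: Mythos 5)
Your proof is correct and essentially the same as the paper's: both start from the explicit density \eqref{ptxy}, evaluate the Gaussian integral in $y$, and then use $t \lesssim \rho^{-2}$ together with \eqref{A1} to show the leftover exponent differs from $\rho L_A$ by a bounded (in fact $o(1)$) amount. The only cosmetic difference is that you complete the square to obtain the closed form $\exp(\rho L_A + \beta L_A t - \rho^2 t/2 + \beta^2 t^3/6)$ in one step and then expand $L_A$, whereas the paper pulls out the bounded terms $\rho^2 t/2$, $\beta L_A t/2$, $\beta^2 t^3/24$ first and then recognizes the remaining integral as a Gaussian moment-generating function.
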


\begin{proof}
By (\ref{ptxy}),
$$\int_{-\infty}^{\infty} p_t(L_A, y) e^{\rho y} \: dy = \int_{-\infty}^{\infty} \frac{1}{\sqrt{2 \pi t}} \exp \bigg( \rho L_A - \frac{(y - L_A)^2}{2t} - \frac{\rho^2 t}{2} + \frac{\beta (L_A + y)t}{2} + \frac{\beta^2 t^3}{24} \bigg) \: dy.$$
Because $t \lesssim \rho^{-2}$, the terms $\rho^2t/2$, $\beta L_A t/2$, and $\beta^2 t^3/24$ are all bounded above by positive constants.  It follows that
$$\int_{-\infty}^{\infty} p_t(L_A, y) e^{\rho y} \: dy \lesssim e^{\rho L_A} \int_{-\infty}^{\infty} \frac{1}{\sqrt{2 \pi t}} e^{\beta y t/2} e^{-(y - L_A)^2/2t} \: dy = e^{\rho L_A} \cdot e^{\beta t L_A/2} e^{\beta^2 t^3/8} \lesssim e^{\rho L_A},$$
as claimed.
\end{proof}

\section{Proof of Proposition \ref{rtconfigprop}}\label{rtedgesec}

In this section, we will prove Proposition \ref{rtconfigprop}, which gives a precise description of the density of particles near the right edge.  Because it will sometimes be necessary to condition on the initial configuration of particles, we will define $({\cal F}_t, t \geq 0)$ to be the natural filtration associated with the branching Brownian motion process.

We begin with the following lemma, which states that when (\ref{Yasm}) holds, particles that start out close to $L$ can be neglected because they will not have descendants surviving for very long.

\begin{Lemma}\label{aboveL}
Suppose (\ref{Yasm}) holds.  Fix $A \in \R$, and let $C_1>0$ be the constant from Lemma \ref{Lsurvive}.  Then the probability that some particle that is to the right of $L_A$ at time zero has a descendant alive in the population at time $2C_1 \rho^{-2}$ tends to zero as $n \rightarrow \infty$.
\end{Lemma}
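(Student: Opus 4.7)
The plan is to split the initial particles to the right of $L_A$ by an intermediate threshold $B$, apply Lemma \ref{Lsurvive} to those in $(L_A, B]$, and use (\ref{Yasm}) directly to rule out any particles above $B$. Writing $N^*$ for the number of initial particles to the right of $L_A$ whose descendants have at least one member alive at time $T := 2C_1 \rho^{-2}$, the goal is $\P(N^* \ge 1) \to 0$. A convenient choice of threshold is $B = L + 3\rho^{-1}\log(1/\rho)$: assumption (\ref{A1}) gives $\rho/\beta \gg \log(1/\rho)$, so $B \ll \beta^{-1}$, while (\ref{A1}) also forces $L_A \sim \rho^2/(2\beta) \gg \beta^{-1/3}$, placing the entire interval $(L_A, B]$ inside the window where Lemma \ref{Lsurvive} is valid.

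For particles initially above $B$, assumption (\ref{Yasm}) lets us assume $Y_n(0) \le \rho^{-1} e^{\rho L}$ with probability tending to one. On that event, the count of initial particles past $B$ is at most $Y_n(0) e^{-\rho B} \le \rho^2 < 1$, which, being integer-valued, forces it to be zero. Hence with high probability no initial particle above $B$ contributes to $N^*$.

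For particles in $(L_A, B]$ I will use Lemma \ref{Lsurvive}. Since $\beta L_A \to \rho^2/2$, for large $n$ and every $x \ge L_A$ we have $C_1/(\beta x) \le T$, so by monotonicity of survival probabilities in the time horizon the chance that the descendants of a particle at $x$ are alive at time $T$ is at most $2\beta x/\Delta$. A one-line check gives $\beta x \le \rho^2 e^{\rho(x - L_A)}$ for all $x \ge L_A$: at $x = L_A$ the left side is $\rho^2/2 + o(\rho^2)$ and the right side is $\rho^2$, while the slopes are $\beta$ and $\rho^3 e^{\rho(x-L_A)} \ge \rho^3$, with $\beta \ll \rho^3$ by (\ref{A1}). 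Summing over the initial particles gives
\[\E\bigl[\,\#\{i : L_A < X_{i,n}(0) \le B,\ \text{descendant alive at time } T\}\,\big|\,\F_0\bigr] \;\le\; \frac{2 e^A}{\Delta}\,\rho^2 e^{-\rho L} Y_n(0),\]
which tends to $0$ in probability by (\ref{Yasm}). A standard conditional-Markov / $\varepsilon$ argument then converts this into the desired bound on the contribution from this range, completing the proof.

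The only genuine subtlety is the choice of $B$: it must sit high enough above $L$ to crush the tail of $Y_n(0)$ when weighted by $e^{-\rho B}$, yet stay below $\beta^{-1}$ so that Lemma \ref{Lsurvive} remains applicable. These constraints are compatible precisely because (\ref{A1}) makes $\rho/\beta$ asymptotically much larger than $\log(1/\rho)$.
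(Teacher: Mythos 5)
Your proof is correct and follows essentially the same route as the paper: dispose of the far-right tail using (\ref{Yasm}), then handle the remaining particles in $(L_A,\cdot\,]$ via Lemma \ref{Lsurvive} combined with a conditional-Markov estimate. The only substantive difference is cosmetic: the paper splits at $\rho/\beta$ (a single particle that far out already makes $Y_n(0)$ too large), whereas you split at $B = L + 3\rho^{-1}\log(1/\rho)$ and kill the tail count with the extra $\rho^3$ from $e^{-\rho B}$; the paper also bounds $\beta X_i(0)$ by separating off $\beta L$ rather than bounding $\beta x$ directly by $\rho^2 e^{\rho(x-L_A)}$, but the resulting estimates are the same.

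One small slip: (\ref{Yasm}) says $\rho^2 e^{-\rho L}Y_n(0)\to_p 0$, which (taking $\delta=1$ in the definition of convergence in probability) yields $Y_n(0)\le\rho^{-2}e^{\rho L}$ with probability tending to one, not $Y_n(0)\le\rho^{-1}e^{\rho L}$; the latter would require $\P(\rho^2 e^{-\rho L}Y_n(0)>\rho)\to 0$, which does not follow since $\rho\to 0$. This is harmless here — with the correct bound the count of particles above $B$ is at most $\rho^{-2}e^{\rho L}\cdot e^{-\rho B}=\rho<1$, still forcing it to vanish — but it is worth fixing. (Equivalently, one can avoid a deterministic bound altogether: the conditional expected count above $B$ is at most $e^{-\rho B}Y_n(0)=\rho\cdot\rho^2 e^{-\rho L}Y_n(0)\to_p 0$.)
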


\begin{proof}
First note that any particle that is to the right of $\rho/\beta$ at time zero contributes at least $e^{\rho^2/\beta}\gg \rho^{-2}e^{\rho L}$ to $Y_n(0)$. It therefore follows from \eqref{Yasm} that the probability that any particle is to the right of $\rho/\beta$ at time zero tends to zero as $n\to\infty$, and we can restrict our attention to particles that start to the left of $\rho/\beta$.

Fix $\eps>0$. Say that a particle at time $0$ ``survives'' if it has a descendant alive at time $2C_1\rho^{-2}$.  That is, we say the particle survives if the family initiated by the particle lasts until at least time $2C_1 \rho^{-2}$.  Let $\mathcal S$ be the number of particles whose positions at time $0$ are between $L_A$ and $\rho/\beta$ and who survive; and define $E_0 = \E[\mathcal S|\F_0]$. Then, using the conditional Markov inequality,
\begin{align*}
\P(\mathcal S\ge 1) = \E\big[\P(\mathcal S\ge 1 |\F_0)\big] &= \E\big[\P(\mathcal S\ge 1 |\F_0)\ind_{\{E_0\le \eps\}}\big] + \E\big[\P(\mathcal S\ge 1 |\F_0)\ind_{\{E_0> \eps\}}\big]\\
&\le \E\big[\E[\mathcal S | \F_0] \ind_{\{E_0\le \eps\}}\big] + \E\big[1\cdot\ind_{\{E_0>\eps\}}\big]\\
&\le \eps + \P(E_0>\eps).
\end{align*}
Since $\eps>0$ was arbitrary, it therefore suffices to show that $\P(E_0>\eps)\to 0$ as $n\to\infty$.

Consider a particle $i$ with $X_i(0)\in[L_A,\rho/\beta]$.
Lemma \ref{Lsurvive} gives an upper bound for the probability that this particle has a descendant alive at time $C_1/\beta X_i(0)$.  Because $X_i(0)\ge L_A$ and $L_A \geq \rho^2/2 \beta$ for large $n$, we have $C_1/\beta X_i(0) \leq 2C_1 \rho^{-2}$ for large $n$.  Therefore, for large $n$, the bound in Lemma \ref{Lsurvive} is an upper bound for the probability that this particle has a descendant alive until time $2C_1 \rho^{-2}$.  Thus, by Lemma \ref{Lsurvive}, a particle $i$ with $X_i(0)\in[L_A,\rho/\beta]$ survives with probability at most $2\beta X_i(0)/\Delta$.  It follows that
\begin{align}
E_0 &\le \sum_{i=1}^{N_n(0)}\ind_{\{X_i(0)\in[L_A,\rho/\beta]\}}\frac{2\beta X_i(0)}{\Delta}\nonumber\\
&= \frac{2}{\Delta}\sum_{i=1}^{N_n(0)}\ind_{\{X_i(0)\in[L_A,\rho/\beta]\}}\beta(X_i(0)-L) + \frac{2 L}{\Delta}\sum_{i=1}^{N_n(0)}\beta\ind_{\{X_i(0)\in[L_A,\rho/\beta]\}}.\label{E0bound1}
\end{align}
We now use the elementary bound $x\le e^x$ together with \eqref{A1} to say that, for large $n$,
\[\beta(X_i(0)-L) \le \rho^2\cdot\rho(X_i(0)-L) \le \rho^2 e^{\rho X_i(0) - \rho L}\]
which, combined with \eqref{E0bound1}, gives
\[E_0 \le \frac{2\rho^2}{\Delta}\sum_{i=1}^{N_n(0)} e^{\rho X_i(0) - \rho L} + \frac{2\beta L}{\Delta}\sum_{i=1}^{N_n(0)}\ind_{\{X_i(0)\ge L_A\}}.\]
The first sum on the right-hand side tends to $0$ in probability by \eqref{Yasm}. The second also tends to $0$ in probability, since
\[\P\Big(\frac{2\beta L}{\Delta}\sum_{i=1}^{N_n(0)}\ind_{\{X_i(0)\ge L_A\}} \ge \eps \Big) \le \P\Big(\beta L \sum_{i=1}^{N_n(0)} e^{\rho X_i(0) - \rho L_A} \ge \frac{\Delta\eps}{2}\Big) = \P\Big(\beta L e^{-\rho L} Y_n(0) \ge \frac{\Delta\eps e^{-A}}{2}\Big)\]
which converges to $0$ as $n\to\infty$ by \eqref{Yasm}. This completes the proof.
\end{proof}

We now introduce some additional notation.  Recall from (\ref{Kdef}) that $K_A(t) = L_A - \beta t^2/66$.  We also define
\begin{equation}\label{HAdef}
H_A(t) = L_A - \frac{\beta t^2}{9},
\end{equation}
so that $H_A(t) < K_A(t) < L_A.$  We set $H(t) = H_0(t)$ and $K(t) = K_0(t)$.  Note that if $H_A(t_n) \leq x < L_A$ and $H_A(t_n) \leq y < L_A$, then as long as $t_n \gg \beta^{-2/3}$, the condition (\ref{dcondbig}) holds, and therefore Lemma \ref{dapprox4} can be used to estimate $p_{t_n}^{L_A}(x,y)$.  When $K_A(t_n) < x < L_A$ and $K_A(t_n) < y < L_A$, Lemma \ref{2momprop} can be used for second moment bounds.  Other methods are needed to control the contribution from particles to the left of $H_A(t)$.  The next lemma will be very useful in this regard.

\begin{Lemma}\label{lowxylem}
If $x \leq H_A(t_n)$ and $t_n \gg \beta^{-2/3}$, then
\begin{equation}\label{explowx}
\int_{-\infty}^{L_A} p_{t_n}^{L_A}(x,y) e^{\rho y} \: dy \ll e^{\rho x} e^{-\beta^2 t_n^3/73}.
\end{equation}
If $x < L_A$ and $t_n \gg \beta^{-2/3}$, then
\begin{equation}\label{explowy}
\int_{-\infty}^{H_A(t_n)} p_{t_n}^{L_A}(x,y) e^{\rho y} \: dy \ll e^{\rho x} e^{-\beta^2 t_n^3/73}.
\end{equation}
If $x \leq H_A(t_n)$, $0 \leq \zeta \leq \beta t_n/2$, and $\beta^{-2/3} \ll t_n \ll \rho/\beta$, then
\begin{equation}\label{explowx2}
\int_{-\infty}^{L_A} p_{t_n}^{L_A}(x,y) e^{(\rho - \zeta) y} \: dy \ll e^{\rho x} e^{-\beta^2 t_n^3/73}.
\end{equation}
If $x < L_A$, $0 \leq \zeta \leq \beta t_n/2$, and $\beta^{-2/3} \ll t_n \ll \rho/\beta$, then
\begin{equation}\label{explowy2}
\int_{-\infty}^{H_A(t_n)} p_{t_n}^{L_A}(x,y) e^{(\rho - \zeta) y} \: dy \ll e^{\rho x} e^{-\beta^2 t_n^3/73}.
\end{equation}
\end{Lemma}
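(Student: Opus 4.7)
The plan is to prove all four statements at once by inserting the spectral expansion \eqref{ptKeq} into the integrand and integrating term by term. After the substitution $u = L_A - y$, the integral $\int_D p_{t_n}^{L_A}(x,y) e^{(\rho-\zeta) y} \, dy$ takes the form
\begin{equation*}
(2\beta)^{1/3} e^{\rho x} e^{-\zeta L_A} \sum_{k=1}^{\infty} \frac{\exp\bigl((2^{-1/3}\beta^{2/3}(\gamma_k - \gamma_1) - A\beta/\rho) t_n\bigr)}{Ai'(\gamma_k)^2}\, Ai\bigl((2\beta)^{1/3}(L_A - x) + \gamma_k\bigr)\, J_k,
\end{equation*}
where $J_k = \int_U e^{\zeta u} Ai((2\beta)^{1/3} u + \gamma_k) \, du$, with $U = (0,\infty)$ in statements (1) and (3) and $U = (\beta t_n^2/9, \infty)$ in statements (2) and (4), and $\zeta = 0$ in statements (1) and (2). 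The goal is to show this quantity is $\ll e^{\rho x} e^{-\beta^2 t_n^3/73}$ in each case.

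The first step is to bound $J_k$. For $U = (0,\infty)$, the classical Airy Laplace identity $\int_{-\infty}^{\infty} Ai(w) e^{\sigma w} \, dw = e^{\sigma^3/3}$ applied with $\sigma = \zeta/(2\beta)^{1/3}$ yields $J_k \lesssim \beta^{-1/3} \exp(\zeta|\gamma_k|/(2\beta)^{1/3} + \zeta^3/(6\beta))$. For $U = (\beta t_n^2/9, \infty)$, the Airy asymptotic \eqref{Airyasymp} applied to the integrand produces decay of order $\exp(-(2/3)\sqrt{2\beta}(\beta t_n^2/9)^{3/2}) = \exp(-2\sqrt{2}\,\beta^2 t_n^3/81)$ in each $J_k$, together with a factor $\exp(\zeta \beta t_n^2/9)$ from $e^{\zeta u}$ at the lower endpoint. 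The second step is summing over $k$: by Lemma \ref{Airyratlem}, $Ai((2\beta)^{1/3}(L_A - x) + \gamma_k) \lesssim k^{1/6} \exp((\gamma_1 - \gamma_k)(2\beta)^{1/6}(L_A - x)^{1/2})\, Ai((2\beta)^{1/3}(L_A - x) + \gamma_1)$, and combined with $Ai'(\gamma_k)^2 \asymp k^{1/3}$ from \eqref{zerosder} and the eigenvalue factor $\exp(2^{-1/3}\beta^{2/3}(\gamma_k - \gamma_1) t_n)$, the series is summable by \eqref{zeros} and dominated by a constant times its $k = 1$ term.

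For the $k = 1$ term, statement (1) follows from the Airy asymptotic applied to $Ai((2\beta)^{1/3}(L_A - x) + \gamma_1)$ using $L_A - x \ge \beta t_n^2/9$, which gives $\lesssim \exp(-2\sqrt{2}\,\beta^2 t_n^3/81)$, and statement (2) inherits the same decay from $J_1$. Since $2\sqrt{2}/81 \approx 0.035$ exceeds $1/73 \approx 0.014$ with plenty of slack, the required bound follows after absorbing the lower-order factors. For statements (3) and (4) there is an extra exponent $-\zeta L_A + \zeta^3/(6\beta) + \zeta|\gamma_1|/(2\beta)^{1/3}$ to control: the cubic $\zeta \mapsto -\zeta L_A + \zeta^3/(6\beta)$ vanishes at zero and decreases on $[0, \sqrt{2\beta L_A}]$ with $\sqrt{2\beta L_A} \asymp \rho$, and the hypothesis $t_n \ll \rho/\beta$ forces $\zeta \le \beta t_n/2 \ll \rho$, keeping us in the decreasing regime; meanwhile $\zeta|\gamma_1|/(2\beta)^{1/3} = O(\beta^{2/3} t_n) = o(\beta^2 t_n^3)$ under $t_n \gg \beta^{-2/3}$.

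The main obstacle I anticipate is statement (4), where the $e^{-\zeta y}$ weight interacts with the tail integration. Depending on whether $\zeta^2/(2\beta)$ lies below or above $\beta t_n^2/9$, the maximum of $e^{\zeta u} Ai((2\beta)^{1/3} u + \gamma_1)$ over $u > \beta t_n^2/9$ sits at the lower endpoint or at the interior critical point $u^{*} = \zeta^2/(2\beta)$. The endpoint case produces the desired $\exp(-2\sqrt{2}\,\beta^2 t_n^3/81)$ decay directly (combined with $e^{-\zeta L_A + \zeta \beta t_n^2/9} = e^{-\zeta H_A(t_n)} \le 1$), while the interior case requires confirming that $-\zeta L_A + \zeta^3/(6\beta) \ll -\beta^2 t_n^3$ for the relevant $\zeta$; at $\zeta = \beta t_n/2$ this exponent is $\approx -\rho^2 t_n/4 + \beta^2 t_n^3/48$, dominated by $-\rho^2 t_n/4$ with margin $\rho^2/(\beta^2 t_n^2) \gg 1$ under $t_n \ll \rho/\beta$.
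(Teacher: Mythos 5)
Your route via the spectral expansion \eqref{ptKeq} is genuinely different from the paper's. The paper's proof is far more elementary: it uses the trivial comparison $p_{t_n}^{L_A}(x,y)\le p_{t_n}(x,y)$ together with the closed Gaussian formula \eqref{ptxy}, observing that the hypotheses $x\le H_A(t_n)$ (or $y\le H_A(t_n)$) force $\tfrac{\beta(x+y)t_n}{2}\le \beta L_A t_n - \tfrac{\beta^2 t_n^3}{18}$, and the remaining cubic deficit of $\beta^2 t_n^3/72$ in the exponent (after cancellations from the definition of $L_A$) directly delivers $\ll e^{\rho x}e^{-\beta^2 t_n^3/73}$. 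For the $\zeta>0$ cases \eqref{explowx2}--\eqref{explowy2}, the paper handles $y<0$ separately, where $\tfrac{1}{2}\beta y t_n - \zeta y\le 0$ and the $\rho^2 t_n$ term swamps $\beta^2 t_n^3$ because $t_n\ll\rho/\beta$.

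Your approach, as written, has a gap in the summation over $k$. After applying Lemma \ref{Airyratlem} to $Ai\bigl((2\beta)^{1/3}(L_A-x)+\gamma_k\bigr)$, combining with \eqref{zerosder} and the eigenvalue factor, the $k$th term carries
\[
k^{-1/6}\exp\!\Bigl[(\gamma_1-\gamma_k)\bigl((2\beta)^{1/6}(L_A-x)^{1/2}-2^{-1/3}\beta^{2/3}t_n\bigr)\Bigr].
\]
Since $\gamma_1-\gamma_k>0$ grows like $k^{2/3}$, this is summable and $k=1$-dominated only when the bracketed coefficient is negative, i.e.\ $L_A-x<\beta t_n^2/2$. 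But the hypothesis only guarantees $L_A-x\ge\beta t_n^2/9$, and $L_A - x$ can be arbitrarily large, so the regime $L_A-x\ge\beta t_n^2/2$ is not excluded and makes your bound diverge. (This is precisely the regime excluded by condition \eqref{dcond} in Lemma \ref{dapprox4}, so the failure is not an accident of using Lemma \ref{Airyratlem}: the spectral series stops being controlled by its leading term once the source is too far from the barrier.) A repair would need to split the sum at the crossover index $k_0\sim\bigl((2\beta)^{1/3}(L_A-x)\bigr)^{3/2}$ and treat the tail $k\ge k_0$ with only the crude $|Ai|\lesssim 1$ bound, extracting the needed $e^{-c\beta^2 t_n^3}$ decay from the eigenvalue factor at $k\approx k_0$; this is a nontrivial extra argument you haven't given. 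Separately, your bound on $J_k$ for $k\ge 2$ uses the whole-line Laplace identity on a half-line integral whose missing piece oscillates; it can be controlled but needs justification. The paper's Gaussian route avoids all of this, and is what I'd recommend here: the regime $x$ far below $L_A$ is exactly where the killed and unkilled densities agree up to negligible error, so there is no need to invoke the eigenfunction expansion at all.
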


\begin{proof}
Suppose $x \leq L_A - \frac{1}{9} \beta t_n^2$.  By (\ref{ptxy}) and the fact that $p_{t_n}^{L_A}(x,y) \leq p_{t_n}(x,y)$, we have
\begin{align*}
\int_{-\infty}^{L_A} p_{t_n}^{L_A}(x,y) e^{\rho y} \: dy &\leq \int_{-\infty}^{L_A} \frac{1}{\sqrt{2 \pi t_n}} \exp \bigg(\rho x - \frac{(y - x)^2}{2t} - \frac{\rho^2 t_n}{2} + \frac{\beta(y + x)t_n}{2} + \frac{\beta^2 t_n^3}{24} \bigg) \: dy \\
&\leq \exp \bigg( \rho x - \frac{\rho^2 t_n}{2} + \beta L_A t_n - \frac{\beta^2 t_n^3}{18} + \frac{\beta^2 t_n^3}{24} \bigg) \int_{-\infty}^{\infty} \frac{1}{\sqrt{2 \pi t_n}} e^{-(y - x)^2/2t_n} \: dy \\
&= \exp \bigg(\rho x - 2^{-1/3} \gamma_1 \beta^{2/3} t_n - \frac{A \beta t_n}{\rho} - \frac{\beta^2 t_n^3}{72} \bigg).
\end{align*} 
Because $\beta^2 t_n^3 \gg \beta^{2/3} t_n$ and $\beta^2 t_n^3 \gg \beta t_n/\rho$, it follows that (\ref{explowx}) holds.

To establish (\ref{explowy}), we use the same argument.  Instead of having $x \leq H_A(t_n)$ and $y \leq L_A$, we now have $x \leq L_A$ and $y \leq H_A(t_n)$.  However, the resulting bound on $\beta(y + x)t_n/2$ is the same, and the rest of the calculation proceeds identically.

To prove (\ref{explowx2}) and (\ref{explowy2}), it follows from (\ref{explowx}) and (\ref{explowy}) that we only need to consider the portion of the integral when $y < 0$.  Using (\ref{ptxy}) along with the fact that $\zeta \leq \beta t_n/2$ and therefore $\frac{1}{2}\beta y t_n - \zeta y \le 0$ when $y \le 0$, we have
\begin{align*}
&\int_{-\infty}^0 p_{t_n}^{L_A}(x,y) e^{(\rho - \zeta)y} \: dy \\
&\hspace{.5in}\leq  \int_{-\infty}^0 \frac{1}{\sqrt{2 \pi t_n}} \exp \bigg(-\zeta y + \rho x - \frac{(y - x)^2}{2t_n} - \frac{\rho^2 t_n}{2} + \frac{\beta(y + x)t_n}{2} + \frac{\beta^2 t_n^3}{24} \bigg) \: dy \\
&\hspace{.5in}\leq \exp \bigg(\rho x - \frac{\rho^2 t_n}{2} + \frac{\beta L_A t_n}{2} + \frac{\beta^2 t_n^3}{24} \bigg) \int_{-\infty}^0 \frac{1}{\sqrt{2 \pi t_n}} e^{-(y - x)^2/2t_n} \: dy \\
&\hspace{.5in}\leq \exp \bigg( \rho x - \frac{\rho^2 t_n}{4} - 2^{-4/3} \gamma_1 \beta^{2/3} t_n - \frac{A \beta t_n}{2 \rho} + \frac{\beta^2 t_n^3}{24} \bigg).
\end{align*}
Because $t_n \ll \rho/\beta$, we have $\rho^2 t_n \gg \beta^2 t_n^3$, which is sufficient to conclude (\ref{explowx2}) and (\ref{explowy2}).
\end{proof}

\begin{proof}[Proof of Proposition \ref{rtconfigprop}]
Let $g: \R \rightarrow [0, \infty)$ be a bounded measurable function, and let $$\|g\| = \sup_{x \in \Rsm} |g(x)|.$$  On the event that $N_n(t) \geq 1$, let
\begin{equation}\label{Phidef}
\Phi_n(g) = Y_n(t_n) \int_{-\infty}^{\infty} g(x) \:\xi_n(t_n)(dx) = \sum_{i=1}^{N_n(t_n)} e^{\rho X_{i,n}(t_n)} g\big((2 \beta)^{1/3}(L - X_{i,n}(t_n)\big).
\end{equation}
Otherwise, let $\Phi_n(g) = 0$.
Let $\Phi_n(1)$ be the value of $\Phi_n(g)$ when $g(x) = 1$ for all $x$.  Then, when $N_n(t) \geq 1$, we have $$\int_{-\infty}^{\infty} g(x) \: \xi_n(t_n)(dx) = \frac{\Phi_n(g)}{\Phi_n(1)}.$$
We will show that for all $\kappa > 0$, we have
\begin{align}\label{Pphi}
&\lim_{n \rightarrow \infty} \P \bigg( \frac{1 - \kappa}{(Ai'(\gamma_1))^2} \bigg( \int_0^{\infty} g(z) Ai(\gamma_1 + z) \: dz \bigg) Z_n(0) < \Phi_n(g) \nonumber \\
&\hspace{1.5in}< \frac{1 + \kappa}{(Ai'(\gamma_1))^2} \bigg( \int_0^{\infty} g(z) Ai(\gamma_1 + z) \: dz \bigg) Z_n(0)  \bigg) = 1.
\end{align}
It will then follow that for all $\kappa > 0$, we have
$$\lim_{n \rightarrow \infty} \P \bigg( \frac{1 - \kappa}{1 + \kappa} \int_0^{\infty} g(z)h(z) \: dz \leq \int_{-\infty}^{\infty} g(x) \: \xi_n(t_n)(dx) \leq \frac{1 + \kappa}{1 - \kappa} \int_0^{\infty} g(z)h(z) \: dz \bigg) = 1.$$
That is, as $n \rightarrow \infty$ we have $$\int_{-\infty}^{\infty} g(x) \: \xi_n(t_n)(dx) \rightarrow_p \int_0^{\infty} g(x) \: \nu(dx),$$ which by, for example, Theorem 16.16 of \cite{kall} will imply the statement of the proposition.

It therefore remains to prove (\ref{Pphi}).  We will estimate $\Phi_n(g)$ by dividing it into seven pieces, depending mostly on the location of the particle at time $t_n$ and the location of the ancestral particle at time zero.  For $i \in \{1, \dots, N_n(t)\}$ and $s \in [0, t]$, let $a_{i,n}(s,t)$ be the location at time $s$ of the ancestor of the $i$th particle at time $t$.  We partition the particles at time $t_n$ into the following seven subsets:
\begin{align*}
S_{1,n} &= \big\{i: a_{i,n}(s, t_n) \geq L \mbox{ for some }s \in \big[0, t_n - 2C_1 \rho^{-2}\big]\big\}, \\
S_{2,n} &= \big\{i \notin S_{1,n}: a_{i,n}(s,t_n) \geq L \mbox{ for some }s \in \big[t_n - 2C_1 \rho^{-2}, t_n\big] \big\}, \\
S_{3,n} &= \big\{i \notin S_{1,n} \cup S_{2,n}: a_{i,n}(0,t_n) < H(t_n) \big\}, \\
S_{4,n} &= \big\{i \notin S_{1,n} \cup S_{2,n} \cup S_{3,n}:  X_{i,n}(t_n) < H(t_n) \big\}, \\
S_{5,n} &= \big\{i \notin S_{1,n} \cup S_{2,n} \cup S_{3,n} \cup S_{4,n}: H(t_n) \leq a_{i,n}(0,t_n) \leq K(t_n) \big\}, \\
S_{6,n} &= \big\{i \notin S_{1,n} \cup S_{2,n} \cup S_{3,n} \cup S_{4,n} \cup S_{5,n}: H(t_n) \leq X_{i,n}(t_n) \leq K(t_n) \big\}, \\
S_{7,n} &= \big\{i \notin S_{1,n} \cup S_{2,n}: K(t_n) < a_{i,n}(0, t_n) < L \mbox{ and } K(t_n) < X_{i,n}(t_n) < L \big\}.
\end{align*}
For $j \in \{1, \dots, 7\}$, let $$\Phi_{j,n}(g) = \sum_{i \in S_{j,n}} e^{\rho X_{i,n}(t_n)} g\big((2 \beta)^{1/3}(L - X_{i,n}(t_n)\big),$$ and note that $\Phi_n(g) = \Phi_{1,n}(g) + \dots + \Phi_{7,n}(g)$.  We will show that the first six terms contribute little to the sum, while the seventh is highly concentrated around its mean.

The first term $\Phi_{1,n}(g)$ accounts for the contributions of particles that reach $L$ before time $t_n - 2C_1 \rho^{-2}$.  By Lemma \ref{aboveL}, with probability tending to one as $n \rightarrow \infty$, no particles above $L$ at time zero will have descendants alive at time $t_n$.
Consider the process in which particles are killed upon reaching $L$, and let $R_n(s,t)$ be the number of particles killed between time $s$ and time $t$.  By Lemma \ref{Rxlem},
$$\E[R_n(0,t_n - 2C_1 \rho^{-2})|{\cal F}_0] \lesssim e^{-\rho L}\big(Y_n(0) + t_n \beta^{2/3} Z_n(0)\big).$$
Therefore, using the assumptions (\ref{Zasm}) and (\ref{Yasm}) and the fact that $t_n \ll \rho/\beta$, we obtain that as $n \rightarrow \infty$,
$$\rho^2 \E[R_n(0, t_n - 2C_1 \rho^{-2})|{\cal F}_0] \lesssim \rho^2 e^{-\rho L} Y_n(0) + \rho^2 e^{-\rho L} \beta^{2/3} t_n Z_n(0) \rightarrow_p 0.$$  In view of Lemma \ref{aboveL}, it follows that with probability tending to one as $n \rightarrow \infty$, no particle that hits $L$ before time $t_n - 2C_1 \rho^{-2}$ has descendants alive in the population at time $t_n$.  That is, we have
\begin{equation}\label{Pphi1}
\lim_{n \rightarrow \infty} \P(\Phi_{1,n}(g) = 0) = 1.
\end{equation}

The term $\Phi_{2,n}(g)$ accounts for particles that reach $L$ between times $t_n - 2C_1 \rho^{-2}$ and $t_n$, which means they may have descendants surviving at time $t_n$ even if they will not have descendants surviving for a long time.  We again apply Lemma \ref{Rxlem}. Noting that $\beta^2 t_n^3 - \beta^2 (t_n - 2C_1 \rho^{-2})^3 \rightarrow 0$ because $t_n \ll \rho/\beta$, we get
$$\E[R_n(t_n - 2C_1 \rho^{-2}, t_n)|{\cal F}_0] \lesssim e^{-\rho L}\big(Y_n(0) e^{-\beta^2 t_n^3/9} + \rho^{-2} \beta^{2/3} Z_n(0)\big).$$  Therefore, by Lemma \ref{YfromL},
$$\E[\Phi_{2,n}(g)|{\cal F}_0] \lesssim \|g\| \big(Y_n(0) e^{-\beta^2 t_n^3/9} + \rho^{-2} \beta^{2/3} Z_n(0)\big).$$  It follows that
\begin{equation}\label{Ephi2}
\E\bigg[ \frac{\rho^3 e^{-\rho L}}{\beta^{1/3}} \cdot \Phi_{2,n}(g)\Big|{\cal F}_0\bigg] \lesssim \frac{\rho^3 e^{-\rho L}}{\beta^{1/3}} Y_n(0) e^{-\beta^2 t_n^3/9} + \frac{\rho^3 e^{-\rho L}}{\beta^{1/3}} \cdot \frac{\beta^{2/3}}{\rho^2} Z_n(0).
\end{equation}
Because (\ref{A1}) implies that $\beta^{2/3}/\rho^2 \rightarrow 0$ and (\ref{tcond1}) implies that
\begin{equation}\label{2nbound}
\frac{\rho}{\beta^{1/3}} e^{-\beta^2 t_n^3/9} \rightarrow 0,
\end{equation}
it follows from (\ref{Zasm}), (\ref{Yasm}), and (\ref{Ephi2}) that as $n \rightarrow \infty$,
$$\E\bigg[ \frac{\rho^3 e^{-\rho L}}{\beta^{1/3}} \cdot \Phi_{2,n}(g)\Big|{\cal F}_0\bigg] \rightarrow_p 0$$ and therefore, by the conditional Markov's inequality,
\begin{equation}\label{Pphi2}
\frac{\rho^3 e^{-\rho L}}{\beta^{1/3}} \cdot \Phi_{2,n}(g) \rightarrow_p 0.
\end{equation}

We now consider $\Phi_{3,n}(g)$ and $\Phi_{4,n}(g)$.  By (\ref{explowx}),
\begin{equation}\label{Ephi3}
\E\bigg[ \frac{\rho^3 e^{-\rho L}}{\beta^{1/3}} \cdot \Phi_{3,n}(g)\Big|{\cal F}_0\bigg] \ll \frac{\rho^3 e^{-\rho L}}{\beta^{1/3}} \|g\| Y_n(0) e^{-\beta^2 t_n^3/73},
\end{equation}
and we obtain the identical result for $\Phi_{4,n}(g)$ using (\ref{explowy}) in place of (\ref{explowx}).  Therefore, applying the conditional Markov's inequality as we did for $\Phi_{2,n}(g)$, we get
\begin{equation}\label{Pphi34}
\frac{\rho^3 e^{-\rho L}}{\beta^{1/3}} \cdot (\Phi_{3,n}(g) + \Phi_{4,n}(g)) \rightarrow_p 0.
\end{equation}

To handle the remaining terms, write
\begin{align*}
Z_n'(0) &= \sum_{i=1}^{N_n(0)} e^{\rho X_{i,n}(0)} \alpha(L - X_{i,n}(0)) \1_{\{X_{i,n}(0) \leq K(t_n) \}}, \\
Z_n^*(0) &= \sum_{i=1}^{N_n(0)} e^{\rho X_{i,n}(0)} \alpha(L - X_{i,n}(0)) \1_{\{K(t_n) < X_{i,n}(0) < L\}} = Z_n(0) - Z_n'(0).
\end{align*}
If $x \leq K(t_n)$, then $(2 \beta)^{1/3}(L - x) + \gamma_1 \geq \frac{1}{66} \cdot 2^{1/3} \beta^{4/3} t^2 + \gamma_1.$
Therefore, by (\ref{Airyasymp}), there exists a positive constant $C_6$ such that if $x \leq K(t_n)$, then
$$\alpha(L - x) \leq e^{-C_6 \beta^2 t_n^3}.$$ It follows that $Z'_n(0) \leq Y_n(0) e^{-C_6 \beta^2 t_n^3},$ so by (\ref{Yasm}), (\ref{A1}), and the reasoning that led to (\ref{2nbound}), we have
\begin{equation}\label{Znprime}
\frac{\rho^3}{\beta^{1/3}} e^{-\rho L} Z_n'(0) \rightarrow_p 0
\end{equation}
as $n \rightarrow \infty$.  Also, if $H(t_n) \leq x < L$ and $H(t_n) \leq y < L$, then because (\ref{dcondbig}) holds, we
can estimate $p_{t_n}^{L}(x,y)$ using Lemma \ref{dapprox4}.  It follows from Lemma \ref{dapprox4} and the boundedness of $g$ that
\begin{align*}
\E[\Phi_{5,n}(g)|{\cal F}_0] &\lesssim \beta^{1/3} \bigg( \int_{H(t_n)}^{L} e^{\rho y} \cdot e^{-\rho y} \alpha(L - y) \: dy \bigg) Z_n'(0) \\
&\leq \beta^{1/3} \bigg( \int_{H(t_n)}^{L} Ai \big( (2 \beta)^{1/3}(L - y) + \gamma_1 \big) \: dy \bigg) Z_n'(0) \\
&\lesssim Z_n'(0).
\end{align*}
Combining this result with (\ref{Znprime}) and the conditional Markov's Inequality, we get
\begin{equation}\label{Pphi5}
\frac{\rho^3 e^{-\rho L}}{\beta^{1/3}} \cdot \Phi_{5,n}(g) \rightarrow_p 0.
\end{equation}

We can bound $\Phi_{6,n}(g)$ by making a similar calculation.  This time, we are considering descendants of the initial particles that contribute to $Z_n^*(0)$ rather than $Z_n'(0)$, and requiring the particles to end up between $H(t_n)$ and $K(t_n)$ at time $t_n$.  Therefore, making the substitution $z = (2 \beta)^{1/3}(L - y)$, we get
\begin{align}\label{phi6int}
\E[\Phi_{6,n}(g)|{\cal F}_0] &\lesssim \beta^{1/3} \bigg( \int_{H(t_n)}^{K(t_n)} Ai \big( (2 \beta)^{1/3}(L - y) + \gamma_1 \big) \: dy \bigg) Z_n^*(0) \nonumber \\
&\lesssim \bigg( \int_{(2 \beta)^{1/3} \cdot \frac{1}{66} \beta t_n^2}^{(2 \beta)^{1/3} \cdot \frac{1}{9} \beta t_n^2} Ai(\gamma_1 + z) \: dz \bigg) Z_n^*(0).
\end{align}
Because $\beta^{4/3} t_n^2 \rightarrow \infty$, the integral in (\ref{phi6int}) tends to zero as $n \rightarrow \infty$.  Therefore, in view of (\ref{Zasm}),
\begin{equation}\label{Pphi6}
\frac{\rho^3 e^{-\rho L}}{\beta^{1/3}} \cdot \Phi_{6,n}(g) \rightarrow_p 0.
\end{equation}

Finally, we consider the term $\Phi_{7,n}(g)$.  Using Lemma \ref{dapprox4} in the first step, making the substitution $z = (2 \beta)^{1/3}(L - y)$ in the second step, and using that $\beta^{4/3} t_n^2 \rightarrow \infty$ in the third step, we obtain
\begin{align*}
\E[\Phi_{7,n}(g)|{\cal F}_0] &= \frac{(2 \beta)^{1/3}}{(Ai'(\gamma_1))^2} \bigg( \int_{K(t_n)}^{L} g \big( (2 \beta)^{1/3}(L - y) \big)\alpha(L-y)\: dy\bigg) Z_n^*(0)(1+o(1)) \\
&= \frac{1}{(Ai'(\gamma_1))^2} \bigg( \int_0^{(2 \beta)^{1/3} \cdot \frac{1}{66} \beta t_n^2} g(z) Ai(\gamma_1 + z) \: dz \bigg) Z_n^*(0)(1 + o(1)) \\
&= \frac{1}{(Ai'(\gamma_1))^2} \bigg( \int_0^{\infty} g(z) Ai(\gamma_1 + z) \: dz \bigg) Z_n^*(0)(1 + o(1)).
\end{align*}
Therefore, in view of (\ref{Zasm}), we have that for all $\eta > 0$,
\begin{equation}\label{EPhi7}
\lim_{n \rightarrow \infty} \P \bigg( \bigg| \E[\Phi_{7,n}(g)|{\cal F}_0] - \frac{1}{(Ai'(\gamma_1))^2} \bigg( \int_0^{\infty} g(z) Ai(\gamma_1 + z) \: dz \bigg) Z_n^*(0) \bigg| > \frac{\eta \beta^{1/3} e^{\rho L}}{\rho^3} \bigg) = 0.
\end{equation}
Moreover, using the independence of the descendants of different particles along with Lemma~\ref{2momprop}, we get
\begin{align*}
\mbox{Var}(\Phi_{7,n}(g)|{\cal F}_0) &= \sum_{i=1}^{N_n(0)} \mbox{Var}_{X_{i,n}(0)}(\Phi_{7,n}(g)) \1_{\{K(t_n) < X_{i,n}(0) < L\}} \\
&\leq \sum_{i=1}^{N_n(0)} \E_{X_{i,n}(0)}[\Phi_{7,n}(g)^2] \1_{\{K(t_n) < X_{i,n}(0) < L\}} \\ 
&\lesssim \frac{\beta^{2/3} e^{\rho L}}{\rho^4} \big( Y_n(0) + \beta^{2/3} t_n Z_n^*(0) \big).
\end{align*}
Therefore, by the conditional Chebyshev's Inequality, for all $\eta > 0$ we have
\begin{align*}
\P\bigg( | \Phi_{7,n}(g) - \E[\Phi_{7,n}(g)|{\cal F}_0] | > \frac{\eta \beta^{1/3} e^{\rho L}}{\rho^3} \bigg|{\cal F}_0 \bigg) &\leq \frac{\rho^2 e^{-\rho L}}{\eta^2} \big( Y_n(0) + \beta^{2/3} t_n Z_n^*(0) \big).
\end{align*}
The first term on the right-hand side converges in probability to zero by (\ref{Yasm}), and because $t_n \ll \rho/\beta$, the second term on the right-hand side converges in probability to zero by (\ref{Zasm}).  It follows that
\begin{equation}\label{VPhi7}
\lim_{n \rightarrow \infty} \P\bigg( | \Phi_{7,n}(g) - \E[\Phi_{7,n}(g)|{\cal F}_0] | > \frac{\eta \beta^{1/3} e^{\rho L}}{\rho^3} \bigg) = 0.
\end{equation}
Combining (\ref{Znprime}), (\ref{EPhi7}) and (\ref{VPhi7}) gives that for all $\eta > 0$,
\begin{equation}\label{Pphi7}
\lim_{n \rightarrow \infty} \P \bigg( \bigg| \Phi_{7,n}(g) - \frac{1}{(Ai'(\gamma_1))^2} \bigg( \int_0^{\infty} g(z) Ai(\gamma_1 + z) \: dz \bigg) Z_n(0) \bigg| > \frac{3 \eta \beta^{1/3} e^{\rho L}}{\rho^3} \bigg) = 0.
\end{equation}
Finally, combining (\ref{Pphi1}), (\ref{Pphi2}), (\ref{Pphi34}), (\ref{Pphi5}), (\ref{Pphi6}), and (\ref{Pphi7}), we get that for all $\eta > 0$,
\begin{equation}\label{mainphi}
\lim_{n \rightarrow \infty} \P \bigg( \bigg| \Phi_n(g) - \frac{1}{(Ai'(\gamma_1))^2} \bigg( \int_0^{\infty} g(z) Ai(\gamma_1 + z) \: dz \bigg) Z_n(0) \bigg| > \frac{4 \eta \beta^{1/3} e^{\rho L}}{\rho^3} \bigg) = 0.
\end{equation}
The result (\ref{Pphi}), and therefore the statement of the proposition, now follows from (\ref{Zasm}).
\end{proof}

\section{Proof of Proposition \ref{gaussianprop}}\label{gausssec}

In this section, we prove Proposition \ref{gaussianprop}, which shows that when (\ref{Zasm}) and (\ref{Yasm}) hold, the empirical distribution of particles at time approximately $\rho/\beta$ is asymptotically Gaussian.  We begin by proving the following simple lemma concerning the Airy function.

\begin{Lemma}\label{airyr3}
We have
$$\lim_{r \rightarrow \infty} e^{-r^3/3} \int_0^{\infty} e^{r(\gamma_1 + z)} Ai(\gamma_1 + z) \: dz = 1.$$
\end{Lemma}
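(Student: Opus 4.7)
The plan is to recognize the integral as a piece of the classical Laplace transform of the Airy function. After the substitution $w = \gamma_1 + z$, the integral becomes
\[
I(r) = \int_{\gamma_1}^{\infty} e^{rw} Ai(w) \: dw,
\]
and the claim reduces to showing $I(r) \sim e^{r^3/3}$ as $r \to \infty$.

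The main step is to establish the identity
\[
F(r) := \int_{-\infty}^{\infty} e^{rw} Ai(w) \: dw = e^{r^3/3} \hspace{4mm} \text{for all } r > 0.
\]
For convergence, note that $Ai$ is bounded by $C_4$ by (\ref{upairy}) and decays super-exponentially as $w \to +\infty$ by (\ref{Airyasymp}), so the integrand is absolutely integrable for every $r > 0$ (and the integral depends smoothly on $r$). Using the defining ODE $Ai''(w) = w \, Ai(w)$, integration by parts twice gives
\[
r^2 F(r) = \int_{-\infty}^{\infty} e^{rw} Ai''(w) \: dw = \int_{-\infty}^{\infty} w e^{rw} Ai(w) \: dw = F'(r),
\]
where the boundary terms vanish because $Ai(w)$ and $Ai'(w)$ decay super-exponentially at $+\infty$ (swamping $e^{rw}$) while at $-\infty$ the factor $e^{rw}$ tends to zero and $Ai, Ai'$ stay bounded by (\ref{upairy}) and (\ref{airyder}). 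The ODE $F'(r) = r^2 F(r)$ yields $F(r) = C e^{r^3/3}$, and the constant is pinned down by the well-known identity $F(0) = \int_{-\infty}^{\infty} Ai(w) \: dw = 1$, giving $C = 1$.

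To finish, I would bound the discarded tail. Since $|Ai(w)| \leq C_4$ by (\ref{upairy}) and $\gamma_1 < 0$,
\[
\left| \int_{-\infty}^{\gamma_1} e^{rw} Ai(w) \: dw \right| \leq C_4 \int_{-\infty}^{\gamma_1} e^{rw} \: dw = \frac{C_4}{r} e^{r \gamma_1},
\]
which tends to $0$ as $r \to \infty$ and in particular is $o(e^{r^3/3})$. Combining with the identity $F(r) = e^{r^3/3}$ gives
\[
e^{-r^3/3} I(r) = 1 - e^{-r^3/3} \int_{-\infty}^{\gamma_1} e^{rw} Ai(w) \: dw \longrightarrow 1,
\]
as required. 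The only real obstacle is justifying the vanishing of boundary terms in the integration by parts, which is handled cleanly by the bounds (\ref{Airyasymp}), (\ref{upairy}), and (\ref{airyder}) already recorded in the paper.
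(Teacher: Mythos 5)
Your proof is correct and takes a genuinely different route from the paper's. The paper performs a direct Laplace-method (steepest-descent) analysis: it singles out the window $[r^2 - r^{2/3}, r^2 + r^{2/3}]$, uses the asymptotics \eqref{Airyasymp} to reduce the integral there to a Gaussian integral and show it is $\sim e^{r^3/3}$, and then carefully bounds the four remaining regions $[\gamma_1, r]$, $(r, r^2 - r^{2/3}]$, $[r^2 + r^{2/3}, 4r^2]$, $(4r^2,\infty)$ to show they contribute $o(e^{r^3/3})$. You instead invoke the exact two-sided Laplace transform of $Ai$, deriving $F(r)=\int_{-\infty}^\infty e^{rw}Ai(w)\,dw = e^{r^3/3}$ via the ODE $Ai''=wAi$ and two integrations by parts, and then strip off the exponentially small tail $\int_{-\infty}^{\gamma_1}$. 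Your route is shorter and more conceptual; the paper's is more elementary and entirely self-contained. Both are valid, and the integration-by-parts mechanics you describe (with boundary terms controlled by \eqref{Airyasymp}, \eqref{upairy}, \eqref{airyder}) are fine for $r>0$.

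The one place you glide past is the determination of the constant $C$. The ODE $F'(r)=r^2F(r)$ gives $F(r)=Ce^{r^3/3}$ only on $r>0$, since the vanishing of the boundary terms at $w\to -\infty$ (and indeed the absolute convergence of $F(r)$ itself) uses $r>0$. At $r=0$ the integral $\int_{-\infty}^\infty Ai(w)\,dw$ converges only conditionally ($Ai(w)$ oscillates with amplitude $\sim |w|^{-1/4}$ as $w\to-\infty$), so concluding $C=F(0)=1$ is not a pure substitution: you need $\lim_{r\to 0^+}F(r)=\int_{-\infty}^\infty Ai(w)\,dw$, which is an Abelian continuity statement. It is true (one can integrate by parts once on $(-\infty,0]$ to upgrade to absolute convergence, or appeal to the standard Abelian theorem for Laplace transforms), but it deserves an explicit sentence. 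Alternatively, $C$ can be pinned down by matching the asymptotics of $F(r)$ as $r\to\infty$, which is effectively what the paper computes, so that route gives no shortcut. With that continuity point spelled out, your argument is complete.
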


\begin{proof}
Although this proof is elementary it does require considering various cases. For a lower bound, begin by noting that for $r$ large,
\begin{equation}\label{airyr3lb}
\int_0^{\infty} e^{r(\gamma_1 + z)} Ai(\gamma_1 + z) \: dz = \int_{\gamma_1}^{\infty} e^{rz} Ai(z) \: dz \ge \int_{r^2-r^{2/3}}^{r^2+r^{2/3}} e^{rz} Ai(z) \: dz.
\end{equation}
Recall from \eqref{Airyasymp} that as $x\to\infty$,
\[Ai(x) \sim \frac{1}{2 \sqrt{\pi} x^{1/4}} e^{-(2/3) x^{3/2}} \hspace{.2in}\mbox{as }x \rightarrow \infty,\]
and therefore
\begin{align*}
\int_{r^2-r^{2/3}}^{r^2+r^{2/3}} e^{rz} Ai(z) \: dz &\sim \frac{1}{2\sqrt\pi} \int_{r^2-r^{2/3}}^{r^2+r^{2/3}} e^{rz}\frac{1}{z^{1/4}} e^{-2z^{3/2}/3} \: dz\\
&= \frac{1}{2\sqrt\pi} \int_{-r^{2/3}}^{r^{2/3}} \frac{1}{(r^2+y)^{1/4}}e^{r^3 + ry - 2(r^2+y)^{3/2}/3} \: dy.
\end{align*}
Now, for $y\in[-r^{2/3},r^{2/3}]$, we have $(r^2+y)^{1/4}\sim r^{1/2}$ and $(r^2+y)^{3/2} = r^3+3ry/2 + 3y^2/8r + o(1)$. Substituting these estimates into the above, we have
\[\int_{r^2-r^{2/3}}^{r^2+r^{2/3}} e^{rz} Ai(z) \: dz \sim \frac{1}{2\sqrt\pi} \int_{-r^{2/3}}^{r^{2/3}} \frac{1}{r^{1/2}}e^{r^3/3 - y^2/4r} \: dy = e^{r^3/3}\int_{-r^{2/3}}^{r^{2/3}} \frac{1}{\sqrt{4\pi r}} e^{-y^2/4r} \: dy.\]
The last integral is easily recognised as the probability that a Gaussian random variable of mean $0$ and variance $2r$ falls between $-r^{2/3}$ and $r^{2/3}$, which converges to $1$ as $r\to\infty$. Thus we have shown that
\begin{equation}\label{airyr3asymp}
\int_{r^2-r^{2/3}}^{r^2+r^{2/3}} e^{rz} Ai(z) \: dz \sim e^{r^3/3}
\end{equation}
and, combining this with \eqref{airyr3lb}, we see that the lower bound claimed in the statement holds.

For the upper bound, by \eqref{airyr3asymp}, it suffices to show that
\begin{equation}\label{airyr3ubcond}
\int_{\gamma_1}^{r^2-r^{2/3}} e^{rz} Ai(z) \: dz + \int_{r^2+r^{2/3}}^\infty e^{rz} Ai(z) \: dz \ll e^{r^3/3}.
\end{equation}
Since $Ai(z)$ is bounded (see \eqref{upairy}), we have
\[\int_{\gamma_1}^r e^{rz} Ai(z) \: dz \lesssim re^{r^2} \ll e^{r^3/3}.\]
When $z> r$, we can again use \eqref{Airyasymp} to obtain that
\[\int_r^{r^2-r^{2/3}} e^{rz} Ai(z) \: dz \sim \frac{1}{2\sqrt\pi}\int_r^{r^2-r^{2/3}} e^{rz}\frac{1}{z^{1/4}} e^{-2z^{3/2}/3} \: dz = \frac{1}{2\sqrt\pi}\int_{r-r^2}^{-r^{2/3}} \frac{e^{r^3+ry-2(r^2+y)^{3/2}/3}}{(r^2+y)^{1/4}} \: dy.\]
We now use the fact that
\[(1+x)^{3/2} \ge 1 + 3x/2 + x^2/4 \hspace{4mm} \text{ for all } x\in(-1,4)\]
and therefore
\begin{equation}\label{airyr3ub}
(r^2+y)^{3/2} \ge r^3 + 3ry/2 + y^2/4r \hspace{4mm} \text{ for all } y\in(-r^2,4r^2)
\end{equation}
to see that
\[\frac{1}{2\sqrt\pi}\int_{r-r^2}^{-r^{2/3}} \frac{e^{r^3+ry-2(r^2+y)^{3/2}/3}}{(r^2+y)^{1/4}} \: dy \le \frac{1}{2\sqrt\pi}\int_{r-r^2}^{-r^{2/3}} \frac{e^{r^3/3-y^2/6r}}{(r^2+y)^{1/4}} \: dy \lesssim r^2 e^{r^3/3 - r^{1/3}/6} \ll e^{r^3/3}.\]
When $z\in [r^2+r^{2/3},4r^2]$, we follow a very similar route: using \eqref{Airyasymp} and \eqref{airyr3ub} in exactly the same way as above, we have
\[\int_{r^2+r^{2/3}}^{4r^2} e^{rz} Ai(z) \: dz \sim \frac{1}{2\sqrt\pi}\int_{r^2+r^{2/3}}^{4r^2} e^{rz}\frac{1}{z^{1/4}} e^{-2z^{3/2}/3} \: dz \le \frac{1}{2\sqrt\pi}\int_{r^{2/3}}^{4r^2} \frac{e^{r^3/3-y^2/6r}}{(r^2+y)^{1/4}} \: dy \ll e^{r^3/3}.\]
Finally, for $z>4r^2$, again by \eqref{Airyasymp} we have
\[\int_{4r^2}^\infty e^{rz} Ai(z) \: dz \sim \frac{1}{2\sqrt\pi}\int_{4r^2}^\infty e^{rz}\frac{1}{z^{1/4}} e^{-2z^{3/2}/3} \: dz\]
and since for $z>4r^2$ we have $z^{3/2} > 2rz$, this is at most
\[\int_{4r^2}^\infty e^{-rz/3} dz \ll e^{r^3/3}.\]
Combining our four estimates on the integrals over the regions $z\in[\gamma_1,r]$, $z\in(r,r^2-r^{2/3}]$, $z\in[r^2+r^{2/3},4r^2]$ and $z>4r^2$, we obtain \eqref{airyr3ubcond} and therefore the proof is complete.
\end{proof}

Let $\eta > 0$.  By Lemma \ref{airyr3}, we can choose $C_7$ to be sufficiently large that
\begin{equation}\label{C1choice}
\bigg(1 - \frac{\eta}{2} \bigg) e^{C_7^3/6} \leq \int_0^{\infty} e^{2^{-1/3} C_7 (\gamma_1 + y)} Ai(\gamma_1 + y) \: dy \leq (1 + \eta) e^{C_7^3/6}
\end{equation}
holds, and also
\begin{equation}\label{C1choice2}
e^{-C_7^3/6} \cdot \frac{(1 + \eta) \sqrt{2 \pi}}{(Ai'(\gamma_1))^2} \int_0^{\infty} Ai(\gamma_1 + y) \: dy < \eta.
\end{equation}
We can then choose $C_8$ to be sufficiently large that
\begin{equation}\label{C2choice}
\int_{2^{1/3}C_8}^{\infty} e^{2^{-1/3} C_7 (\gamma_1 + y)} Ai(\gamma_1 + y) \: dy < \frac{\eta}{2} e^{C_7^3/6}.
\end{equation}
By (\ref{C1choice}) and (\ref{C2choice}), we have
\begin{equation}\label{C1C2}
(1 - \eta) e^{C_7^3/6} \leq \int_0^{2^{1/3} C_8} e^{2^{-1/3} C_7 (\gamma_1 + y)} Ai(\gamma_1 + y) \: dy \leq (1 + \eta) e^{C_7^3/6}.
\end{equation}

We now establish the following lemma, which shows that a certain functional of the process after a short time $t_n$ which satisfies (\ref{tcond1}) is well approximated by a constant mutiple of $Z_n(0)$.  To prove this result, we use the results established in the previous section for the configuration of particles near the right edge.

\begin{Lemma}\label{rhobetas}
Suppose (\ref{Zasm}), (\ref{Yasm}), and (\ref{tcond1}) hold.  Let $\eta > 0$, and choose positive constants $C_7$ and $C_8$ as above so that (\ref{C1C2}) holds.  Let
$$\Upsilon_n = \exp \bigg( \frac{C_7 \rho^2}{2 \beta^{2/3}} - \frac{C_7^3}{6} \bigg) \sum_{i=0}^{N_n(t_n)} e^{(\rho - C_7 \beta^{1/3}) X_{i,n}(t_n)} \1_{\{L - C_8 \beta^{-1/3} < X_{i,n}(t_n) < L\}}.$$
Then
$$\lim_{n \rightarrow \infty} \P \bigg( \frac{1 - 2\eta}{(Ai'(\gamma_1))^2} Z_n(0) \leq \Upsilon_n \leq  \frac{1 + 2\eta}{(Ai'(\gamma_1))^2} Z_n(0) \bigg) = 1.$$
\end{Lemma}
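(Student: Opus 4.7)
The plan is to recognise $\Upsilon_n$ as a rescaling of the functional $\Phi_n(g)$ that was analyzed inside the proof of Proposition \ref{rtconfigprop}, applied to a specific bounded test function $g$, and then to invoke the concentration estimate \eqref{mainphi} proved there.

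First I would perform the change of variables $y_i := (2\beta)^{1/3}(L - X_{i,n}(t_n))$, under which the indicator $\1_{\{L - C_8 \beta^{-1/3} < X_{i,n}(t_n) < L\}}$ becomes $\1_{\{0 < y_i < 2^{1/3} C_8\}}$. The definition of $L$ gives $\beta^{1/3} L = \rho^2/(2\beta^{2/3}) - 2^{-1/3}\gamma_1$, so a direct algebraic manipulation yields
\[
e^{(\rho - C_7 \beta^{1/3}) X_{i,n}(t_n)} = e^{\rho X_{i,n}(t_n)} \exp\!\Big( -\tfrac{C_7 \rho^2}{2 \beta^{2/3}} + 2^{-1/3} C_7 (\gamma_1 + y_i) \Big).
\]
Multiplying by the prefactor $\exp(C_7 \rho^2/(2\beta^{2/3}) - C_7^3/6)$ in the definition of $\Upsilon_n$ cancels the $\rho^2/\beta^{2/3}$ contribution and leaves
\[
\Upsilon_n = e^{-C_7^3/6}\, \Phi_n(g), \qquad g(y) := e^{2^{-1/3} C_7 (\gamma_1 + y)} \1_{(0, 2^{1/3} C_8)}(y),
\]
where $g$ is bounded, nonnegative, compactly supported, and measurable, and $\Phi_n(g)$ is the functional defined in \eqref{Phidef}.

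Since the hypotheses of Proposition \ref{rtconfigprop} hold, I would invoke \eqref{mainphi} (which is proved there for \emph{any} bounded measurable $g$) with this particular $g$. Dividing by $\beta^{1/3} e^{\rho L}/\rho^3$ and using \eqref{Zasm} to say that $Z_n(0)$ is of exactly this order with probability $\ge 1 - \varepsilon$, the conclusion of \eqref{mainphi} rephrases as
\[
\frac{\Phi_n(g)}{Z_n(0)} \;\longrightarrow_p\; \frac{1}{(Ai'(\gamma_1))^2} \int_0^{2^{1/3} C_8} e^{2^{-1/3} C_7 (\gamma_1 + y)} Ai(\gamma_1 + y) \, dy.
\]
Multiplying by $e^{-C_7^3/6}$ and sandwiching the integral on the right by \eqref{C1C2} shows that $\Upsilon_n/Z_n(0)$ concentrates around a value in $[1-\eta, 1+\eta]/(Ai'(\gamma_1))^2$; the tiny slack from convergence in probability is absorbed by enlarging $\eta$ to $2\eta$, which gives the claim.

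The main obstacle is really just bookkeeping. The estimate \eqref{mainphi} is an internal step in the proof of Proposition \ref{rtconfigprop} rather than a standalone result, so one must be a little careful to isolate it and apply it to our slightly exotic (bounded, but involving an exponential through $e^{2^{-1/3}C_7 y}$) test function; the required boundedness is ensured by the explicit truncation at $L - C_8 \beta^{-1/3}$ built into the definition of $\Upsilon_n$. No new analytic input is needed: the constants $C_7$ and $C_8$ have been tuned in \eqref{C1choice}--\eqref{C2choice} precisely so that the integral appearing above lands in the window $[(1-\eta), (1+\eta)] e^{C_7^3/6}$, which is exactly what the lemma requires.
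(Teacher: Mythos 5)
Your proposal is correct and follows essentially the same route as the paper: you rewrite $\Upsilon_n$ as a constant multiple of $\Phi_n(g)$ for the bounded compactly supported test function $g$, invoke the concentration estimate from the proof of Proposition~\ref{rtconfigprop} (you cite \eqref{mainphi}, the paper cites \eqref{Pphi}, but the latter follows from the former via \eqref{Zasm} so this is the same step), and then sandwich the resulting integral using \eqref{C1C2}. The only cosmetic difference is that your $g$ carries the constant factor $e^{2^{-1/3}C_7\gamma_1}$ while the paper pulls that factor outside of $\Phi_n(g)$; the algebra and conclusion are identical.
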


\begin{proof}
Define the function $g$ by $g(y) = e^{2^{-1/3} C_7 y}$ if $0 < y < 2^{1/3} C_8$ and $g(y) = 0$ otherwise.  Define $\Phi_n(g)$ as in (\ref{Phidef}).  Then
\begin{align}\label{sumphi}
&\sum_{i=0}^{N_n(t_n)} e^{(\rho - C_7 \beta^{1/3}) X_{i,n}(t_n)} \1_{\{L - C_8 \beta^{-1/3} < X_{i,n}(t_n) < L\}} \nonumber \\
&\hspace{.5in}= e^{-C_7 \beta^{1/3} L} \sum_{i=0}^{N_n(t_n)} e^{\rho X_{i,n}(t_n)} e^{C_7 \beta^{1/3} (L - X_{i,n}(t_n))} \1_{\{L - C_8 \beta^{-1/3} < X_{i,n}(t_n) < L\}} \nonumber \\
&\hspace{.5in}= e^{-C_7 \rho^2 \beta^{-2/3}/2} e^{2^{-1/3}C_7 \gamma_1} \sum_{i=0}^{N_n(t_n)} e^{\rho X_{i,n}(t_n)} g((2 \beta)^{1/3}(L - X_{i,n}(t_n))) \nonumber \\
&\hspace{.5in}= e^{-C_7 \rho^2 \beta^{-2/3}/2} e^{2^{-1/3}C_7 \gamma_1} \Phi_n(g).
\end{align}
By (\ref{Pphi}), as $n \rightarrow \infty$, we have
\begin{equation}\label{phiz}
\frac{\Phi_n(g)}{Z_n(0)} \rightarrow_p \frac{1}{(Ai'(\gamma_1))^2} \int_0^{2^{1/3} C_8} e^{2^{-1/3} C_7 y} Ai(\gamma_1 + y) \: dy.
\end{equation}
It follows from (\ref{sumphi}) and (\ref{phiz}) that
\begin{align}\label{ratioint}
&\frac{e^{C_7 \rho^2 \beta^{-2/3}/2}}{Z_n(0)} \sum_{i=0}^{N_n(t_n)} e^{(\rho - C_7 \beta^{1/3}) X_{i,n}(t_n)} \1_{\{L - C_8 \beta^{-1/3} < X_{i,n}(t_n) < L\}} \nonumber \\
&\hspace{1.5in}\rightarrow_p \frac{1}{(Ai'(\gamma_1))^2} \int_0^{2^{1/3} C_8} e^{2^{-1/3} C_7 (\gamma_1 + y)} Ai(\gamma_1 + y) \: dy.
\end{align}
The result follows from (\ref{ratioint}) and (\ref{C1C2}).
\end{proof}

\begin{proof}[Proof of Proposition \ref{gaussianprop}]
Let $g: \R \rightarrow [0, \infty)$ be a nonzero bounded continuous function, and let $$\|g\| = \sup_{x \in \Rsm} |g(x)|.$$  
Write $$I(g) = \int_{-\infty}^{\infty} g(x) \mu(dx) = \int_{-\infty}^{\infty} g(y) \cdot \frac{1}{\sqrt{2 \pi}} e^{-y^2/2} \: dy.$$  Let
$$\Psi_n(g) = \sum_{i=1}^{N_n(t_n)} g \Big( X_{i,n}(t_n) \sqrt{\beta/\rho} \Big)$$
We will show that for all $\kappa > 0$, we have
\begin{equation}\label{mainPhin}
\lim_{n \rightarrow \infty} \P \bigg( \frac{1 - \kappa}{(Ai'(\gamma_1))^2} e^{-\rho^3/3\beta} Z_n(0) I(g) \leq \Psi_n(g) \leq \frac{1 + \kappa}{(Ai'(\gamma_1))^2} e^{-\rho^3/3\beta} Z_n(0) I(g) \bigg) = 1.
\end{equation}
Because $\Psi_n(1) = N_n(t_n)$, the result (\ref{mainNn}) will follow immediately.  Also, if $N_n(t) \geq 1$, then
$$\int_{-\infty}^{\infty} g(x) \: \zeta_n(t_n)(dx) = \frac{\Psi_n(g)}{\Psi_n(1)}.$$  Because $I(1) = 1$, it will follow from (\ref{mainPhin}) that as $n \rightarrow \infty$, we have
$$\int_{-\infty}^{\infty} g(x) \: \zeta_n(t_n)(dx) \rightarrow_p \int_{-\infty}^{\infty} g(x) \: \mu(dx),$$ 
which by, for example, Theorem 16.16 of \cite{kall} is enough to imply that $\zeta_n(t_n) \Rightarrow \mu$.

It remains, then, to prove (\ref{mainPhin}).
Let $\eta > 0$, and recall the definitions of the positive constants $C_7$ and $C_8$ from before the statement of Lemma \ref{rhobetas}.  Let $C_9$ be a positive constant chosen large enough that if $Z$ has a standard normal distribution, then
\begin{equation}\label{C3cond}
P(|Z| > C_9) < \eta.
\end{equation}
Let $s_n = C_7 \beta^{-2/3}$, and let $$u_n = t_n - \frac{\rho}{\beta} + s_n.$$  Because the times $t_n$ satisfy (\ref{tcond2}), we have
\begin{equation}\label{unbound}
\beta^{-2/3} \bigg( \log \bigg(\frac{\rho}{\beta^{1/3}} \bigg) \bigg)^{1/3} \ll \frac{\rho^{2/3}}{\beta^{8/9}} \ll u_n \ll \frac{\rho}{\beta},
\end{equation}
and therefore the configuration of particles at time $u_n$ satisfies the conclusions of Proposition~\ref{rtconfigprop}.

To prove (\ref{mainPhin}), we will follow the trajectories of the particles between times $u_n$ and $t_n$.  Recalling (\ref{HAdef}), we first
partition the particles at time $u_n$ into the following four subsets:
\begin{align*}
G_{1,n} &= \{i: X_{i,n}(u_n) \geq L\}, \\
G_{2,n} &= \{i: X_{i,n}(u_n) \leq H(u_n) \}, \\
G_{3,n} &= \big\{i: H(u_n) < X_{i,n}(u_n) \leq L - C_8 \beta^{-1/3} \big\}, \\
G_{4,n} &= \big\{i: L - C_8 \beta^{-1/3} < X_{i,n}(u_n) < L \big\}.
\end{align*}
We then partition the particles at time $t_n$ into six subsets.  Recall that $a_{i,n}(s,t)$ is the position of the ancestor at time $s$ of the $i$th particle at time $t$.  We will denote by $k_{i,n}(s,t)$ the index of this ancestor, which means $X_{k_{i,n}(s,t), n}(s) = a_{i,n}(s,t)$.
For $j \in \{1, 2, 3\}$, we define $$S_{j,n} = \{i: k_{i,n}(u_n, t_n) \in G_{j,n}\}.$$  We also define
\begin{align*}
S_{4,n} &= \big\{i: k_{i,n}(u_n, t_n) \in G_{4,n} \mbox{ and } X_{i,n}(t_n) \notin \big[ -C_9 \sqrt{\rho/\beta}, C_9 \sqrt{\rho/\beta} \big]\big\}, \\
S_{5,n} &= \big\{i: k_{i,n}(u_n, t_n) \in G_{4,n}, \: i \notin S_{4,n}, \mbox{ and }a_{i,n}(s, t_n) \geq L \mbox{ for some }s \in (u_n, t_n) \big\}, \\
S_{6,n} &= \big\{i: k_{i,n}(u_n, t_n) \in G_{4,n} \mbox{ and }i \notin S_{4,n} \cup S_{5,n} \big\}.
\end{align*}
For $j \in \{1, \dots, 6\}$, let $$\Psi_{j,n}(g) = \sum_{i \in S_{j,n}} g \Big( X_{i,n}(t_n) \sqrt{\beta/\rho} \Big).$$  Then $\Psi_n(g) = \Psi_{1,n}(g) + \dots + \Psi_{6,n}(g)$.  We will show that with high probability, the values of $\Psi_{j,n}(g)$ are small for $j \in \{1, \dots, 5\}$.  The dominant contribution comes from $\Psi_{6,n}(g)$, which is highly concentrated around its expectation.

The term $\Psi_{1,n}(g)$ accounts for the particles that are above $L$ at time $u_n$. 
To bound this term we can use the argument leading to (\ref{Pphi1}), with $u_n$ in place of $t_n - 2C_1 \rho^{-2}$, to see that with probability tending to one as $n \rightarrow \infty$, no particle that either starts above $L$ or reaches $L$ before time $u_n$ has descendants alive past time $u_n + 2C_1 \rho^{-2}$.  Because $u_n + 2C_1 \rho^{-2} \leq t_n$ for sufficiently large $n$, it follows that
\begin{equation}\label{Ppsi1}
\lim_{n \rightarrow \infty} \P(\Psi_{1,n}(g) = 0) = 1.
\end{equation}

We next consider $\Psi_{2,n}(g)$, which accounts for particles that are below $H(u_n)$ at time $u_n$.  If there is a particle at $x$ at time $u_n$, then by (\ref{intpeq}), the expected number of descendants of this particle alive at time $t_n$ is
\begin{equation}\label{prePsi}
\exp \bigg( \beta x (t_n - u_n) + \frac{\beta^2 (t_n - u_n)^3}{6} - \frac{\beta \rho (t_n - u_n)^2}{2} \bigg).
\end{equation}
Using that $t_n - u_n = (\rho/\beta) - s_n$, after a few lines of algebra we get that the expression in (\ref{prePsi}) is equal to
\begin{equation}\label{EPsi15}
\exp \bigg( (\rho - \beta s_n)x - \frac{\rho^3}{3 \beta} + \frac{\rho^2 s_n}{2} - \frac{\beta^2 s_n^3}{6} \bigg).
\end{equation}
It follows that
\begin{equation}\label{EPsi2}
\E[\Psi_{2,n}(g)|{\cal F}_{u_n}] \leq \|g\| \exp \bigg( - \frac{\rho^3}{3 \beta} + \frac{\rho^2 s_n}{2} - \frac{\beta^2 s_n^3}{6} \bigg) \sum_{i \in G_{2,n}} e^{(\rho - \beta s_n)X_{i,n}(u_n)}.
\end{equation}
Note that (\ref{unbound}) implies that $s_n \ll u_n$ and therefore $\beta s_n \leq \beta u_n/2$ for sufficiently large $n$, so it follows from (\ref{explowy2}) that $$\E \bigg[  \sum_{i \in G_{2,n}} e^{(\rho - \beta s_n)X_{i,n}(u_n)} \Big| {\cal F}_0 \bigg] \ll e^{-\beta^2 u_n^3/73} Y_n(0),$$
and therefore $$\E[\Psi_{2,n}(g)|{\cal F}_0] \ll Y_n(0) \exp \bigg( - \frac{\rho^3}{3 \beta} + \frac{\rho^2 s_n}{2} - \frac{\beta^2 s_n^3}{6} - \frac{\beta^2 u_n^3}{73} \bigg).$$  Because $u_n \gg \rho^{2/3}/\beta^{8/9}$ by (\ref{unbound}), we have $\rho^2 s_n \ll \beta^2 u_n^3$.  Therefore,
\begin{equation}\label{EPsi74}
\E[\Psi_{2,n}(g)|{\cal F}_0] \ll Y_n(0) \exp \bigg( - \frac{\rho^3}{3 \beta} - \frac{\beta^2 u_n^3}{74} \bigg).
\end{equation}
Combining (\ref{EPsi74}) with (\ref{Zasm}) and (\ref{Yasm}) along with the conditional Markov's inequality, we obtain
\begin{equation}\label{Ppsi2}
\lim_{n \rightarrow \infty} \P\Big(\Psi_{2,n}(g) > \eta Z_n(0) e^{-\rho^3/3 \beta} \Big) = 0.
\end{equation}

The reasoning leading to (\ref{EPsi2}) gives
\begin{equation}\label{EPsi3}
\E[\Psi_{3,n}(g)|{\cal F}_{u_n}] \leq \|g\| \exp \bigg( - \frac{\rho^3}{3 \beta} + \frac{\rho^2 s_n}{2} - \frac{\beta^2 s_n^3}{6} \bigg) \sum_{i \in G_{3,n}} e^{(\rho - \beta s_n)X_{i,n}(u_n)}.
\end{equation}
We can write $G_{3,n} = G_{3,n}^* + G_{3,n}^{**}$, where for $i \in G_{3,n}$, we say $i \in G_{3,n}^*$ if $a_{i,n}(0, u_n) \leq H(u_n)$ and $i \in G_{3,n}^{**}$ otherwise.  By (\ref{explowx2}),
\begin{equation}\label{G*}
\E \bigg[  \sum_{i \in G_{3,n}^*} e^{(\rho - \beta s_n)X_{i,n}(u_n)} \bigg| {\cal F}_0 \bigg] \ll e^{-\beta^2 u_n^3/73} Y_n(0).
\end{equation}
When $H(u_n) < x < L$ and $H(u_n) < y < L$, we can estimate $p_{u_n}^{L}(x,y)$ using Lemma \ref{dapprox4}, and the error term $E_0(u_n, x, y)$ will be $o(1)$.  Therefore, for any constant $C_{10} > 2^{1/3}/(Ai'(\gamma_1))^2$, we have for sufficiently large $n$,
\begin{align*}
\E \bigg[  \sum_{i \in G_{3,n}^{**}} e^{(\rho - \beta s_n)X_{i,n}(u_n)} \bigg| {\cal F}_0 \bigg] &= \sum_{i=1}^{N_n(0)} \1_{\{H(u_n) < X_{i,n}(0) < L\}}\int_{H(u_n)}^{L - C_8 \beta^{-1/3}} e^{(\rho - \beta s_n) y} p_{u_n}^{L}(X_{i,n}(0), y) \: dy \\
&\leq C_{10} Z_n(0) \int_{H(u_n)}^{L - C_8 \beta^{-1/3}} e^{-\beta s_n y} \beta^{1/3} \alpha(L - y) \: dy.
\end{align*}
We now make the substitution $z = (2 \beta)^{1/3}(L - y)$ and recall (\ref{C2choice}) and the fact that $s_n = C_7 \beta^{-2/3}$ to get
\begin{align}\label{G**}
\E \bigg[  \sum_{i \in G_{3,n}^{**}} e^{(\rho - \beta s_n)X_{i,n}(u_n)} \bigg| {\cal F}_0 \bigg] &\leq C_{10} Z_n(0) \int_{2^{1/3} C_8}^{\infty} e^{-\beta s_n (L - (2 \beta)^{-1/3} z)} Ai(\gamma_1 + z) \: dz \nonumber \\
&= C_{10} Z_n(0) e^{-\rho^2 s_n/2} \int_{2^{1/3} C_8}^{\infty} e^{2^{-1/3} C_7 (\gamma_1 + z)} Ai(\gamma_1 + z) \: dz \nonumber \\
&\leq \frac{C_{10} \eta}{2} \cdot Z_n(0) e^{-\rho^2 s_n/2} e^{C_7^3/6}.
\end{align}
It follows from (\ref{EPsi3}), (\ref{G*}), and (\ref{G**}), using the reasoning that led to (\ref{EPsi74}) to handle the first term, that for sufficiently large $n$,
$$\E[\Psi_{3,n}(g)|{\cal F}_0] \leq \|g\| e^{-\rho^3/3 \beta} \bigg( Y_n(0) e^{-\beta^2 u_n^3/74} + \frac{C_{10} \eta}{2} \cdot Z_n(0) \bigg).$$
Therefore, using (\ref{Zasm}) and (\ref{Yasm}) along with the conditional Markov's Inequality,
\begin{equation}\label{Ppsi3}
\limsup_{n \rightarrow \infty} \P\Big(\Psi_{3,n}(g) > \eta^{1/2} Z_n(0) e^{-\rho^3/3 \beta} \Big) \leq C_{10} \|g\| \eta^{1/2}.
\end{equation}

It remains to consider the particles between $L - C_8 \beta^{-1/3}$ and $L$ at time $u_n$.  Applying Lemma~\ref{firstmom} with $s=s_n=\rho/\beta-(t_n-u_n)$, there is an ${\cal F}_{u_n}$-measurable random variable $\theta_n$ tending uniformly to zero as $n \rightarrow \infty$ such that
\begin{align}\label{firstPsi56}
\E[\Psi_{5,n}(g) + \Psi_{6,n}(g)|{\cal F}_{u_n}] &= (1 + \theta_n) \sum_{i \in G_{4,n}} \int_{-C_9 \sqrt{\rho/\beta}}^{C_9 \sqrt{\rho/\beta}} \sqrt{\frac{\beta}{2 \pi \rho}} g \bigg( y \sqrt{\frac{\beta}{\rho}} \bigg) \nonumber \\
&\hspace{.2in}\times \exp \bigg( \rho X_{i,n}(u_n) - \frac{\rho^3}{3 \beta} - \frac{\beta y^2}{2 \rho} - \frac{\beta^2 s_n^3}{6} + \beta\Big(\frac{\rho^2}{2\beta} - X_{i,n}(u_n)\Big)s_n \bigg) \: dy \nonumber \\
&= (1 + \theta_n) \bigg( \sum_{i \in G_{4,n}} e^{(\rho - \beta s_n)X_{i,n}(u_n)} \bigg) \nonumber \\
&\hspace{.2in}\times \exp \bigg( \frac{\rho^2 s_n}{2} - \frac{\rho^3}{3 \beta} - \frac{\beta^2 s_n^3}{6} \bigg) \int_{-C_9}^{C_9} g(y) \cdot \frac{1}{\sqrt{2 \pi}} e^{-y^2/2} \: dy.
\end{align}
Because $s_n = C_7 \beta^{-2/3}$, Lemma \ref{rhobetas} applied at time $u_n$ tells us that 
\begin{equation}\label{mainYZ}
\lim_{n \rightarrow \infty} \P \bigg( \frac{(1 - 2\eta) Z_n(0)}{(Ai'(\gamma_1))^2} \leq  \exp \bigg( \frac{\rho^2 s_n}{2} - \frac{\beta^2 s_n^3}{6} \bigg) \sum_{i \in G_{4,n}} e^{(\rho - \beta s_n)X_{i,n}(u_n)} \leq \frac{(1 + 2\eta) Z_n(0)}{(Ai'(\gamma_1))^2} \bigg) = 1.
\end{equation}
Because $g$ is nonnegative and $\theta_n \rightarrow 0$ uniformly as $n \rightarrow \infty$, it follows that
\begin{equation}\label{Psi56upper}
\lim_{n \rightarrow \infty} \P \bigg( \E[\Psi_{5,n}(g) + \Psi_{6,n}(g)|{\cal F}_{u_n}] \leq \frac{1 + 3 \eta}{(Ai'(\gamma_1))^2} e^{-\rho^3/3 \beta} Z_n(0) I(g) \bigg) = 1.
\end{equation}
To obtain the corresponding lower bound, note that by (\ref{C3cond}),
$$\int_{-C_9}^{C_9} g(y) \frac{1}{\sqrt{2 \pi}} e^{-y^2/2} \: dy \geq \int_{-\infty}^{\infty} g(y) \frac{1}{\sqrt{2 \pi}} e^{-y^2/2} \: dy - \eta \|g\| = I(g) \bigg(1 - \frac{\eta \|g\|}{I(g)}\bigg).$$ Also using again that $\theta_n \rightarrow 0$ uniformly as $n \rightarrow \infty$, we get
\begin{equation}\label{Psi56lower}
\lim_{n \rightarrow \infty} \P \bigg(\E[\Psi_{5,n}(g) + \Psi_{6,n}(g)|{\cal F}_{u_n}] \geq \frac{1 - 3 \eta}{(Ai'(\gamma_1))^2} \Big(1 - \frac{\eta \|g\|}{I(g)}\Big) e^{-\rho^3/3 \beta} Z_n(0) I(g)  \bigg) = 1.
\end{equation}

The term $\Psi_{5,n}(g)$ accounts for the particles that reach $L$ between times $u_n$ and $t_n$. We now bound the contribution from this term individually. Take $v\in[0,t_n-u_n]$, and recall the definition of ${\tilde r}^L_x(v)$ from the beginning of section \ref{estimates_LA_sec}.
From Corollary~\ref{rtedge} and the fact that $(\rho^2/2) - \beta L = 2^{-1/3} \beta^{2/3} \gamma_1$, there is a positive constant $C_{11}$ such that
\begin{equation}\label{rxn}
{\tilde r}_x^L(v) \leq \frac{C_{11}(L - x)}{v^{3/2}} \exp \bigg(\rho x - \rho L - \frac{(L - x)^2}{2v} - 2^{-1/3} \beta^{2/3} \gamma_1 v \bigg).
\end{equation}
Now let $m_n(v)$ denote the expected number of descendants in the population at time $t_n$ of a particle that reaches $L$ at time $u_n + v$.  It follows from (\ref{intpeq}) that 
$$m_n(v) = \exp \bigg( \beta L (t_n - u_n - v) + \frac{\beta^2 (t_n - u_n - v)^3}{6} - \frac{\beta \rho (t_n - u_n - v)^2}{2} \bigg).$$
Because $t_n - u_n = (\rho/\beta) - s_n$, a short computation gives
$$\frac{\beta^2 (t_n - u_n - v)^3}{6} - \frac{\beta \rho (t_n - u_n - v)^2}{2} = \frac{\rho^2(v + s_n)}{2} - \frac{\rho^3}{3 \beta} - \frac{\beta^2(v + s_n)^3}{6}.$$  It follows that
\begin{equation}\label{vnu}
m_n(v) = \exp \bigg( \rho L + 2^{-1/3} \beta^{2/3} \gamma_1 (v + s_n) - \frac{\rho^3}{3 \beta} - \frac{\beta^2(v + s_n)^3}{6} \bigg).
\end{equation}
Therefore, using that $\gamma_1 < 0$ and $(v + s_n)^3 \geq s_n^3$, we obtain from (\ref{rxn}) and (\ref{vnu}) that
$${\tilde r}_x^L(v)m_n(v) \leq \frac{C_{11}(L - x)}{v^{3/2}} \exp \bigg(\rho x - \frac{(L - x)^2}{2v} - \frac{\rho^3}{3 \beta} - \frac{\beta^2 s_n^3}{6} \bigg).$$
We now integrate over $v$ and apply Lemma \ref{32int} to see that if there is one particle at $x$ at time $u_n$, then the expected number of particles alive at time $t_n$ whose trajectory crosses $L$ between times $u_n$ and $t_n$ is bounded above by 
\begin{align*}
&C_{11}(L - x) \exp \bigg( \rho x - \frac{\rho^3}{3 \beta} - \frac{\beta^2 s_n^3}{6}\bigg) \int_0^{t_n - u_n}\hspace{-0.5mm} v^{-3/2} e^{-(L - x)^2/2v} \: dv \leq C_{11} \sqrt{2 \pi} \exp \bigg( \rho x - \frac{\rho^3}{3 \beta} - \frac{C_7^3}{6} \bigg).
\end{align*}  
It follows that $$\E[\Psi_{5,n}(g)|{\cal F}_{u_n}] \leq C_{11} \sqrt{2\pi} \|g\| e^{-C_7^3/6} e^{-\rho^3/3 \beta} Y_n(u_n).$$
By applying (\ref{Pphi}) with the constant function that always takes the value $1$ in place of $g$, we have
\begin{equation}\label{Ybd}
\lim_{n \rightarrow \infty} \P \bigg( Y_n(u_n) < \frac{1 + \eta}{(Ai'(\gamma_1))^2} \bigg( \int_0^{\infty} Ai(\gamma_1 + z) \: dz \bigg) Z_n(0) \bigg) = 1.
\end{equation}
Therefore, because $C_7$ was chosen to satisfy (\ref{C1choice2}),
\begin{equation}\label{EPsi6}
\lim_{n \rightarrow \infty} \P \bigg( \E[\Psi_{5,n}(g)|{\cal F}_{u_n}] \geq C_{11} \|g\| \eta Z_n(0) e^{-\rho^3/3 \beta} \bigg) = 0.
\end{equation}
Combining this result with the conditional Markov's Inequality, we get
\begin{equation}\label{Ppsi6}
\limsup_{n \rightarrow \infty} \P\Big( \Psi_{5,n}(g) > \eta^{1/2} Z_n(0) e^{-\rho^3/3 \beta} \Big) \leq C_{11} \|g\| \eta^{1/2}.
\end{equation}

We now consider $\Psi_{6,n}(g)$.  The upper bound (\ref{Psi56upper}) on the expectation still holds when $\Psi_{5,n}(g) + \Psi_{6,n}(g)$ is replaced by $\Psi_{6,n}(g)$.  To get a lower bound on the expectation, let
$$f(\eta) = (1 - 3 \eta) \bigg(1 - \frac{\eta \|g\|}{I(g)} \bigg) - \frac{(Ai'(\gamma_1))^2 C_{11} \|g\| \eta}{I(g)}$$ and then
combine (\ref{Psi56lower}) with (\ref{EPsi6}) to get
\begin{equation}\label{Psi5lower}
\lim_{n \rightarrow \infty} \P \bigg(\E[\Psi_{6,n}(g)|{\cal F}_{u_n}] \geq \frac{f(\eta)}{(Ai'(\gamma_1))^2} e^{-\rho^3/3 \beta} Z_n(0) I(g)  \bigg) = 1.
\end{equation}
We now need to control the fluctuations.  By Lemma \ref{mainvarg},
\begin{align*}
\Var(\Psi_{6,n}(g)|{\cal F}_{u_n}) &\leq \sum_{i \in G_{4,n}} \E_{X_i(u_n)} \bigg[ \bigg( \sum_{j \in S_{6,n}} g \Big( X_{j,n}(t) \sqrt{\beta/\rho} \Big) \bigg)^2 \bigg] \\
&\lesssim \sum_{i \in G_{4,n}} \frac{\beta^{2/3}}{\rho^4} \exp \bigg( \rho X_i(u_n) + \rho L - \frac{2 \rho^3}{3 \beta} - \frac{\beta^2 s_n^3}{3} \bigg) \\
&\leq \frac{\beta^{2/3}}{\rho^4} \exp \bigg(\rho L - \frac{2 \rho^3}{3 \beta} \bigg) Y_n(u_n).
\end{align*}
Therefore,
\begin{equation}\label{condcheb}
\P \big( |\Psi_{6,n}(g) - E[\Psi_{6,n}(g)|{\cal F}_{u_n}]| > \eta e^{-\rho^3/3 \beta} Z_n(0) \big| {\cal F}_{u_n} \big) \lesssim \frac{\beta^{2/3} e^{\rho L}}{\eta^2 \rho^4} \cdot \frac{Y_n(u_n)}{Z_n(0)^2}.
\end{equation}
It follows from (\ref{Zasm}) and (\ref{A1}) that as $n \rightarrow \infty$,
\begin{equation}\label{Zsize}
\frac{\beta^{2/3} e^{\rho L}}{\rho^4} \cdot \frac{1}{Z_n(0)} \rightarrow_p 0.
\end{equation}
It follows from (\ref{Ybd}) and (\ref{Zsize}) that the right-hand side of (\ref{condcheb}) converges in probability to zero as $n \rightarrow \infty$.  Combining this observation with (\ref{Psi56upper}) and (\ref{Psi5lower}), we get that for all $\eta > 0$,
\begin{equation}\label{Ppsi5}
\lim_{n \rightarrow \infty} \P \bigg( \bigg| \Psi_{6,n}(g) - \frac{e^{-\rho^3/3\beta}}{(Ai'(\gamma_1))^2} Z_n(0) I(g) \bigg| > \Big( \eta + \frac{(1 - f(\eta)) I(g)}{(Ai'(\gamma_1))^2} \Big) e^{-\rho^3/3 \beta} Z_n(0) \bigg) = 1.
\end{equation}

The only term remaining to be considered is $\Psi_{4,n}(g)$, which accounts for particles that end up outside the interval $[-C_9 \sqrt{\rho/\beta}, C_9 \sqrt{\rho/\beta}]$ at time $t_n$.  Letting $1$ denote the constant function whose value is always equal to one, we have by (\ref{EPsi15}),
$$\E[\Psi_{4,n}(1) + \Psi_{5,n}(1) + \Psi_{6,n}(1)|{\cal F}_{u_n}]  = \exp \bigg( - \frac{\rho^3}{3 \beta} + \frac{\rho^2 s_n}{2} - \frac{\beta^2 s_n^3}{6} \bigg) \sum_{i \in G_{4,n}} e^{(\rho - \beta s_n) X_{i,n}(u_n)}.$$
Also, using (\ref{firstPsi56}), if $Z$ has a standard normal distribution, then
\begin{align*}
&\E[\Psi_{5,n}(1) + \Psi_{6,n}(1)|{\cal F}_{u_n}] \\
&\hspace{.3in} = (1 + \theta_n) \exp \bigg( \frac{\rho^2 s_n}{2} - \frac{\rho^3}{3 \beta} - \frac{\beta^2 s_n^3}{6} \bigg) \bigg( \sum_{i \in G_{4,n}} e^{(\rho - \beta s_n)X_{i,n}(u_n)} \bigg) (1 - P(|Z| > C_9)).
\end{align*}
Combining these two results and using (\ref{C3cond}) gives
\begin{align*}
\E[\Psi_{4,n}(g)|{\cal F}_{u_n}] &\leq \|g\| \E[\Psi_{4,n}(1)|{\cal F}_{u_n}] \\
&= \|g\| \big( \E[\Psi_{4,n}(1) + \Psi_{5,n}(1) + \Psi_{6,n}(1)|{\cal F}_{u_n}] - \E[\Psi_{5,n}(1) + \Psi_{6,n}(1)|{\cal F}_{u_n}] \big) \\
&\leq \|g\| (|\theta_n| + \eta) \exp \bigg( \frac{\rho^2 s_n}{2} - \frac{\rho^3}{3 \beta} - \frac{\beta^2 s_n^3}{6} \bigg) \bigg( \sum_{i \in G_{4,n}} e^{(\rho - \beta s_n)X_{i,n}(u_n)} \bigg).
\end{align*}
Using the upper bound in (\ref{mainYZ}), and the fact that $\theta_n \rightarrow 0$ uniformly as $n \rightarrow \infty$, it follows
$$\lim_{n \rightarrow \infty} \P \bigg( \E[\Psi_{4,n}(g)|{\cal F}_{u_n}] \leq \frac{\|g\| (2 \eta)(1 + 2 \eta)}{(Ai'(\gamma_1))^2} e^{-\rho^3/3 \beta} Z_n(0) \bigg) = 1.$$  Therefore, by the conditional Markov's Inequality,
\begin{equation}\label{Ppsi4}
\limsup_{n \rightarrow \infty} \P \Big( \Psi_{4,n}(g) > \eta^{1/2} Z_n(0) e^{-\rho^3/3 \beta} \Big) \leq \frac{\|g\| (2 \eta^{1/2})(1 + 2 \eta)}{(Ai'(\gamma_1))^2}.
\end{equation}

Because $\eta > 0$ was arbitrary, we can now obtain (\ref{mainPhin}) from (\ref{Ppsi1}), (\ref{Ppsi2}), (\ref{Ppsi3}), (\ref{Ppsi6}), (\ref{Ppsi5}), and (\ref{Ppsi4}).  The proposition follows.
\end{proof}

\section{Proof of Proposition \ref{YZmain}}\label{YZsec}

To prove Proposition \ref{YZmain}, we essentially show that $Y_n(t_n)$ and $Z_n(t_n)$ remain of the order $\beta^{1/3} \rho^{-3} e^{\rho L}$ after the process has evolved for a time that is of the order $\rho/\beta$.  For the upper bounds, only truncated first moment estimates, in combination with Markov's inequality, are needed.  However, killing particles when they hit $L$ is not sufficient because doing so would kill some particles whose descendants would otherwise contribute significantly to the process at time $t_n$.  Therefore, we instead have to kill particles when they hit $L_{A}$ for $A<0$, and then move the barrier further away as time increases---that is, make $A$ more negative as a function of time---thereby reducing the number of particles that hit the wall.

Obtaining a lower bound on $Z_n(t_n)$ requires second moment estimates.  To obtain adequate second moment estimates, the value of $A$ needs to be chosen so that the wall at $L_A$ moves closer to the origin at regular intervals.  An alternative to this approach would be to follow the techniques in \cite{bbs, bms}, which would likely yield the stronger result that $(Z_n(t), t \geq 0)$ converges to a continuous-state branching process.  However, the simpler arguments given here are sufficient for our purposes.

\subsection{Upper bounds on contributions of particles remaining below $L_A$}

Recall that $a_{i,n}(s,t)$ is the position of the ancestor at time $s$ of the $i$th particle at time $t$, and 
\[z_{n,A}(x) = e^{\rho x}Ai((2\beta)^{1/3}(L_A-x)+\gamma_1)\ind_{\{x<L_A\}}.\]
For $A<0$, define
\begin{equation}\label{Ynstardef}
Y^*_{n,A}(t) = \sum_{i=1}^{N_n(t)} e^{\rho X_{i,n}(t)} \1_{\{a_{i,n}(s,t)<L_A \,\forall s\le t\}}
\end{equation}
and
\begin{equation}\label{Znstardef}
Z^*_{n,A}(t) = \sum_{i=1}^{N_n(t)} z_{n,A}(X_{i,n}(t)) \1_{\{a_{i,n}(s,t)<L_A \,\forall s\le t\}}
\end{equation}
so that $Y^*_{n,A}(t)$ and $Z^*_{n,A}(t)$ only count particles that have remained below the level $L_A$ for all times $s\le t$. Before we introduce our moving barrier, a large part of our upper bound follows relatively easily from estimates on $Y^*_{n,A}(t_n)$ and $Z^*_{n,A}(t_n)$ for fixed $A<0$.

The following fact about the Airy function will help us to compare the values of $z_{n,A}(x)$ for different values of $A$.

\begin{Lemma}\label{AiryZZA}
If $x > 0$ and $1/2 < r < 1$, then $Ai(x + \gamma_1) \leq 2 Ai(rx + \gamma_1)$. 
\end{Lemma}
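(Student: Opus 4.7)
My plan is to exploit the defining ODE $Ai''(t) = t\, Ai(t)$ together with the fact that $Ai > 0$ on $(\gamma_1, \infty)$, which follows because $\gamma_1$ is the largest zero of $Ai$. The ODE immediately forces $Ai$ to be concave on $(\gamma_1, 0)$ and convex on $(0, \infty)$; combined with $Ai'(0) < 0$ and $Ai(t) \to 0$ as $t \to \infty$, the convex branch must be monotonically decreasing on $[0, \infty)$ (since $Ai''>0$ makes $Ai'$ increasing on $(0,\infty)$ with $Ai'(0)<0$ and $Ai'\to 0$ at infinity, forcing $Ai'<0$ throughout). These two facts---concavity on $[\gamma_1, 0]$ with boundary value $Ai(\gamma_1) = 0$, and monotone decrease on $[0, \infty)$---are the only analytic ingredients I need.

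I would then split the range $x > 0$ into three subcases according to the signs of $x + \gamma_1$ and $rx + \gamma_1$. When $0 < x \leq -\gamma_1$, both arguments lie in $(\gamma_1, 0]$, and the concave function $\phi(s) := Ai(s + \gamma_1)$ on $[0, -\gamma_1]$ with $\phi(0) = 0$ satisfies $\phi(rx) = \phi(r\cdot x + (1-r)\cdot 0) \geq r\phi(x)$, so $Ai(rx + \gamma_1) \geq r\, Ai(x + \gamma_1) > \tfrac{1}{2} Ai(x + \gamma_1)$ since $r > 1/2$. When $-\gamma_1 < x \leq -\gamma_1/r$, we have $x + \gamma_1 > 0$ while $rx + \gamma_1$ still lies in $(\gamma_1, 0]$: monotone decrease gives $Ai(x + \gamma_1) \leq Ai(0)$, and the chord lower bound $\phi(s) \geq s\cdot Ai(0)/(-\gamma_1)$ (from concavity together with $\phi(0)=0$ and $\phi(-\gamma_1) = Ai(0)$) evaluated at $s = rx \geq -r\gamma_1$ yields $Ai(rx + \gamma_1) \geq r\, Ai(0)$, so the ratio is at most $1/r < 2$. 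Finally, when $x > -\gamma_1/r$, both $x + \gamma_1$ and $rx + \gamma_1$ are positive with $x + \gamma_1 > rx + \gamma_1$, so direct monotone decrease on $[0,\infty)$ gives $Ai(x + \gamma_1) < Ai(rx + \gamma_1)$, which is stronger than what is required.

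I do not anticipate any significant obstacle. The argument is a direct case analysis driven entirely by the ODE. The only mildly delicate step is Case 2, where I must combine an \emph{upper} bound (from monotonicity past $0$) with a \emph{lower} bound (from the chord inequality); but in both cases the common reference value is $Ai(0)$, so the bookkeeping is clean. The role of the hypothesis $r > 1/2$ is entirely to make $1/r < 2$, ensuring each case closes with the stated constant.
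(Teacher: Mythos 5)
Your proof is correct and rests on the same two analytic facts as the paper's: concavity of $Ai$ on $(\gamma_1,0)$ coming from the Airy ODE $Ai''(z)=zAi(z)$ together with $Ai>0$ on $(\gamma_1,\infty)$, and eventual monotone decrease of $Ai$. Where you differ from the paper is in the choice of threshold for the three-way case split and in how the middle case is closed. The paper splits at $a_1'$, the largest zero of $Ai'$ (equivalently, the maximizer of $Ai$), and handles its middle regime (namely $x+\gamma_1>0$ but $rx+\gamma_1<a_1'$) by first bounding $Ai(x+\gamma_1)\le Ai(a_1')$ and then re-invoking the first case with $a_1'-\gamma_1$ in place of $x$. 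You split instead at $0$, and in the middle regime (namely $x+\gamma_1>0$ but $rx+\gamma_1\le 0$) you pair the upper bound $Ai(x+\gamma_1)\le Ai(0)$ with a direct chord lower bound $Ai(rx+\gamma_1)\ge (rx/(-\gamma_1))Ai(0)\ge rAi(0)$ coming from concavity, which immediately gives a ratio at most $1/r<2$. Your version is a bit more self-contained -- it avoids introducing $a_1'$ and avoids the slightly awkward ``apply the previous case with a different $x$'' step -- at the modest cost of needing the standard facts $Ai'(0)<0$ and $Ai(z)\to 0$ as $z\to\infty$ to justify monotone decrease on $[0,\infty)$, whereas the paper only needs the definition of $a_1'$. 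Both bookkeepings are clean, and your Case C (both arguments nonnegative) and the paper's Case 3 (both arguments $\ge a_1'$) are the same monotonicity observation.
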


\begin{proof}
We consider three cases.  First, suppose $0 < x \leq -\gamma_1$.
Because $Ai(x) > 0$ for all $x > \gamma_1$ and the Airy function solves the differential equation $Ai''(z) = z Ai(z)$, the second derivative $Ai''(z)$ is negative for all $z \in (\gamma_1, 0)$.  Therefore, since $r < 1$, the average value of the derivative of the Airy function between $\gamma_1$ and $\gamma_1 + x$ is less than the average value of the derivative of the Airy function between $\gamma_1$ and $\gamma_1 + rx$.  The conclusion of the lemma follows from this observation because $r \geq 1/2$.

Next, let $a_1' < 0$ be the largest zero of the derivative of the Airy function, which is also the point at which the Airy function attains its maximum.
Suppose $x > -\gamma_1$ but $r x < a_1' - \gamma_1$.  Then, we can apply the result from the previous case with $a_1' - \gamma_1$ in place of $x$ to see that $Ai(x + \gamma_1) \leq Ai(a_1') \leq 2 Ai(rx + \gamma_1).$

Finally, suppose $rx \geq a_1' - \gamma_1$.  Because the Airy function is decreasing on $(a_1', \infty)$, we have $Ai(x + \gamma_1) \leq Ai(rx + \gamma_1)$, which implies the conclusion of the lemma.
\end{proof}

We now check that, at time $0$, the value of $Z_{n,A}(0)$, or equivalently $Z^*_{n,A}(0)$, cannot be much larger than that of $Z_n(0)$.

\begin{Lemma}\label{ZAlem}
Fix $\eps>0$, and suppose that (\ref{Zasm}) and (\ref{Yasm}) hold. Let $A < 0$, and define
$$G_n = \bigg\{Z_{n, A}(0) \leq \frac{3}{\delta} \cdot \frac{\beta^{1/3} e^{\rho L}}{\rho^3}\bigg\} \cap \bigg\{Y_{n}(0) \leq \frac{1}{\rho^2} e^{\rho L} \bigg\}.$$
Then $\P(G_n) > 1 - 3\eps$ for sufficiently large $n$.
\end{Lemma}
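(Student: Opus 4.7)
The strategy is to bound $Z_{n,A}(0)$ on a high-probability event by splitting the relevant sum at height $L - |A|/\rho$: on the lower part we compare $\alpha(L_A - \cdot)$ to $\alpha(L - \cdot)$ term by term using Lemma \ref{AiryZZA}, and on the upper part we exploit $Ai(\gamma_1) = 0$ to get a crude linear bound on $\alpha$ and control the resulting sum by $Y_n(0)$.

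Specifically, (\ref{Zasm}) gives that for sufficiently large $n$ the event $\{Z_n(0) \leq \delta^{-1} \beta^{1/3} e^{\rho L}/\rho^3\}$ has probability at least $1 - \eps$, and (\ref{Yasm}) gives that for any preselected $\eta > 0$ and sufficiently large $n$ the event $\{Y_n(0) \leq \eta e^{\rho L}/\rho^2\}$ has probability at least $1 - \eps$. Taking $\eta \leq 1$ makes the $Y_n(0)$ inequality appearing in the definition of $G_n$ automatic, and the plan is to show deterministically, on the intersection of these two events (which has probability at least $1 - 2\eps$, well within the target $1 - 3\eps$), that $Z_{n,A}(0) \leq 3\delta^{-1}\beta^{1/3} e^{\rho L}/\rho^3$.

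Since $A < 0$, $L_A = L + |A|/\rho > L$. I will write $Z_{n,A}(0) = \Sigma_1 + \Sigma_2$, where $\Sigma_1$ sums over particles with $X_{i,n}(0) < L - |A|/\rho$ and $\Sigma_2$ over those with $L - |A|/\rho \leq X_{i,n}(0) < L_A$. For $\Sigma_1$, setting $u = (2\beta)^{1/3}(L - X_{i,n}(0))$ and $c = (2\beta)^{1/3}|A|/\rho$, the defining inequality gives $u > c > 0$ so that $r := u/(u+c) \in (1/2,1)$; Lemma \ref{AiryZZA} applied with $x = u + c$ then yields $\alpha(L_A - X_{i,n}(0)) = Ai(\gamma_1 + u + c) \leq 2 Ai(\gamma_1 + u) = 2\alpha(L - X_{i,n}(0))$ termwise, so $\Sigma_1 \leq 2 Z_n(0) \leq 2 \delta^{-1} \beta^{1/3} e^{\rho L}/\rho^3$ on the first event. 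For $\Sigma_2$, the defining range forces $0 < L_A - X_{i,n}(0) \leq 2|A|/\rho$, and hence $(2\beta)^{1/3}(L_A - X_{i,n}(0)) \leq 2^{4/3} |A| \beta^{1/3}/\rho$, which tends to $0$ by (\ref{A1}); combined with $Ai(\gamma_1) = 0$ and the boundedness of $Ai'$ on a neighborhood of $\gamma_1$, the mean value theorem gives $\alpha(L_A - X_{i,n}(0)) \lesssim \beta^{1/3}|A|/\rho$, so $\Sigma_2 \lesssim (\beta^{1/3}|A|/\rho) Y_n(0) \lesssim |A|\eta \beta^{1/3} e^{\rho L}/\rho^3$. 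Choosing $\eta$ small enough (depending on $A$ and $\delta$) makes $\Sigma_2 \leq \delta^{-1}\beta^{1/3} e^{\rho L}/\rho^3$, and adding the two pieces gives the required deterministic bound.

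No step is genuinely difficult. The main conceptual point is that (\ref{A1}) forces the gap $L_A - L = |A|/\rho$ between the two barriers to be much smaller than the natural Airy length scale $\beta^{-1/3}$, so both the ratio comparison of Lemma \ref{AiryZZA} (applied on the bulk) and the linear bound near $\gamma_1$ (applied on the thin sliver near $L_A$) remain tame as $n \to \infty$.
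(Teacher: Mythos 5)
Your proof is correct and follows essentially the same route as the paper: split $Z_{n,A}(0)$ at $L+A/\rho = L-|A|/\rho$, bound the lower part by $2Z_n(0)$ via Lemma~\ref{AiryZZA}, and bound the thin sliver near $L_A$ linearly in $L_A-x$ and then by $Y_n(0)$ using \eqref{Yasm}. The only (harmless) difference is that you fold the two uses of \eqref{Yasm} into a single event with a small parameter $\eta$, which actually yields the slightly stronger bound $\P(G_n)>1-2\eps$.
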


\begin{proof}
Since \eqref{Yasm} holds, it suffices to show that for large $n$,
$$\P \bigg(Z_{n,A}(0) \leq \frac{3}{\delta} \cdot \frac{\beta^{1/3}}{\rho^3} e^{\rho L} \bigg) > 1 - 2 \eps.$$
If $x \geq L + A/\rho$, which implies that $L_A - x \leq 2|A|/\rho$, then $$z_{n,A}(x) = e^{\rho x} Ai((2 \beta)^{1/3}(L_A - x) + \gamma_1)\ind_{\{x<L_A\}} \lesssim e^{\rho x} \cdot (2 \beta)^{1/3}(L_A - x) \lesssim \frac{\beta^{1/3} |A| e^{\rho x}}{\rho}.$$  It follows that
\begin{equation}\label{fewnearA}
\sum_{i=1}^{N_n(0)} z_{n,A}(X_{i,n}(0)) \1_{\{X_{i,n}(0) \geq L + A/\rho\}} \lesssim \frac{\beta^{1/3} |A| Y_n(0)}{\rho}.
\end{equation}
In view of (\ref{Yasm}), it follows that for sufficiently large $n$,
\begin{equation}\label{ZAbigx}
\P \bigg( \sum_{i=1}^{N_n(0)} z_{n,A}(X_{i,n}(0)) \1_{\{X_{i,n}(0) \geq L + A/\rho\}} < \frac{1}{\delta} \cdot \frac{\beta^{1/3}}{\rho^3} e^{\rho L} \bigg) > 1 - \eps.
\end{equation}

Suppose that $x < L + A/\rho$.  Then $\frac{1}{2}(L_A - x) < L - x < L_A - x$.  Therefore, $z_{n,A}(x) \leq 2 z_{n,0}(x)$ by Lemma \ref{AiryZZA}.  It follows from (\ref{Zasm}) that for sufficiently large $n$,
\begin{equation}\label{ZAsmallx}
\P \bigg( \sum_{i=1}^{N_n(0)} z_{n,A}(X_{i,n}(0)) \1_{\{X_{i,n}(0) < L + A/\rho\}} \le \frac{2}{\delta} \cdot \frac{\beta^{1/3}}{\rho^3} e^{\rho L} \bigg) > 1 - \eps.
\end{equation}
The result follows from (\ref{ZAbigx}) and (\ref{ZAsmallx}).
\end{proof}

Lemma \ref{ZAlem} tells us that we may restrict our attention to the case in which the initial configuration of particles is such that $G_n$ occurs.

\begin{Lemma}\label{YZ0tlem}
If $t\gg\beta^{-2/3}$, then
\[\E[Y^*_{n,A}(t)|{\cal F}_0] \lesssim Z_{n,A}(0)e^{-\beta A t/\rho} + Y_{n}(0) e^{-\beta^2 t^3/73}.\]
\end{Lemma}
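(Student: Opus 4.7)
The plan is to apply the many-to-one lemma together with the density estimates from Section~\ref{approxden}. Conditionally on ${\cal F}_0$, a particle $i$ at $X_{i,n}(0) = x < L_A$ contributes to $Y^*_{n,A}(t)$ exactly the expected mass $\int_{-\infty}^{L_A} p_t^{L_A}(x,y)\,e^{\rho y}\,dy$ of its descendants that stay below $L_A$ throughout $[0,t]$, where we have absorbed the factor $e^{\rho y}$ into the many-to-one formula in the usual way. Thus
\[
\E[Y^*_{n,A}(t)\mid{\cal F}_0] \;=\; \sum_{i:\,X_{i,n}(0)<L_A}\int_{-\infty}^{L_A} p_t^{L_A}(X_{i,n}(0),y)\,e^{\rho y}\,dy.
\]
I split each initial particle into two cases according to whether it lies below or above $H_A(t) = L_A - \beta t^2/9$.

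For initial particles with $X_{i,n}(0) \le H_A(t)$, the estimate \eqref{explowx} of Lemma~\ref{lowxylem} gives directly $\int_{-\infty}^{L_A} p_t^{L_A}(x,y)\,e^{\rho y}\,dy \ll e^{\rho x}e^{-\beta^2 t^3/73}$, whose sum is $\ll Y_n(0)\,e^{-\beta^2 t^3/73}$. For initial particles with $H_A(t) < X_{i,n}(0) < L_A$, I further split the $y$-integral at $H_A(t)$. The piece over $y \le H_A(t)$ is handled by \eqref{explowy}, contributing again $\ll e^{\rho x}e^{-\beta^2 t^3/73}$ per particle, summing to $\ll Y_n(0)\,e^{-\beta^2 t^3/73}$. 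For the remaining piece over $H_A(t) < y < L_A$, note that since $L_A - x, L_A - y \le \beta t^2/9$, we have
\[
(2\beta)^{1/6}\bigl((L_A-x)^{1/2} + (L_A-y)^{1/2}\bigr) \;\le\; \tfrac{2\cdot 2^{1/6}}{3}\,\beta^{2/3} t,
\]
and since $2\cdot 2^{1/6}/3 < 2^{-1/3}$ and $\beta^{2/3}t\to\infty$, condition \eqref{dcondbig} of Lemma~\ref{dapprox4} is satisfied. Consequently $p_t^{L_A}(x,y) \lesssim \beta^{1/3} e^{-\beta A t/\rho} e^{\rho x}\alpha(L_A-x)\,e^{-\rho y}\alpha(L_A-y)$, so
\[
\int_{H_A(t)}^{L_A} p_t^{L_A}(x,y)\,e^{\rho y}\,dy \;\lesssim\; \beta^{1/3} e^{-\beta A t/\rho} z_{n,A}(x) \int_{H_A(t)}^{L_A}\alpha(L_A-y)\,dy.
\]
Substituting $u = (2\beta)^{1/3}(L_A-y)$ bounds the last integral by $(2\beta)^{-1/3}\int_0^\infty Ai(u+\gamma_1)\,du \lesssim \beta^{-1/3}$, yielding $\lesssim e^{-\beta A t/\rho} z_{n,A}(x)$ per particle. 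Summing over $i$ gives $\lesssim e^{-\beta A t/\rho} Z_{n,A}(0)$, and combining the three contributions completes the proof.

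There is no genuine obstacle: the main thing to verify carefully is the applicability of \eqref{dcondbig} under $t\gg\beta^{-2/3}$ (which gives the mild numerical check above), and the fact that the Airy-integral bound and the constant $1/(Ai'(\gamma_1))^2$ are independent of $A$, so the implicit constants in $\lesssim$ do not depend on $A$ as required by the conventions of Section~\ref{estimates_LA_sec}.
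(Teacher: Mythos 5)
Your proof is correct and takes essentially the same approach as the paper. The paper partitions particles at time $t$ into three sets $S_{1,n}, S_{2,n}, S_{3,n}$ according to whether the time-zero ancestor is below $H_A(t)$, the particle itself is below $H_A(t)$, or neither; you achieve the same decomposition by first splitting on the initial position relative to $H_A(t)$ and then splitting the $y$-integral at $H_A(t)$, and in each case invoke the same tools — \eqref{explowx}, \eqref{explowy}, and Lemma~\ref{dapprox4} together with the Airy-integral bound.
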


\begin{proof}
Recall that $H_A(t) = L_A-\beta t^2/9$, and that $a_{i,n}(s,t)$ is the position of the ancestor at time $s$ of the $i$th particle at time $t$. We divide into three subsets the particles $i$ that are below $L_A$ at time $t$:
\begin{align*}
S_{1,n} &= \{i: a_{i,n}(0,t) \leq H_{A}(t)\}, \\
S_{2,n} &= \{i \notin S_{1,n}: X_{i,n}(t) \leq H_{A}(t) \}, \\
S_{3,n} &= \{i \notin S_{1,n} \cup S_{2,n}\}.
\end{align*}
For $j \in \{1, 2, 3\}$, let
$$Y^*_{n,A,j}(t) = \sum_{i=1}^{N(t)} e^{\rho X_{i,n}(t)} \1_{\{a_{i,n}(s,t)<L_A \,\forall s\le t\}} \1_{\{i \in S_{j,n}\}}.$$
It follows from (\ref{explowx}) that
\begin{equation}\label{Yn1A}
\E[Y^*_{n,A,1}(t)|{\cal F}_{0}] \ll Y_{n}(0)e^{-\beta^2 t^3/73}.
\end{equation}
Likewise, it follows from (\ref{explowy}) that
\begin{equation}\label{Yn2A}
\E[Y^*_{n,A,2}(t)|{\cal F}_{0}] \ll Y_{n}(0)e^{-\beta^2 t^3/73}.
\end{equation}
Finally, noting that $H_A(t)$ was chosen so that \eqref{dcondbig} holds when $H_A(t) < x < L_A$ and $H_A(t) < y < L_A$, equation (\ref{densapprox}) implies that for sufficiently large $n$,
\begin{align}\label{Yn3A}
\E[Y^*_{n,A,3}(t)|{\cal F}_{0}] &\le \frac{(2 \beta)^{1/3}}{(Ai'(\gamma_1))^2}e^{-\beta A t/\rho} \bigg( \int_{-\infty}^{L_{A}} e^{\rho y} \cdot e^{-\rho y} \alpha(L_{A} - y) \: dy \bigg) Z_{n,A}(0) (1 + o(1)) \nonumber \\
&= \frac{e^{-\beta A t/\rho}}{(Ai'(\gamma_1))^2} \bigg(\int_0^{\infty} Ai(z + \gamma_1) \: dz \bigg) Z_{n,A}(0)(1 + o(1)) \nonumber \\
&\lesssim e^{-\beta A t/\rho} Z_{n,A}(0).
\end{align}
Now the result follows from (\ref{Yn1A}), (\ref{Yn2A}), and (\ref{Yn3A}).
\end{proof}

\begin{Cor}\label{YZAtcor}
Fix $\eps>0$ and $A<0$. There exists $\eta>0$, depending on $A$, such that if $\beta^{-2/3} \log(\beta^{1/3}/\rho)^{1/3} \ll t\le \rho/\eps\beta$, then on the event $G_n$,
\[\P\Big(Y^*_{n,A}(t) > \frac{1}{\eta}\cdot\frac{\beta^{1/3}}{\rho^3} e^{\rho L}\Big|\F_0\Big) < \eps.\]
\end{Cor}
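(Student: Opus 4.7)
The plan is to combine the first-moment estimate in Lemma~\ref{YZ0tlem} with the conditional Markov inequality, using the deterministic bounds that $G_n$ provides on the initial data. Note that $G_n\in\F_0$, so on $G_n$ we may treat $Z_{n,A}(0)\le (3/\delta)\beta^{1/3}\rho^{-3}e^{\rho L}$ and $Y_n(0)\le \rho^{-2}e^{\rho L}$ as deterministic. Since the assumed lower bound on $t$ in particular forces $t\gg\beta^{-2/3}$, Lemma~\ref{YZ0tlem} applies and gives
\[\E[Y^*_{n,A}(t)\mid\F_0]\lesssim Z_{n,A}(0)\,e^{-\beta A t/\rho}+Y_n(0)\,e^{-\beta^2 t^3/73}.\]

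For the first term, since $A<0$ and $t\le \rho/(\eps\beta)$, we have $0<-\beta A t/\rho\le |A|/\eps$, so $e^{-\beta A t/\rho}\le e^{|A|/\eps}$, a constant depending only on $A$ and $\eps$. On $G_n$ the first term is therefore bounded by some constant $C_1(A,\eps)\cdot\beta^{1/3}\rho^{-3}e^{\rho L}$. For the second term, cubing the lower bound $t\gg\beta^{-2/3}(\log(\rho/\beta^{1/3}))^{1/3}$ gives $\beta^2 t^3/73\gg\log(\rho/\beta^{1/3})$, so $e^{-\beta^2 t^3/73}$ decays faster than any positive power of $\beta^{1/3}/\rho$; in particular $e^{-\beta^2 t^3/73}\ll\beta^{1/3}/\rho$ for $n$ large. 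Combined with $Y_n(0)\le\rho^{-2}e^{\rho L}$ on $G_n$, the second term is $o(\beta^{1/3}\rho^{-3}e^{\rho L})$.

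Absorbing the lower-order term, there is a constant $C(A,\eps)$ such that on $G_n$, for all sufficiently large $n$,
\[\E[Y^*_{n,A}(t)\mid\F_0]\le C(A,\eps)\cdot\frac{\beta^{1/3}}{\rho^3}e^{\rho L}.\]
The conditional Markov inequality then yields, on $G_n$,
\[\P\Big(Y^*_{n,A}(t)>\frac{1}{\eta}\cdot\frac{\beta^{1/3}}{\rho^3}e^{\rho L}\,\Big|\,\F_0\Big)\le \eta\, C(A,\eps),\]
and setting $\eta=\eps/C(A,\eps)$ finishes the proof. There is no real obstacle here beyond assembling the ingredients; the two endpoints of the time window $\beta^{-2/3}(\log(\rho/\beta^{1/3}))^{1/3}\ll t\le \rho/(\eps\beta)$ are calibrated precisely so that the $Y_n(0)$ contribution in Lemma~\ref{YZ0tlem} is negligible while the $Z_{n,A}(0)$ contribution picks up only a bounded multiplicative factor $e^{|A|/\eps}$, which is exactly why $\eta$ is allowed to depend on $A$ (and $\eps$).
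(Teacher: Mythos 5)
Your proof is correct and follows essentially the same path as the paper's: apply Lemma~\ref{YZ0tlem}, note that on $G_n$ the first term is at most a constant multiple (with $e^{|A|/\eps}=e^{-A/\eps}$) of $\beta^{1/3}\rho^{-3}e^{\rho L}$ using $t\le\rho/(\eps\beta)$, observe that the lower bound on $t$ makes the $Y_n(0)$ term negligible relative to $\beta^{1/3}\rho^{-3}e^{\rho L}$, and then invoke the conditional Markov inequality with $\eta$ chosen small depending on $A$ and $\eps$. The only difference is cosmetic: you spell out the estimate $e^{-\beta^2 t^3/73}\ll\beta^{1/3}/\rho$ explicitly, whereas the paper compresses both terms into a single $\lesssim\,\beta^{1/3}\rho^{-3}e^{\rho L}e^{-A/\eps}$ in one step.
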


\begin{proof}
Note that on the event $G_n$, by Lemma \ref{YZ0tlem},
\[\E[Y^*_{n,A}(t)|\F_0] \lesssim \frac{3}{\delta}\cdot\frac{\beta^{1/3}e^{\rho L}}{\rho^3}e^{-\beta A t/\rho} + \frac{e^{\rho L}}{\rho^2}e^{-\beta^2 t^3/73} \lesssim \frac{\beta^{1/3}e^{\rho L}}{\rho^3} e^{-A/\eps}.\]
By the conditional Markov inequality, on $G_n$,
\[\P\Big(Y^*_{n,A}(t) > \frac{1}{\eta}\cdot\frac{\beta^{1/3}}{\rho^3} e^{\rho L}\Big|\F_0\Big) \lesssim \frac{\beta^{1/3}e^{\rho L}e^{-A/\eps}}{\rho^3}\cdot\frac{\eta\rho^3}{\beta^{1/3}e^{\rho L}}=\eta e^{-A/\eps},\]
and the right-hand side can be made smaller than $\eps$ by choosing $\eta$ small.
\end{proof}

\subsection{A moving barrier}\label{moving_barrier_sec}

To prove the upper bounds in Proposition \ref{YZmain}, we need to upgrade the result on $Y^*$ in Corollary~\ref{YZAtcor} to results about $Y$ and $Z$. To do so, we need to bound how many particles go above $L_A$. As mentioned earlier, a fixed barrier does not give us accurate enough bounds, so we now define a moving barrier. For $A\in\R$ and $s\ge0$, let
\[\Lambda_A(s) = L_A - \frac{2A}{\rho}\Big(\exp\Big(\frac{2\beta}{\rho}s\Big)-1\Big) \hspace{6mm} \text{ and } \hspace{6mm} \Delta_A(s) = \Lambda_A(s)-L_A.\]
Throughout this argument we will take $A<0$, and therefore $\Delta_A$ and all its derivatives are non-negative.
We would like to study the process when particles are killed as soon as they hit the curve $(\Lambda_A(s))_{s\ge0}$. Let $\hat r^A_x(u,v)$ be the expected number of particles that hit the curve between times $u$ and $v$ in this modified process, when starting from a single particle at $x$.

\begin{Lemma}\label{rAabound}
Suppose that $A<0$. Then for any $t\ge 0$ and $x\le L_A$,
\[\hat r^A_x(0,t) \le e^{2A\beta t/\rho} r_x^{L_A}(0,t) - \frac{2A\beta}{\rho}\int_0^t e^{2A\beta s/\rho} r_x^{L_A}(0,s) ds.\]
\end{Lemma}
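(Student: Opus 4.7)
The plan is to reduce the stated bound, via integration by parts in $s$, to the pointwise hitting-intensity inequality
\[\tilde{\hat r}_x^A(s) \le e^{2A\beta s/\rho}\,\tilde r_x^{L_A}(s), \qquad s\in[0,t],\]
and then to establish this via a Girsanov transformation that converts the moving barrier into the fixed barrier $L_A$. Indeed, using $r_x^{L_A}(0,0)=0$ and integrating $\tfrac{d}{ds}\bigl(e^{2A\beta s/\rho} r_x^{L_A}(0,s)\bigr)$, the right-hand side of the lemma equals $\int_0^t e^{2A\beta s/\rho}\tilde r_x^{L_A}(s)\,ds$, so it suffices to bound the time derivatives pointwise and then integrate in $s$.

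For the pointwise comparison, I would first invoke the many-to-one formula (cf.\ the derivation of \eqref{rxformula}) adapted to the moving barrier, writing
\[\tilde{\hat r}_x^A(s)\,ds = e^{\rho x-\rho\Lambda_A(s)-\rho^2 s/2}\,\E_x\!\left[e^{\int_0^s\beta B_u\,du}\ind_{\{T\in ds\}}\right],\]
where $B$ is a standard Brownian motion from $x$ and $T=\inf\{u: B_u\ge\Lambda_A(u)\}$. I would then apply the substitution $Y_u = B_u-\Delta_A(u)$, so that $T$ becomes the first hitting time of $L_A$ by $Y$, together with a Girsanov change of measure that makes $Y$ a standard Brownian motion. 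At the stopping time $T$, integration by parts on $\int_0^T\Delta_A'(u)\,dY_u$ (using $Y_T=L_A$ and $\Delta_A(0)=0$) produces deterministic boundary terms plus the remainder $-\int_0^T\Delta_A''(u)Y_u\,du$. The defining ODE $\Delta_A''=(2\beta/\rho)\Delta_A'$ then forces all the $Y$-dependent pieces to combine neatly with the branching weight.

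The decisive elementary inequality is $e^{-\rho\Delta_A(s)} \le e^{2A\beta s/\rho}$ valid for $A<0$ and $s\ge 0$: indeed $-\rho\Delta_A(s)=2A(e^{2\beta s/\rho}-1)\le 2A\beta s/\rho$ since $A<0$ and $e^{2\beta s/\rho}-1\ge \beta s/\rho$. This supplies exactly the exponential factor $e^{2A\beta s/\rho}$ appearing in the target bound, and matches with the deterministic factor produced by the Girsanov weight.

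The main obstacle will be controlling the cross-terms produced by the Girsanov change of measure, in particular $-\tfrac12\int_0^T\Delta_A'(u)^2\,du$ combined with the modified branching integrand $\int_0^T(\beta+\Delta_A''(u))Y_u\,du$. The specific exponential form of $\Delta_A$ — chosen precisely so that $\Delta_A''$ is proportional to $\Delta_A'$ — is what makes the residual weight non-positive (modulo an inequality that can be bounded by the elementary estimate above), so that the ratio $\tilde{\hat r}_x^A(s)/\tilde r_x^{L_A}(s)$ is dominated by $e^{2A\beta s/\rho}$. Integrating this pointwise bound and using the integration by parts identity from the first paragraph yields the stated inequality.
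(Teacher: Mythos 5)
Your high-level strategy — reduce to a pointwise intensity bound via integration by parts, then convert the moving barrier to the fixed one $L_A$ by the shift $Y_u=B_u-\Delta_A(u)$ together with a Girsanov change of measure, and bound the leftover deterministic weight — is the same as the paper's, and your integration-by-parts reformulation of the right-hand side is correct. However, the part you flag as ``the main obstacle'' is exactly where the proposal is incomplete, and the mechanism you invoke to resolve it is not the right one. The ODE $\Delta_A''=(2\beta/\rho)\Delta_A'$ does not make the cross-term $\int_0^T(\beta+\Delta_A''(u))Y_u\,du$ ``combine neatly'': $\Delta_A''$ is $u$-dependent, so this is not a constant times $\int Y_u\,du$ and there is no algebraic cancellation. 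What actually closes the argument is the pathwise bound $Y_u\le L_A$ for $u\le T$ (equivalently $B_u\le\Lambda_A(u)$ before the hitting time) combined with $\Delta_A''\ge0$, which gives
\[\int_0^T\Delta_A''(u)Y_u\,du\;\le\;L_A\int_0^T\Delta_A''(u)\,du\;=\;L_A\big(\Delta_A'(T)-\Delta_A'(0)\big).\]
This telescopes against the boundary term $\Delta_A'(T)L_A-\Delta_A'(0)x$ produced by your stochastic integration by parts, leaving only $\Delta_A'(0)(x-L_A)\le0$ (using $x\le L_A$ and $\Delta_A'(0)\ge0$); the Girsanov remainder $-\tfrac12\int_0^T\Delta_A'(u)^2\,du\le0$ is simply dropped. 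Without these sign observations the argument does not close.

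A secondary inaccuracy: your ``decisive elementary inequality'' $e^{-\rho\Delta_A(s)}\le e^{2A\beta s/\rho}$ is not the inequality the proof needs. After writing $B=Y+\Delta_A$, the branching weight contributes the additional (positive) deterministic term $\int_0^T\beta\Delta_A(u)\,du$, so the bound that must hold is
\[-\rho\Delta_A(T)+\int_0^T\beta\Delta_A(u)\,du\;\le\;\frac{2A\beta T}{\rho},\]
which reduces to $A(e^{2\beta T/\rho}-1)\le0$. Your stated inequality is in fact strictly stronger than the left summand alone, but presenting it as sufficient misses that the branching term must be absorbed into the same estimate — it is the specific exponential form of $\Delta_A$ that makes the full combination work, not the bound on $-\rho\Delta_A$ in isolation.
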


\begin{proof}
Recall that $(B_t)_{t\ge0}$ is a one-dimensional Brownian motion started at $x$ under $\P_x$, and $T_K = \inf\{t\ge 0 : B_t \ge K\}$. Let $(\mathcal G_t)_{t\ge0}$ be the natural filtration for this Brownian motion. Define
\[T^* = \inf\{t>0 : B_t > \Lambda_A(t)\}.\]
Then by the many-to-one lemma, reasoning as in (\ref{rx0t}), we have
\begin{equation}\label{hatrmto}
\hat r_x^A(u,v) = \E_x\big[ e^{\rho x - \rho \Lambda_A(T^*) - \rho^2 T^*/2 + \int_0^{T^*}\beta B_s ds} \ind_{\{u<T^*\le v\}}\big].
\end{equation}
Define a new probability measure $\Q_x$ by setting
\begin{align}
\frac{d\Q_x}{d\P_x}\bigg|_{\mathcal G_t} &= \exp\Big( \int_0^t \Delta'_A(s) dB_s - \frac12\int_0^t \Delta'_A(s)^2 ds\Big)\nonumber\\
&= \exp\Big(\Delta_A'(t) B_t - \Delta_A'(0)x - \int_0^t \Delta_A''(s)B_s ds - \frac12\int_0^t \Delta'_A(s)^2 ds\Big)\label{QAdef}
\end{align}
where the second expression follows from the first by (stochastic) integration by parts.

Combining \eqref{hatrmto} and \eqref{QAdef} tells us that $\hat r_x^A(u,v)$ equals
\[\Q_x\big[ e^{\rho x - \rho \Lambda_A(T^*) - \rho^2 T^*/2 -\Delta_A'(T^*) \Lambda_A(T^*) + \Delta_A'(0)x + \int_0^{T^*} \Delta_A''(s)B_s ds + \frac12 \int_0^{T^*} \Delta_A'(s)^2 ds + \int_0^{T^*}\beta B_s ds} \ind_{\{u<T^*\le v\}}\big].\]
Note that since $B_s\le \Lambda_A(s)$ for all $s\le T^*$ and $\Delta_A'(s) = \Lambda_A'(s)$ for all $s$, by the standard integration by parts formula,
\[\int_0^{T^*} \Delta_A''(s) B_s ds \le \int_0^{T^*} \Delta_A''(s)\Lambda_A(s) ds = \Delta_A'(T^*)\Lambda_A(T^*) - \Delta_A'(0)L_A - \int_0^{T^*}\Delta_A'(s)^2 ds,\]
and so, since $x\le L_A$ and $\Delta_A'(0)\ge 0$,
\begin{align*}
\hat r_x^A(u,v) &\le \Q_x\big[ e^{\rho x - \rho \Lambda_A(T^*) - \rho^2 T^*/2 + \Delta_A'(0)(x-L_A) - \frac12 \int_0^{T^*} \Delta_A'(s)^2 ds + \int_0^{T^*}\beta B_s ds} \ind_{\{u<T^*\le v\}}\big]\\
&\le \Q_x\big[e^{\rho x - \rho \Lambda_A(T^*) - \rho^2 T^*/2 + \int_0^{T^*}\beta B_s ds} \ind_{\{u<T^*\le v\}}\big].
\end{align*}
Under $\Q_x$, the process $(B_t-\Delta_A(t))_{t\ge0}$ is a Brownian motion started from $x$, so from above,
\begin{align*}
\hat r_x^A(u,v) &\le \P_x\big[e^{\rho x - \rho \Lambda_A(T_{L_A}) - \rho^2 T_{L_A}/2 + \int_0^{T_{L_A}}\beta (B_s+\Delta_A(s)) ds} \ind_{\{u<T_{L_A}\le v\}}\big]\\
&= \P_x\big[e^{ - \rho \Delta_A(T_{L_A}) + \int_0^{T_{L_A}}\beta \Delta_A(s) ds} \cdot e^{\rho x - \rho L_A - \rho^2 T_{L_A}/2 +\int_0^{T_{L_A}}\beta B_s ds} \ind_{\{u<T_{L_A}\le v\}}\big].
\end{align*}
We now note that for any time $t\ge0$,
\[-\rho \Delta_A(t) + \int_0^t \beta \Delta_A(s) ds = A(e^{2\beta t/\rho}-1) + 2A\beta t/\rho \le 2A\beta t/\rho,\]
so
\[\hat r_x^A(u,v) \le \P_x\big[e^{2A\beta T_{L_A}/\rho}\cdot e^{\rho x - \rho L_A - \rho^2 T_{L_A}/2 +\int_0^{T_{L_A}}\beta B_s ds} \ind_{\{u<T_{L_A}\le v\}}\big] \le e^{2A\beta u/\rho}r_x^{L_A}(u,v).\]
Recalling that $\tilde r_x^{L_A}(t)$ is the derivative of $r_x^{L_A}(0,t)$ with respect to $t$, we have
\[\frac{d}{dt} \hat r_x^A(0,t) \le e^{2A\beta t/\rho}\tilde r_x^{L_A}(t),\]
and integrating by parts completes the proof of the lemma.
\end{proof}

\subsection{Proof of the upper bounds in Proposition \ref{YZmain}}

\begin{Lemma}\label{hitL}
Fix $\eps>0$ and suppose that (\ref{Zasm}) and (\ref{Yasm}) hold and that $t\le \rho/\eps\beta$.  Recall the definition of $\Delta$ from (\ref{A3}), and let $C_1$ be the constant from Lemma \ref{Lsurvive}. Then there exists a negative real number $A'$, depending on $\delta$, $\eps$, and $\Delta$, such that if $A \le A'$, then the probability, conditional on $G_n$, that some particle hits the barrier $\Lambda_A(s)$ at some time $s \leq t$ and has descendants that survive for an additional time $2C_1 \rho^{-2}$ is bounded above by $\eps$ for large $n$.
\end{Lemma}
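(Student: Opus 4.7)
The plan is a first-moment argument. Let $M_A(t)$ denote the number of particles that cross the moving barrier $\Lambda_A(\cdot)$ at some time $s\le t$ and whose lineage survives for another $2C_1/\rho^2$ time units. I will bound $\E[M_A(t)\mid\F_0]$ on the event $G_n$ and conclude by the conditional Markov inequality.

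To bound the expected number of crossings, I will combine Lemma \ref{rAabound} with Lemma \ref{Rxlem} (applied with its first argument set to $0$; note $A_-=|A|$ since $A<0$), giving
\[r_x^{L_A}(0,s) \lesssim e^{\rho x-\rho L_A} + s\beta^{2/3}e^{-\rho L_A}z_A(x)\,e^{|A|\beta s/\rho}.\]
Inserting this bound into Lemma \ref{rAabound}, the decaying factor $e^{2A\beta s/\rho}=e^{-2|A|\beta s/\rho}$ precisely absorbs the boundary-reflection growth $e^{|A|\beta s/\rho}$, leaving a net factor bounded by $1$. Elementary manipulations (using, for instance, $\int_0^t s\,e^{-|A|\beta s/\rho}ds \le t\int_0^t e^{-|A|\beta s/\rho}ds \le t\rho/(|A|\beta)$) then yield
\[\hat r_x^A(0,t) \lesssim e^{\rho x-\rho L_A} + t\beta^{2/3}e^{-\rho L_A}z_A(x),\]
with implicit constant independent of $A$. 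Summing over initial particles and using $L_A-L=|A|/\rho$, $t\le\rho/(\eps\beta)$, together with the bounds from $G_n$, one finds
\[\E\bigl[\#\{\text{crossings by time }t\}\bigm|\F_0\bigr] \lesssim e^{-\rho L_A}\bigl(Y_n(0)+t\beta^{2/3}Z_{n,A}(0)\bigr) \lesssim \frac{e^{-|A|}}{\delta\eps\,\rho^2} \quad\text{on }G_n.\]

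Next, each crossing particle sits at some $\Lambda_A(s)\in[L_A,\,L_A+2|A|(e^{2/\eps}-1)/\rho]$. Assumptions \eqref{A1} and \eqref{A2} give $\beta^{-1/3}\ll\Lambda_A(s)\ll\beta^{-1}$ and $\beta\Lambda_A(s)\le\rho^2$ for large $n$, whence $C_1/(\beta\Lambda_A(s))\le 2C_1/\rho^2$. The strong Markov property at the crossing time together with Lemma \ref{Lsurvive} then shows that the subtree rooted at a crossing particle still has a living descendant at time $2C_1/\rho^2$ later with probability at most $2\beta\Lambda_A(s)/\Delta\le 2\rho^2/\Delta$. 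Multiplying this uniform factor by the expected number of crossings yields, on $G_n$,
\[\E[M_A(t)\mid\F_0] \lesssim \frac{e^{-|A|}}{\delta\eps\Delta},\]
which can be made smaller than $\eps^2$ by taking $|A|$ sufficiently large (depending on $\delta,\eps,\Delta$); the conditional Markov inequality finishes the proof.

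The main technical point is the exponential cancellation in the first step: the decay $e^{2A\beta s/\rho}$ supplied by the moving barrier (Lemma \ref{rAabound}) is exactly what is needed to tame the growth $e^{|A|\beta s/\rho}$ from reflection off a right wall at $L_A$ (Lemma \ref{Rxlem}). This is precisely why a moving, rather than stationary, barrier is essential for a bound that is small uniformly in $A$. A minor matter is initial particles lying above $L_A$, which are not counted by $Z_{n,A}(0)$; but on $G_n$ their total $Y$-weight is already controlled, so they can be handled separately by the argument of Lemma \ref{aboveL}.
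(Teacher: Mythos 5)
Your proof is correct and follows essentially the same strategy as the paper: apply Lemma \ref{rAabound} (the moving-barrier comparison) to the bound from Lemma \ref{Rxlem}, observe that the prefactor $e^{2A\beta s/\rho}$ cancels the growth $e^{A_-\beta s/\rho}$ (leaving even an extra unit of decay), use $e^{-\rho L_A}=e^{-|A|}e^{-\rho L}$ together with the event $G_n$ from Lemma \ref{ZAlem} to extract the $e^{-|A|}$ factor, apply Lemma \ref{Lsurvive} at the crossing point, and finish with the conditional Markov inequality. The only difference from the paper is that you bound $\int_0^t s e^{-|A|\beta s/\rho}\,ds$ by $t\rho/(|A|\beta)$ rather than by $(\rho/|A|\beta)^2$, which costs you the paper's extra $1/|A|$ in the $Z$-term; that factor is not needed for the conclusion, so the argument still closes. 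One small slip to fix: you write ``$\beta\Lambda_A(s)\le\rho^2$ \ldots\ whence $C_1/(\beta\Lambda_A(s))\le 2C_1/\rho^2$,'' but the inequality you need for this conclusion is $\beta\Lambda_A(s)\ge\rho^2/2$; both the upper and lower bounds $\rho^2/2\le\beta\Lambda_A(s)\le\rho^2$ hold for large $n$, and you use one for the time comparison and the other for the survival-probability bound $2\beta\Lambda_A(s)/\Delta\le 2\rho^2/\Delta$, so just state both.
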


\begin{proof}
First consider a fixed barrier at $L_A$, and let $R_t$ be the number of particles killed at this barrier before time $t\ge0$. From Lemma \ref{Rxlem},
\begin{align*}
\E[R_t|\F_0]
&\lesssim e^{-\rho L_A}Y_n(0) + t\beta^{2/3}e^{-\rho L_A - \beta A t/\rho} Z_{n,A}(0).
\end{align*}
Now let $R^*_t$ be the number of particles that are killed at the moving barrier $\Lambda_A(s)$ for some $s\le t$. By Lemma \ref{rAabound} and the bound above, we have
\begin{align*}
\E[R^*_t|\F_0] &\lesssim e^{2A\beta t/\rho - \rho L_A} Y_n(0) + t\beta^{2/3}e^{(-A + 2A)\beta t/\rho-\rho L_A} Z_{n,A}(0)\\
&\hspace{10mm} -\frac{2A\beta}{\rho} \int_0^t e^{2A\beta s/\rho-\rho L_A} Y_n(0) ds - \frac{2A\beta}{\rho} \int_0^t s\beta^{2/3} e^{(-A + 2A)\beta s/\rho-\rho L_A} Z_{n,A}(0) ds \\
&= e^{-\rho L_A} Y_n(0) + \bigg( \frac{2 \rho}{A \beta^{1/3}} (e^{A \beta t/\rho} - 1) - t \beta^{2/3} e^{A \beta t/\rho} \bigg) e^{-\rho L_A} Z_{n,A}(0).
\end{align*}
Without loss of generality we may assume that $\eps\le 1$. Since $A<0$, it follows that
\[\E[R^*_t|\F_0] \lesssim e^{-\rho L_A} Y_n(0) + \frac{\rho}{\eps\beta^{1/3} |A|}e^{-\rho L_A} Z_{n,A}(0).\]

Recall that
\[G_n = \bigg\{Z_{n, A}(0) \leq \frac{3}{\delta} \cdot \frac{\beta^{1/3} e^{\rho L}}{\rho^3}\bigg\} \cap \bigg\{Y_n(0) \leq \frac{1}{\rho^2} e^{\rho L} \bigg\},\]
so on $G_n$,
\[\E[R^*_t|\F_0] \lesssim e^{-\rho L_A}\frac{1}{\rho^2} e^{\rho L} + \frac{\rho}{\eps\beta^{1/3} |A|}e^{-\rho L_A}\frac{3}{\delta} \cdot \frac{\beta^{1/3} e^{\rho L}}{\rho^3} = \frac{e^A}{\rho^2} + \frac{3e^A}{\delta\eps\rho^2 |A|}.\]
It follows that $A'$ can be chosen so that if $A \le A'$, then $E[R_t^*|{\cal F}_0] \leq \eps\Delta/2\rho^2$.
Because $t \le \rho/\eps\beta$, for sufficiently large $n$ we have $\rho^2/2 \leq \beta \Lambda_A(s) \leq \rho^2$ for all $s \leq t$.
From Lemma \ref{Lsurvive}, it follows that conditional on $G_n$, the probability that some particle reaches the boundary before an arbitrary time $t$ and has descendants that survive for an additional time $2C_1 \rho^{-2}$ is bounded above by a constant multiple of $\eps$, which is sufficient to imply the result because $\eps>0$ was arbitrary.
\end{proof}

We now have the ingredients to complete the proof of the upper bound Proposition \ref{YZmain}, in the form of the following lemma. Note in particular that, in view of (\ref{A1}), the result (\ref{mainYupper}) is stronger than the required conclusion that $\rho^{-2} e^{-\rho L} Y_n(t_n) \rightarrow_p 0$.

\begin{Lemma}\label{mainupper}
Fix $\eps>0$ and suppose that (\ref{Zasm}) and (\ref{Yasm}) hold, and that the sequence of times $t_n$ satisfy $\beta t_n/\rho \to \tau\in(0,\infty)$. Then there exists $\eta>0$ such that
\begin{equation}\label{mainYupper}
\P \bigg( Y_n(t_n) \leq \frac{1}{\eta} \cdot \frac{\beta^{1/3}}{\rho^3} e^{\rho L} \bigg) > 1 - 6 \eps
\end{equation}
and
\begin{equation}\label{mainZupper}
\P \bigg( Z_n(t_n) \leq \frac{1}{\eta} \cdot \frac{\beta^{1/3}}{\rho^3} e^{\rho L} \bigg) > 1 - 6 \eps.
\end{equation}
\end{Lemma}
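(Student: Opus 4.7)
The plan is to decompose $Y_n(t_n)$ and $Z_n(t_n)$ using the moving barrier $\Lambda_A$ introduced in Section~\ref{moving_barrier_sec}. I will fix $\eps>0$ sufficiently small, and choose $A<0$ negative enough that Lemma~\ref{hitL} applied with $t=t_n$ yields, on the event $G_n^A$ from Lemma~\ref{ZAlem}, conditional probability at least $1-\eps$ that no particle hitting $\Lambda_A$ by time $t_n$ has any descendant surviving for an additional time $2C_1/\rho^2$; call this event $E_1$. I will also set $A^* = A(2e^{2\tau}-1)(1+\delta)$ for a small $\delta>0$, so that $L_{A^*}\ge\Lambda_A(t_n)$ for all large $n$, since a direct computation gives $\rho(\Lambda_A(t_n)-L)\to|A|(2e^{2\tau}-1)$.

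On the intersection of $G_n^A$, $G_n^{A^*}$, the event $E_1$, and the event $E_2$ from Corollary~\ref{YZAtcor} applied to parameter $A^*$ at time $t_n$, I will decompose every particle alive at time $t_n$ according to whether its ancestral path stayed below $\Lambda_A$ throughout $[0,t_n]$ (case (a)) or crossed $\Lambda_A$ at some time $s^*\le t_n$ (case (b)). Since $\Lambda_A$ is non-decreasing and $\Lambda_A(t_n)\le L_{A^*}$, case (a) ancestors stay below $L_{A^*}$, so
\[Y_n^{(a)}(t_n)\le Y^*_{n,A^*}(t_n)\le \frac{1}{\eta_1}\cdot\frac{\beta^{1/3}}{\rho^3}e^{\rho L}\]
for some $\eta_1>0$ depending on $\eps$ and $A^*$, by the definition of $E_2$. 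The event $E_1$ forces any case (b) particle to descend from a hit in the short late window $(t_n-2C_1/\rho^2,\,t_n]$, since earlier hits have no surviving descendants at $t_n$.

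The main obstacle is bounding the case (b) contribution. Combining Lemma~\ref{rAabound} (whose proof yields $d\hat r_x^A/dt\le e^{2A\beta t/\rho}\tilde r_x^{L_A}(t)$) with the asymptotic rate bound $\tilde r_x^{L_A}(t)\lesssim \beta^{2/3}e^{-\beta A t/\rho}z_A(x)e^{-\rho L_A}$ from Corollary~\ref{rtedge}, integrating over the late window of length $2C_1/\rho^2$, and summing over initial particles using the $G_n^A$ bound on $Z_{n,A}(0)$, gives
\[\E[\text{late hits}\mid\F_0] \lesssim \frac{2C_1}{\rho^2}\beta^{2/3}e^{A\beta t_n/\rho}Z_{n,A}(0)e^{-\rho L_A} \lesssim \frac{\beta}{\rho^5}\,e^{A(1+\tau)},\]
a constant (depending on $A,\tau$) times $\beta/\rho^5$. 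Each late hit occurs at a position at most $\Lambda_A(t_n)\le L + |A|(2e^{2\tau}-1)/\rho + o(1/\rho)$; since the residual time $t_n-s\le 2C_1/\rho^2\lesssim\rho^{-2}$, Lemma~\ref{YfromL} shows that the expected contribution of a single hit to $Y_n(t_n)$ is at most a constant times $e^{\rho L}$. Hence $\E[Y_n^{(b)}(t_n)\mid\F_0]\lesssim \beta e^{\rho L}/\rho^5$, and since $\beta^{2/3}/\rho^2=(\beta/\rho^3)^{2/3}\to 0$ by \eqref{A1}, this is $o(\beta^{1/3}e^{\rho L}/\rho^3)$ on $G_n^A$. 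The conditional Markov inequality then yields $Y_n^{(b)}(t_n)\le\eps\beta^{1/3}e^{\rho L}/\rho^3$ with probability at least $1-\eps$.

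Summing the case (a) and case (b) bounds yields \eqref{mainYupper} with $\eta=(1/\eta_1+\eps)^{-1}$; the probabilities of the various bad events are combined by a union bound, and rescaling $\eps$ at the outset absorbs the numerical constants into the factor of $6$ stated in the lemma. For \eqref{mainZupper}, I will use the pointwise bound $|Ai|\le C_4$ from \eqref{upairy} to get $z_{n,0}(x)\le C_4 e^{\rho x}$ for all $x$; thus $Z_n(t_n)\le C_4 Y_n(t_n)$ and \eqref{mainZupper} follows from \eqref{mainYupper} after replacing $\eta$ by $\eta/C_4$.
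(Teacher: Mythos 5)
Your overall strategy — decompose by the moving barrier $\Lambda_A$, use Corollary~\ref{YZAtcor} for the particles that stay below it, use Lemma~\ref{hitL} to kill off descendants of early crossings, and then directly estimate the contribution of late crossings — is essentially the same route the paper takes, with the cosmetic difference that you decompose by $\Lambda_A$ itself rather than by the fixed level $L_{A^*}\ge\Lambda_A(t_n-2C_1/\rho^2)$.

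There is one genuine gap. Your dichotomy "ancestral path stayed below $\Lambda_A$ throughout" versus "crossed $\Lambda_A$ at some time $s^*\le t_n$" is not exhaustive once you account for particles whose position at time zero is already at or above $L_A=\Lambda_A(0)$. Such a particle never \emph{hits} the moving barrier in the sense underpinning Lemma~\ref{hitL} — the lemma's proof rests on Lemma~\ref{rAabound}, which requires the starting position $x\le L_A$ and bounds the rate at which particles \emph{reach} the boundary from below — so the event $E_1$ says nothing about it, and you cannot conclude that its crossing happened in the short late window. This is precisely the paper's case (a), which is handled by a separate appeal to Lemma~\ref{aboveL}: on $G_n$, with probability tending to one, no particle that is above $L_A$ at time zero has a descendant alive at time $2C_1\rho^{-2}$, hence none at time $t_n$. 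Without invoking something like Lemma~\ref{aboveL} (or redoing its argument, which uses Lemma~\ref{Lsurvive} and the smallness of $Y_n(0)$), your union bound does not cover these initial particles.

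A secondary, more technical point: in bounding the late-window crossings you combine $d\hat r_x^A/dt\le e^{2A\beta t/\rho}\tilde r_x^{L_A}(t)$ with the asymptotic rate $\tilde r_x^{L_A}(t)\lesssim\beta^{2/3}e^{-\beta At/\rho}z_A(x)e^{-\rho L_A}$ from Corollary~\ref{rtedge}, but the latter requires condition~\eqref{Rtcond}, which fails for initial positions $x$ sufficiently far below $L$ (roughly $L_A-x\gtrsim\beta t_n^2\asymp\rho^2/\beta$). The paper avoids this by citing Lemma~\ref{Rxlem}, whose two-term bound covers all $x<L_A$. Your computation gives the right order, and the offending $x$ make a negligible contribution since $z_A(x)$ is then super-exponentially small, but this should be said explicitly or Lemma~\ref{Rxlem} should be used in place of the bare Corollary. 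The rest — the choice of $A^*$, the use of Lemma~\ref{YfromL} to bound per-hit contribution by a constant times $e^{\rho L}$, the order-of-magnitude check $\beta/\rho^5\ll\beta^{1/3}/\rho^3$ from~\eqref{A1}, and the deduction of~\eqref{mainZupper} from~\eqref{mainYupper} via the uniform bound on $|Ai|$ — is correct.
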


\begin{proof}
Choose $A'$ as in Lemma \ref{hitL}, and fix $A \le A'$.  Fix $A^*$ such that for all large $n$,
\[A^* \le A+2A\Big(\exp\Big(\frac{2\beta}{\rho}\Big(t_n-\frac{2C_1}{\rho^2}\Big)\Big)-1\Big)\]
so that
\[L_{A^*} \ge \Lambda_{A}(t_n-2C_1/\rho^2).\]
Recall that, by Corollary \ref{YZAtcor}, there exists $\eta>0$ such that on $G_n$,
\[\P\Big(Y^*_{n,A^*}(t_n) > \frac{1}{\eta}\frac{\beta^{1/3}}{\rho^3} e^{\rho L}\Big|\F_0\Big) < \eps.\]
We therefore need to consider those particles that contribute to $Y_n(t_n)-Y^*_{n,A^*}(t_n)$; such particles must be above level $L_{A^*}$ at some time before $t_n$.

Note that $\Lambda_A(s) \le L_{A^*}$ for all $s\le t_n-2C_1/\rho^2$. Therefore in order to contribute to $Y_n(t_n)-Y^*_{n,A^*}(t_n)$, a particle must do one of the following:
\begin{enumerate}[label=(\alph*)]
\item start above $L_{A}$ and survive until time $t_n$;
\item hit $\Lambda_A(s)$ for some $s\le t_n-2C_1/\rho^2$ and then survive for an additional time of $2C_1/\rho^2$;
\item hit $L_{A^*}$ between times $t_n-2C_1/\rho^2$ and $t_n$.
\end{enumerate}

By Lemma \ref{aboveL}, the probability that any particle does (a) tends to $0$ as $n\to\infty$. Lemma~\ref{hitL} tells us that on $G_n$, the probability that any particle does (b) is bounded above by $\eps$. It therefore remains to consider case (c). Let $R'$ be the number of particles that hit $L_{A^*}$ between times $t_n-2C_1/\rho^2$ and $t_n$. Then on $G_n$, by Lemma \ref{Rxlem},
\[\E[R'|\F_0] \lesssim e^{-\rho L_{A^*} - \tau^3 \rho/10\beta} \cdot \frac{1}{\rho^2}e^{\rho L} + \frac{1}{\rho^2} e^{-\rho L_{A^*}}\beta^{2/3}e^{|A^*|\tau}\cdot \frac{\beta^{1/3}}{\rho^3}e^{\rho L}.\]
We have $\tau\asymp 1$ and $A^*$ does not depend on $n$.  Also, by \eqref{A1} we have $e^{-\tau^3 \rho/10 \beta}\ll \beta^{1/3}/\rho$ and $\beta/\rho^5\ll \beta^{1/3}/\rho^3$. Therefore
\[\E[R'|\F_0] \ll \frac{\beta^{1/3}}{\rho^3}e^{\rho L-\rho L_{A^*}}.\]
By Lemma \ref{YfromL}, the expected contribution from each of these particles to $Y_n(t_n)$ is at most $e^{\rho L_{A^*}}$.  Therefore, letting $Y'$ denote the total contribution to $Y_n(t_n)$ from these particles, we have
\[\E[Y'|\F_0] \ll \frac{\beta^{1/3}}{\rho^3}e^{\rho L}.\]
The conditional Markov's inequality implies that for large $n$, on $G_n$,
\[\P\Big(Y_n(t_n)-Y^*_{n,A^*}(t_n) > \frac{\beta^{1/3}}{\rho^3}e^{\rho L} \Big| \F_0\Big) \le 2\eps + \P\Big(Y' > \frac{\beta^{1/3}}{\rho^3}e^{\rho L} \Big| \F_0\Big) < 3\eps.\]
Since $\P(G_n)>1-3\eps$ for large $n$ by Lemma \ref{ZAlem}, this completes the proof for $Y_n(t_n)$. Because $|Ai(x)| \leq 1$ for all $x$ (see table 9.9.1 in \cite{dlmf}), the result for $Z_n(t_n)$ follows immediately.
\end{proof}

\subsection{Proof of the lower bound in Proposition \ref{YZmain}}

To prove Proposition \ref{YZmain}, it remains to establish the lower bound for $Z_n(t_n)$.  This requires using a second moment argument to control the fluctuations.  To do this, we will construct another modification of the original process. We would essentially like to use the moving barrier from Section \ref{moving_barrier_sec}, with $A$ chosen positive so that the barrier moves closer to the origin as time increases. However, our second moment bound Lemma \ref{2momprop} holds only for a fixed barrier at $L_A$. Developing the required second moment bounds for the moving barrier would require substantial extra work, and it is much more convenient to instead mimic the moving barrier with a series of fixed barriers that move progressively closer to $0$.

Fix $\eps > 0$ and choose $\delta>0$ such that (\ref{Zasm}) holds. Suppose that $\beta t_n/\rho\to \tau\in(0,\infty)$. Let $C_{12}$ be a positive constant chosen so that (\ref{mainvargeq}) is a strict inequality for all $n$ if the right-hand side is multiplied by $C_{12}$. Fix a positive number $A$ large enough that $1/A < \tau$ and
\begin{equation}\label{Aprime}
\frac{288 e^4 C_{12} e^{-A}((4e^2)^{\tau(1 + \eps)} - 1)}{\delta \log(4e^2)} < \eps.
\end{equation}
Let $A_0 = A$, and let $A_k = A + k \log (4e^2)$ for positive integers $k$. 
Choose $J_n\in\N$ and times $$0 = u_0 < u_1 < \dots < u_{J_n} = t_n$$ such that
\begin{equation}\label{ukcond}
\frac{\rho}{\beta A_k} \leq u_{k+1} - u_k \leq \frac{2 \rho}{\beta A_k}
\end{equation}
for all $k \in \{0, 1, \dots, J_{n-1}\}$, which is possible for sufficiently large $n$ because $1/A < \tau$. Note that
\[u_k\ge \frac{\rho}{\beta}\sum_{j=0}^{k-1} \frac{1}{A+j\log(4e^2)} \ge \frac{\rho}{\beta}\int_0^k \frac{1}{A+x\log(4e^2)}\:dx = \frac{\rho}{\beta \log(4e^2)}\log\Big(1+\frac{k\log(4e^2)}{A}\Big).\]
Since $u_{J_n} = t_n\sim \rho\tau/\beta$, it follows that for large $n$,
\[\frac{\rho\tau(1+\eps)}{\beta} > \frac{\rho}{\beta\log(4e^2)}\log\Big(1+\frac{J_n\log(4e^2)}{A}\Big),\]
and therefore
\begin{equation}\label{Jprime}
J_n \leq \frac{A((4e^2)^{\tau(1 + \eps)} - 1)}{\log(4e^2)}.
\end{equation}
Choose positive numbers $C_{13}$ and $C_{14}$ such that $0 < C_{13} < C_{14} < \infty$ and
\begin{equation}\label{c1c2}
\int_{2^{1/3} C_{13}}^{2^{1/3} C_{14}} Ai(z + \gamma_1)^2 \: dz > \frac{(Ai'(\gamma_1))^2}{2},
\end{equation}
which is possible by (\ref{orth}) with $j = k = 1$.  Recalling (\ref{Kdef}), we consider a modified process in which particles are killed at time $0$ unless they lie in the interval
\begin{equation}\label{intL0}
I_0 = \big( K_{A}(u_1), \: L_{A} \big).
\end{equation}
For $k \in \{0, 1, \dots, J_n - 1\}$, particles are killed if they reach $L_{A_k}$ between times $u_k$ and $u_{k+1}$, and then particles are also killed at time $u_{k+1}$ unless they are in the interval
\begin{equation}\label{intLk}
I_{k+1} = \Big[ L_{A_k} - C_{14} \beta^{-1/3}, \: L_{A_k} - C_{13} \beta^{-1/3} \Big].
\end{equation}
Letting $G_i'(t)$ be the event that the $i$th particle at time $t$ in the original process has not been killed by time $t$ in this modification, we define $$Z_{n,A}'(t) = \sum_{i=1}^{N_n(t)} e^{\rho X_{i,n}(t)} Ai((2 \beta)^{1/3}(L_A - X_{i,n}(t)) + \gamma_1) \1_{\{X_{i,n}(t) < L_A\}} \1_{G_i'(t)}.$$

\begin{Lemma}\label{ZAlem2}
Fix $\eps>0$ and suppose that (\ref{Zasm}) and (\ref{Yasm}) hold.  Then for sufficiently large $n$,
$$\P \bigg(Z_{n,A}'(0) \geq \frac{\delta}{4} \cdot \frac{\beta^{1/3}}{\rho^3} e^{\rho L} \bigg) > 1 - 3 \eps.$$
\end{Lemma}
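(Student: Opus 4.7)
The plan is to combine \eqref{Zasm}, which provides a lower bound on $Z_n(0)$, with the observation that the $L_A$-weighting $z_{n,A}$ is comparable to the $L$-weighting $z_{n,0}$ on the bulk of the interval $I_0 = (K_A(u_1), L_A)$. Fix $\eta > 0$ (to be chosen small later). By \eqref{Zasm} and \eqref{Yasm}, for sufficiently large $n$ the event
\[E = \Big\{Z_n(0) \ge \delta\,\tfrac{\beta^{1/3}}{\rho^3}e^{\rho L}\Big\} \cap \Big\{Y_n(0) \le \eta\,\tfrac{e^{\rho L}}{\rho^2}\Big\}\]
has probability at least $1 - 2\eps$. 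It therefore suffices to show that on $E$, for large $n$, $Z_{n,A}'(0) \ge (\delta/4)\beta^{1/3}e^{\rho L}/\rho^3$.

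Decompose $Z_n(0) = S_1 + S_2 + S_3$ according to whether $X_{i,n}(0)$ lies in $(-\infty, K_A(u_1)]$, $(K_A(u_1), L_A - A/\rho)$, or $[L_A - A/\rho, L)$. For $S_1$, since $u_1 \ge \rho/(\beta A)$, we have $L - X_{i,n}(0) \ge \beta u_1^2/66 \ge \rho^2/(66\beta A^2)$, so $(2\beta)^{1/3}(L - X_{i,n}(0)) + \gamma_1$ grows like $\rho^2/(A^2\beta^{2/3})$; the Airy asymptotic \eqref{Airyasymp} then gives $\alpha(L - X_{i,n}(0)) \lesssim \exp(-c\rho^3/(A^3\beta))$ for some $c > 0$, which by \eqref{A1} decays faster than any power of $\rho/\beta^{1/3}$, so $S_1 = o(\beta^{1/3}e^{\rho L}/\rho^3)$ on $E$. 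For $S_3$, $L - X_{i,n}(0) \in (0, 2A/\rho]$, and smoothness of $Ai$ at $\gamma_1$ (together with $Ai(\gamma_1) = 0$) gives $\alpha(L - X_{i,n}(0)) \lesssim (2\beta)^{1/3}(L - X_{i,n}(0)) \lesssim \beta^{1/3}A/\rho$, whence $S_3 \lesssim A\eta\,\beta^{1/3}e^{\rho L}/\rho^3$ on $E$.

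For the middle sum $S_2$, the ratio $r = (L_A - X_{i,n}(0))/(L - X_{i,n}(0))$ exceeds $1/2$ precisely when $X_{i,n}(0) < L_A - A/\rho$, so Lemma \ref{AiryZZA}, applied with $x = (2\beta)^{1/3}(L - X_{i,n}(0))$, gives $z_{n,0}(X_{i,n}(0)) \le 2 z_{n,A}(X_{i,n}(0))$ for each term in $S_2$. Dropping the non-negative contribution to $Z_{n,A}'(0)$ from $X_{i,n}(0) \in [L_A - A/\rho, L_A)$ then yields
\[Z_{n,A}'(0) \ge \tfrac{1}{2}S_2 = \tfrac{1}{2}\bigl(Z_n(0) - S_1 - S_3\bigr).\]
Taking $\eta$ small enough that $S_3 \le (\delta/4)\beta^{1/3}e^{\rho L}/\rho^3$ on $E$, and then $n$ large enough that $S_1 \le (\delta/4)\beta^{1/3}e^{\rho L}/\rho^3$, gives $Z_{n,A}'(0) \ge (\delta/4)\beta^{1/3}e^{\rho L}/\rho^3$ on $E$, as required. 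The main subtlety is that $\alpha(L - \cdot)$ and $\alpha(L_A - \cdot)$ can differ by an unbounded multiplicative factor when $x$ is within $A/\rho$ of $L_A$, which forces us to excise the narrow strip $[L_A - A/\rho, L_A)$ and bound its contribution separately via $S_3$.
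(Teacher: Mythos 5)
Your proof is correct and follows essentially the same route as the paper: same three-region decomposition (left of $K_A(u_1)$, a narrow strip within $2A/\rho$ of $L$, and the middle), same use of the linear bound $\alpha(L-x) \lesssim \beta^{1/3}(L-x)$ near the right edge combined with \eqref{Yasm}, same appeal to the Airy decay for the left tail, and the same application of Lemma \ref{AiryZZA} to pass from $z_{n,0}$ to $z_{n,A}$ on the middle region. The only cosmetic difference is that you condition on a single good event $E$ and argue deterministically, whereas the paper keeps the three probabilistic estimates separate and applies a union bound; the mathematical content is identical.
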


\begin{proof}
If $x \geq L - 2A/\rho$, then $L - x \leq 2A/\rho$ and therefore
$$z_{n,0}(x) = e^{\rho x} Ai((2 \beta)^{1/3}(L - x) + \gamma_1) \lesssim \frac{A \beta^{1/3} e^{\rho x}}{\rho}.$$
Thus
$$\sum_{i=1}^{N_n(0)} z_{n,0}(X_{i,n}(0)) \1_{\{X_{i,n}(0) \geq L - 2A/\rho\}} \lesssim \frac{A \beta^{1/3} Y_n(0)}{\rho}.$$  In view of (\ref{Yasm}), it follows for sufficiently large $n$ that $$\P \bigg( \sum_{i=1}^{N_n(0)} z_{n,0}(X_{i,n}(0)) \1_{\{X_{i,n}(0) \geq L - 2A/\rho\}} < \frac{\delta}{4} \cdot \frac{\beta^{1/3}}{\rho^3} e^{\rho L} \bigg) > 1 - \eps.$$  
Likewise, by the reasoning that led to (\ref{Znprime}),
$$\P \bigg( \sum_{i=1}^{N_n(0)} z_{n,0}(X_{i,n}(0)) \1_{\{X_{i,n}(0) \leq K_{A}(u_1)\}} < \frac{\delta}{4} \cdot \frac{\beta^{1/3}}{\rho^3} e^{\rho L} \bigg) > 1 - \eps.$$
Therefore, in view of (\ref{Zasm}),
\begin{equation}\label{z0trunc}
\P \bigg( \sum_{i=1}^{N_n(0)} z_{n,0}(X_{i,n}(0)) \1_{G_i'(0)} \1_{\{X_{i,n}(0) < L - 2A/\rho\}} > \frac{\delta}{2} \cdot \frac{\beta^{1/3}}{\rho^3} e^{\rho L} \bigg) > 1 - 3\eps.
\end{equation}

Now suppose $x < L - 2A/\rho$.  Then $\frac{1}{2}(L - x) \leq L_{A} - x \leq L - x$.  Therefore, by Lemma \ref{AiryZZA}, we have $z_{n,0}(x) \leq 2 z_{n,A}(x)$.  In view of (\ref{z0trunc}), it follows that $$\P \bigg( \sum_{i=1}^{N_n(0)} z_{A,n}(X_{i,n}(0)) \1_{G_i'(0)} \1_{\{X_{i,n}(0) < L - 2A/\rho\}} > \frac{\delta}{4} \cdot \frac{\beta^{1/3}}{\rho^3} e^{\rho L} \bigg) > 1 - 3\eps,$$ which implies the result.
\end{proof}

Lemma \ref{mainZlow} below gives the lower bound on $Z_n(t_n)$ that is needed to complete the proof of Proposition \ref{YZmain}.  In particular, Proposition \ref{YZmain} follows directly from Lemmas \ref{mainupper} and \ref{mainZlow}.

\begin{Lemma}\label{mainZlow}
Fix $\eps>0$ and suppose that (\ref{Zasm}) and (\ref{Yasm}) hold. Suppose that $\beta t_n/\rho\to\tau\in(0,\infty)$. There exists $\eta>0$ such that for sufficiently large $n$,
$$\P \bigg( Z_n(t_n) \geq \eta \cdot \frac{\beta^{1/3}}{\rho^3} e^{\rho L} \bigg) > 1 - 5 \eps.$$
\end{Lemma}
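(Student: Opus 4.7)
The plan is to establish the lower bound by iterating a second-moment argument along the $J_n$ time windows $[u_k, u_{k+1}]$ of the modified process constructed above. For each $k \in \{0, \dots, J_n\}$ define
\[
W_k \;=\; \sum_{i=1}^{N_n(u_k)} z_{n,A_k}(X_{i,n}(u_k))\, \ind_{G_i'(u_k)},
\]
so that $W_0 = Z_{n,A}'(0)$, which by Lemma~\ref{ZAlem2} satisfies $W_0 \ge (\delta/4)\,\beta^{1/3}\rho^{-3} e^{\rho L}$ on an event $\mathcal G$ with $\P(\mathcal G)>1-3\eps$. I would show by induction on $k$ that, with high probability, $W_k \ge W_0/(4e^2)^k$, and then deduce $Z_n(t_n) \gtrsim W_{J_n}$.

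For the conditional mean, note that $W_{k+1} = V_{\varphi_{k+1}, n, A_k}(u_{k+1}-u_k)$ summed over the surviving particles at time $u_k$, with $\varphi_{k+1}(y) = \alpha(L_{A_{k+1}}-y)\ind_{y\in I_{k+1}}$. The hypotheses of Lemma~\ref{dapprox4} hold because $u_{k+1}-u_k \gtrsim \rho/(\beta A_k) \gg \beta^{-2/3}$ and both $x\in I_k$ and $y\in I_{k+1}$ are within $O(\beta^{-1/3})$ of $L_{A_k}$. Applying that lemma to $p^{L_{A_k}}_{u_{k+1}-u_k}(x,y)$, substituting $z=(2\beta)^{1/3}(L_{A_k}-y)$, invoking (\ref{c1c2}), and using that $z_{n,A_{k+1}}(y)/z_{n,A_k}(y) \to 1$ uniformly on $I_{k+1}$ (since the Airy-argument shift is $(2\beta)^{1/3}\log(4e^2)/\rho \to 0$), yields $\E[W_{k+1}\mid\F_{u_k}] \ge (W_k/(2e^2))(1+o(1))$. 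The factor $e^{-A_k\beta(u_{k+1}-u_k)/\rho}\in[e^{-2},e^{-1}]$ coming from (\ref{zmeaneq}) is precisely why the spacing $u_{k+1}-u_k\asymp \rho/(\beta A_k)$ was imposed.

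For the variance I would apply Lemma~\ref{2momprop} with $A=A_k$, $t=u_{k+1}-u_k$ and $\varphi=\varphi_{k+1}$ to each surviving particle; its hypotheses hold for large $n$ since $I_k \subset (K_{A_k}(u_{k+1}-u_k),L_{A_k})$ once $C_{13}\beta^{-1/3} > \log(4e^2)/\rho$ and $C_{14}\beta^{-1/3} \lesssim \beta(u_{k+1}-u_k)^2/66$. Because $\beta^{2/3}(u_{k+1}-u_k) z_{n,A_k}(x) \gg e^{\rho x}$ on $I_k$ (so the second term in (\ref{Vvareq}) dominates), and because on $I_k$ we have $z_{n,A_k}(x)\asymp e^{\rho x}\asymp e^{\rho L_{A_{k-1}}}$, summing the per-particle bound gives
\[
\Var(W_{k+1}\mid \F_{u_k}) \;\lesssim\; \frac{\beta^{1/3}e^{\rho L}e^{-A_k}}{\rho^3 A_k}\,W_k.
\]
A conditional Chebyshev bound then makes $W_{k+1} \ge W_k/(4e^2)$ fail with probability at most $C\,e^4 \beta^{1/3}e^{\rho L}e^{-A_k}/(\rho^3 A_k W_k)$. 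Inserting the inductive lower bound $W_k \gtrsim (\delta/4)\,\beta^{1/3}\rho^{-3}e^{\rho L}/(4e^2)^k$ and using the telescoping identity $(4e^2)^k e^{-A_k} = e^{-A}$ (built into the choice $A_{k+1}-A_k = \log(4e^2)$), the union bound over $k=0,\dots,J_n-1$ is controlled by $C\,e^{-A}J_n/(\delta A)$, which by (\ref{Jprime}) and the calibration (\ref{Aprime}) of $A$ is at most $2\eps$.

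On the intersection of $\mathcal G$ with these concentration events, $W_{J_n} \ge W_0/(4e^2)^{J_n} \ge \eta\,\beta^{1/3}\rho^{-3} e^{\rho L}$ for some $\eta=\eta(\tau,\eps,A,\delta)>0$, with total probability at least $1-5\eps$. Finally, each particle contributing to $W_{J_n}$ sits in $I_{J_n}$, and $(2\beta)^{1/3}A_{J_n}/\rho \to 0$, so $z_{n,0}(X_{i,n}(t_n))/z_{n,A_{J_n}}(X_{i,n}(t_n)) \to 1$ uniformly, giving $Z_n(t_n) \ge (1+o(1))W_{J_n}$. The main obstacle will be the telescoping calibration that makes $(4e^2)^k e^{-A_k}$ collapse to the constant $e^{-A}$: the geometric loss $(4e^2)^{J_n}$ in the mean along the $J_n$ steps must exactly offset the geometric loosening of the variance tolerance, and this closes only because the barrier increments $A_{k+1}-A_k=\log(4e^2)$ and the time increments $A_k\beta(u_{k+1}-u_k)/\rho\in[1,2]$ are jointly tuned to the factor $1/(4e^2)$ coming out of the mean estimate.
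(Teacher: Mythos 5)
Your proposal reproduces the paper's argument almost exactly: the events $G_{k,n}$, the calibration $A_k = A + k\log(4e^2)$, the time spacings $u_{k+1}-u_k\asymp\rho/(\beta A_k)$, the intervals $I_k$, the use of Lemma~\ref{dapprox4} for the conditional mean and Lemma~\ref{2momprop} for the conditional variance, the telescoping identity $(4e^2)^k e^{-A_k}=e^{-A}$, the bound on $J_n$ via \eqref{Jprime}, and the final conversion $Z_n(t_n)\geq(1+o(1))W_{J_n}$ via the uniform comparison of $z_{n,A_{J_n}}$ with $z_{n,0}$ on $I_{J_n}$. The only genuine organizational novelty is that you incorporate the ratio $z_{n,A_{k+1}}/z_{n,A_k}\to 1$ directly into the mean estimate, whereas the paper first obtains $Z'_{n,A_k}(u_{k+1})\geq Z'_{n,A_k}(u_k)/(3e^2)$ and then shifts the barrier using \eqref{ZA1A2}. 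This is cosmetic.

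There is, however, a gap in your variance bound at $k=0$. You assert that ``$\beta^{2/3}(u_{k+1}-u_k)z_{n,A_k}(x)\gg e^{\rho x}$ on $I_k$, so the second term in \eqref{Vvareq} dominates'' and that ``on $I_k$ we have $z_{n,A_k}(x)\asymp e^{\rho x}$.'' Both claims hold for $k\geq 1$, where $I_k$ has width $\asymp\beta^{-1/3}$ and the Airy argument $(2\beta)^{1/3}(L_{A_k}-x)+\gamma_1$ is confined to a compact set bounded away from $\gamma_1$. But $I_0 = (K_A(u_1),L_A)$ is much wider: for $x$ near $K_A(u_1)=L_A-\beta u_1^2/66$ one has $(2\beta)^{1/3}(L_A-x)\asymp\rho^2\beta^{-2/3}/A^2\to\infty$, so $\alpha(L_A-x)$ decays superexponentially and $z_{n,A}(x)\ll e^{\rho x}$ there; also $\alpha(L_A-x)\to Ai(\gamma_1)=0$ as $x\uparrow L_A$. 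So neither claim holds on $I_0$, and the first term of \eqref{Vvareq} — which sums to $Y_n(0)$, not a multiple of $W_0$ — must be handled separately. This is precisely why the paper keeps $Y_n(u_k)$ in its variance bound and invokes \eqref{Yasm} at $k=0$ (and Lemma~\ref{mainupper} at $k\geq 1$) to show its contribution to the Chebyshev probability vanishes. Your omission of the $Y_n(0)$ term is a real gap, though easily filled: on the event $\{W_0\gtrsim\delta\beta^{1/3}\rho^{-3}e^{\rho L}\}$, the $Y_n(0)$-contribution to Chebyshev is $\lesssim\rho^2 e^{-\rho L}Y_n(0)\cdot e^{-A}/\delta^2\to_p 0$ by \eqref{Yasm}. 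Conversely, for $k\geq 1$ your observation that the surviving particles are confined to $I_k$ — where $z_{n,A_k}\asymp e^{\rho x}$ — is actually a small improvement on the paper, as it removes the need to invoke Lemma~\ref{mainupper} in this step. Finally, the assertion ``$e^{\rho x}\asymp e^{\rho L_{A_{k-1}}}$'' on $I_k$ is false (the interval width is $\asymp\beta^{-1/3}\gg\rho^{-1}$, so $e^{\rho x}$ varies by a factor $e^{\rho\beta^{-1/3}}\to\infty$ across $I_k$), but this claim is not actually used in deriving your final variance estimate.
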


\begin{proof}
Let $J = A((4e^2)^{\tau(1 + \eps)} - 1)/\log(4e^2)$ be the right-hand side of (\ref{Jprime}), and let $$0 < \eta < \frac{\delta}{4 (4e^2)^{J}}.$$

For $k \in \{0, 1, \dots, J_n\}$, let $$G_{k,n} = \bigg\{Z'_{n,A_k}(u_k) \geq \frac{\delta}{4 (4 e^2)^k} \cdot \frac{\beta^{1/3}}{\rho^3} e^{\rho L}\bigg\}.$$  Lemma \ref{ZAlem2} implies that $\P(G_{0,n}) > 1 - 3\eps$ for sufficiently large $n$.

For $k \in \{0, 1, \dots, J_n - 1\}$, we consider the evolution of the process between times $u_k$ and $u_{k+1}$.  
Recall that all particles at time $u_k$ are in the interval $I_k$, while particles will be killed at time $u_{k+1}$ unless they are in the interval $I_{k+1}$.  Particles will also be killed if they reach $L_{A_k,n}$ between these two times.  Note that these intervals have been chosen in such a way that if $x \in I_k$ and $y \in I_{k+1}$, then the density $$p_{u_{k+1} - u_k}^{L_{A_k}}(x,y)$$ can be approximated using Lemma \ref{dapprox4} because the error term in (\ref{densapprox}) tends to zero uniformly over $x \in I_k$ and $y \in I_{k+1}$ as $n \rightarrow \infty$.  Likewise, Lemma \ref{2momprop} can be applied for second moment calculations because any $x \in I_k$ and $y \in I_{k+1}$ will satisfy the conditions of Lemma \ref{2momprop} if $n$ is large enough.

By Lemma \ref{dapprox4},
$$\E[Z'_{n,A_k}(u_{k+1})|{\cal F}_{u_k}] \sim \frac{(2 \beta)^{1/3} e^{-\beta A_k (u_{k+1} - u_k)/\rho}}{(Ai'(\gamma_1))^2} Z'_{n,A_k}(u_k) \int_{I_{k+1}} \alpha(L_{A_k} - y)^2 \: dy.$$
Making the substitution $z = (2 \beta)^{1/3}(L_{A_k} - y)$ and applying (\ref{c1c2}), we have
$$\frac{(2 \beta)^{1/3}}{(Ai'(\gamma_1))^2} \int_{I_{k+1}} \alpha(L_{A_k} - y)^2 \: dy = \frac{1}{(Ai'(\gamma_1))^2} \int_{2^{1/3} C_{13}}^{2^{1/3} C_{14}} Ai(z + \gamma_1)^2 \: dz > \frac{1}{2}.$$  By (\ref{ukcond}), $e^{-\beta A_k (u_{k+1} - u_k)/\rho} \geq e^{-2}.$  It follows that for sufficiently large $n$,
\begin{equation}\label{EZprime}
\E[Z'_{n,A_k}(u_{k+1})|{\cal F}_{u_k}] \geq \frac{1}{2e^2} Z'_{n,A_k}(u_k).
\end{equation}
Because all particles at time $u_k$ and their descendants evolve independently between times $u_k$ and $u_{k+1}$, it follows from Lemma \ref{2momprop} that
$$\Var(Z'_{n,A_k}(u_{k+1})|{\cal F}_{u_k}) \leq \frac{C_{12} \beta^{2/3}e^{\rho L_{A_k}}}{\rho^4} \big( Y_n(u_k) + \beta^{2/3}(u_{k+1} - u_k) Z'_{n,A_k}(u_k) \big).$$  Therefore, by Chebyshev's Inequality,
\begin{align*}
&\P \bigg( \big|Z'_{n,A_k}(u_{k+1}) - \E\big[Z'_{n,A_k}(u_{k+1})|{\cal F}_{u_k} \big] \big| > \frac{1}{6e^2} Z'_{n,A_k}(u_k) \Big| {\cal F}_{u_k} \bigg) \\
&\hspace{1in}\leq \frac{36 e^4 C_{12} \beta^{2/3} e^{\rho L_{A_k}}\big(Y_n(u_k) + \beta^{2/3}(u_{k+1} - u_k) Z'_{n,A_k}(u_k)\big)}{\rho^4 (Z'_{n,A_k}(u_k))^2}.
\end{align*}
Note that $\rho^2 e^{-\rho L} Y_n(0) \rightarrow_p 0$ as $n \rightarrow \infty$ from (\ref{Yasm}) and $\rho^2 e^{-\rho L} Y_n(u_k) \rightarrow_p 0$ as $n \rightarrow \infty$ for $k \in \{1, \dots, J_n - 1\}$ by Lemma \ref{mainupper}, with $u_k$ in place of $t_n$.  Therefore, for all $k \in \{0, 1, \dots, J_n - 1\}$, as $n \rightarrow \infty$,
$$\frac{36 e^4 C_{12} \beta^{2/3} e^{\rho L_{A_k}}Y_n(u_k) \1_{G_{k,n}}}{\rho^4 (Z'_{n,A_k}(u_k))^2} \rightarrow_p 0.$$  Using (\ref{Aprime}) and recalling that $A_k = A + k \log (4e^2)$, we also have
$$\frac{36 e^4 C_{12} \beta^{2/3} e^{\rho L_{A_k}} \cdot \beta^{2/3}(u_{k+1} - u_k) Z'_{n,A_k}(u_k) \1_{G_{k,n}}}{\rho^4 (Z'_{n,A_k}(u_k))^2} \leq \frac{288 e^4 C_{12} e^{-A}}{\delta A_k} < \frac{\eps \log(4e^2)}{A_k ((4e^2)^{\tau(1 + \eps)} - 1)}.$$
It follows that
\begin{align*}
&\limsup_{n \rightarrow \infty} \P \bigg( \bigcup_{k=0}^{J_n - 1} \Big\{\big|Z'_{n,A_k}(u_{k+1}) - \E\big[Z'_{n,A_k}(u_{k+1})|{\cal F}_{u_k} \big] \big| > \frac{1}{6e^2} Z'_{n,A_k}(u_k) \Big\} \cap G_{k,n} \bigg) \\
&\hspace{3.7in}\leq \sum_{k=0}^{J_n - 1} \frac{\eps \log(4e^2)}{A_k ((4e^2)^{\tau(1 + \eps)} - 1)} \leq \eps,
\end{align*}
where the last inequality is from \eqref{Jprime}. Combining this result with (\ref{EZprime}), we get
\begin{equation}\label{Zprime2}
\limsup_{n \rightarrow \infty} \P \bigg( \bigcup_{k=0}^{J_n - 1} \Big\{ Z'_{n,A_k}(u_{k+1}) < \frac{1}{3e^2} Z'_{n,A_k}(u_k) \Big\} \cap G_{k,n} \bigg) \leq \eps.
\end{equation}
For all $k \in \{1, \dots, J_n\}$ and all positive real numbers $a_1$ and $a_2$, we have
\begin{equation}\label{ZA1A2}
\lim_{n \rightarrow \infty} \inf_{y \in I_{k}} \frac{z_{n,a_1}(y)}{z_{n,a_2}(y)} = \lim_{n \rightarrow \infty} \sup_{y \in I_{k}} \frac{z_{n,a_1}(y)}{z_{n,a_2}(y)} = 1.
\end{equation}
It follows from (\ref{Zprime2}) and (\ref{ZA1A2}) that
$$\limsup_{n \rightarrow \infty} \P \bigg( \bigcup_{k=0}^{J_n - 1} \Big\{ Z'_{n,A_{k+1}}(u_{k+1}) < \frac{1}{4e^2} Z'_{n,A_k}(u_k) \Big\} \cap G_{k,n} \bigg) \leq \eps.$$
Therefore, by the definition of the events $G_{k,n}$, and using that $\P(G_{0,n}) > 1 - 3\eps$ for sufficiently large $n$, we have $$\liminf_{n \rightarrow \infty} \P(G_{J_n,n}) \geq \liminf_{n \rightarrow \infty} \P \bigg( \bigcap_{k=0}^{J_n} G_{k,n} \bigg) \geq 1 - 4 \eps.$$  That is,
$$\liminf_{n \rightarrow \infty} \P \bigg( Z_{n,A_{J_n}}(u_{J_n}) \geq \frac{\delta}{4(4 e^2)^{J}} \cdot \frac{\beta^{1/3}}{\rho^3} e^{\rho L} \bigg) \geq 1 - 4 \eps.$$  The result now follows from another application of (\ref{ZA1A2}).
\end{proof}

\section{Second moment calculations}\label{2momproof}

\subsection{Some integral estimates}

\begin{Lemma}\label{BMmoments}
Take $k\in\N$, $\rho > 0$, and $t>0$. If $a\ge \rho t/2$, then
\begin{equation}\label{BMmom1}
\int_0^\infty x^k e^{-(x-a)^2/t - \rho x} dx \asymp t^{1/2}\big( (a-\rho t/2)^k + t^{k/2}\big)e^{\rho^2 t/4 - a\rho},
\end{equation}
and if $a< \rho t/2$, then
\begin{equation}\label{BMmom2}
\int_0^\infty x^k e^{-(x-a)^2/t - \rho x} dx \asymp \frac{t^{k+1}}{(\rho t/2-a)^{k+1}+t^{(k+1)/2}}e^{-a^2/t}.
\end{equation}
\end{Lemma}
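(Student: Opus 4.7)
The plan is to complete the square in the exponent, which reduces both cases to a single standard form, and then to estimate the resulting integral by elementary means in each regime.

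Write $-(x-a)^2/t - \rho x = -(x-b)^2/t + \rho^2 t/4 - a\rho$ where $b := a - \rho t/2$, so that
\[
\int_0^\infty x^k e^{-(x-a)^2/t - \rho x}\,dx = e^{\rho^2 t/4 - a\rho}\int_0^\infty x^k e^{-(x-b)^2/t}\,dx.
\]
Under this substitution \eqref{BMmom1} corresponds to $b \geq 0$ and \eqref{BMmom2} to $b < 0$, so the task reduces to estimating the remaining truncated-Gaussian moment integral in each regime.

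For the case $b \geq 0$, I would substitute $x = b + z\sqrt{t}$ to rewrite the integral as $\sqrt t\int_{-b/\sqrt t}^\infty (b + z\sqrt t)^k e^{-z^2}\,dz$. For the upper bound, split at $z=0$: on $[-b/\sqrt t, 0]$ we have $(b+z\sqrt t)^k \le b^k$, while on $[0,\infty)$ we use $(b+z\sqrt t)^k \le 2^k(b^k + z^k t^{k/2})$ together with the finite Gaussian moments of $e^{-z^2}$, giving $\lesssim b^k + t^{k/2}$. For the lower bound, restrict to $z \in [0,1]$ and apply separately the two trivial estimates $(b+z\sqrt t)^k \ge b^k$ and $(b+z\sqrt t)^k \ge (z\sqrt t)^k$, obtaining $\gtrsim b^k$ and $\gtrsim t^{k/2}$ respectively. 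Multiplying by $\sqrt t$ and by the prefactor $e^{\rho^2 t/4 - a\rho}$ yields \eqref{BMmom1}.

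For the case $b < 0$, set $c := -b = \rho t/2 - a > 0$ and expand $(x+c)^2 = c^2 + 2cx + x^2$ to obtain
\[
\int_0^\infty x^k e^{-(x+c)^2/t}\,dx = e^{-c^2/t}\int_0^\infty x^k e^{-2cx/t - x^2/t}\,dx.
\]
The key step is to show that the remaining integral is $\asymp t^{k+1}/(c^{k+1} + t^{(k+1)/2})$, which I would handle in two subcases depending on the sign of $c - \sqrt t$. If $c \ge \sqrt t$, then for the upper bound drop $e^{-x^2/t} \le 1$ to reduce to $\int_0^\infty x^k e^{-2cx/t}\,dx = k!(t/2c)^{k+1}$, and for the lower bound note that on $x\in[0,t/c]$ both exponents are $O(1)$ (using $t/c^2 \le 1$), so the integrand is $\gtrsim x^k$ and the integral is $\gtrsim (t/c)^{k+1}$. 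If $c \le \sqrt t$, then for the upper bound drop $e^{-2cx/t} \le 1$ to reduce to $\int_0^\infty x^k e^{-x^2/t}\,dx \asymp t^{(k+1)/2}$, and for the lower bound the same $O(1)$ argument on $x\in[0,\sqrt t]$ gives a matching lower bound. Finally, recognising $e^{\rho^2 t/4 - a\rho - c^2/t} = e^{-a^2/t}$ completes the proof of \eqref{BMmom2}.

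The proof is entirely elementary; the only subtlety is bookkeeping to ensure the upper and lower bounds match in every regime (depending on whether the Gaussian peak at $b$ lies inside $[0,\infty)$ and on whether $|b|$ is small or large compared to $\sqrt t$). I do not anticipate any significant obstacle.
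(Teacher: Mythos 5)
Your proposal is correct and follows essentially the same strategy as the paper's proof: complete the square to factor out $e^{\rho^2 t/4 - a\rho}$, then analyze the truncated Gaussian moment integral separately according to whether the peak $b = a-\rho t/2$ lies inside or outside $[0,\infty)$, using elementary splits and crude pointwise bounds on $x^k$. The only stylistic difference is in the $b<0$ case: the paper first rescales $y = x t^{1/2}$ to reduce to the single-parameter problem $\int_0^\infty x^k e^{-(x+\gamma)^2}\,dx \asymp (\gamma^{k+1}+1)^{-1}e^{-\gamma^2}$ with $\gamma = -b\,t^{-1/2}$ and then splits at $\gamma=1$, whereas you keep both $c$ and $t$ and split at $c = \sqrt t$; these are the same dichotomy under the rescaling, and both yield matching upper and lower bounds by the same maneuvers (dropping one factor in the exponential for the upper bound, restricting to a short initial interval where the exponential is bounded below for the lower bound). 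No gap.
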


\begin{proof}
Note that
\[\int_0^\infty x^k e^{-(x-a)^2/t - \rho x} dx = e^{\rho^2 t/4 - a\rho}\int_0^\infty x^k e^{-(x-a+\rho t/2)^2/t} dx,\]
so it suffices to show that for $b\ge 0$,
\[\int_0^\infty y^k e^{-(y-b)^2/t} dy \asymp t^{1/2}(t^{k/2}+b^k)\]
and
\[\int_0^\infty y^k e^{-(y+b)^2/t} dy \asymp \frac{t^{k+1}}{b^{k+1}+t^{(k+1)/2}}e^{-b^2/t}.\]
For the former,
\begin{align*}
\int_0^\infty y^k e^{-(y-b)^2/t} dy &= \int_{-b}^\infty (y+b)^k e^{-y^2/t} dy\\
&= \int_0^\infty (y+b)^k e^{-y^2/t} dy + \int_{-b}^0 (y+b)^k e^{-y^2/t} dy\\
&\asymp \int_0^\infty (y+b)^k e^{-y^2/t} dy\\
&\asymp \int_0^\infty y^k e^{-y^2/t} dy + b^k \int_0^\infty e^{-y^2/t} dy\\
&\asymp t^{(1+k)/2} + b^k t^{1/2},
\end{align*}
as required.
For the latter,
\[\int_0^\infty y^k e^{-(y+b)^2/t} dy = t^{(k+1)/2} \int_{0}^\infty x^k e^{-(x + bt^{-1/2})^2} dx\]
so it suffices to show that for $\gamma\ge 0$,
\[\int_0^\infty x^k e^{-(x+\gamma)^2} dx \asymp \frac{1}{\gamma^{k+1}+1}e^{-\gamma^2}.\]
This clearly holds when $\gamma\in[0,1]$, so we may assume that $\gamma>1$. We have
\[\int_0^\infty x^k e^{-(x+\gamma)^2} dx = e^{-\gamma^2} \int_0^\infty x^k e^{-x^2-2\gamma x} dx\]
which is bounded above by
\[e^{-\gamma^2}\int_0^\infty x^k e^{-2\gamma x} dx \asymp \frac{1}{\gamma^{k+1}}e^{-\gamma^2}\]
and below, using the assumption that $\gamma>1$, by
\[e^{-\gamma^2}\int_{1/2\gamma}^{1/\gamma} x^k e^{-x^2-2\gamma x} dx \asymp \frac{1}{\gamma^{k+1}}e^{-\gamma^2}.\]
The result follows.
\end{proof}

\begin{Lemma}\label{expintlem}
For any fixed $k \geq 0$, we have, for all $a \geq 0$ and $\lambda > 0$,
\begin{equation}\label{expint1}
\int_a^{\infty} x^k e^{-\lambda x} \: dx \lesssim e^{-\lambda a} \bigg( \frac{1}{\lambda^{k+1}} + \frac{a^k}{\lambda} \bigg).
\end{equation}
If $0 \leq k \leq 1$, then for all $a \geq 0$ and $\lambda > 0$, we have
\begin{equation}\label{expint2}
\int_a^{\infty} x^k e^{-\lambda x^2} \: dx \lesssim e^{-\lambda a^2} \cdot \frac{a^{k-1}}{\lambda}.
\end{equation}
\end{Lemma}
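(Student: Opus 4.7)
The plan is straightforward: both estimates reduce to elementary tail bounds for the Gamma and Gaussian integrals, with the only subtlety being a case split in (\ref{expint1}) to capture both regimes of $\lambda a$.

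For (\ref{expint1}), I would begin with the substitution $u = \lambda x$, which reduces the problem to proving
\[\int_M^\infty u^k e^{-u}\,du \lesssim e^{-M}\big(1 + M^k\big)\]
for $M := \lambda a \ge 0$, since multiplying both sides by $\lambda^{-(k+1)}$ returns the desired bound. I would then split into two cases. When $M \le 1$, the left-hand side is at most $\Gamma(k+1) \lesssim 1$, and $e^{-M} \ge e^{-1}$, so the inequality is immediate. When $M \ge 1$, I would write $u = M + v$ and bound the resulting $v$-integral by splitting at $v = M$: on $[0,M]$ use $(M+v)^k \le (2M)^k$, and on $[M,\infty)$ use $(M+v)^k \le (2v)^k$ so that the tail is controlled by $\Gamma(k+1)$. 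Combining gives $\int_M^\infty u^k e^{-u}\,du \lesssim M^k e^{-M}$, as required.

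For (\ref{expint2}), the clean observation is that if $0 \le k \le 1$ and $x \ge a > 0$, then $x^{k-1} \le a^{k-1}$ since the exponent $k - 1$ is nonpositive. Writing $x^k = x^{k-1}\cdot x$, this yields
\[\int_a^\infty x^k e^{-\lambda x^2}\,dx \le a^{k-1}\int_a^\infty x\, e^{-\lambda x^2}\,dx = \frac{a^{k-1}}{2\lambda}\,e^{-\lambda a^2},\]
where the final step is an exact evaluation. The degenerate case $a = 0$ with $k < 1$ makes the right-hand side infinite so the bound holds vacuously, while the case $a = 0$ with $k = 1$ gives $\int_0^\infty x e^{-\lambda x^2}\,dx = 1/(2\lambda)$, matching the claim directly.

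There is no genuine obstacle in either part; the only mildly delicate point is the two-regime analysis in (\ref{expint1}), which is necessary to match both the $\lambda^{-(k+1)}$ term (dominant when $\lambda a$ is small) and the $a^k/\lambda$ term (dominant when $\lambda a$ is large) on the right-hand side. All implicit constants depend only on the fixed exponent $k$, consistent with the statement.
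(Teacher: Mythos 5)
Your proof is correct. For (\ref{expint2}), you take essentially the same route as the paper: the paper substitutes $y = x^2$ and then bounds $y^{(k-1)/2} \leq a^{k-1}$, which is the same observation as your $x^{k-1} \leq a^{k-1}$ for $x \geq a$ and $k \leq 1$, just phrased pre-substitution. For (\ref{expint1}), though, your argument is genuinely different from the paper's. The paper substitutes $y = x - a$, invokes the elementary inequality $(y+a)^k \lesssim y^k + a^k$ (subadditivity for $k \leq 1$, convexity for $k \geq 1$), and integrates the two resulting pieces directly; this gives both terms of the bound in one stroke with no case analysis. You instead scale via $u = \lambda x$, reducing to an incomplete-Gamma tail bound, and then split on whether $M = \lambda a$ is large or small to pick out which of the two terms dominates. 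Both work and both are short; the paper's shift argument is slightly more compact because it never needs a case split, while your scaling approach makes the two-regime structure of the right-hand side more transparent (the $\lambda^{-(k+1)}$ term is the $M \lesssim 1$ regime, the $a^k/\lambda$ term is the $M \gtrsim 1$ regime), which is a nice bit of insight the paper's proof doesn't surface.
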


\begin{proof}
Making the substitution $y = x - a$, we have
$$\int_a^{\infty} x^k e^{-\lambda x} \: dx = \int_0^{\infty} (y + a)^k e^{-\lambda(y + a)} \: dy \lesssim e^{-\lambda a} \int_0^{\infty} (y^k + a^k) e^{-\lambda y} \: dy \lesssim e^{-\lambda a} \bigg( \frac{1}{\lambda^{k+1}} + \frac{a^k}{\lambda} \bigg),$$ which gives (\ref{expint1}).
Making the substitution $y = x^2$, we have
$$\int_a^{\infty} x^k e^{-\lambda x^2} \: dx = \frac{1}{2} \int_{a^2}^{\infty} y^{(k-1)/2} e^{-\lambda y} \: dy.$$
The bound $y^{(k-1)/2} \leq a^{k-1}$ leads to (\ref{expint2}).
\end{proof}

\begin{Lemma}\label{alphaintlem}
For any fixed $k > 0$, we have
\begin{equation}
\int_{-\infty}^{L_A} [\alpha(L_A - y)]^k \: dy \asymp \beta^{-1/3}
\end{equation}
\end{Lemma}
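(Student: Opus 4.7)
The plan is to reduce the integral to a standard one-dimensional Airy integral by changing variables, and then show that the resulting integral is a positive finite constant depending only on $k$.

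Making the substitution $z = (2\beta)^{1/3}(L_A - y)$, so that $dy = -(2\beta)^{-1/3}\,dz$, we obtain
\[
\int_{-\infty}^{L_A} [\alpha(L_A - y)]^k \, dy \;=\; (2\beta)^{-1/3} \int_0^{\infty} [Ai(z + \gamma_1)]^k \, dz.
\]
Thus the claim reduces to showing that
\[
0 \;<\; \int_0^{\infty} [Ai(z + \gamma_1)]^k \, dz \;<\; \infty,
\]
a bound which is independent of $n$ and $A$, and therefore gives $\asymp \beta^{-1/3}$.

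For the upper bound, the integrand is continuous on $[0,\infty)$ by continuity of $Ai$, with $Ai(\gamma_1) = 0$, so the integrand is bounded near $0$. For large $z$, the asymptotic (\ref{Airyasymp}) gives
\[
[Ai(z + \gamma_1)]^k \;\sim\; \frac{1}{(2\sqrt\pi)^k (z+\gamma_1)^{k/4}} \exp\!\bigl(-\tfrac{2k}{3}(z+\gamma_1)^{3/2}\bigr),
\]
which is integrable on $[1,\infty)$ for any fixed $k>0$ thanks to the super-exponential decay. For the lower bound, since $\gamma_1$ is a simple zero of $Ai$ (indeed $Ai'(\gamma_1)\ne 0$ by the general theory of the Airy function), there is a neighborhood $(0,\epsilon)$ on which $Ai(z+\gamma_1)$ is strictly positive, so the integral is strictly positive. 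Hence the integral equals some positive constant $c_k$ depending only on $k$, and the lemma follows.

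No step here is substantive; the only care needed is to record that the resulting constant is independent of $n$ and $A$, which is clear from the change of variables (the dependence on $A$ disappears because the substitution shifts $L_A$ away) and from the fact that $\gamma_1$ does not depend on $n$.
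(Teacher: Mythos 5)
Your proof is correct and follows essentially the same route as the paper: the same change of variables $z = (2\beta)^{1/3}(L_A - y)$ reduces the claim to showing that $\int_0^\infty [Ai(z+\gamma_1)]^k\,dz$ is a positive finite constant, which follows from the continuity and asymptotics of the Airy function. The paper states this more tersely, but your additional comments on the upper and lower bounds (decay of $Ai$ at infinity, simplicity of the zero $\gamma_1$) are accurate and fill in the same details the paper leaves implicit.
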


\begin{proof}
Recalling (\ref{alphadef}), making the substitution $z = (2 \beta)^{1/3}(L_A - y)$, and then using (\ref{Airyasymp}) and the continuity of the Airy function, we get
$$\int_{-\infty}^{L_A} [\alpha(L_A - y)]^k \: dy = (2 \beta)^{-1/3} \int_0^{\infty} [Ai(z + \gamma_1)]^k \: dz \asymp \beta^{-1/3},$$
as claimed.
\end{proof}

\subsection{Proof of Lemma \ref{2momprop}}

Recall that
\[V_{\varphi, n, A}(t) = \sum_{i=1}^{N_n(t)} e^{\rho X_{i,n}(t)} \varphi(X_{i,n}(t)) \1_{\{X_{i,n}(t) < L_A\}}.\]
Standard second moment calculations, which go back to early work on branching Markov processes by Ikeda, Nagasawa, and Watanabe (see p. 146 of \cite{inw}) give
\begin{align*}
\E_x[V_{\varphi, n, A}(t)^2] &= \int_{-\infty}^{L_A} e^{2 \rho y} \varphi(y)^2 p_t^{L_A}(x,y) \: dy \\
&\hspace{.4in}+ 2 \int_0^t \int_{-\infty}^{L_A} p_s^{L_A}(x,z)b_n(z) \bigg( \int_{-\infty}^{L_A} e^{\rho y} \varphi(y) p_{t-s}^{L_A}(z,y) \: dy \bigg)^2 \: dz \: ds.
\end{align*}
For $z \leq L_A$, the birth rate $b_n(z)$ is bounded by (\ref{A3}).  Using also that $\varphi$ is bounded and equals zero except on $[K_A(t),L_A]$, we have
\begin{equation}\label{2momeq}
\E_x[V_{\varphi,n,A}(t)^2] \lesssim \int_{K_A(t)}^{L_A} e^{2 \rho y} p_t^{L_A}(x,y) \: dy + \int_0^t \int_{-\infty}^{L_A} p_s^{L_A}(x,z) \bigg( \int_{K_A(t)}^{L_A} e^{\rho y} p_{t-s}^{L_A}(z,y) \: dy \bigg)^2 \: dz \: ds.
\end{equation}
We now split the last term into six parts.  We define
$$b = \min\{8 \beta^{-2/3}, t/2\}$$ and note that we may, and will, assume that $\rho^{-2} \leq t/2$ because of (\ref{A1}) and the assumption that $t \gtrsim \beta^{-2/3}$. Recall from \eqref{Kdef} that $l(t)=\beta t^2/33$ and $K_A(t) = L_A-l(t)/2$. 
We write
\[\textrm{I}:= \int_0^{\rho^{-2}} \int_{L_A-l(t)}^{L_A} p_s^{L_A}(x,z) \bigg( \int_{K_A(t)}^{L_A} e^{\rho y} p_{t-s}^{L_A}(z,y) \: dy \bigg)^2 \: dz \: ds,\]
\[\textrm{II}:= \int_{\rho^{-2}}^{b} \int_{L_A-l(t)}^{L_A} p_s^{L_A}(x,z) \bigg( \int_{K_A(t)}^{L_A} e^{\rho y} p_{t-s}^{L_A}(z,y) \: dy \bigg)^2 \: dz \: ds,\]
\[\textrm{III}:= \int_{b}^{t/2} \int_{L_A-l(t)}^{L_A} p_s^{L_A}(x,z) \bigg( \int_{K_A(t)}^{L_A} e^{\rho y} p_{t-s}^{L_A}(z,y) \: dy \bigg)^2 \: dz \: ds,\]
\[\textrm{IV}:= \int_{t/2}^{t-b} \int_{L_A-l(t)}^{L_A} p_s^{L_A}(x,z) \bigg( \int_{K_A(t)}^{L_A} e^{\rho y} p_{t-s}^{L_A}(z,y) \: dy \bigg)^2 \: dz \: ds,\]
\[\textrm{V}:= \int_{t-b}^{t} \int_{L_A-l(t)}^{L_A} p_s^{L_A}(x,z) \bigg( \int_{K_A(t)}^{L_A} e^{\rho y} p_{t-s}^{L_A}(z,y) \: dy \bigg)^2 \: dz \: ds,\]
\[\textrm{VI}:= \int_0^{t} \int_{-\infty}^{L_A-l(t)} p_s^{L_A}(x,z) \bigg( \int_{K_A(t)}^{L_A} e^{\rho y} p_{t-s}^{L_A}(z,y) \: dy \bigg)^2 \: dz \: ds.\]
The next seven lemmas, which bound these six terms as well as the first term on the right-hand side of (\ref{2momeq}), will imply Lemma \ref{2momprop}.

\begin{Lemma}
Under the assumptions of Lemma \ref{2momprop}, we have
$$\int_{K_A(t)}^{L_A} e^{2\rho y} p_t^{L_A}(x,y) \: dy \ll \frac{\beta^{4/3}}{\rho^4}e^{\rho L_A} t z_A(x).$$
\end{Lemma}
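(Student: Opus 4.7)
The plan is to apply Lemma \ref{dapprox4} directly on the integration range, since $y \in (K_A(t), L_A)$ means $L_A - y \leq \beta t^2/66$, which is small enough to verify the hypothesis (\ref{dcond}). Concretely, for both $x$ and $y$ in $(K_A(t), L_A)$, we have
\[
(2\beta)^{1/6}\bigl((L_A-x)^{1/2}+(L_A-y)^{1/2}\bigr) \leq 2 \cdot (2\beta)^{1/6}(\beta t^2/66)^{1/2} = \frac{2 \cdot 2^{1/6}}{\sqrt{66}} \beta^{2/3} t,
\]
and since $\tfrac{2\cdot 2^{1/6}}{\sqrt{66}} < 2^{-1/3}$ and $\beta^{2/3} t \gtrsim 1$ by hypothesis, the condition (\ref{dcond}) holds for some fixed $C>0$ and all sufficiently large $n$. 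Hence, using $A \geq 0$ and (\ref{densapproxrough}),
\[
p_t^{L_A}(x,y) \lesssim \beta^{1/3} e^{\rho x} \alpha(L_A - x) e^{-\rho y} \alpha(L_A - y) = \beta^{1/3} z_A(x) e^{-\rho y} \alpha(L_A - y).
\]

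Substituting and changing variables $u = L_A - y$, the integral reduces to
\[
\int_{K_A(t)}^{L_A} e^{2\rho y} p_t^{L_A}(x,y)\, dy \lesssim \beta^{1/3} z_A(x) e^{\rho L_A} \int_0^{\beta t^2/66} e^{-\rho u} \alpha(u)\, du.
\]
The next step is to bound $\int_0^\infty e^{-\rho u}\alpha(u)\,du$ by $\beta^{1/3}/\rho^2$. I split at $u = \beta^{-1/3}$: for $u \leq \beta^{-1/3}$ the argument $(2\beta)^{1/3}u + \gamma_1$ lies in a bounded interval where $Ai$ is Lipschitz and $Ai(\gamma_1)=0$, giving $|\alpha(u)| \lesssim \beta^{1/3} u$, so this part contributes $\lesssim \beta^{1/3}\int_0^\infty u e^{-\rho u}\,du = \beta^{1/3}/\rho^2$. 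For $u \geq \beta^{-1/3}$ we use $|\alpha(u)| \leq C_4$ from (\ref{upairy}) to get a contribution $\lesssim e^{-\rho/\beta^{1/3}}/\rho$, which is negligible compared to $\beta^{1/3}/\rho^2$ by (\ref{A1}).

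Combining the two displays gives
\[
\int_{K_A(t)}^{L_A} e^{2\rho y} p_t^{L_A}(x,y)\, dy \lesssim \frac{\beta^{2/3}}{\rho^2} e^{\rho L_A} z_A(x).
\]
To obtain the claimed strict inequality $\ll \frac{\beta^{4/3}}{\rho^4} e^{\rho L_A} t z_A(x)$, we compare $\beta^{2/3}/\rho^2$ with $\beta^{4/3} t/\rho^4$: the ratio of the former to the latter is $\rho^2/(\beta^{2/3} t)$. Since $t \gtrsim \beta^{-2/3}$, we have $\beta^{2/3} t \gtrsim 1$, while $\rho^2 \to 0$ by (\ref{A2}); hence $\rho^2/(\beta^{2/3} t) \to 0$, which finishes the proof.

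The only real subtlety is checking that the numerical constant $2 \cdot 2^{1/6}/\sqrt{66}$ is genuinely smaller than $2^{-1/3}$, which is why $K_A(t)$ was defined via $\beta t^2/66$ rather than $\beta t^2/c$ for some arbitrary $c$; this is the one step that needs to be verified carefully, but it is just an arithmetic check.
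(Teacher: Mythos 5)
Your proof is correct and follows essentially the same route as the paper: apply Lemma \ref{dapprox4} via (\ref{densapproxrough}) after verifying (\ref{dcond}), bound the resulting integral of $e^{-\rho u}\alpha(u)$ by $\beta^{1/3}/\rho^2$, and then observe that $\rho^{-2}\beta^{2/3}t\to\infty$. The only cosmetic difference is that you split the $\alpha$-integral at $u=\beta^{-1/3}$ rather than using the global bound $\alpha(z)\lesssim\beta^{1/3}z$ directly, and you spell out the numeric check on (\ref{dcond}) that the paper leaves implicit.
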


\begin{proof}
When $K_A(t) < x < L_A$ and $K_A(t) < y < L_A$, equation (\ref{dcond}) holds because $\beta^{-2/3} \lesssim t$.  Therefore, by Lemma \ref{dapprox4} and (\ref{airyder2}),
\begin{align*}
\int_{K_A(t)}^{L_A} e^{2 \rho y} p_t^{L_A}(x,y) \: dy &\lesssim \beta^{1/3} z_A(x) \int_{K_A(t)}^{L_A} e^{\rho y} \alpha(L_A - y) \: dy \\
&= \beta^{1/3} z_A(x) e^{\rho L_A} \int_0^{L_A - K_A(t)} e^{-\rho y} \alpha(y) \: dy \\
&\lesssim  \beta^{1/3} z_A(x) e^{\rho L_A} \int_0^{\infty} e^{-\rho y} \cdot \beta^{1/3} y \: dy \\
&\lesssim \frac{\beta^{2/3}}{\rho^2} e^{\rho L_A} z_A(x),
\end{align*}
which implies the lemma because $\rho^{-2} \beta^{2/3} t \rightarrow \infty$ by (\ref{A2}) and the assumption that $t \gtrsim \beta^{-2/3}$.
\end{proof}

\begin{Lemma}\label{LemI}
Under the assumptions of Lemma \ref{2momprop}, we have $$\textup{I} \lesssim \frac{\beta^{2/3}}{\rho^4}e^{\rho x} e^{\rho L_A}.$$
\end{Lemma}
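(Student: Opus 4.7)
The plan is to estimate the inner integral using Lemma \ref{dapprox4}, the outer density using Lemma \ref{dapprox1}, and then separate the resulting double integral into an elementary $s$-integral and a $u$-integral (after the substitution $u = L_A - z$) whose sharp estimate is the real content. For $z \in (L_A - l(t), L_A)$ and $y \in (K_A(t), L_A)$, a short numerical calculation with $l(t) = \beta t^2/33$ confirms hypothesis (\ref{dcond}) of Lemma \ref{dapprox4}, since $s \le \rho^{-2} \ll t$ by (\ref{A1}) gives $t - s \ge t/2$ for large $n$. Applying (\ref{densapproxrough}), using $A \ge 0$ to drop the factor $e^{-\beta A(t-s)/\rho} \le 1$, and bounding $\int_{K_A(t)}^{L_A} \alpha(L_A - y)\,dy \lesssim \beta^{-1/3}$ via Lemma \ref{alphaintlem}, I obtain $\int_{K_A(t)}^{L_A} e^{\rho y}\,p_{t-s}^{L_A}(z,y)\,dy \lesssim z_A(z)$, so the squared inner factor in $\textup{I}$ is at most a constant times $e^{2\rho z}\alpha(L_A - z)^2$.

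For $p_s^{L_A}(x,z)$ with $s \le \rho^{-2}$, Lemma \ref{dapprox1} gives $p_s^{L_A}(x,z) \lesssim s^{-1/2}\,e^{\rho(x - z) - (z - x)^2/(2s)}$, since the exponential factors $e^{-\rho^2 s/2}$ and $e^{\beta L_A s}$ are $O(1)$ (using $\beta L_A \sim \rho^2/2$). Substituting $u = L_A - z$ and $v = L_A - x$ and writing $e^{\rho z} = e^{\rho L_A} e^{-\rho u}$ yields
\begin{equation*}
\textup{I} \lesssim e^{\rho x}\,e^{\rho L_A} \int_0^{\rho^{-2}} \int_0^{l(t)} \frac{1}{\sqrt{s}}\, e^{-\rho u}\, \alpha(u)^2\, e^{-(u - v)^2/(2s)}\,du\,ds.
\end{equation*}

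Now I would simply drop the Gaussian factor $e^{-(u-v)^2/(2s)} \le 1$, which decouples the two integrals. The $s$-integral is elementary, equal to $2/\rho$, and the $u$-integral is the crux. Because $Ai(\gamma_1) = 0$, a Taylor expansion combined with the uniform bound (\ref{upairy}) gives $Ai(\gamma_1 + w)^2 \lesssim \min(w^2, 1) \le w^2$ for all $w \ge 0$. Substituting $w = (2\beta)^{1/3} u$ and using that $\rho/(2\beta)^{1/3} \to \infty$ by (\ref{A1}),
\begin{equation*}
\int_0^\infty e^{-\rho u}\,\alpha(u)^2\,du \lesssim (2\beta)^{-1/3} \int_0^\infty e^{-\rho w/(2\beta)^{1/3}}\, w^2\,dw \asymp \frac{\beta^{2/3}}{\rho^3}.
\end{equation*}
Multiplying through yields $\textup{I} \lesssim \beta^{2/3}\rho^{-4}\, e^{\rho x}\,e^{\rho L_A}$. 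The main obstacle is recognizing that the quadratic vanishing of $\alpha$ at $u = 0$ supplies the extra factor $\beta^{2/3}/\rho$ beyond what the naive $L^2$-norm of $\alpha$ (which is of order $\beta^{-1/3}$ by Lemma \ref{alphaintlem}) would give; once this observation is in hand, everything else is routine bookkeeping.
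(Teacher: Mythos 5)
Your proof is correct and takes a genuinely simpler route than the paper's. Both you and the paper reach, after applying Lemma~\ref{dapprox1} to the outer density, Lemma~\ref{dapprox4} plus Lemma~\ref{alphaintlem} to the inner integral, and the bound $\alpha(u)\lesssim\beta^{1/3}u$, essentially the expression
\[
\textup{I}\lesssim \beta^{2/3}\,e^{\rho x}\,e^{\rho L_A}\int_0^{\rho^{-2}}\frac{1}{\sqrt{s}}\int_0^\infty u^2\, e^{-\rho u}\,e^{-(u-(L_A-x))^2/(2s)}\,du\,ds.
\]
From here the paper keeps the Gaussian kernel and calls on Lemma~\ref{BMmoments}, case-splitting on whether $L_A-x\gtrless\rho^{-1}$, before arriving at $\rho^{-4}$. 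You instead discard $e^{-(u-(L_A-x))^2/(2s)}\le 1$, which immediately decouples the two integrals: $\int_0^{\rho^{-2}}s^{-1/2}\,ds=2\rho^{-1}$ and $\int_0^\infty u^2 e^{-\rho u}\,du=2\rho^{-3}$ give exactly the claimed $\beta^{2/3}\rho^{-4}$, with no case analysis and no use of Lemma~\ref{BMmoments} at all. The reason this crude step does not lose anything here is that the Lemma~\ref{dapprox1} bound for $s\le\rho^{-2}$ carries no extra factor of $L_A-x$, so the target bound does not depend on how far $x$ sits below $L_A$; the Gaussian's job in the paper is merely to handle the $x$-dependence, which in this lemma is a wash. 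Worth knowing that this shortcut is specific to $\textup{I}$: in Lemma~\ref{LemII}, Lemma~\ref{dapprox2} introduces an $(L_A-x)$ prefactor, and there the Gaussian is needed to supply the compensating exponential decay $e^{-\rho(L_A-x)}$, so dropping it would leave a spurious $L_A-x$ and fail. As a minor stylistic note, once you have reduced to $\int_0^\infty e^{-\rho u}\alpha(u)^2\,du$ it is slightly cleaner to use directly that $\alpha(u)\lesssim\beta^{1/3}u$ (which the paper records via~(\ref{airyder}),~(\ref{airyder2}) and uses repeatedly) rather than re-deriving it from $Ai(\gamma_1)=0$, but that is purely cosmetic; your observation that the quadratic vanishing of $\alpha$ at zero is the source of the extra $\beta^{2/3}/\rho$ beyond the naive $L^2$ estimate is exactly right, and the argument is sound.
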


\begin{proof}
We use Lemma \ref{dapprox1} to bound $p_s^{L_A}(x,z)$ and Lemma \ref{dapprox4} to bound $p_{t-s}^{L_A}(z,y)$.  If $z \geq L_A - l(t)$, $y \geq K_A(t) = L_A - l(t)/2$, and $s \leq t/2$, then $$(L_A - z)^{1/2} + (L_A - y)^{1/2} \leq 2 l(t)^{1/2} = \frac{2}{\sqrt{33}} \beta^{1/2} t \leq \frac{4}{\sqrt{33}} \beta^{1/2}(t - s).$$  Therefore, using that $\beta^{-2/3} \lesssim t$, the expression
$$(2 \beta)^{1/6}[(L_A - z)^{1/2} + (L_A - y)^{1/2}] - 2^{-1/3} \beta^{2/3} (t - s)$$
is bounded above by a negative constant, so we can apply (\ref{densapproxrough}).  We get
\begin{multline*}
\textup{I} \lesssim \int_0^{\rho^{-2}} \int_{L_A - l(t)}^{L_A} \frac{1}{\sqrt{s}} \exp \Big( \rho x - \rho z - \frac{(z - x)^2}{2s} - \frac{\rho^2 s}{2} + \beta L_A s \Big) \\
\times \bigg( \int_{K_A(t)}^{L_A} \beta^{1/3} e^{-\beta A (t-s)/\rho} e^{\rho z} \alpha(L_A - z) \alpha(L_A - y) \: dy  \bigg)^2 \: dz \: ds.
\end{multline*}
Note that $\beta L_A s$ and $\rho^2s/2$ are both bounded above by constants.  Using also that $A \geq 0$, we get
$$\textup{I} \lesssim \beta^{2/3} e^{\rho x} \int_0^{\rho^{-2}} \int_{L_A - l(t)}^{L_A} \frac{1}{\sqrt{s}} e^{\rho z} e^{-(z - x)^2/2s} \alpha(L_A - z)^2 \bigg( \int_{K_A(t)}^{L_A} \alpha(L_A - y) \: dy \bigg)^2 \: dz \: ds.$$ Now applying Lemma \ref{alphaintlem} with $k = 1$ gives
$$\textup{I} \lesssim e^{\rho x} \int_0^{\rho^{-2}} \int_{L_A - l(t)}^{L_A} \frac{1}{\sqrt{s}} e^{\rho z} e^{-(z - x)^2/2s} \alpha(L_A - z)^2 \: dz \: ds.$$
Next, we reverse the roles of $z$ and $L_A - z$ and use that $\alpha(z) \lesssim \beta^{1/3} z$ by (\ref{airyder}) and (\ref{airyder2}) to get
\begin{equation}\label{10}
\textup{I} \lesssim \beta^{2/3} e^{\rho x} e^{\rho L_A} \int_0^{\rho^{-2}} \frac{1}{\sqrt{s}} \int_0^{\infty} z^2 e^{-\rho z} e^{-(z - (L_A - x))^2/2s} \: dz \: ds.
\end{equation}

To evaluate the inner integral, we apply Lemma \ref{BMmoments} with $k = 2$, $t = 2s$, and $a = L_A - x$.
We now split the argument into two cases depending on the value of $x$.  First, suppose $L_A - x \geq \rho^{-1}$.  Then, because $s \leq \rho^{-2}$, we have $L_A - x \geq \rho s$, so we can apply (\ref{BMmom1}).  Noting also that in this case we have $(L_A - x)^2 \geq \rho^{-2} \geq s$, we have
$$\int_0^{\infty} z^2 e^{-\rho z} e^{-(z - (L_A - x))^2/2s} \: dz \asymp s^{1/2} \big((L_A - x - \rho s)^2 + s \big) e^{\rho^2 s/2 - \rho(L_A - x)} \lesssim s^{1/2}(L_A - x)^2 e^{-\rho(L_A - x)}$$ and therefore
\begin{equation}\label{11}
\int_0^{\rho^{-2}} \frac{1}{\sqrt{s}} \int_0^{\infty} z^2 e^{-\rho z} e^{-(z - (L_A - x))^2/2s} \: dz \: ds \lesssim \frac{1}{\rho^4} \cdot \rho^{2} (L_A - x)^2 e^{-\rho (L_A - x)} \lesssim \frac{1}{\rho^4}.
\end{equation}
Next, suppose instead that $L_A - x < \rho^{-1}$.  This time, we must use (\ref{BMmom1}) when $s \leq (L_A - x)/\rho$ and (\ref{BMmom2}) when $s > (L_A - x)/\rho$ to get
\begin{align}\label{12}
&\int_0^{\rho^{-2}} \frac{1}{\sqrt{s}} \int_0^{\infty} z^2 e^{-\rho z} e^{-(z - (L_A - x))^2/2s} \: dz \: ds \nonumber \\
&\hspace{.2in}\asymp \int_0^{(L_A - x)/\rho} \big( (L_A - x - \rho s)^2 + s \big) e^{\rho^2 s/2 - \rho(L_A - x)} \: ds + \int_{(L_A - x)/\rho}^{\rho^{-2}} \frac{s^{5/2}e^{-(L_A - x)^2/2s}}{(\rho s - (L_A - x))^3 + s^{3/2}} \: ds \nonumber \\
&\hspace{.2in}\lesssim \int_0^{(L_A - x)/\rho} \big( (L_A - x)^2 + s \big) \: ds + \int_{(L_A - x)/\rho}^{\rho^{-2}} s \: ds \nonumber \\
&\hspace{.2in}\lesssim \frac{(L_A - x)^3}{\rho} + \frac{(L_A - x)^2}{\rho^2} + \frac{1}{\rho^4} \nonumber \\
&\hspace{.2in}\asymp \frac{1}{\rho^4}.
\end{align}
The result follows from (\ref{10}), (\ref{11}), and (\ref{12}).
\end{proof}

\begin{Lemma}\label{LemII}
Under the assumptions of Lemma \ref{2momprop}, we have
\begin{equation}\label{IIeq}
\textup{II} \lesssim \frac{\beta^{2/3}}{\rho^4}e^{\rho x} e^{\rho L_A}.
\end{equation}
\end{Lemma}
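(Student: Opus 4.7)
The plan is to follow the template of the proof of Lemma \ref{LemI}, adjusting for the new range $s \in [\rho^{-2}, b]$. First, I will bound the inner integral over $y$ using Lemma \ref{dapprox4}: since $z \ge L_A - l(t)$, $y \ge K_A(t)$, and $s \le t/2$, the condition \eqref{dcond} holds by the same numerical comparison used in the proof of Lemma \ref{LemI}, and Lemma \ref{alphaintlem} then gives $\int_{K_A(t)}^{L_A} e^{\rho y} p_{t-s}^{L_A}(z,y)\,dy \lesssim e^{\rho z}\alpha(L_A-z)$ (the factor $e^{-\beta A(t-s)/\rho} \leq 1$ because $A \ge 0$). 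The squared inner integral is therefore bounded by $e^{2\rho z}\alpha(L_A-z)^2$.

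For the outer density $p_s^{L_A}(x,z)$, my plan is to apply Lemma \ref{dapprox2} rather than Lemma \ref{dapprox1}, as the boundary correction $(L_A-x)(L_A-z)/s$ in Lemma \ref{dapprox2} reflects the fact that for $s \gtrsim \rho^{-2}$ the trajectory has had time to feel the killing barrier, and this extra factor is crucial for reaching the target exponent $1/\rho^4$. Substituting $w = L_A - z$, writing $a = L_A - x$, and using the bound $\alpha(w)^2 \lesssim \beta^{2/3} w^2$ that follows from \eqref{airyder} and \eqref{airyder2}, the estimate reduces to controlling
\[
a\,\beta^{2/3}\,e^{\rho x}\,e^{\rho L_A} \int_{\rho^{-2}}^{b} e^{(\beta L_A - \rho^2/2)s} \int_0^\infty \frac{w^3}{s^{3/2}}\, e^{-\rho w - (w-a)^2/(2s)} \, dw \, ds.
\]
The crucial observation is that, because $b \le 8\beta^{-2/3}$ and $\beta L_A - \rho^2/2 = -2^{-1/3}\beta^{2/3}\gamma_1 - A\beta/\rho \lesssim \beta^{2/3}$ by \eqref{A1} and \eqref{A2}, we have $(\beta L_A - \rho^2/2)s = O(1)$ uniformly on $[0,b]$, so the outer exponential factor absorbs into the constants.

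Finally, Lemma \ref{BMmoments} with $k = 3$ handles the Gaussian integral in $w$, splitting according to whether $a \geq \rho s$ or $a < \rho s$ exactly as in the end-game of Lemma \ref{LemI}. In the Case~2 regime $a < \rho s$ (which covers the entire range of $s$ when $a$ is small), the $s^{3/2}$ divisor inherited from Lemma \ref{dapprox2} converts the integrand into something of order $1/(\rho^4 s^2)$, whose $s$-integral over $[\rho^{-2}, b]$ is of order $1/\rho^2$; in Case~1, the exponential $e^{\rho^2 s/2 - \rho a}$ combines with the factor $(a - \rho s)^3 + s^{3/2}$ to give uniform smallness after $s$-integration. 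The main obstacle will be stitching these two regimes together, in particular handling the crossover $a \asymp \rho s$ and verifying that the prefactor $a$ combines with the $1/\rho^2$ from the $s$-integration (together with the decay $e^{-\rho a}$ available in Case~1) to produce a clean bound of order $\beta^{2/3} e^{\rho x} e^{\rho L_A}/\rho^4$, uniformly over all admissible $x \in (K_A(t), L_A)$.
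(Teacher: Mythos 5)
Your proposal follows essentially the same route as the paper: Lemma \ref{dapprox2} for $p_s^{L_A}(x,z)$, Lemma \ref{dapprox4} for $p_{t-s}^{L_A}(z,y)$, the bound $\alpha(z) \lesssim \beta^{1/3} z$, and then Lemma \ref{BMmoments} with $k=3$ together with a case split governed by the comparison of $a=L_A-x$ with $\rho s$. One small arithmetic correction: in the regime $\rho s \gg a$, combining (\ref{BMmom2}) with $k=3$, $t=2s$ with the $s^{-3/2}$ divisor gives an integrand of order $1/(\rho^4 s^{3/2})$ rather than $1/(\rho^4 s^2)$, so the $s$-integral over $[\rho^{-2},b]$ is $\asymp 1/\rho^3$ (not $1/\rho^2$); and the ``stitching'' you flag as the main obstacle is indeed exactly where the paper's effort lies, resolved by splitting at $a=1/\rho$ and, when $a>1/\rho$, dividing the $s$-range at $a/\rho$ and $2a/\rho$ so that the exponential factor $e^{\rho^2 s/2-\rho a}$ from (\ref{BMmom1}) and the Gaussian factor $e^{-a^2/2s}$ from (\ref{BMmom2}) can absorb the prefactor $a$.
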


\begin{proof}
We use Lemma \ref{dapprox2} to bound $p_s^{L_A}(x,z)$ and Lemma \ref{dapprox4} to bound $p_{t-s}^{L_A}(z,y)$.  Recall from the proof of Lemma \ref{LemI} that when $z \geq L_A - l(t)$, $y \geq K_A(t)$, and $s \leq t/2$, we can apply (\ref{densapproxrough}) to get
\begin{multline*}
\textup{II} \lesssim \int_{\rho^{-2}}^{b} \int_{L_A - l(t)}^{L_A} \frac{(L_A - x)(L_A - z)}{s^{3/2}} \exp \bigg( \rho x - \rho z - \frac{(z - x)^2}{2s} - \frac{\rho^2 s}{2} + \beta L_A s \bigg) \\
\times \bigg( \int_{K_A(t)}^{L_A} \beta^{1/3} e^{-\beta A (t-s)/\rho} e^{\rho z} \alpha(L_A - z) \alpha(L_A - y) \: dy \bigg)^2 \: dz \: ds.
\end{multline*}
It follows from (\ref{LAdef}) that when $s \leq 8\beta^{-2/3}$, the quantity $\beta L_A s - \rho^2s/2$ is bounded above by a positive constant.  Using also that $A \geq 0$, we get
\begin{multline*}
\textup{II} \lesssim \beta^{2/3} e^{\rho x} (L_A - x) \int_{\rho^{-2}}^{b} \frac{1}{s^{3/2}} \int_{L_A - l(t)}^{L_A} e^{\rho z - (z - x)^2/2s} \\
\times (L_A - z) \alpha(L_A - z)^2 \bigg(\int_{K_A(t)}^{L_A} \alpha(L_A - y) \: dy \bigg)^2 \: dz \: ds.
\end{multline*} 
Next, we apply Lemma \ref{alphaintlem} with $k = 1$, interchange the roles of $z$ and $L_A - z$, and use that $\alpha(z) \lesssim \beta^{1/3} z$ to get
\begin{equation}\label{prelim2}
\textup{II} \lesssim \beta^{2/3} e^{\rho x} e^{\rho L_A} (L_A - x) \int_{\rho^{-2}}^{b} \frac{1}{s^{3/2}} \int_0^{\infty} z^3 e^{-\rho z} e^{-(z - (L_A - x))^2/2s} \: dz \: ds.
\end{equation}
To evaluate the double integral, we will apply Lemma \ref{BMmoments} with $k = 3$, $t = 2s$, and $a = L - x$.  This will involve considering two cases, depending on the value of $x$.

First, suppose $L_A - x \leq 1/\rho$.  Then, when $s > \rho^{-2}$, we have $L_A - x < \rho s$.  Therefore, we apply (\ref{BMmom2}), discarding the $e^{-a^2/t}$ term there, to get
\begin{align*}
\int_{\rho^{-2}}^{b} \frac{1}{s^{3/2}} \int_0^{\infty} z^3 e^{-\rho z} e^{-(z - (L_A - x))^2/2s} \: dz \: ds &\lesssim \int_{\rho^{-2}}^{b} \frac{s^{5/2}}{(\rho s - (L_A - x))^4 + s^2} \: ds \\
&\lesssim \int_{\rho^{-2}}^{2 \rho^{-2}} s^{1/2} \: ds + \int_{2 \rho^{-2}}^{8 \beta^{2/3}} \frac{1}{\rho^4 s^{3/2}} \: ds \\
&\lesssim \frac{1}{\rho^3}.
\end{align*}
Combining this with (\ref{prelim2}) and using that $L_A - x \leq 1/\rho$, we get that (\ref{IIeq}) holds in this case.

Next, suppose $L_A - x > 1/\rho$.  We split the double integral in (\ref{prelim2}) into three pieces, denoted $J_1$, $J_2$, and $J_3$, depending on whether $\rho^{-2} \leq s \leq (L_A - x)/\rho$, $(L_A - x)/\rho < s < 2(L_A - x)/\rho$, or $2(L_A - x)/\rho \leq s \leq 8 \beta^{-2/3}$ respectively.  When $s \leq (L_A - x)/\rho$, we can apply (\ref{BMmom1}) to get
$$J_1 \asymp \int_{\rho^{-2}}^{(L_A - x)/\rho} \frac{1}{s^{3/2}} \cdot s^{1/2} \big( (L_A - x - \rho s)^3 + s^{3/2} \big) e^{\rho^2s/2 - \rho(L_A - x)} \: ds.$$
Now using the bound $1/s \leq \rho^2$ and then making the substitution $u = ((L_A - x)/\rho - s) \rho^2/2$, so that $ds/du = -2/\rho^2$, we get
\begin{align}\label{J1bound}
J_1 &\lesssim \rho^2 e^{-\rho(L_A - x)} \int_{\rho^{-2}}^{(L_A - x)/\rho}  \big( (L_A - x - \rho s)^3 + s^{3/2} \big) e^{\rho^2s/2} \: ds \nonumber \\
&\leq \rho^2 e^{-\rho(L_A - x)} \int_{\rho^{-2}}^{(L_A - x)/\rho}  \bigg( (L_A - x - \rho s)^3 + \Big( \frac{L_A - x}{\rho} \Big)^{3/2} \bigg) e^{\rho^2s/2} \: ds \nonumber \\
&\leq \rho^2 e^{-\rho(L_A - x)} \int_0^{\infty} \bigg( \Big( \frac{2u}{\rho} \Big)^3 + \Big( \frac{L_A - x}{\rho} \Big)^{3/2} \bigg) e^{(\rho(L_A - x)/2) - u} \cdot \frac{2}{\rho^2} \: du \nonumber \\
&\lesssim e^{-\rho(L_A - x)/2} \int_0^{\infty} \bigg(\frac{u^3}{\rho^3} + \frac{(L_A - x)^{3/2}}{\rho^{3/2}} \bigg) e^{-u} \: du \nonumber \\
&\lesssim e^{-\rho(L_A - x)/2} \bigg(\frac{1}{\rho^3} + \frac{(L_A - x)^{3/2}}{\rho^{3/2}}\bigg) \nonumber \\
&\lesssim \frac{1}{\rho^4 (L_A - x)} \cdot (\rho(L_A - x))^{5/2} e^{-\rho(L_A - x)/2}.
\end{align}
When $s > (L_A - x)/\rho$, we instead apply (\ref{BMmom2}) and get
\begin{align}\label{J2bound}
J_2 &\asymp \int_{(L_A - x)/\rho}^{2(L_A - x)/\rho} \frac{s^{5/2}}{(\rho s - (L_A - x))^4 + s^2} e^{-(L_A - x)^2/2s} \: ds \nonumber \\
&\leq \int_{(L_A - x)/\rho}^{2(L_A - x)/\rho} s^{1/2} e^{-(L_A - x)^2/2s} \: ds \nonumber \\
&\lesssim \frac{(L_A - x)^{3/2}}{\rho^{3/2}} e^{-\rho (L_A - x)/4} \nonumber \\
&= \frac{1}{\rho^4 (L_A - x)} \cdot (\rho(L_A - x))^{5/2} e^{-\rho(L_A - x)/4}.
\end{align}
Also, using that $\rho s - (L_A - x) \asymp \rho s$ when $s \geq 2(L_A - x)/\rho$, we have
$$J_3 \asymp \int_{2(L_A - x)/\rho}^{8 \beta^{-2/3}} \frac{s^{5/2}}{(\rho s - (L_A - x))^4 + s^2} e^{-(L_A - x)^2/2s} \: ds \lesssim \frac{1}{\rho^4} \int_{2(L_A - x)/\rho}^{8 \beta^{-2/3}} \frac{1}{s^{3/2}} e^{(L_A - x)^2/2s} \: ds.$$  Therefore, by Lemma \ref{32int},
\begin{equation}\label{J3bound}
J_3 \lesssim \frac{1}{\rho^4(L_A - x)}.
\end{equation}
It follows from (\ref{J1bound}), (\ref{J2bound}), and (\ref{J3bound}) that
$$J_1 + J_2 + J_3 \lesssim \frac{1}{\rho^4(L_A - x)},$$ which, in combination with (\ref{prelim2}), implies that (\ref{IIeq}) also holds when $L_A - x > 1/\rho$.
\end{proof}

\begin{Lemma}\label{LemIII}
Under the assumptions of Lemma \ref{2momprop}, we have
\begin{equation}\label{IIIeq}
\textup{III} \lesssim \frac{\beta^{2/3} e^{\rho L_A}}{\rho^4} \big( e^{\rho x} + \beta^{2/3} t z_A(x) \big).
\end{equation}
\end{Lemma}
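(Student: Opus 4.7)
The plan follows the pattern of Lemmas \ref{LemI} and \ref{LemII}: first bound the inner $y$-integral, then integrate in $z$, and finally in $s$. For the inner integral, since $s\leq t/2$ forces $t-s\geq t/2 \gtrsim \beta^{-2/3}$, a direct computation using $L_A - z \leq l(t)$ and $L_A - y \leq l(t)/2$ shows $(2\beta)^{1/6}((L_A-z)^{1/2}+(L_A-y)^{1/2}) \lesssim 0.333\,\beta^{2/3}t \leq 2^{-1/3}\beta^{2/3}(t-s) - C$, so condition \eqref{dcond} holds. Lemma \ref{dapprox4} will then give $p_{t-s}^{L_A}(z,y) \lesssim \beta^{1/3}e^{\rho z}\alpha(L_A-z)e^{-\rho y}\alpha(L_A-y)$ (using $A\geq 0$ to drop the $e^{-\beta A(t-s)/\rho}\leq 1$ factor), and combining with $\int_{K_A(t)}^{L_A}\alpha(L_A-y)\,dy \lesssim \beta^{-1/3}$ from Lemma \ref{alphaintlem} will yield $\bigl(\int e^{\rho y}p_{t-s}^{L_A}(z,y)\,dy\bigr)^{2} \lesssim e^{2\rho z}\alpha(L_A-z)^{2}$.

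The key new ingredient, which supplies the $\rho^{-4}$ factor missing from a naive estimate, will be the identity
$$\int_0^{\infty} e^{-\rho w}\alpha(w)^{3}\,dw \asymp \frac{\beta}{\rho^{4}}.$$
I will prove this by substituting $u=(2\beta)^{1/3}w$: since $\gamma_{1}$ is a zero of $Ai$, the Taylor expansion $Ai(u+\gamma_{1})^{3} \sim (Ai'(\gamma_{1}))^{3}u^{3}$ near $u=0$, combined with $\int_{0}^{\infty} u^{3}e^{-\lambda u}\,du = 6/\lambda^{4}$ for $\lambda=\rho/(2\beta)^{1/3}\to\infty$ (by \eqref{A1}), gives the claim via Watson's lemma; the $(2\beta)^{-1/3}$ from the Jacobian and the $(2\beta)^{4/3}/\rho^{4}$ from $\lambda^{-4}$ multiply to $\asymp \beta/\rho^{4}$.

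For the outer integral I will split the $s$-range at a threshold $s_{\star}\asymp t$ above which condition \eqref{dcond} also holds for $p_{s}^{L_A}(x,z)$. On $[s_{\star},t/2]$, Lemma \ref{dapprox4} bounds $p_{s}^{L_A}(x,z) \lesssim \beta^{1/3}z_A(x)e^{-\rho z}\alpha(L_A-z)$, so after substituting $w=L_A-z$ the $z$-integral becomes $\beta^{1/3}z_A(x)e^{\rho L_A}\int_{0}^{\infty} e^{-\rho w}\alpha(w)^{3}\,dw \asymp \beta^{4/3}z_A(x)e^{\rho L_A}/\rho^{4}$, producing $\lesssim \beta^{4/3}t\,z_A(x)e^{\rho L_A}/\rho^{4}$ after integrating in $s$; this is the $\beta^{2/3}tz_A(x)$ piece of the target. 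On $[b,s_{\star}]$, where \eqref{dcond} may fail, I will instead apply Lemma \ref{dapprox2}: after multiplying by $e^{2\rho z}\alpha(L_A-z)^{2}$, substituting $w=L_A-z$ and $a=L_A-x$, and completing the square in $z$, the exponential will take the form $e^{\rho x+\rho L_A+(\beta L_A-\rho^{2}/2)s-\rho w-(a-w)^{2}/(2s)}$. The analogous estimate $\int_{0}^{\infty} w\alpha(w)^{2}e^{-\rho w}\,dw \asymp \beta^{2/3}/\rho^{4}$ (by the same Watson-lemma argument) together with careful $s$-integration will give the remaining $\beta^{2/3}e^{\rho x+\rho L_A}/\rho^{4}$ piece.

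The hard part will be the sub-range $[b,s_{\star}]$: here the sharp bound \eqref{densapproxrough} of Lemma \ref{dapprox4} is not available, and the Lemma \ref{dapprox2} bound carries a factor $e^{(\beta L_A-\rho^{2}/2)s} \leq e^{C_{*}\beta^{2/3}s}$ that can grow large; balancing this against the Gaussian tail $e^{-(z-x-\rho s)^{2}/(2s)}$ and the $e^{-\rho w}$ factor, in the spirit of the $s$-integrations of Lemmas \ref{LemI} and \ref{LemII}, will be the technical crux.
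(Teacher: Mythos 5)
The plan for the inner $y$-integral (using Lemma \ref{dapprox4} to bound $p_{t-s}^{L_A}$ when $s\le t/2$, $L_A-z\le l(t)$, $L_A-y\le l(t)/2$), and the $\int e^{-\rho w}\alpha(w)^k\,dw$ estimates via Watson's lemma, are both correct and essentially equivalent to the paper's bound $\alpha(w)\lesssim\beta^{1/3}w$ followed by integration against $e^{-\rho w}$. The genuine gap is in the split of the outer $s$-integral at a fixed threshold $s_\star\asymp t$ depending only on $t$, not on $x$ or $z$. On the range $[b,s_\star)$ you propose to bound $p_s^{L_A}(x,z)$ by Lemma \ref{dapprox2}, which carries the growing factor
\[
\exp\Big(\big(\beta L_A-\tfrac{\rho^2}{2}\big)s\Big)=\exp\Big(-2^{-1/3}\gamma_1\beta^{2/3}s-\tfrac{A\beta s}{\rho}\Big)\lesssim e^{c\beta^{2/3}s},\qquad c=-2^{-1/3}\gamma_1>0.
\]
Now consider the subregion where both $L_A-x\lesssim\beta^{-1/3}$ and $L_A-z\lesssim\beta^{-1/3}$, with $s$ near $s_\star\asymp t$. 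There the Gaussian factor $e^{-(z-x)^2/2s}$ is $\asymp 1$ (since $(z-x)^2/s\lesssim 1/\beta^{2/3}s\ll 1$), the algebraic prefactors $(L_A-x)(L_A-z)s^{-3/2}$ and $(L_A-z)\alpha(L_A-z)^2$ are only polynomial, and the factor $e^{-\rho(L_A-z)}$, after integrating over $L_A-z\in(0,\beta^{-1/3})$, produces a finite quantity rather than an exponentially small one. Nothing kills $e^{c\beta^{2/3}s_\star}\asymp e^{c'\beta^{2/3}t}\to\infty$, so the bound diverges. The resolution is that in this very regime condition \eqref{dcond} actually \emph{holds} for $p_s^{L_A}(x,z)$ as soon as $s\gtrsim\beta^{-2/3}$ and $L_A-x, L_A-z\lesssim\beta s^2$; applying Lemma \ref{dapprox4} there replaces the runaway $e^{c\beta^{2/3}s}$ with the harmless $e^{-\beta As/\rho}\le 1$ (using $A\ge 0$). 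The paper therefore splits jointly: an $x$-dependent threshold $m=\max\{8\beta^{-2/3},8\sqrt{(L_A-x)/\beta}\}$ for $s$, and within $s\ge m$ a further split of the $z$-integral at $L_A-\beta s^2/64$; Lemma \ref{dapprox4} is used whenever $L_A-x$ and $L_A-z$ are both $\lesssim\beta s^2$, and Lemma \ref{dapprox2} only in the complementary regions, where the $e^{-\rho\beta s^2/\mathrm{const}}$ (from $e^{-\rho z}$) or $e^{-\rho(L_A-x)}$-type factors available there dominate the $e^{c\beta^{2/3}s}$ growth (using \eqref{A1}). Your own final paragraph already identifies the balancing problem; the point is that a single $s$-threshold cannot resolve it — you must also split on $z$ and let the $s$-threshold track $L_A-x$.
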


\begin{proof}
We may, and will, assume that $8 \beta^{-2/3} < t/2$, as otherwise the term $\textup{III}$ is zero.
Write $$m = \max \bigg\{8 \beta^{-2/3}, 8 \sqrt{\frac{L_A - x}{\beta}} \bigg\}.$$  Now define
$$\textup{III}_1 = \int_{m}^{t/2} \int_{L_A - \beta s^2/64}^{L_A} p_s^{L_A}(x,z) \bigg( \int_{K_A(t)}^{L_A} e^{\rho y} p_{t-s}^{L_A}(z,y) \: dy \bigg)^2 \: dz \: ds.$$
Note that if $s \geq m$, then $L_A - x \leq \beta s^2/64$.  Therefore, if $s \geq m$ and $L_A - z \leq \beta s^2/64$, then
\begin{equation}\label{condfor4}
(2 \beta)^{1/6}[(L_A - x)^{1/2} + (L_A - z)^{1/2}] - 2^{-1/3} \beta^{2/3} s \leq \beta^{2/3} s \bigg( \frac{2^{1/6}}{4} - 2^{-1/3} \bigg) < - \frac{\beta^{2/3} s}{8} \leq -1,
\end{equation}
so we can use Lemma \ref{dapprox4} to estimate $p_s^{L_A}(x,z)$.  We can also use Lemma~\ref{dapprox4} to estimate $p_{t-s}^{L_A}(z,y)$ as in the proofs of Lemmas \ref{LemI} and \ref{LemII}.  Therefore, using that $A \geq 0$ along with Lemma \ref{alphaintlem} and the bound $\alpha(z) \lesssim \beta^{1/3} z$, we get
\begin{align}\label{int31}
\textup{III}_1 &\lesssim \int_{m}^{t/2} \int_{L_A - \beta s^2/64}^{L_A} \beta^{1/3} e^{\rho x} \alpha(L_A - x) e^{-\rho z} \alpha(L_A - z) \nonumber \\
&\hspace{.5in}\times \bigg( \int_{K_A(t)}^{L_A} \beta^{1/3} e^{\rho z} \alpha(L_A - z) \alpha(L_A - y) \: dy \bigg)^2 \: dz \: ds \nonumber \\
&= \beta z_A(x) \int_{m}^{t/2} \int_{L_A - \beta s^2/64}^{L_A} e^{\rho z} \alpha(L_A - z)^3 \bigg( \int_{K_A(t)}^{L_A} \alpha(L_A - y) \: dy \bigg)^2 \: dz \: ds \nonumber \\
&\lesssim \beta^{1/3} z_A(x) e^{\rho L_A} \int_{m}^{t/2} \int_{0}^{\beta s^2/64} e^{-\rho z} \beta z^3 \: dz \: ds \nonumber \\
&\lesssim \frac{\beta^{4/3}}{\rho^4} z_A(x) e^{\rho L_A} t.
\end{align}

Next, we consider the case in which $s \geq m$ but $L_A - z > \beta s^2/64$.  Define
$$\textup{III}_2 = \int_{m}^{t/2} \int_{L_A - l(t)}^{L_A - \beta s^2/64} p_s^{L_A}(x,z) \bigg( \int_{K_A(t)}^{L_A} e^{\rho y} p_{t-s}^{L_A}(z,y) \: dy \bigg)^2 \: dz \: ds.$$
In this case, we use Lemma~\ref{dapprox2} to bound $p_s^{L_A}(x,z)$ and Lemma \ref{dapprox4} to bound $p_{t-s}^{L_A}(z,y)$.  Using also Lemma \ref{alphaintlem}, we have
\begin{align}\label{int32a}
\textup{III}_2 &\lesssim \int_{m}^{t/2} \int_{L_A - l(t)}^{L_A - \beta s^2/64} \frac{(L_A - x)(L_A - z)}{s^{3/2}} \exp \bigg( \rho x - \rho z - \frac{\rho^2 s}{2} + \beta L_A s \bigg) \nonumber \\
&\hspace{2in}\times \bigg( \int_{K_A(t)}^{L_A} \beta^{1/3} e^{\rho z} \alpha(L_A - z) \alpha(L_A - y) \: dy \bigg)^2 \: dz \: ds \nonumber \\
&= \beta^{2/3} e^{\rho x}e^{\rho L_A}(L_A - x) \int_m^{t/2} \frac{e^{\beta L_A s - \rho^2s/2}}{s^{3/2}} \int_{\beta s^2/64}^{l(t)} e^{-\rho z} z \alpha(z)^2 \bigg( \int_{K_A(t)}^{L_A} \alpha(L_A - y) \: dy \bigg)^2 \: dz \: ds \nonumber \\
&\lesssim e^{\rho x}e^{\rho L_A}(L_A - x) \int_m^{t/2} \frac{e^{\beta L_A s - \rho^2s/2}}{s^{3/2}} \bigg( \int_{\beta s^2/64}^{l(t)} e^{-\rho z} z \alpha(z)^2 \: dz \bigg) \: ds.
\end{align}
Because $A \geq 0$, we have
\begin{equation}\label{int32b}
e^{\beta L_A s - \rho^2 s/2} \leq e^{-2^{-1/3} \gamma_1 \beta^{2/3} s}.
\end{equation}
Because the function $\alpha$ is bounded and $\beta s^2 \gg 1/\rho$ by (\ref{A1}) whenever $s \gtrsim \beta^{-2/3}$, it follows from (\ref{expint1}) when $k = 1$ that
\begin{equation}\label{int32c}
\int_{\beta s^2/64}^{l(t)} e^{-\rho z} z \alpha(z)^2 \: dz \lesssim \int_{\beta s^2/64}^{\infty} e^{-\rho z} z \: dz \lesssim \frac{\beta s^2}{\rho} e^{-\rho \beta s^2/64}.
\end{equation}
Combining (\ref{int32a}), (\ref{int32b}), and (\ref{int32c}), and then using that $\rho \beta s^2 \gtrsim \rho \beta^{1/3} s \gg \beta^{2/3} s$ by (\ref{A1}) when $s \geq 8 \beta^{-2/3}$, we get
\begin{align}\label{int32d}
\textup{III}_2 &\lesssim \frac{\beta}{\rho} e^{\rho x} e^{\rho L_A} (L_A - x) \int_m^{\infty} s^{1/2} \exp \bigg(-2^{-1/3} \gamma_1 \beta^{2/3} s - \frac{\rho \beta s^2}{64} \bigg) \: ds \nonumber \\
&\lesssim \frac{\beta}{\rho} e^{\rho x} e^{\rho L_A} (L_A - x) \int_m^{\infty} s^{1/2} \exp \bigg(- \frac{\rho \beta s^2}{128} \bigg) \: ds.
\end{align}
By (\ref{expint2}) with $k = 1/2$,
\begin{equation}\label{int32e}
\int_m^{\infty} s^{1/2} \exp \bigg(- \frac{\rho \beta s^2}{128} \bigg) \: ds \lesssim \frac{e^{-\rho \beta m^2/128}}{m^{1/2} \rho \beta}.
\end{equation}
We now claim that
\begin{equation}\label{3claim}
\frac{\rho^2 (L_A - x)}{\beta^{2/3} m^{1/2}} e^{-\rho \beta m^2/128} \rightarrow 0.
\end{equation}
It will then follow from (\ref{int32d}), (\ref{int32e}), and (\ref{3claim}) that
\begin{equation}\label{int32}
\textup{III}_2 \ll \frac{\beta^{2/3}}{\rho^4}e^{\rho x} e^{\rho L_A}.
\end{equation}
To prove (\ref{3claim}), we consider two cases.  First, suppose $L_A - x \leq \beta^{-1/3}$, so that $m = 8 \beta^{-2/3}$.  Then (\ref{A1}) implies that
$$\frac{\rho^2 (L_A - x)}{\beta^{2/3} m^{1/2}} e^{-\rho \beta m^2/128} \lesssim \bigg( \frac{\rho}{\beta^{1/3}} \bigg)^2 e^{-\rho \beta^{-1/3}/2} \rightarrow 0.$$  Alternatively, suppose $L_A - x > \beta^{-1/3}$.  Then $m = 8 \sqrt{(L_A - x)/\beta}$ and we have
$$\frac{\rho^2 (L_A - x)}{\beta^{2/3} m^{1/2}} e^{-\rho \beta m^2/128} \lesssim \frac{\rho^2 (L_A - x)^{3/4}}{\beta^{5/12}} e^{-\rho(L_A - x)/2} = \bigg( \frac{\rho}{\beta^{1/3}} \bigg)^{5/4} [\rho(L_A - x)]^{3/4} e^{-\rho (L_A - x)/2}.$$  Using that $L_A - x > \beta^{-1/3}$ and that the function $x \mapsto x^{3/4} e^{-x/2}$ is decreasing for sufficiently large $x$, we have
$$\bigg( \frac{\rho}{\beta^{1/3}} \bigg)^{5/4} [\rho(L_A - x)]^{3/4} e^{-\rho (L_A - x)/2} \lesssim \bigg( \frac{\rho}{\beta^{1/3}} \bigg)^{5/4} \bigg( \frac{\rho}{\beta^{1/3}} \bigg)^{3/4} e^{-\rho \beta^{-1/3}/2} \rightarrow 0,$$
and again (\ref{3claim}) holds.

It remains to consider the case in which $8 \beta^{-2/3} \leq s < m$, which is possible only when $L_A - x > \beta^{-1/3}$ and $m = 8 \sqrt{(L_A - x)/\beta}$.  Define
$$\textup{III}_3 = \int_{8 \beta^{-2/3}}^m \int_{L_A - l(t)}^{L_A} p_s^{L_A}(x,z) \bigg( \int_{K_A(t)}^{L_A} e^{\rho y} p_{t-s}^{L_A}(z,y) \: dy \bigg)^2 \: dz \: ds.$$
 We again use Lemma~\ref{dapprox2} to bound $p_s^{L_A}(x,z)$  and Lemma~\ref{dapprox4} to bound $p_{t-s}^{L_A}(z,y)$.  Using also Lemma~\ref{alphaintlem} and the bound $\alpha(z) \lesssim \beta^{1/3} z$, we get
\begin{align}\label{int33a}
\textup{III}_3 &\leq \int_{8 \beta^{-2/3}}^m \int_{L_A - l(t)}^{L_A} \frac{(L_A - x)(L_A - z)}{s^{3/2}} \exp \bigg( \rho x - \rho z - \frac{(z - x)^2}{2s} - \frac{\rho^2 s}{2} + \beta L_A s \bigg) \nonumber \\
&\hspace{.5in}\times \bigg( \int_{K_A(t)}^{L_A} \beta^{1/3} e^{\rho z} \alpha(L_A - z) \alpha(L_A - y) \: dy \bigg)^2 \: dz \: ds \nonumber \\
&\lesssim e^{\rho x} (L_A - x) \int_{8 \beta^{-2/3}}^m \frac{e^{\beta L_A s - \rho^2s/2}}{s^{3/2}} \int_{L_A - l(t)}^{L_A} e^{\rho z} (L_A - z) \alpha(L_A - z)^2 e^{-(z - x)^2/2s} \: dz \: ds \nonumber \\
&= e^{\rho x} e^{\rho L_A} (L_A - x) \int_{8 \beta^{-2/3}}^m \frac{e^{\beta L_A s - \rho^2s/2}}{s^{3/2}} \int_0^{l(t)} e^{-\rho z} z \alpha(z)^2 e^{-(z - (L_A - x))^2/2s} \: dz \: ds \nonumber \\
&\lesssim \beta^{2/3} e^{\rho x} e^{\rho L_A} (L_A - x) \int_{8 \beta^{-2/3}}^m \frac{e^{\beta L_A s - \rho^2s/2}}{s^{3/2}} \bigg( \int_0^{\infty} e^{-\rho z} z^3 e^{-(z - (L_A - x))^2/2s} \: dz \bigg) \: ds.
\end{align}

We now estimate the inner integral using Lemma \ref{BMmoments} with $k = 3$, $t = 2s$, and $a = L - x$.
We need to consider three cases.  First, suppose $s \geq 2(L_A - x)/\rho$.  Then $L_A - x \leq \frac{1}{2} \rho s$, so we use (\ref{BMmom2}) and the fact that $s \leq m$ to get
\begin{equation}\label{int33b}
\int_0^{\infty} e^{-\rho z} z^3 e^{-(z - (L_A - x))^2/2s} \: dz \asymp \frac{s^4}{(\rho s - (L_A - x))^4 + s^2} e^{-(L_A - x)^2/2s} \lesssim \frac{1}{\rho^4} e^{-(L_A - x)^2/2m}.
\end{equation}
Combining (\ref{int33a}) with (\ref{int33b}), and using (\ref{int32b}) again along with the fact that $m = 8 \sqrt{(L_A - x)/\beta}$, we get
\begin{align}\label{IIIsbig}
&(L_A - x) \int_{8 \beta^{-2/3} \vee 2(L_A - x)/\rho}^m \frac{e^{\beta L_A s - \rho^2s/2}}{s^{3/2}} \bigg( \int_0^{\infty} e^{-\rho z} z^3 e^{-(z - (L_A - x))^2/2s} \: dz \bigg) \: ds \nonumber \\
&\hspace{1in}\lesssim \frac{L_A - x}{\rho^4} e^{-2^{-1/3} \gamma_1 \beta^{2/3} m} e^{-(L_A - x)^2/2m} \int_{8 \beta^{-2/3}}^m \frac{1}{s^{3/2}} \: ds \nonumber \\
&\hspace{1in}\lesssim \frac{1}{\rho^4} \cdot \beta^{1/3}(L_A - x) e^{-2^{8/3} \gamma_1 \beta^{1/6}(L_A - x)^{1/2}} e^{-\beta^{1/2}(L_A - x)^{3/2}/16} \nonumber \\
&\hspace{1in}\lesssim \frac{1}{\rho^4}.
\end{align}
Next, suppose $(L_A - x)/\rho \leq s < 2(L_A - x)/\rho$.  Then $L_A - x \leq \rho s$, so again we use (\ref{BMmom2}).  This time, we keep the $s^2$ term in the denominator, and we get
$$\int_0^{\infty} e^{-\rho z} z^3 e^{-(z - (L_A - x))^2/2s} \: dz \asymp \frac{s^4}{(\rho s - (L_A - x))^4 + s^2} e^{-(L_A - x)^2/2s} \lesssim s^2 e^{-\rho (L_A - x)/4}.$$
Combining this result with (\ref{int33a}) and (\ref{int32b}), we get
\begin{align}\label{IIIsmiddle}
&(L_A - x) \int_{(L_A - x)/\rho}^{2(L_A - x)/\rho} \frac{e^{\beta L_A s - \rho^2s/2}}{s^{3/2}} \bigg( \int_0^{\infty} e^{-\rho z} z^3 e^{-(z - (L_A - x))^2/2s} \: dz \bigg) \: ds \nonumber \\
&\hspace{1in}\lesssim (L_A - x) e^{-2^{2/3} \gamma_1 \beta^{2/3} (L_A - x)/\rho} e^{-\rho(L_A - x)/4} \int_{(L_A - x)/\rho}^{2(L_A - x)/\rho} s^{1/2} \: ds \nonumber \\
&\hspace{1in}\lesssim (L_A - x) \bigg( \frac{L_A - x}{\rho} \bigg)^{3/2} e^{-2^{2/3} \gamma_1 (\beta^{2/3}/\rho^2) \rho(L_A - x)} e^{-\rho(L_A - x)/4} \nonumber \\
&\hspace{1in}\lesssim \frac{1}{\rho^4} \cdot (\rho(L_A - x))^{5/2} e^{-2^{2/3} \gamma_1 (\beta^{2/3}/\rho^2) \rho(L_A - x)} e^{-\rho(L_A - x)/4} \nonumber \\
&\hspace{1in}\lesssim \frac{1}{\rho^4}.
\end{align}
Now, suppose $s < (L_A - x)/\rho$.  Then $L_A - x > \rho s$, so this time we use (\ref{BMmom1}) to get
$$\int_0^{\infty} e^{-\rho z} z^3 e^{-(z - (L_A - x))^2/2s} \: dz \asymp s^{1/2} \big( (L_A - x - \rho s)^3 + s^{3/2} \big) e^{\rho^2 s/2} e^{-\rho(L_A - x)}.$$
We may assume that $(L_A - x)/\rho > 8 \beta^{-2/3}$, which implies that $\rho(L_A - x) \geq 8 \rho^2 \beta^{-2/3} \rightarrow \infty$ and therefore $s^{3/2}\leq (L_A - x)^{3/2} \rho^{-3/2} \ll (L_A - x)^3$.  It follows that
$$\int_0^{\infty} e^{-\rho z} z^3 e^{-(z - (L_A - x))^2/2s} \: dz \lesssim s^{1/2} (L_A - x)^3 e^{\rho^2 s/2} e^{-\rho(L_A - x)}.$$
Therefore,
\begin{align*}
&(L_A - x) \int_{8 \beta^{-2/3}}^{(L_A - x)/\rho} \frac{e^{\beta L_A s - \rho^2s/2}}{s^{3/2}} \bigg( \int_0^{\infty} e^{-\rho z} z^3 e^{-(z - (L_A - x))^2/2s} \: dz \bigg) \: ds \nonumber \\
&\hspace{1in}\lesssim (L_A - x)^4 e^{-\rho (L_A - x)} \int_{8 \beta^{-2/3}}^{(L_A - x)/\rho} \frac{1}{s} e^{\beta L_A s} \: ds \\
&\hspace{1in}\lesssim (L_A - x)^4 e^{-\rho (L_A - x)} \cdot \frac{\rho}{\beta L_A(L_A - x)} e^{\beta L_A (L_A - x)/\rho}.
\end{align*}
Because $L_A \leq 2 \rho^2/3 \beta$ for sufficiently large $n$, we have $e^{\beta L_A (L_A - x)/\rho} \leq e^{2 \rho (L_A -x )/3}$, and therefore
\begin{align}\label{IIIssmall}
&(L_A - x) \int_{8 \beta^{-2/3}}^{(L_A - x)/\rho} \frac{e^{\beta L_A s - \rho^2s/2}}{s^{3/2}} \bigg( \int_0^{\infty} e^{-\rho z} z^3 e^{-(z - (L_A - x))^2/2s} \: dz \bigg) \: ds \nonumber \\
&\hspace{1in}\lesssim (L_A - x)^4 e^{-\rho (L_A - x)} \cdot \frac{1}{\rho(L_A - x)} e^{2 \rho(L_A - x)/3} \nonumber \\
&\hspace{1in}= \frac{1}{\rho^4} \cdot (\rho(L_A - x))^3 e^{-\rho(L_A - x)/3} \nonumber \\
&\hspace{1in}\lesssim \frac{1}{\rho^4}.
\end{align}
It now follows from (\ref{int33a}), (\ref{IIIsbig}), (\ref{IIIsmiddle}), and (\ref{IIIssmall}) that
\begin{equation}\label{int33}
\textup{III}_3 \lesssim \frac{\beta^{2/3}}{\rho^4} e^{\rho x} e^{\rho L_A}.
\end{equation}
The result follows from (\ref{int31}), (\ref{int32}), and (\ref{int33}).
\end{proof}

\begin{Lemma}\label{LemIV}
Under the assumptions of Lemma \ref{2momprop}, we have $$\textup{IV} \lesssim \frac{\beta^{4/3}}{\rho^4}e^{\rho L_A} t z_A(x).$$
\end{Lemma}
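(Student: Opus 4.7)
The strategy parallels that of Lemma \ref{LemIII}, but with the roles of the small-time and large-time factors reversed. In $\textup{IV}$ we have $s \in [t/2, t-b]$, so the outer time $s$ is large ($s \gtrsim t \gtrsim \beta^{-2/3}$), while the inner time $u := t-s$ may be as small as $b \asymp \beta^{-2/3}$. For the outer density $p_s^{L_A}(x,z)$, the constants in the definitions $l(t) = \beta t^2/33$ and $K_A(t) = L_A - l(t)/2$ were chosen precisely so that, for $x \geq K_A(t)$ and $z \in (L_A - l(t), L_A)$, condition \eqref{dcond} is satisfied whenever $s \geq t/2$ (an elementary numerical check using $(L_A-x)^{1/2} + (L_A-z)^{1/2} \leq (1+1/\sqrt{2})\sqrt{l(t)}$ and $\beta^{2/3}t \to \infty$). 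Lemma \ref{dapprox4}, combined with the assumption $A \geq 0$, then yields the clean bound
$$p_s^{L_A}(x,z) \lesssim \beta^{1/3}\, z_A(x)\, e^{-\rho z}\alpha(L_A - z).$$

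The main estimate is for the inner integral $J(z,u) := \int_{K_A(t)}^{L_A} e^{\rho y} p_u^{L_A}(z,y)\, dy$. The dominant regime is where both $L_A - z$ and $L_A - y$ are at most $\beta u^2/64$; there, condition \eqref{dcond} holds for $p_u^{L_A}(z,y)$ (using $u \geq b \gtrsim \beta^{-2/3}$, which forces $\beta^{2/3}u \gtrsim 1$), so Lemma \ref{dapprox4} together with Lemma \ref{alphaintlem} produces $J(z,u) \lesssim e^{\rho z}\alpha(L_A - z)$. Plugging $J(z,u)^2$ into $\textup{IV}$ gives
$$\textup{IV} \lesssim \beta\, z_A(x)\int_{t/2}^{t-b}\int_{L_A-l(t)}^{L_A} e^{\rho z}\alpha(L_A - z)^3\,dz\,ds.$$
Substituting $w = L_A - z$, using $\alpha(w) \lesssim \beta^{1/3} w$, computing $\int_0^\infty \beta w^3 e^{-\rho w}\,dw \asymp \beta/\rho^4$, and estimating $\int_{t/2}^{t-b}\,ds \lesssim t$, we arrive at the required bound $\frac{\beta^{4/3}}{\rho^4} e^{\rho L_A}\, t\, z_A(x)$.

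The main obstacle will be showing that the complementary regimes, where either $z$ or $y$ is far from $L_A$ on the scale $\sqrt{\beta}\, u$, contribute only subdominantly. There Lemma \ref{dapprox4} is unavailable, so I would fall back to Lemma \ref{dapprox2} for $p_u^{L_A}(z,y)$, which introduces factors $(L_A - z)(L_A - y)/u^{3/2}$ along with a Gaussian factor $e^{-(y-z)^2/2u}$. The Gaussian suppression either forces $|y - z|$ to be small (so both coordinates must be close to $L_A$ together) or extracts exponential decay in $(L_A-z)^2/u$ or $(L_A-y)^2/u$. Combined with the Gaussian-type decay of $\alpha(L_A - z)$ coming from \eqref{Airyasymp} for $L_A - z$ large, and the factor $e^{-\rho z}$ from the outer bound, these off-diagonal contributions should be negligible compared to the diagonal term, in direct analogy with the $\textup{III}_2$ and $\textup{III}_3$ cases in the proof of Lemma \ref{LemIII}. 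Concretely, I expect the integrals to reduce, after substituting $w = L_A - z$ and $v = L_A - y$ and squaring the inner integral, to several applications of Lemma \ref{BMmoments} over a case split based on the four regions (near/far in $z$ crossed with near/far in $y$), with the gain in each off-diagonal case coming from $\rho\beta u^2 \gg 1$ under assumption \eqref{A1}.
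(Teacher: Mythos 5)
Your dominant-term calculation coincides with the paper's handling of $\textup{IV}_1$: bound $p^{L_A}_s(x,z)$ via Lemma~\ref{dapprox4} (valid for all $z\in(L_A-l(t),L_A)$ once $s\geq t/2$), bound the inner integral by $e^{\rho z}\alpha(L_A-z)$ via Lemma~\ref{dapprox4} and Lemma~\ref{alphaintlem}, then integrate using $\alpha(w)\lesssim\beta^{1/3}w$. One slip: plugging the bound $p_s^{L_A}(x,z)\lesssim\beta^{1/3}z_A(x)e^{-\rho z}\alpha(L_A-z)$ against $J(z,u)^2\lesssim e^{2\rho z}\alpha(L_A-z)^2$ gives the prefactor $\beta^{1/3}z_A(x)$, not $\beta z_A(x)$; with $\beta$ the subsequent steps would produce $\beta^2/\rho^4$ rather than $\beta^{4/3}/\rho^4$. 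Since you state the correct final answer, this looks like a transcription error rather than a conceptual one, but it should be $\beta^{1/3}$ in the display.

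For the off-diagonal regimes your plan is sound in outline but diverges from the paper's execution, and the stated analogy with $\textup{III}_2$ and $\textup{III}_3$ is imperfect. In Lemma~\ref{LemIII} the problematic case is small \emph{outer} time $s$, forcing Lemma~\ref{dapprox2} on $p_s^{L_A}(x,z)$; here the outer density always admits Lemma~\ref{dapprox4} (precisely because $s\geq t/2$), and the issue is only the small \emph{inner} time $t-s$. Accordingly, the paper applies Lemma~\ref{dapprox4} to $p_s^{L_A}(x,z)$ in all three sub-cases of $\textup{IV}$ and reserves Lemma~\ref{dapprox2} for $p_{t-s}^{L_A}(z,y)$, which matches your choice. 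However, the paper does not invoke Lemma~\ref{BMmoments} for $\textup{IV}_2$ or $\textup{IV}_3$; instead it uses Lemma~\ref{expintlem}. Concretely, when $L_A-z\geq\beta(t-s)^2/128$ the decay comes from the weight $e^{-\rho(L_A-z)}$ via (\ref{expint1}), producing a factor $e^{-\rho\beta s^2/128}$ after the change of variables; and when $L_A-z$ is small but $L_A-y$ is large, the Gaussian factor $e^{-(z-y)^2/2(t-s)}$ supplies $e^{-\delta\beta^2(t-s)^3}$, exactly as you anticipate. So your intuition about which mechanism drives each off-diagonal estimate is correct, and the $\rho\beta u^2\gg1$ observation is the right closing input, even if the specific lemma you name is not the one the paper uses.
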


\begin{proof}
We may, and will, assume that $8 \beta^{-2/3} < t/2$, as otherwise the term $\textup{IV}$ is zero.
When $s \geq t/2$, $x \geq K_A(t) = L_A - l(t)/2$, and $z \geq L_A - l(t)$, we have
$$(L_A - x)^{1/2} + (L_A - z)^{1/2} \leq 2 l(t)^{1/2} = \frac{2}{\sqrt{33}} \beta^{1/2} t \leq \frac{4}{\sqrt{33}} \beta^{1/2} s.$$  Therefore, using that $\beta^{-2/3} \lesssim s$, we see that $$(2 \beta)^{1/6}[(L_A - x)^{1/2} + (L_A - z)^{1/2}] - 2^{-1/3} \beta^{2/3} s$$ is bounded above by a negative constant, so we can apply Lemma \ref{dapprox4} to approximate $p_s^{L_A}(z,y)$ and use (\ref{densapproxrough}).

If $L_A - y \leq \beta(t-s)^2/64$ and $L_A - z \leq \beta (t-s)^2/128$, then reasoning as in (\ref{condfor4}), we have
$$(2 \beta)^{1/6}[(L_A - z)^{1/2} + (L_A - y)^{1/2}] - 2^{-1/3}\beta^{2/3}(t - s) \leq -1,$$ so we can use Lemma \ref{dapprox4} to estimate $p_{t-s}^{L_A}(z,y)$.  We define
$$\textup{IV}_1 = \int_{t/2}^{t - 8 \beta^{-2/3}} \int_{L_A - \beta(t-s)^2/128}^{L_A} p_s^{L_A}(x,z) \bigg( \int_{L_A - \beta (t-s)^2/64}^{L_A} e^{\rho y} p_{t-s}^{L_A}(z,y) \: dy \bigg)^2 \: dz \: ds.$$
Therefore, using also that $A \geq 0$ along with Lemma \ref{alphaintlem} and the bound $\alpha(z) \lesssim \beta^{1/3} z$, we have
\begin{align}\label{int41}
\textup{IV}_1 &\lesssim \int_{t/2}^{t - 8 \beta^{-2/3}} \int_{L_A - \beta(t-s)^2/128}^{L_A} \beta^{1/3} e^{\rho x} \alpha(L_A - x) e^{-\rho z} \alpha(L_A - z) \nonumber \\
&\hspace{.5in}\times \bigg( \int_{L_A - \beta(t-s)^2/64}^{L_A} \beta^{1/3} e^{\rho z} \alpha(L_A - z)\alpha(L_A - y) \: dy \bigg)^2 \: dz \: ds \nonumber \\
&= \beta z_A(x) \int_{t/2}^{t - 8 \beta^{-2/3}} \int_{L_A - \beta(t-s)^2/128}^{L_A} e^{\rho z} \alpha(L_A - z)^3 \bigg( \int_{L_A - \beta(t-s)^2/64}^{L_A} \alpha(L_A - y) \: dy \bigg)^2 \: dz \: ds \nonumber \\
&\lesssim \beta^{1/3} z_A(x) e^{\rho L_A} \int_{t/2}^{t - 8 \beta^{-2/3}} \int_0^{\beta(t - s)^2/128} e^{-\rho z} \beta z^3 \: dz \: ds \nonumber \\
&\lesssim \frac{\beta^{4/3}}{\rho^4} z_A(x) e^{\rho L_A} t.
\end{align}

Next, we consider the case in which $L_A - z > \beta(t-s)^2/128$.  Define
$$\textup{IV}_2 = \int_{t/2}^{t - 8 \beta^{-2/3}} \int_{L_A - l(t)}^{L_A - \beta(t-s)^2/128} p_s^{L_A}(x,z) \bigg( \int_{K_A(t)}^{L_A} e^{\rho y} p_{t-s}^{L_A}(z,y) \: dy \bigg)^2 \: dz \: ds.$$
We use Lemma \ref{dapprox4} to bound $p^{L_A}_s(x,z)$ as before and Lemma \ref{dapprox2} to bound $p^{L_A}_{t-s}(z,y)$.  We get
\begin{align*}
\textup{IV}_2 &\lesssim \int_{t/2}^{t - 8 \beta^{-2/3}} \int_{L_A - l(t)}^{L_A - \beta(t-s)^2/128} \beta^{1/3} e^{\rho x} \alpha(L_A - x) e^{-\rho z} \alpha(L_A - z) \\
&\hspace{.2in}\times \bigg(\int_{K_A(t)}^{L_A} \frac{(L_A - z)(L_A - y)}{(t-s)^{3/2}} \exp \Big(\rho z - \frac{(z - y)^2}{2(t-s)} - \frac{\rho^2(t-s)}{2} + \beta L_A(t-s) \Big) \: dy \bigg)^2 \: dz \: ds \\
&= \beta^{1/3}z_A(x) \int_{t/2}^{t-8 \beta^{-2/3}} \frac{1}{(t-s)^2} e^{(2 \beta L_A - \rho^2)(t-s)} \int_{L_A - l(t)}^{L_A - \beta(t-s)^2/128} e^{\rho z} (L_A - z)^2 \alpha(L_A - z) \\
&\hspace{.2in}\times \bigg( \int_{K_A(t)}^{L_A} \frac{1}{\sqrt{t-s}} (L_A - y) e^{-(z - y)^2/2(t-s)} \: dy \bigg)^2 \: dz \: ds.
\end{align*}
Now interchanging the roles of $s$ and $t-s$, $z$ and $L_A - z$, and $y$ and $L_A - y$, we get
\begin{equation}\label{IVex}
\textup{IV}_2 \lesssim \beta^{1/3}z_A(x)e^{\rho L_A} \int_{8 \beta^{-2/3}}^{t/2} \frac{e^{(2 \beta L_A - \rho^2)s}}{s^2} \int_{\beta s^2/128}^{l(t)} e^{-\rho z} z^2 \alpha(z) \bigg( \int_0^{l(t)/2} \frac{y}{\sqrt{s}} e^{-(z - y)^2/2s} \: dy \bigg)^2 \: dz \: ds.
\end{equation}
Note that if $W$ has a normal distribution with mean $z \geq 0$ and variance $s$, then
\begin{equation}\label{Wnorm}
\int_0^{l(t)/2} \frac{y}{\sqrt{s}} e^{-(z-y)^2/2s} \: dy \leq \int_0^{\infty} \frac{y}{\sqrt{s}} e^{-(z-y)^2/2s} \: dy = \sqrt{2 \pi} \: \E[\max\{0, W\}] \lesssim z + \sqrt{s}.
\end{equation}
Also, because the function $\alpha$ is bounded and $\beta s^2 \gg 1/\rho$ when $s \gtrsim \beta^{-2/3}$ by (\ref{A1}), it follows from (\ref{expint1}) that
$$\int_{\beta s^2/128}^{l(t)} e^{-\rho z} z^2 \alpha(z) (z + \sqrt{s})^2 \: dz \lesssim \int_{\beta s^2/128}^{\infty} e^{-\rho z}(z^4 + z^2 s) \: dz \lesssim e^{-\rho \beta s^2/128} \bigg(\frac{(\beta s^2)^4}{\rho} + \frac{(\beta s^2)^2 s}{\rho} \bigg).$$
Because $A \geq 0$, we have
\begin{equation}\label{LArho}
e^{(2 \beta L_A - \rho^2)s} \leq e^{-2^{2/3} \gamma_1 \beta^{2/3} s}.
\end{equation}
Combining these bounds, we get
\begin{align*}
\textup{IV}_2 &\lesssim \beta^{1/3}z_A(x)e^{\rho L_A} \int_{8 \beta^{-2/3}}^{t/2} \frac{1}{s^2} e^{-2^{2/3} \gamma_1 \beta^{2/3}s} e^{-\rho \beta s^2/128} \bigg(\frac{(\beta s^2)^4}{\rho} + \frac{(\beta s^2)^2 s}{\rho} \bigg) \: ds \\
&= \beta^{1/3}z_A(x)e^{\rho L_A} \int_{8 \beta^{-2/3}}^{t/2} \frac{1}{s^2} e^{-2^{2/3} \gamma_1 \beta^{2/3}s} e^{-\rho \beta s^2/128} \bigg(\frac{(\rho \beta s^2)^4}{\rho^5} + \frac{(\rho \beta s^2)^2 s}{\rho^3} \bigg) \: ds.
\end{align*}
Now $\beta^{2/3} s \ll \rho \beta s^2$ when $s \gtrsim \beta^{-2/3}$, which implies that for any $k > 0$,
we have $$e^{-2^{2/3} \gamma_1 \beta^{2/3}s} e^{-\rho \beta s^2/128} (\rho \beta s^2)^k \ll e^{-\rho \beta s^2/256}.$$  It follows, using (\ref{expint2}) with $k = 0$ for the third inequality, that
\begin{align*}
\textup{IV}_2 &\lesssim \beta^{1/3}z_A(x)e^{\rho L_A} \int_{8 \beta^{-2/3}}^{t/2} e^{-\rho \beta s^2/256} \bigg( \frac{1}{s^2 \rho^5} + \frac{1}{s \rho^3} \bigg) \: ds \\
&\lesssim \beta^{1/3}z_A(x)e^{\rho L_A} \bigg( \frac{\beta^{4/3}}{\rho^5} + \frac{\beta^{2/3}}{\rho^3} \bigg) \int_{8 \beta^{-2/3}}^{t/2} e^{-\rho \beta s^2/256} \: ds \\
&\lesssim \beta^{1/3}z_A(x)e^{\rho L_A} \bigg( \frac{\beta^{4/3}}{\rho^5} + \frac{\beta^{2/3}}{\rho^3} \bigg) \cdot e^{\rho \beta^{-1/3}/4} \frac{1}{\rho \beta^{1/3}} \\
&= \frac{\beta^{2/3}}{\rho^4} z_A(x) e^{\rho L_A} \bigg(\frac{\beta^{2/3}}{\rho^2} + 1 \bigg) e^{\rho \beta^{-1/3}/4}.
\end{align*}
Finally, using the assumption that $t \gtrsim \beta^{-2/3}$, we have
\begin{equation}\label{int42}
\textup{IV}_2 \ll \frac{\beta^{2/3}}{\rho^4} z_A(x) e^{\rho L_A} \lesssim \frac{\beta^{4/3}}{\rho^4} e^{\rho L_A} t z_A(x). 
\end{equation}

Now, we consider the remaining case in which $L_A - z \leq \beta (t-s)^2/128$ and $L_A - y > \beta(t-s)^2/64$.  Define
$$\textup{IV}_3 = \int_{t/2}^{t - 8 \beta^{-2/3}} \int_{L_A - \beta(t-s)^2/128}^{L_A} p_s^{L_A}(x,z) \bigg( \int_{K_A(t)}^{L_A - \beta(t-s)^2/64} e^{\rho y} p_{t-s}^{L_A}(z,y) \: dy \bigg)^2 \: dz \: ds.$$
We again use Lemma \ref{dapprox4} to bound $p_s^{L_A}(x,z)$ and Lemma \ref{dapprox2} to bound $p_{t-s}^{L_A}(z,y)$, and following the same steps that led to (\ref{IVex}), we get
$$\textup{IV}_3 \lesssim \beta^{1/3} z_A(x) e^{\rho L_A} \int_{8 \beta^{-2/3}}^{t/2} \frac{e^{(2 \beta L_A - \rho^2)s}}{s^2} \int_0^{\beta s^2/128} e^{-\rho z} z^2 \alpha(z) \bigg( \int_{\beta s^2/64}^{l(t)/2} \frac{y}{\sqrt{s}} e^{-(z-y)^2/2s} \: dy \bigg)^2 \: dz \: ds.$$
Note that if $z \leq \beta s^2/128$ and $y \geq \beta s^2/64$, then letting $\delta = 1/(4 \cdot 128^2)$,
$$e^{-(z - y)^2/2s} = e^{-(z-y)^2/4s} e^{-(z-y)^2/4s} \leq e^{-\delta \beta^2 s^3} e^{-(z-y)^2/4s}.$$
Therefore, reasoning as in (\ref{Wnorm}),
$$\int_{\beta s^2/64}^{l(t)/2} \frac{y}{\sqrt{s}} e^{-(z-y)^2/2s} \: dy \leq e^{-\delta \beta^2 s^3} \int_0^{\infty} \frac{y}{\sqrt{s}} e^{-(z-y)^2/4s} \: dy \lesssim e^{-\delta \beta^2 s^3} (z + \sqrt{s}).$$
Using also that $\alpha(z) \lesssim \beta^{1/3} z$ and (\ref{LArho}), we get
\begin{align*}
\textup{IV}_3 &\lesssim \beta^{1/3} z_A(x) e^{\rho L_A} \int_{8 \beta^{-2/3}}^{t/2} \frac{e^{(2 \beta L_A - \rho^2)s}}{s^2} \int_0^{\beta s^2/128} e^{-\rho z} z^2 \alpha(z) \cdot e^{-2 \delta \beta^2 s^3} (z^2 + s) \: dz \: ds \\
&\lesssim \beta^{2/3} z_A(x) e^{\rho L_A} \int_{8 \beta^{-2/3}}^{t/2} \frac{1}{s^2} e^{-2^{2/3} \gamma_1 \beta^{2/3} s} e^{-2 \delta \beta^2 s^3} \int_0^{\infty} e^{-\rho z} (z^5 + s z^3) \: dz \: ds \\
&\lesssim \beta^{2/3} z_A(x) e^{\rho L_A} \int_{8 \beta^{-2/3}}^{t/2} \frac{1}{s^2} e^{-2^{2/3} \gamma_1 \beta^{2/3} s} e^{-2 \delta \beta^2 s^3} \bigg( \frac{1}{\rho^6} + \frac{s}{\rho^4} \bigg) \: ds.
\end{align*}
When $s \geq 8 \beta^{-2/3}$, we have $s/\rho^4 \gg 1/\rho^6$ by (\ref{A1}), so we may disregard the $1/\rho^6$ term.  Therefore, making the substitution $u = \beta^{2/3} s$ and then using that $\beta^{-2/3} \lesssim t$,
\begin{align}\label{int43}
\textup{IV}_3 &\lesssim \frac{\beta^{2/3}}{\rho^4} z_A(x) e^{\rho L_A} \int_{8 \beta^{-2/3}}^{\infty} \frac{1}{s} e^{-2^{2/3} \gamma_1 \beta^{2/3} s} e^{- 2\delta \beta^2 s^3} \: ds \nonumber \\
&= \frac{\beta^{2/3}}{\rho^4} z_A(x) e^{\rho L_A} \int_8^{\infty} \frac{1}{u} e^{-2^{2/3} \gamma_1 u - 2 \delta u^3} \: du \nonumber \\
&\lesssim \frac{\beta^{2/3}}{\rho^4} z_A(x) e^{\rho L_A} \nonumber \\
&\lesssim \frac{\beta^{4/3}}{\rho^4} e^{\rho L_A} t z_A(x).
\end{align}
The result follows from (\ref{int41}), (\ref{int42}), and (\ref{int43}).
\end{proof}

\begin{Lemma}\label{LemV}
Under the assumptions of Lemma \ref{2momprop}, we have $$\textup{V} \lesssim \frac{\beta^{4/3}}{\rho^4}e^{\rho L_A} t z_A(x).$$
\end{Lemma}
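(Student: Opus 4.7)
The strategy will parallel the proof of Lemma \ref{LemIV} with the roles of $s$ and $t-s$ interchanged: now the small time variable is $\tau := t - s$, which lies in $(0, b]$ with $b = 8\beta^{-2/3}$. We may assume $b \le t/2$ (else $\textup{V}$ is vacuous), so that throughout the integration $s \ge t/2 \gtrsim \beta^{-2/3}$, $\tau \le 8\beta^{-2/3}$, and both $x$ and $z$ lie within $l(t) = \beta t^2/33$ of $L_A$.

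The outer density $p_s^{L_A}(x, z)$ will be controlled by Lemma \ref{dapprox4}: a direct computation as at the start of the proof of Lemma \ref{LemI} shows
\[(2\beta)^{1/6}\bigl((L_A - x)^{1/2} + (L_A - z)^{1/2}\bigr) \le (2\beta)^{1/6}\cdot 2\,l(t)^{1/2} \lesssim \beta^{2/3} t,\]
which is dominated by $2^{-1/3}\beta^{2/3} s \ge 2^{-4/3}\beta^{2/3} t$ with room to spare, so that \eqref{densapproxrough} gives $p_s^{L_A}(x, z) \lesssim \beta^{1/3}\, z_A(x)\, e^{-\rho z}\,\alpha(L_A - z)$ after using $A \ge 0$ to bound $e^{-\beta A s/\rho} \le 1$.

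The real work lies in estimating the inner integral $\int_{K_A(t)}^{L_A} e^{\rho y}\, p_\tau^{L_A}(z, y)\,dy$, for which Lemma \ref{dapprox4} is unavailable because $\tau$ is too small. Writing $w = L_A - z$ and $u = L_A - y$, I would split the $z$-integration at $w = \sqrt\tau$. In the regime $w \le \sqrt\tau$, I would apply Lemma \ref{dapprox2}; since the Gaussian-moment estimate $\int_0^\infty u\, e^{-(u-w)^2/2\tau}\,du \lesssim \sqrt\tau(w + \sqrt\tau)$ is of order $\tau$ when $w \le \sqrt\tau$, and the exponential $e^{(\beta L_A - \rho^2/2)\tau} = e^{-2^{-1/3}\gamma_1 \beta^{2/3}\tau}$ is $O(1)$ for $\tau \lesssim \beta^{-2/3}$, this produces an inner bound of order $(w/\sqrt\tau)\, e^{\rho z}$. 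In the complementary regime $w > \sqrt\tau$, I would instead use Lemma \ref{dapprox1}, whose $y$-integral over the whole line is dominated by $\sqrt{2\pi\tau}$, cancelling the $(2\pi\tau)^{-1/2}$ in the kernel and yielding an inner bound of order $e^{\rho z}$.

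After squaring, multiplying by $p_s^{L_A}(x, z)$ with the Airy bound $\alpha(w) \lesssim \beta^{1/3} w$, integrating $w$ via Lemma \ref{expintlem}, and then integrating $\tau \in (0, b]$, the regime $w > \sqrt\tau$ will contribute at most of order $\beta^{2/3}\, z_A(x)\, e^{\rho L_A}/\rho^4$ after splitting $\tau$ at the threshold $\rho^{-2}$ and using the Gaussian tail $e^{-\rho\sqrt\tau}$ from Lemma \ref{expintlem}, which is enough because of the decay $\int v^2 e^{-v}\,dv < \infty$ after the substitution $v = \rho\sqrt\tau$. Collecting the two regimes yields $\textup{V} \lesssim \beta^{2/3}\, z_A(x)\, e^{\rho L_A}/\rho^4 \lesssim \beta^{4/3} t\, z_A(x)\, e^{\rho L_A}/\rho^4$, using $t \gtrsim b \asymp \beta^{-2/3}$. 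The hard part will be controlling the apparent $1/\tau^3$ singularity at $\tau = 0$ produced by squaring the reflection-principle prefactor $(L_A - z)(L_A - y)/\tau^{3/2}$ of Lemma \ref{dapprox2}; the splitting at $w = \sqrt\tau$ is precisely what is needed, since it restricts the $w$-integration to $[0, \sqrt\tau]$ in the dangerous regime and replaces the singular integrand by one of order $\min(\tau, 1/(\rho^4\tau))$, whose $\tau$-integral is finite. This is the same mechanism that handles subcase $\textup{IV}_2$ of Lemma \ref{LemIV}.
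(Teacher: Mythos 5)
Your proposal follows the same route as the paper's own proof: interchange $s\leftrightarrow t-s$, control $p_s^{L_A}(x,z)$ with Lemma \ref{dapprox4} (valid since $s\ge t/2\gtrsim\beta^{-2/3}$ and both $x,z$ lie within $l(t)$ of $L_A$), and control the inner kernel $p_\tau^{L_A}(z,y)$ with Lemmas \ref{dapprox1} and \ref{dapprox2}, splitting the $z$-integration at $w=L_A-z=\sqrt\tau$. Your bound for the inner integral, $\int_{K_A(t)}^{L_A}e^{\rho y}p_\tau^{L_A}(z,y)\,dy\lesssim\min\{1,w/\sqrt\tau\}\,e^{\rho z}$, is correctly derived.

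The genuine gap is in the final $\tau$-integration of the $w\le\sqrt\tau$ regime. With your bound, the squared inner integral is of order $(w^2/\tau)e^{2\rho z}$, and after multiplying by $\beta^{1/3}z_A(x)e^{-\rho z}\alpha(L_A-z)\lesssim\beta^{2/3}z_A(x)e^{-\rho z}w$ and integrating $w$ over $[0,\sqrt\tau]$ one indeed obtains, as you say, a $\tau$-integrand of order $\beta^{2/3}z_A(x)e^{\rho L_A}\min(\tau,\rho^{-4}/\tau)$. But
\[\int_0^b\min\big(\tau,\rho^{-4}/\tau\big)\,d\tau=\tfrac12\rho^{-4}+\rho^{-4}\log(b\rho^2)\asymp\rho^{-4}\log(b\rho^2),\]
and since $b\asymp\beta^{-2/3}$ the factor $\log(b\rho^2)\asymp\log(\rho^3/\beta)\to\infty$ by \eqref{A1}. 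So ``whose $\tau$-integral is finite'' is true but is not the conclusion you need; regime 1 contributes $\beta^{2/3}z_A(x)e^{\rho L_A}\rho^{-4}\log(\rho^3/\beta)$, which cannot be absorbed into $\beta^{2/3}t\gtrsim 1$ when $t\asymp\beta^{-2/3}$, so your estimate falls short of the lemma's statement by a diverging logarithm.

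For comparison: the paper passes through the bound $\min\{1,z^2/s\}$ (in your notation $\min\{1,w^2/\tau\}$) for the \emph{unsquared} $y$-integral, whose square $\min\{1,w^4/\tau^2\}$ has $\tau$-integral $\lesssim\rho^{-4}$ with no logarithm. That is one extra factor of $w/\sqrt\tau$ beyond what your argument produces, and it is exactly what eliminates the log. Note, however, that the elementary inequality the paper invokes to pass from $\min\{1,\tfrac{z}{s}(z+\sqrt s)\}$ to $\min\{1,z^2/s\}$ is not valid as written (for $z<\sqrt s$ the former is $\asymp z/\sqrt s$, strictly larger than $z^2/s$); so the missing power of $w/\sqrt\tau$ is not something you can obtain from the same manipulation, and closing the gap requires a different or more delicate estimate of the inner integral than what either you or the paper's displayed chain supplies.
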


\begin{proof}
As in the proof of Lemma \ref{LemIV}, we can apply Lemma \ref{dapprox4} to estimate $p_s^{L_A}(z,y)$.  To bound $p_{t-s}(z,y)$, we will use either the bound from Lemma \ref{dapprox1} or the bound from Lemma \ref{dapprox2}, whichever is smaller.  Using also that, when $t - s \leq \beta^{-2/3}$, we have $$\exp \bigg( \beta L_A (t - s) - \frac{\rho^2 (t-s)}{2} \bigg) \lesssim 1,$$ we get
\begin{align*}
\textup{V} &\lesssim \int_{t - b}^t \int_{L_A - l(t)}^{L_A} \beta^{1/3} e^{\rho x} \alpha(L_A - x) e^{-\rho z} \alpha(L_A - z) \\
&\hspace{.5in}\times \bigg( \int_{K_A(t)}^{L_A} \min \bigg\{\frac{1}{\sqrt{t - s}}, \frac{(L_A - z)(L_A - y)}{(t - s)^{3/2}} \bigg\} \exp \bigg(\rho z - \frac{(z - y)^2}{2(t-s)} \bigg) \: dy \bigg)^2 \: dz \: ds \\
&\lesssim \beta^{1/3} z_A(x) \int_{t - b}^t \int_{L_A - l(t)}^{L_A} e^{\rho z} \alpha(L_A - z) \\
&\hspace{.5in} \times \bigg( \int_{K_A(t)}^{L_A} \min \bigg\{1, \frac{(L_A - z)(L_A - y)}{t - s} \bigg\} \cdot \frac{1}{\sqrt{t-s}} e^{-(z - y)^2/2(t-s)} \: dy \bigg)^2 \: dz \: ds.
\end{align*}
Interchanging the roles of $s$ and $t - s$, $z$ and $L_A - z$, and $y$ and $L_A - y$, we get
\begin{equation}\label{Vprelim}
\textup{V} \lesssim \beta^{1/3} z_A(x) e^{\rho L_A} \int_{0}^{b} \int_0^{l(t)} e^{-\rho z} \alpha(z) \bigg( \int_0^{l(t)/2} \min \bigg\{1, \frac{yz}{s} \bigg\} \cdot \frac{1}{\sqrt{s}} e^{-(z - y)^2/2s} \: dy \bigg)^2 \: dz \: ds.
\end{equation}  
We now use (\ref{Wnorm}) to estimate the integral with respect to $y$, which yields
\begin{align*}
&\int_0^{l(t)/2} \min \bigg\{1, \frac{yz}{s} \bigg\} \cdot \frac{1}{\sqrt{s}} e^{-(z - y)^2/2s} \: dy \\
&\hspace{.5in}\lesssim \min \bigg\{ \int_0^{l(t)/2} \frac{1}{\sqrt{s}} e^{-(z - y)^2/2s} \: ds, \: \frac{z}{s} \int_0^{l(t)/2} \frac{y}{\sqrt{s}} e^{-(z - y)^2/2s} \: dy \bigg\} \\
&\hspace{.5in}\lesssim \min \bigg\{ 1, \frac{z}{s} (z + \sqrt{s}) \bigg\}.
\end{align*}
Now noting that $z^2/s \geq z/\sqrt{s}$ whenever either of these expressions is larger than one, it follows that
$$\int_0^{l(t)/2} \min \bigg\{1, \frac{yz}{s} \bigg\} \cdot \frac{1}{\sqrt{s}} e^{-(z - y)^2/2s} \: dy \leq \min \bigg\{1, \frac{z^2}{s} \bigg\}.$$
Plugging this result into (\ref{Vprelim}), and using that $\alpha(z) \lesssim \beta^{1/3} z$, we get
\begin{align*}
\textup{V} &\lesssim \beta^{2/3} z_A(x) e^{\rho L_A} \int_0^{b} \int_0^{l(t)} e^{-\rho z} z \min \bigg\{1, \frac{z^4}{s^2} \bigg\} \: dz \: ds \\
&\lesssim \beta^{2/3} z_A(s) e^{\rho L_A} \bigg( \int_0^{\rho^{-2}} \int_0^{\infty} e^{-\rho z} z \: dz \: ds + \int_{\rho^{-2}}^{8 \beta^{-2/3}} \frac{1}{s^2} \int_0^{\infty} e^{-\rho z} z^5 \: dz \: ds \bigg) \\
&\lesssim \beta^{2/3} z_A(s) e^{\rho L_A} \bigg( \int_0^{\rho^{-2}} \frac{1}{\rho^2} \: ds + \frac{1}{\rho^6} \int_{\rho^{-2}}^{8 \beta^{-2/3}} \frac{1}{s^2} \: ds \bigg) \\
&\lesssim \frac{\beta^{2/3}}{\rho^4} z_A(x) e^{\rho L_A}.
\end{align*}
Because $t \gtrsim \beta^{-2/3}$, the result follows.
\end{proof}

\begin{Lemma}
Under the assumptions of Lemma \ref{2momprop}, we have $$\textup{VI} \ll \frac{\beta^{4/3}}{\rho^4}e^{\rho L_A} t z_A(x).$$
\end{Lemma}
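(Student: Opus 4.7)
In term VI, the intermediate point $z$ at time $s$ lies in $(-\infty, L_A - l(t)]$, far below $L_A$, while both $x$ and the integration range for $y$ lie above $K_A(t) = L_A - l(t)/2$. Setting $w := L_A - z \geq l(t) = \beta t^2/33$, one has $x - z \geq w/2$ (since $L_A - x \leq l(t)/2 \leq w/2$) and $y - z \geq w/2$. Contributions to VI thus correspond to trajectories that travel the large distance $w$ on the way down to $z$ and whose descendants must travel a similar distance back toward $L_A$; the plan is to extract exponential decay from this double excursion that strictly dominates the (possibly small) factor $z_A(x)$ in the target.

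The concrete approach is to bound $p_{t-s}^{L_A}(z,y)$ using Lemma~\ref{dapprox4} whenever condition~(\ref{dcond}) holds (i.e.\ when $l(t) \leq w \lesssim \beta(t-s)^2$), since then the bound contains the factor $\alpha(L_A-z) \asymp \exp(-\tfrac{2\sqrt{2}}{3}\beta^{1/2}w^{3/2})$ by Airy asymptotics. In the complementary regime $w \gg \beta(t-s)^2$, use Lemma~\ref{dapprox2}, whose Gaussian factor $e^{-(y-z)^2/2(t-s)} \leq e^{-w^2/8(t-s)}$ is then even stronger. For $p_s^{L_A}(x,z)$, use Lemma~\ref{dapprox2} in the bulk (yielding $e^{-(x-z)^2/2s} \leq e^{-w^2/8s}$) and Lemma~\ref{dapprox1} for very small $s$, as in Lemmas~\ref{LemI}--\ref{LemII}. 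Bound the inner integral over $y$ using Lemma~\ref{alphaintlem} together with $L_A - y \leq l(t)/2$, change variables to $w = L_A - z$, use $\beta L_A - \rho^2/2 = -2^{-1/3}\gamma_1\beta^{2/3} - \beta A/\rho \lesssim \beta^{2/3}$ to absorb the $s$-dependent exponential prefactors, and integrate over $s \in [0,t]$ after splitting into sub-intervals based on whether $s$ or $t-s$ lies below $\rho^{-2}$. Finally integrate over $w \in [l(t), \infty)$ via Lemma~\ref{expintlem}.

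The main obstacle is tracking constants sharply enough to establish the strict inequality $\ll$ (rather than $\lesssim$). In the worst case $L_A - x \approx l(t)/2$ the target itself is exponentially small, since $\alpha(L_A - x) \asymp \exp(-c_1\beta^2 t^3)$ with $c_1 = 2\sqrt{2}/(3\cdot 66^{3/2})$. In the Lemma~\ref{dapprox4} regime, squaring the inner integral gives a factor $\alpha(L_A - z)^2$, which at $w = l(t)$ equals $\exp(-2c_3\beta^2 t^3)$ with $c_3 = 2\sqrt{2}/(3\cdot 33^{3/2}) = 2\sqrt{2}\,c_1$; hence the ratio of our bound to the target acquires a factor $\exp(-(2c_3 - c_1)\beta^2 t^3) = \exp(-(4\sqrt{2} - 1)c_1\beta^2 t^3)$, which goes to zero because $\beta^2 t^3 \gtrsim \beta^2 \beta^{-2} = 1$ (in fact much larger) by $t \gtrsim \beta^{-2/3}$. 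In the complementary regime the Gaussian decay $e^{-w^2/(8(t-s))}$ at $w = l(t)$ produces $\exp(-\beta^2 t^3/(8 \cdot 33^2 (t-s)/t))$, which is strictly stronger than $c_1\beta^2 t^3$ whenever $t - s$ is small. The remaining technical bookkeeping concerns the polynomial prefactors and the singular behaviour of $s^{-3/2}$ and $(t-s)^{-3/2}$ near the endpoints $s = 0, t$, for which the techniques from Lemmas~\ref{LemI} and~\ref{LemV} apply directly.
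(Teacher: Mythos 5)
The proposal takes a genuinely different route from the paper. The paper bounds both $p_s^{L_A}(x,z)$ and $p_{t-s}^{L_A}(z,y)$ using Lemma \ref{dapprox2} alone, then uniformly factors out the Gaussian decay $e^{-l(t)^2/8s}\,e^{-l(t)^2/4(t-s)}$ (from the lower bounds $(x-z)^2,(y-z)^2 \geq l(t)^2/4$) and evaluates the residual $z$-integral $\int_{l(t)}^\infty e^{-\rho w} w^3\,dw \asymp e^{-\rho l(t)} l(t)^3/\rho$; the dominant source of exponential smallness in the final ratio is $\rho l(t) \to \infty$. You instead propose a case split, using Lemma \ref{dapprox4} (Airy decay $\alpha(L_A - z)^2$) when \eqref{dcond} holds and Lemma \ref{dapprox2} (Gaussian decay) otherwise. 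Your constant computation $c_3 = 2\sqrt{2}\,c_1$, hence $2c_3 - c_1 = (4\sqrt{2}-1)c_1 > 0$, is correct and is a nice observation: at $w = l(t)$ the squared Airy factor strictly dominates the worst-case $\alpha(L_A-x)^{-1}$.

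The difficulty is the hand-off between the two regimes. Condition \eqref{dcond} fails once $w \gtrsim (\sqrt 2 - 1)^2\,\beta(t-s)^2 \approx 0.17\,\beta(t-s)^2$, whereas the squared Gaussian $e^{-w^2/4(t-s)}$ only becomes stronger than the squared Airy factor $\alpha(w)^2 \asymp e^{-\tfrac{4\sqrt 2}{3}\beta^{1/2}w^{3/2}}$ once $w \gtrsim \tfrac{512}{9}\,\beta(t-s)^2 \approx 57\,\beta(t-s)^2$. Between these two thresholds — which the boundary $w = l(t)$ hits whenever $t-s$ is an order-one fraction of $t$ — neither of the mechanisms you invoke produces the full Airy-scale decay, so the claim that the Gaussian is "even stronger" does not hold there and the pointwise exponential comparison that underlies your plan breaks down. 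The paper sidesteps this entirely by never asking the transition-density bound alone to beat $\alpha(L_A-x)^{-1}$: it takes only a modest uniform Gaussian factor and leans on the $e^{-\rho l(t)}$ decay from the $w$-integral, which (together with the factor $\rho^4 l(t)^4$ going to zero by \eqref{A1}) closes the argument. You do mention integrating over $w$ via Lemma \ref{expintlem}, so the $e^{-\rho l(t)}$ factor is available to you as well; but if you lean on it to fill the gap, the Lemma-\ref{dapprox4} case becomes unnecessary and the argument reduces to the paper's. To make your version self-contained you would need to fill the intermediate regime, either by quantifying the error term $E_A$ in Lemma \ref{dapprox4} when \eqref{dcond} barely fails, or by showing that the partial Gaussian decay plus $e^{-\rho l(t)}$ suffices there.
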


\begin{proof}
We use Lemma \ref{dapprox2} to bound both $p_s^{L_A}(x,z)$ and $p_{t-s}^{L_A}(z,y)$ and get
\begin{align*}
\textup{VI} &\lesssim \int_0^t \int_{-\infty}^{L_A - l(t)} \frac{(L_A - x)(L_A - z)}{s^{3/2}} \exp \Big( \rho x - \rho z - \frac{(z - x)^2}{2s} - \frac{\rho^2 s}{2} + \beta L_A s \Big) \\
&\hspace{.2in}\times \bigg( \int_{K_A(t)}^{L_A} \frac{(L_A - z)(L_A - y)}{(t - s)^{3/2}} \exp \Big( \rho z - \frac{(y - z)^2}{2(t-s)} - \frac{\rho^2 (t - s)}{2} + \beta L_A (t - s) \Big) \: dy \bigg)^2 \: dz \: ds \\
&= e^{\rho x}(L_A - x) \int_0^t \frac{1}{s^{3/2}(t - s)^{3}} \exp \Big( \Big( \beta L_A - \frac{\rho^2}{2} \Big)(2t - s) \Big) \\
&\hspace{.2in}\times \int_{-\infty}^{L_A - l(t)} e^{\rho z} (L_A - z)^3 e^{-(x - z)^2/2s} \bigg( \int_{K_A(t)}^{L_A} (L_A - y) e^{-(y - z)^2/2(t-s)} \: dy \bigg)^2 \: dz \: ds.
\end{align*}
Note that if $x > K_A(t)$, $y > K_A(t)$, and $z < L_A - l(t)$, then $(x - z)^2 \geq l(t)^2/4$ and $(y - z)^2 \geq l(t)^2/4$.
Also, recalling that $\gamma_1 < 0$ and $A \geq 0$, we have that for $s \geq 0$, $$\bigg( \beta L_A - \frac{\rho^2}{2} \bigg)(2t - s) = \bigg( - 2^{-1/3} \beta^{2/3} \gamma_1 - \frac{\beta A}{\rho} \bigg)(2t - s) \leq -(2 \beta)^{2/3} \gamma_1 t.$$
It follows that
\begin{align}\label{71}
\textup{VI} &\lesssim e^{\rho x}(L_A - x) e^{-(2 \beta)^{2/3} \gamma_1 t} \bigg( \int_0^t \frac{1}{s^{3/2} ( t - s)^{3}} e^{-l(t)^2/8s} e^{-l(t)^2/4(t-s)} \: ds \bigg) \nonumber \\
&\hspace{.5in}\times \bigg( \int_{-\infty}^{L_A - l(t)} e^{\rho z} (L_A - z)^3 \: dz \bigg) \bigg( \int_{K_A(t)}^{L_A} (L_A - y) \: dy \bigg)^2.
\end{align}
Because $t \gtrsim \beta^{-2/3}$, we have $l(t) \gg 1/\rho$ and therefore by (\ref{expint1}),
\begin{equation}\label{72}
\int_{-\infty}^{L_A - l(t)} e^{\rho z} (L_A - z)^3 \: dz \asymp e^{\rho(L_A - l(t))} \cdot \frac{l(t)^3}{\rho}.
\end{equation}
Also, 
\begin{equation}\label{73}
\int_{K_A(t)}^{L_A} (L_A - y) \: dy = \frac{l(t)^2}{8}.
\end{equation}
Because $\int_0^{\infty} s^{-b} e^{-a/s} \: ds = a^{1-b}\int_0^\infty u^{b-2}e^{-u}\:du \asymp a^{1-b}$ when $b > 1$, we have, using Lemma \ref{32int},
$$\int_0^{t/2} \frac{1}{s^{3/2} ( t - s)^{3}} e^{-l(t)^2/8s} e^{-l(t)^2/4(t-s)} \: ds \lesssim \frac{e^{-l(t)^2/4t}}{t^3} \int_0^{t/2} \frac{1}{s^{3/2}} e^{-l(t)^2/8s} \: ds \lesssim \frac{e^{-l(t)^2/4t}}{t^3 l(t)}$$
and, interchanging the roles of $s$ and $t-s$ to evaluate the integral,
$$\int_{t/2}^t \frac{1}{s^{3/2} ( t - s)^{3}} e^{-l(t)^2/8s} e^{-l(t)^2/4(t-s)} \: ds \lesssim \frac{e^{-l(t)^2/8t}}{t^{3/2}} \int_{t/2}^t \frac{1}{(t-s)^3} e^{-l(t)^2/4(t-s)} \: ds \lesssim \frac{e^{-l(t)^2/8t}}{t^{3/2} l(t)^4}.$$
Because $l(t) \asymp \beta t^2$ and $t \gtrsim \beta^{-2/3}$, we have $t^3 l(t) \lesssim t^{3/2} l(t)^4$.  Therefore, summing the previous two integrals gives
\begin{equation}\label{74}
\int_0^t \frac{1}{s^{3/2} ( t - s)^{3}} e^{-l(t)^2/8s} e^{-l(t)^2/[4(t-s)]} \: ds \lesssim \frac{1}{t^3 l(t)} e^{-l(t)^2/8t}.
\end{equation}
Combining (\ref{71}), (\ref{72}), (\ref{73}), and (\ref{74}), we get
\begin{align}\label{75}
\textup{VI} &\lesssim \frac{z_A(x) l(t)^6}{\rho t^3} \exp \bigg( \rho(L_A - l(t)) - (2 \beta)^{2/3} \gamma_1 t - \frac{l(t)^2}{8t} \bigg) \nonumber \\
&= \frac{\beta^{4/3}}{\rho^4} e^{\rho L_A} t z_A(x) \cdot \frac{l(t)^2}{\rho \beta^{4/3} t^4} \cdot \rho^4 l(t)^4 e^{-\rho l(t)} \cdot \exp \bigg( - (2 \beta)^{2/3} \gamma_1 t - \frac{l(t)^2}{8t} \bigg).
\end{align}
We will show that the second and third factors on the right-hand side of (\ref{75}) tend to zero, while the fourth is bounded above, which will complete the proof of the lemma.  Because $l(t) \asymp \beta t^2$ and $t \gtrsim \beta^{-2/3}$, we have
$$\frac{l(t)^2}{\rho \beta^{4/3} t^4} \asymp \frac{\beta^{2/3}}{\rho} \rightarrow 0.$$
Also, $\rho l(t) \asymp \rho \beta t^2 \gtrsim \rho \beta^{-1/3} \rightarrow \infty$ by (\ref{A1}), so
$\rho^4 l(t)^4 e^{-\rho l(t)} \rightarrow 0.$  Finally, $l(t)^2/8t \asymp \beta^2 t^3$, which implies that the fourth factor is bounded because $t \gtrsim \beta^{2/3}$.
\end{proof}

\subsection{Proof of Lemma \ref{mainvarg}}

Standard second moment calculations (again see p. 146 of \cite{inw}) give
\begin{multline*}
\E_x \bigg[ \bigg( \sum_{i=1}^{N(t)} g\big( X_i(t) \sqrt{\beta/\rho} \big) \bigg)^2 \bigg] = \int_{-\infty}^{L} g \big( y \sqrt{\beta/\rho} \big)^2 p^{L}_t(x,y) \: dy \\
+ 2 \int_0^t \int_{-\infty}^{L} p^{L}_u(x,z) b_n(z) \bigg( \int_{-\infty}^{L} g \big( y \sqrt{\beta/\rho} \big) p^{L}_{t-u}(z,y) \: dy \bigg)^2 \: dz \: du.
\end{multline*}
When $n$ is large, the birth rate $b_n(z)$ is bounded for $z\le L$ by (\ref{A3}), and $g$ is also bounded, so
\begin{align}\label{vargform}
&\E_x \bigg[ \bigg( \sum_{i=1}^{N(t)} g\big( X_i(t) \sqrt{\beta/\rho} \big) \bigg)^2 \bigg] \nonumber \\
&\hspace{.4in} \lesssim \int_{-\infty}^{L} p^{L}_t(x,y) \: dy + \int_0^t \int_{-\infty}^{L} p^{L}_u(x,z) \bigg( \int_{-\infty}^{L} p^{L}_{t-u}(z,y) \: dy \bigg)^2 \: dz \: du.
\end{align}

From (\ref{intpeq}) and the fact that $s \geq 0$, we get
\begin{align*}
\int_{-\infty}^L p^{L}_t(x,y) \: dy \leq \int_{-\infty}^{\infty} p_t(x,y) \: dy &= \exp \bigg( \beta x t + \frac{\beta^2 t^3}{6} - \frac{\beta \rho t^2}{2} \bigg) \\
&= \exp \bigg( \rho x  - \beta x s - \frac{\rho^3}{3 \beta} + \frac{\rho^2 s}{2} - \frac{\beta^2 s^3}{6} \bigg).
\end{align*}
Because $s \geq 0$, and because the assumption $L - x \ll \rho^2/\beta$ implies that $x \geq 0$ for sufficiently large $n$, it follows that
\begin{equation}\label{zvarterm11}
\int_{-\infty}^L p^{L}_t(x,y) \: dy \lesssim \exp \bigg( \rho x - \frac{\rho^3}{3 \beta} + \frac{\rho^2 s}{2} \bigg).
\end{equation}
We claim that
\[\exp \bigg( - \frac{\rho^3}{3 \beta} + \frac{\rho^2 s}{2} \bigg) \ll \bigg( \frac{\rho^3}{\beta} \bigg)^{-2/3} \exp \bigg( - \frac{\rho^3}{6 \beta} - \frac{\gamma_1 \rho}{(2\beta)^{1/3}} - \frac{\beta^2 s^3}{3} \bigg).\]
Indeed, comparing the two sides of the inequality, we see that because $s \ll \rho/\beta$ by assumption, the dominant terms are those of order $\rho^3/\beta$ inside the exponentials; therefore the left-hand side is smaller when $n$ is large. Substituting this into \eqref{zvarterm11}, we obtain
\begin{align*}
\int_{-\infty}^L p^{L}_t(x,y) \: dy &\ll \bigg( \frac{\rho^3}{\beta} \bigg)^{-2/3} \exp \bigg(\rho x - \frac{\rho^3}{6 \beta} - \frac{\gamma_1 \rho}{(2\beta)^{1/3}} - \frac{\beta^2 s^3}{3} \bigg)\\
&= \frac{\beta^{2/3}}{\rho^2} \exp \bigg(\rho x + \rho L - \frac{2 \rho^3}{3 \beta} - \frac{\beta^2 s^3}{3} \bigg)\\
&\ll \frac{\beta^{2/3}}{\rho^4} \exp \bigg(\rho x + \rho L - \frac{2 \rho^3}{3 \beta} - \frac{\beta^2 s^3}{3} \bigg).
\end{align*}

It remains to bound the second term on the right-hand side of (\ref{vargform}).  We will use three different bounds for $p^{L}_u(x,z)$ depending on the value of $u$.  When $\rho^{1/2}/\beta^{5/6} \leq u \leq t$, we will use that $p^{L_A}_u(x,z) \leq p_u(x,z)$ and then use (\ref{ptxy}).  Noting that $$\frac{\beta(z + x)u}{2} = \beta L u - \frac{\beta (L - z) u}{2} - \frac{\beta (L - x) u}{2} \leq \beta L u - \frac{\beta(L - z) u}{2},$$ we get
$$p^L_u(x,z) \lesssim \frac{1}{u^{1/2}} \exp \bigg(  \rho x - \rho z - \frac{(x - z)^2}{2u} - \frac{\rho^2 u}{2} + \beta L u - \frac{\beta(L - z) u}{2} + \frac{\beta^2 u^3}{24} \bigg).$$  We will use Lemma \ref{dapprox1} when $0 \leq u \leq 1/\rho^2$ and Lemma \ref{dapprox2} when $1/\rho^2 < u < \rho^{1/2}/\beta^{5/6}$.  Combining these estimates yields that if we set
\begin{equation}\label{Mdef}
M(u,x,z) = \min\bigg\{\frac{(L - x)(L - z)}{u^{3/2}}, \frac{1}{u^{1/2}}, \frac{1}{u^{1/2}} 
\exp \bigg( - \frac{\beta(L - z) u}{2} + \frac{\beta^2 u^3}{24} \bigg) \bigg\},
\end{equation}
then
\begin{equation}\label{vubound}
p^{L}_u(x,z) \lesssim M(u,x,z) \exp \bigg( \rho x - \rho z - \frac{(x - z)^2}{2u} - \frac{\rho^2 u}{2} + \beta L u \bigg).
\end{equation}

To bound $p_{t-u}^L(z,y)$, we use (\ref{intpeq}) when $\rho^{1/2}/\beta^{5/6} \leq u \leq t$ and Lemma \ref{intpL} when $0 \leq u < \rho^{1/2}/\beta^{5/6}$.  
From (\ref{intpeq}) with $t - u = \frac{\rho}{\beta} - s - u$ in place of $t$, we get
\begin{align*}
\int_{-\infty}^{L} p^{L}_{t-u}(z, y) \: dy &\leq \int_{-\infty}^{\infty} p_{t-u}(z,y) \: dy \\
&= \exp \bigg( \beta z \Big( \frac{\rho}{\beta} - (u + s) \Big) + \frac{\beta^2}{6} \Big( \frac{\rho}{\beta} - (u + s) \Big)^3 - \frac{\beta \rho}{2} \Big( \frac{\rho}{\beta} - (u + s) \Big)^2 \bigg) \\
&= \exp \bigg( \rho z + \Big( \frac{\rho^2}{2} - \beta z \Big)(u + s) - \frac{\rho^3}{3 \beta} - \frac{\beta^2(u + s)^3}{6} \bigg).
\end{align*}
Therefore, using also that $(u + s)^3 \geq u^3 + s^3$, we have
\begin{equation}\label{vtubound}
\bigg( \int_{-\infty}^{L} p^{L}_{t-u}(z, y) \: dy \bigg)^2 \leq \exp \bigg( 2 \rho z + (\rho^2 - 2 \beta z)(u + s) - \frac{2 \rho^3}{3 \beta} - \frac{\beta^2u^3 }{3} - \frac{\beta^2 s^3}{3} \bigg).
\end{equation}
Using Lemma \ref{intpL} and following the same calculation, we get that if $u \ll t$, then
\begin{align}\label{vtubound2}
&\bigg( \int_{-\infty}^{L} p^{L}_{t-u}(z, y) \: dy \bigg)^2 \nonumber \\
&\hspace{.1in}\lesssim \beta^{2/3}(L - z)^2 \exp \bigg( 2 \rho z + (\rho^2 - 2 \beta z)(u + s) - \frac{2 \rho^3}{3 \beta} - \frac{\beta^2u^3 }{3} - \frac{\beta^2 s^3}{3} + 2 \beta^{2/3} (u + s) \bigg).
\end{align}
Combining (\ref{vtubound}) and (\ref{vtubound2}), and using that $\rho^{1/2}/\beta^{5/6} \ll t$ by (\ref{A1}), 
we get that if we set $$N(u,z) = \beta^{2/3}(L - z)^2 \1_{\{u < \rho^{1/2}/\beta^{5/6}\}} + \1_{\{u \geq \rho^{1/2}/\beta^{5/6}\}},$$ then
\begin{align}\label{vtubound3}
&\bigg( \int_{-\infty}^{L} p^{L}_{t-u}(z, y) \: dy \bigg)^2 \nonumber \\
&\hspace{.4in}\lesssim N(u,z) \exp \bigg( 2 \rho z + (\rho^2 - 2 \beta z)(u + s) - \frac{2 \rho^3}{3 \beta} - \frac{\beta^2u^3 }{3} - \frac{\beta^2 s^3}{3} + 2 \beta^{2/3} (u + s) \bigg).
\end{align}

Denoting the second term on the right-hand side of (\ref{vargform}) by $T$ and then combining (\ref{vubound}) and (\ref{vtubound3}), we get
\begin{align*}
T &\lesssim \exp \bigg( \rho x - \frac{2 \rho^3}{3 \beta} - \frac{\beta^2 s^3}{3} \bigg) \int_0^t \int_{-\infty}^L M(u,x,z) N(u,z) \\
&\hspace{.4in}\times \exp \bigg(\rho z - \frac{\rho^2 u}{2} + \beta L u - \frac{\beta^2 u^3}{3} - \frac{(x - z)^2}{2u} + (\rho^2 - 2 \beta z + 2 \beta^{2/3})(u + s) \bigg) \: dz \: du.
\end{align*}
Interchanging the roles of $z$ and $L - z$, and separating out three terms from the exponential that involve only $u$, we get
\begin{align*}
T &\lesssim \exp \bigg( \rho x + \rho L - \frac{2 \rho^3}{3 \beta} - \frac{\beta^2 s^3}{3} \bigg) \int_0^t \exp \bigg( - \frac{\rho^2 u}{2} + \beta L u - \frac{\beta^2 u^3}{3} \bigg) \int_0^{\infty} M(u, x, L - z) 
 \\
&\hspace{.2in}\times N(u, L-z) \exp \bigg(- \rho z - \frac{((L - x) - z)^2}{2u} + (\rho^2 - 2 \beta (L - z) + 2 \beta^{2/3})(u + s) \bigg) \: dz \: du.
\end{align*}
Now, using (\ref{LAdef}) and (\ref{zerovalue}), we have $\rho^2 - 2 \beta L + 2 \beta^{2/3} = (2^{2/3} \gamma_1 + 2)\beta^{2/3} < 0$, 
so we can discard the term $(\rho^2 - 2 \beta L + 2 \beta^{2/3})(u + s)$.  Therefore, 
\begin{align}\label{TReq}
T &\lesssim \exp \bigg( \rho x + \rho L - \frac{2 \rho^3}{3 \beta} - \frac{\beta^2 s^3}{3} \bigg) \int_0^t \exp \bigg( - \frac{\rho^2 u}{2} + \beta L u - \frac{\beta^2 u^3}{3} \bigg) \nonumber \\
&\hspace{.2in}\times \int_0^{\infty} M(u, x, L - z) N(u, L-z) \exp \bigg(- (\rho - 2 \beta (u + s)) z - \frac{((L - x) - z)^2}{2u} \bigg) \: dz \: du.
\end{align}
Thus, denoting the double integral in the expression above by $R$, the proof will be complete if we can show that $R \lesssim \beta^{2/3}/\rho^4$.  
To do this, choose a positive number $r$ such that $\sqrt{3/7} < r < 2/3$, and write $$R = R_1 + R_2 + R_3 + R_4,$$ where
$R_1$ is the portion of the double integral for which $0 \leq u \leq 1/\rho^2$, $R_2$ is the portion of the double integral for which $1/\rho^2 < u < \rho^{1/2}/\beta^{5/6}$, $R_3$ is the portion with $\rho^{1/2}/\beta^{5/6} \leq u < r \rho/\beta$, and $R_4$ is the portion with $r\rho/\beta \leq u \leq t$.

We first estimate $R_1$.  Because $\gamma_1 < 0$ and the function $x \mapsto bx - cx^3$ is bounded above on $[0, \infty)$ for any $b > 0$ and $c > 0$, we have
\begin{equation}\label{expconst}
- \frac{\rho^2 u}{2} + \beta L u - \frac{\beta^2 u^3}{3} = -2^{-1/3} \beta^{2/3} \gamma_1 u - \frac{\beta^2 u^3}{3} = -2^{-1/3} \gamma_1 (\beta^{2/3} u) - \frac{1}{3} (\beta^{2/3} u)^3 = O(1).
\end{equation}
Therefore, using also that $2 \beta(u + s) \ll \rho$ when $u \leq 1/\rho^2$, we have
\begin{align}\label{R1main}
R_1 &\lesssim \int_0^{1/\rho^2} \int_0^{\infty} \frac{1}{u^{1/2}} \cdot \beta^{2/3} z^2 \exp \bigg(- (\rho - 2 \beta (u + s)) z - \frac{((L - x) - z)^2}{2u} \bigg) \: dz \: du \nonumber \\
&\lesssim \beta^{2/3} \int_0^{1/\rho^2} \frac{1}{u^{1/2}} \int_0^{\infty} z^2 \exp \bigg(- (\rho - 2 \beta (u + s)) z \bigg) \: dz \: du \nonumber \\
&\lesssim \beta^{2/3} \int_0^{1/\rho^2} \frac{1}{u^{1/2} \rho^3} \: du \nonumber \\
&\lesssim \frac{\beta^{2/3}}{\rho^4}.
\end{align}

Next, we estimate $R_2$.  Suppose $1/\rho^2 < u < \rho^{1/2}/\beta^{5/6}$.  Since $u \ll \rho/\beta$ and $s \ll \rho/\beta$, we have $\rho - 2 \beta(u + s) \geq \rho/2$ for sufficiently large $n$.  Using also (\ref{expconst}) again, we get
\begin{align}\label{R2int}
R_2 &\leq \int_{1/\rho^2}^{\rho^{1/2}/\beta^{5/6}} \int_0^{\infty} \frac{(L-x)z}{u^{3/2}} \cdot \beta^{2/3} z^2 \exp \bigg( - \frac{\rho z}{2} - \frac{((L-x) - z)^2}{2u} \bigg) \: dz \: du. \nonumber \\
&= \beta^{2/3}(L - x) \int_{1/\rho^2}^{\rho^{1/2}/\beta^{5/6}} \frac{1}{u^{3/2}} \int_0^{\infty} z^3 \exp \bigg( - \frac{\rho z}{2} - \frac{((L-x) - z)^2}{2u} \bigg) \: dz \: du.
\end{align}
To evaluate the inner integral, we will apply Lemma \ref{BMmoments} with $k = 3$, with $L - x$ in place of $a$, with $2u$ in place of $t$, and with $\rho/2$ in place of $\rho$.  The condition $a \geq \rho t/2$ in Lemma \ref{BMmoments} becomes $u \leq 2(L - x)/\rho$.  Noting that $\rho^2 t/4 - a \rho \leq -a \rho/2$ when $a \geq \rho t/2$, we have
\begin{align}\label{R2a}
&\beta^{2/3}(L - x) \int_{1/\rho^2}^{2(L - x)/\rho} \frac{1}{u^{3/2}} \int_0^{\infty} z^3 \exp \bigg( - \frac{\rho z}{2} - \frac{((L-x) - z)^2}{2u} \bigg) \: dz \: du \nonumber \\
&\hspace{.4in}\lesssim \beta^{2/3}(L - x) \int_{1/\rho^2}^{2(L - x)/\rho} \frac{1}{u^{3/2}} \cdot u^{1/2}\big((L - x)^3 + u^{3/2}\big) e^{-\rho(L-x)/4} \: du \nonumber \\
&\hspace{.4in}\lesssim \beta^{2/3} e^{-\rho(L-x)/4} \bigg( (L - x)^4 \int_{1/\rho^2}^{2(L - x)/\rho} \frac{1}{u} \: du + (L - x) \int_{1/\rho^2}^{2(L - x)/\rho} u^{1/2} \: du \bigg) \nonumber \\
&\hspace{.4in}\lesssim \frac{\beta^{2/3}}{\rho^4} e^{-\rho(L-x)/4} \Big( \rho^4 (L - x)^4 \log (2 \rho(L-x)) + \rho^{5/2}(L - x)^{5/2} \Big) \nonumber \\
&\hspace{.4in}\lesssim \frac{\beta^{2/3}}{\rho^4}.
\end{align}
When $u > 2(L-x)/\rho$, we apply instead the second part of Lemma \ref{BMmoments}.  When $2(L-x)/\rho < u \leq 4(L-x)/\rho$, we disregard the first term in the denominator on the right-hand side of (\ref{BMmom2}) and get
\begin{align}\label{R2b}
&\beta^{2/3}(L - x) \int_{2(L-x)/\rho}^{4(L - x)/\rho} \frac{1}{u^{3/2}} \int_0^{\infty} z^3 \exp \bigg( - \frac{\rho z}{2} - \frac{((L-x) - z)^2}{2u} \bigg) \: dz \: du \nonumber \\
&\hspace{.4in}\lesssim \beta^{2/3}(L - x) \int_{2(L-x)/\rho}^{4(L - x)/\rho} \frac{1}{u^{3/2}} \cdot u^2 e^{-(L - x)^2/2u} \: du \nonumber \\
&\hspace{.4in}\lesssim \beta^{2/3} (L - x) e^{-\rho(L-x)/8} \bigg(\frac{L-x}{\rho} \bigg)^{3/2} \nonumber \\
&\hspace{.4in} \lesssim \frac{\beta^{2/3}}{\rho^4} e^{-\rho(L-x)/8} \rho^{5/2}(L - x)^{5/2} \nonumber \\
&\hspace{.4in} \lesssim \frac{\beta^{2/3}}{\rho^4}.
\end{align}
Finally, when $4(L-x)/\rho < u \leq \rho^{1/2} /\beta^{5/6}$, we disregard the second term in the denominator on the right-hand side of (\ref{BMmom2}) and then apply Lemma \ref{32int} to get
\begin{align}\label{R2c}
&\beta^{2/3}(L - x) \int_{4(L-x)/\rho}^{\rho^{1/2}/\beta^{5/6}} \frac{1}{u^{3/2}} \int_0^{\infty} z^3 \exp \bigg( - \frac{\rho z}{2} - \frac{((L-x) - z)^2}{2u} \bigg) \: dz \: du \nonumber \\
&\hspace{.4in}\lesssim \frac{\beta^{2/3} (L - x)}{\rho^4} \int_0^{\infty} \frac{1}{u^{3/2}} e^{-(L - x)^2/2u} \: du \nonumber \\
&\hspace{.4in}\lesssim \frac{\beta^{2/3}}{\rho^4}.
\end{align}
By combining (\ref{R2int}), (\ref{R2a}), (\ref{R2b}), and (\ref{R2c}), we get
\begin{equation}\label{R2main}
R_2 \lesssim \frac{\beta^{2/3}}{\rho^4}.
\end{equation}

Next, we estimate $R_3$.  For $u \geq \rho^{1/2}/\beta^{5/6}$, we have $N(u, L-z) = 1$.  Bounding $M(u, x, L-z)$ by the third expression in (\ref{Mdef}), we get from (\ref{TReq})
\begin{align}\label{R3initial}
R_3 &\lesssim \int_{\rho^{1/2}/\beta^{5/6}}^{r \rho/\beta} \frac{1}{u^{1/2}} \exp \bigg( - \frac{\rho^2 u}{2} + \beta L u - \frac{7 \beta^2 u^3}{24} \bigg) \nonumber \\
&\hspace{.4in}\times \int_0^{\infty} \exp \bigg( -(\rho - 2 \beta(u + s)) z - \frac{\beta u z}{2} - \frac{((L - x) - z)^2}{2u} \bigg) \: dz \: du.
\end{align}
We now discard the $((L - x) - z)^2/2u$ term and apply (\ref{expconst}) with $1/24$ in place of $1/3$ as the constant in front of $\beta^2 u^3$.  Using also that $r < 2/3$ and $s \ll \rho/\beta$, we get
\begin{align*}
R_3 &\lesssim \int_{\rho^{1/2}/\beta^{5/6}}^{r \rho/\beta} \frac{1}{u^{1/2}} \exp \bigg( - \frac{\beta^2 u^3}{4} \bigg) \int_0^{\infty} \exp \bigg( \Big( - \rho + \frac{3 \beta u}{2} + 2 \beta s \Big)z \bigg) \: dz \: du \\
&\lesssim \frac{1}{\rho} \int_{\rho^{1/2}/\beta^{5/6}}^{r \rho/\beta} \frac{1}{u^{1/2}} \exp \bigg( - \frac{\beta^2 u^3}{4} \bigg) \: du.
\end{align*}
Noting that the integrand is decreasing in $u$ and therefore largest when $u = \rho^{1/2}/\beta^{5/6}$, we get
\begin{equation}\label{R3main}
R_3 \lesssim \frac{1}{\rho} \cdot \frac{r \rho}{\beta} \cdot \frac{\beta^{5/12}}{\rho^{1/4}} \exp \bigg(- \frac{\rho^{3/2}}{4 \beta^{1/2}} \bigg) = \frac{r\beta^{2/3}}{\rho^4} \bigg( \frac{\rho^3}{\beta} \bigg)^{5/4} \exp \bigg(- \frac{1}{4} \bigg(\frac{\rho^3}{\beta} \bigg)^{1/2} \bigg) \ll \frac{\beta^{2/3}}{\rho^4}.
\end{equation}

To evaluate $R_4$, we reason as in (\ref{R3initial}) and use that, by the well-known formula for the moment generating function of the normal distribution,
\begin{align*}
&\int_{-\infty}^{\infty} \exp \bigg( -(\rho - 2 \beta(u + s)) z - \frac{\beta u z}{2} - \frac{((L - x) - z)^2}{2u} \bigg) \: dz \\
&\hspace{.5in} = \sqrt{2 \pi u} \: \exp \bigg( (L - x) \bigg( -\rho + \frac{3 \beta u}{2} + 2 \beta s \bigg) +  \frac{u}{2} \bigg( -\rho + \frac{3 \beta u}{2} + 2 \beta s \bigg)^2 \bigg),
\end{align*}
to get
\begin{align*}
R_4 &\lesssim \int_{r \rho/\beta}^t \exp \bigg( - \frac{\rho^2 u}{2} + \beta L u - \frac{7 \beta^2 u^3}{24} \bigg) \nonumber \\
&\hspace{.4in}\times \exp \bigg( (L - x) \bigg( -\rho + \frac{3 \beta u}{2} + 2 \beta s \bigg) +  \frac{u}{2} \bigg( -\rho + \frac{3 \beta u}{2} + 2 \beta s \bigg)^2 \bigg) \: du.
\end{align*}
Because $u \leq \rho/\beta$, we have $-\rho + 3 \beta u/2 + 2 \beta s \leq \rho/2 + 2 \beta s$.  Considering also that $L - x \ll \rho^2/\beta$ and $s \ll \rho/\beta$, it follows that
$$\exp \bigg( (L - x) \bigg( -\rho + \frac{3 \beta u}{2} + 2 \beta s \bigg) + \frac{u}{2}\bigg( -\rho + \frac{3 \beta u}{2} + 2 \beta s \bigg)^2  \bigg) \leq \exp \bigg( \frac{\rho^2 u}{8} + o \bigg( \frac{\rho^3}{\beta} \bigg) \bigg).$$ Also, we have $-\rho^2 u/2 + \beta L u = -2^{-1/3} \gamma_1 \beta^{2/3} u \ll \rho^3/\beta$.
Therefore,
\begin{align*}
R_4 &\lesssim \int_{r \rho/\beta}^t \exp \bigg( - \frac{7 \beta^2 u^3}{24} + \frac{\rho^2 u}{8} + o \bigg( \frac{\rho^3}{\beta} \bigg) \bigg) \: du \lesssim \frac{\rho}{\beta} \max_{r \rho/\beta \leq u \leq t} \exp \bigg( - \frac{7 \beta^2 u^3}{24} + \frac{\rho^2 u}{8} + o \bigg( \frac{\rho^3}{\beta} \bigg) \bigg).
\end{align*}
Because $r>\sqrt{1/7}$, the function $f(x) = -\frac{7}{24} x^3 + \frac{x}{8}$ is decreasing over $[r, 1]$, and therefore the contribution from the sum of the first two terms in the exponential above is maximized when $u = r \rho/\beta$. This leads to
$$R_4 \lesssim \frac{\beta^{2/3}}{\rho^4} \bigg(\frac{\rho^3}{\beta} \bigg)^{5/3} \exp \bigg( \bigg( -\frac{7 r^3 }{24} + \frac{r}{8} \bigg) \frac{\rho^3}{\beta} + o \bigg( \frac{\rho^3}{\beta} \bigg) \bigg).$$
Because $r > \sqrt{3/7}$, the coefficient in front of $\rho^3/\beta$ in the exponential is negative.  It follows that $R_4 \ll \beta^{2/3}/\rho^4$, which in combination with (\ref{R1main}), (\ref{R2main}), and (\ref{R3main}) completes the proof.

\bigskip
\bigskip
\noindent {\bf {\Large Acknowledgments}}

\bigskip
\noindent The authors thank Jiaqi Liu for spotting a typo in an earlier version of the manuscript, and Daniel Fisher for bringing to their attention the references \cite{f13, gd13, grbhd12}.  They also thank two referees for helpful comments that improved the exposition of the paper.

\end{document}